\numberwithin{equation}{section}
\def\C{\mathbb{C}} \def\P{\mathbb{ P}} \def\N{\mathbb{ N}}\def\K{\mathcal K}
\def\M{\mathcal M}\def\R{\mathbb{ R}}  \def\n{\mathbb{n}}
\def\D{\mathcal{D}}\def\O{\mathcal{O}}\def\L{\mathcal{L}}
\def\H{\tilde H}
\def\s{\mathfrak{S}}
\newtheorem{thm}{Theorem}[section]
\newtheorem{lem}[thm]{Lemma}
\newtheorem{cor}[thm]{Corollary}
\newtheorem{prop}[thm]{Proposition}
\newtheorem{defi}[thm]{Definition}
\newtheorem{definition}[thm]{Definition}
\newcommand{\be}{\begin{equation}}
\newcommand{\ee}{\end{equation}}
\newtheorem{fact}[thm]{Fact}
\newcommand{\ba}{\begin{array}}
\newcommand{\ea}{\end{array}}
\newcommand{\al}{\alpha}
\newcommand{\bet}{\beta}
\renewcommand{\O}{\mathcal{O}}
\newcommand{\la}{\lambda}
\newcommand{\La}{\mathbf{\Lambda}}
\newtheorem{rem}[thm]{Remark}
\newcommand{\bea}{\begin{eqnarray}}
\newcommand{\eea}{\end{eqnarray}}
\newcommand{\Sum}{\sum_{n=0}^\infty}
\def\bl{\mathop{\mathrm{bl}}}
\def\tr{\mathop{\mathrm{tr}}}
\def\sg{\mathop{\mathrm{sg}}}
\newcommand{\cyc}{\mathop{\mathrm{cyc}}}
\newcommand{\drop}{\mathop{\mathrm{drop}}}
\newcommand{\exc}{\mathop{\mathrm{exc}}}
\newcommand{\wex}{\mathop{\mathrm{wex}}}
\newcommand{\Fix}{\mathop{\mathrm{Fix}}}
\newcommand{\sing}{\mathop{\mathrm {sing}}}
\newcommand{\rc}{\mathop{\mathrm{cr}}}
\newcommand{\ninv}{\mathop{\mathrm{ninv}}}
\def\d{{\rm dp}}
\def\P{\mathcal{P}}
\def\O{{\mathcal O}}
\def\C{\mathcal{C}}
\def\F{\mathcal{F}}
\def\n{\boldsymbol{n}}
\def\m{\boldsymbol{m}}
\def\x{\boldsymbol{x}}
\def\blue{\textcolor{blue}}
\def\red{\textcolor{red}}
\def\pf{\noindent {\it Proof.} }
\begin{document}

\title[Separation of variables in combinatorics]{Separation of variables  and
combinatorics of linearization coefficients of orthogonal  polynomials}
\author[M. Ismail]{Mourad E.H. Ismail$^{*}$}
\address{City University of Hong Kong,
Tat Chee Avenue, Kowloon, Hong Kong
and King Saud University, Riyadh, Saudi Arabia}
\email{mourad.eh.ismail\@gmail.com}
\thanks{$^*$Research supported by NPST Program of King Saud University; project number 10-MAT1293-02 and
  King Saud University in Riyadh  and by
 Research Grants Council of Hong Kong under  contract \#  101410}

\author[A. Kasraoui]{Anisse Kasraoui$^{\dagger}$}
\address{Fakult\"at f\"ur Mathematik, Universit\"at Wien,
Nordbergstrasse 15,
A-1090 Vienna,
Austria}
\email{anisse.kasraoui@univie.ac.at}
\thanks{$^{\dagger}$ Research supported by the grant S9607-N13 from Austrian Science Foundation FWF
 in the framework of the National Research Network  ``Analytic Combinatorics and Probabilistic Number theory".}
\author[J. Zeng]{Jiang Zeng$^\ddagger$}
\address{Universit\'{e} de Lyon,  Universit\'{e} Lyon 1,  Institut Camille Jordan,
UMR 5028 du CNRS,  69622 Villeurbanne,
France}
\email{zeng@math.univ-lyon1.fr}
\thanks{$^\ddagger$Research supported  by  the grant ANR-08-BLAN-0243-03.}

\subjclass[2010]{Primary  33D15,   05A15,
Secondary 30E05, 33 C15.}

\keywords{Orthogonal polynomials, separation of variables,  linearization coefficients, Sheffer polynomials, 
 $q$-analogues, derangements.}

\date{}

\dedicatory{Dedicated to Richard Askey on his 80th birthday$^1$}\thanks{$^1$This is in appreciation of his fundamental
  mathematical contributions to special functions and orthogonal polynomials, for his tireless efforts of promoting
  the subject,  and for his outstanding mentoring of younger mathematicians.
  }
\begin{abstract}
We propose a new approach to the combinatorial interpretations of  linearization coefficient problem
of orthogonal polynomials.
We   first establish a difference system and then solve it combinatorially and analytically
using the method of separation of variables.  We illustrate our  approach
by applying it to determine the number of perfect matchings, derangements, and other weighted permutation problems.
The separation of variables technique naturally leads to integral representations of combinatorial
numbers where the integrand contains a product of one or more types of orthogonal polynomials.
This also establishes the positivity of such integrals.
\end{abstract}

\maketitle


\tableofcontents


\section{Introduction}
In the late 1960's Askey formulated several conjectures about the nonnegativity
of integrals of products of orthogonal polynomials times certain functions. An excellent
survey of the research in this area, which was spearheaded by Askey, is Askey's CBMS
lecture notes \cite{Ask}, see also \cite{And:Ask:Roy}. In the 1970's it was realized that
some of the integrals considered by Askey and his coauthors have combinatorial interpretations.
Even and Gillis \cite{Eve:Gil} showed that the number of derangements of sets of sizes $n_1, n_2, \ldots, n_m$ is
\be
\label{eqderang}
(-1)^{n_1+\cdots +n_m}  \int_0^\infty e^{-x} \prod_{j=1}^m L_{n_j}(x) dx,
\ee
where $L_n(x)$'s are the simple Laguerre polynomials,
while  Azor, Gillis, and Victor \cite{Azo:Gil:Vic} and independently Godsil~\cite{God}
showed that the number of perfect matchings of sets of
sizes $n_1, n_2, \ldots, n_m$ is
$$
2^{-(n_1+\cdots +n_m)/2} \int_\R \frac{e^{-x^2}}{\sqrt{\pi}}\prod_{j=1}^mH_{n_j}(x)dx,
$$
where $H_n(x)$'s are the Hermite polynomials. Askey and Ismail
\cite{Ask:Ism}  used the MacMahon Master theorem  to give a
systematic combinatorial treatment of the integrals of products of
the classical polynomials  with respect to certain measures.  One of
them   generalized the Even and Gillis result to Meixner
polynomials. Foata and Zeilberger \cite{Foa:Zei}  considered the
general Laguerre numbers
$$
 (-1)^{\sum_{j=1}^m n_j} \;  n_1!\cdots n_m!\; \int_0^\infty \frac{x^\al e^{-x} }{\Gamma(\alpha+1)}\prod_{j=1}^m L^{(\al)}_{n_j}(x) dx,
$$
where $L^{(\al)}_{n}(x)$'s are the Laguerre polynomials.
 Zeng and,  Kim and Zeng extended this study to all Sheffer polynomials in  \cite{Kim:Zen,Zen90, Zen}.

In their combinatorial study of  integrals of products of orthogonal polynomials Askey and Ismail \cite{Ask:Ism}
pointed out another source of positivity results. Recall that a system $\{Q_n(x)\}$
of birth and death process polynomials,  \cite{Kar:McG1}, \cite[\S 5.2]{Ism2},  is generated by
  \begin{equation}
\label{eqBandDPol}
\begin{split}
Q_0(x) =1, \quad Q_1(x) = [b_0 +d_0 -x]/d_0, \\
-xQ_n(x) = b_n Q_{n+1}(x) + d_n Q_{n-1}(x) -(b_n+d_n) Q_n(x),
\end{split}
  \end{equation}
where $\{b_n\}$ and  $\{d_n\}$ are the birth and death rates, respectively, are such that
\be
 \la_n > 0, n \ge 0, \quad \textup{and} \quad d_n > 0, n >0, d_0 \ge 0.
\ee
Karlin and McGregor \cite{Kar:McG1} showed that the probability to go from
state (population) $m$ to state (population) $n$ in time $t$ is given by
\be
p_{m,n}(t) = \frac{b_0 b_1 \cdots b_{n-1}}{d_1 d_2 \cdots d_n}
 \int_0^\infty e^{-xt} Q_m(x) Q_n(x) \, d\mu(x), \quad t >0,
\label{eqpmn}
\ee
where $\mu$ is the orthogonality measure of $\{Q_n(x)\}$. This proves that
\be
\int_0^\infty e^{-xt} Q_m(x) Q_n(x) \, d\mu(x) \ge 0.
\ee
The Laguerre polynomials correspond to $b_n = n+1, d_n = n+ \al$.  Thus
\be
\label{eqposLag}
\int_0^\infty e^{-xt} x^\al e^{-x} L_m^{(\al)} (x) L_n^{(\al)}(x) dx > 0, \quad \al \ge 0, \quad t > 0.
\ee
This and the derangement number  \eqref{eqderang} motivated us to consider
the combinatorial interpretation of the numbers
\be
A^{(\al)}(m,n,s) =  \frac{(-1)^{m+n}}{\Gamma(\al+1)} \int_0^\infty \frac{x^s}{s!} L_m^{(\al)}(x) L_n^{(\al)}(x) x^{\al} e^{-x} dx.
\ee

 One important tool used in the combinatorial study of the integrals of orthogonal
 polynomials is  MacMahon's Master theorem and its $\beta$-extension due to Foata--Zeilberger~\cite{Foa:Zei}.
 When the $\beta$-extension of MacMahon's Master theorem is combined with the exponential formula~\cite{Sta, Wil},
all  the known combinatorial interpretations of the  linearization coefficients of the orthogonal
Sheffer polynomials  can be deduced by computing their generating functions.
Another way to gain insight into the combinatorial interpretation of the linearization coefficients
is from their corresponding moment sequences, see \cite{Kim:Zen,Vie, Zen},
\be
\mu_n=\int_\R x^n\,d\mu(x).
\ee
However the generating function approach  fails when one tries to  extend the previous results
to their $q$-analogues,  even though  a conjecture for the combinatorial interpretation is formulated. For example,
 an  important $q$-analogue for the linearization coefficients of Hermite polynomials was given by
Ismail, Stanton and Viennot~\cite{Ism:Sta:Vie}, but their proof remains difficult.
We are grateful to a  referee for pointing  out that Effros and Popa rediscovered  the
lsmail--Stanton--Viennot result in \cite{Eff:Pop}.
Another proof  due to  Anshelevich~\cite{Ans} uses  stochastic processes, and
 is also far from  being elementary.  Our  paper provides a fresh approach to linearization questions.
 Indeed, one of the main results of this paper is to give an elementary proof of the Ismail--Stanton--Viennot  result.

Separation of variables is a standard technique to solve linear partial differential equations.
The idea is to seek solutions which are products of single variables then by the principle of
linear superposition the general solution is a linear combination of these products. The only
problem left is to use initial and boundary conditions to determine the coefficients.  This
technique can be used to solve difference or differential equations.  One important application
of this method is to solve the Chapman--Kolmogorov equations for birth and death processes,
see \cite[\S 5.2]{Ism2}.  The latter equations is a system of differential equations in time and
partial difference equations in two discrete variables whose solution is given by \eqref{eqpmn}.

In this paper we show how the separation of variables gives integral representations for solutions
of certain combinatorial problems.

Our approach is explained in detail in Section~2.   The integrands in our integral representations
are constant multiples of products of orthogonal polynomials times a measure with respect to which
the polynomials are orthogonal.  The integral representations arise naturally through separation of
variables of the solution of systems of difference equations satisfied by the combinatorial numbers.
We may reverse the process by starting with integrals of products of orthogonal polynomials times their
orthogonality measure and reach the combinatorial numbers.  Some of these integrals arose in problems
involving linearizations of products of orthogonal polynomials where the focus of attention was their
nonnegativity~\cite{Ask:Ism:Koo, Koo}.  Most of the positivity results originated from work by Askey and his coauthors in the late
1960's and 1970's. For references we refer  the interested reader to Askey's monograph \cite{Ask}, and
to Ismail's book \cite{Ism2}.

The integral representations studied in this work are of the form
\be\label{eqIS}
\int_\R  f(x) \prod_{j=1}^m p_{n_j}(\la_j x) d\mu(x),
\ee
where $\mu$ is a discrete or absolutely continuous measure and $f$ is some integrable function. Ismail and Simeonov~\cite{Ism:Sim}  studied the large
$k$ behavior of integrals of the form~\eqref{eqIS} when the $n_j$'s are all equal.
Since the integral in \eqref{eqIS} represents the number of  ways a certain configuration occurs, one can calculate
the probability that such configuration occurs. We shall also study integrals of the type
\eqref{eqIS} where the polynomials $p_{n_j}(x)$ come from two different families of orthogonal polynomials.
The positivity results which we establish are not only  new but seem to be the first of its type.

The rest of this paper is organized as follows.
As we already mentioned in the above paragraph our approach is outlined in Section~2, where we characterize
the linearization coefficients of orthogonal polynomials as the unique solution of  some
partial differential equations with boundary conditions.
Then we  apply the results of Section~2 to various combinatorial problems in Sections~3--8. More precisely,
by  solving the  corresponding partial difference equations combinatorially,
 we deduce the combinatorial interpretations of Hermite  and Charlier polynomials, Laguerre polynomials, Meixner polynomials, Meixner--Pollaczek polynomials, $q$-Hermite polynomials,
 $q$-Charlier polynomials, and
 $q$-Laguerre polynomials, respectively.  In each case we start with a combinatorial problem involving
 multisets, deduce a difference equation for the combinatorial numbers involved, then identify the
 orthogonal polynomials  which arise through the machinery developed in Section~2.
 Furthermore,  in Section~9, we extend the previous results to some more general integrals  to include
 the moments, inverse coefficients and linearization coefficients. We also compute the corresponding
 generating functions for
 the corresponding integrals of Lagurre and Meixner polynomials and deduce their combinatorial
 interpretation by applying MacMahon's Master theorem.
In Section~10, we give a further extension of the integrals of Laguerre and Meixner polynomials.
Finally, in Section~11,  we  prove the  crucial step, Lemma~\ref{lem:sym-charlier},   towards the
combinatorial solution of the partial difference equations of $q$-Charlier polynomials.

We follow the standard notation for shifted  factorials, hypergeometric functions and their $q$-analogues as in the books \cite{And:Ask:Roy, Gas:Rah, Ism2}.
The work of  Koekoek--Lesky--Swarttouw~\cite{Koe:Swa} is also a standard reference for formulas involving orthogonal polynomials and their basic analogues.

  \section{Separation of variables and  linearization coefficients}
  Let $\{p_n(x)\}$ be a sequence of orthogonal polynomials
  \begin{equation}
  \int_\R p_m(x) p_n(x) d\mu(x) = \zeta_n \delta_{m,n}, \quad \zeta_0=1.
  \label{eqorth}
  \end{equation}
  The condition $\zeta_0=1$ amounts to  normalizing  total mass of $\mu$ to be  1.
Then the polynomials $\{p_n(x)\}$ must satisfy a three term recurrence relation of the form
  \begin{equation}
 \label{eqgen3trr}
 p_{n+1}(x) = [A_n x + B_n] \; p_n(x) - C_n p_{n-1}(x),  \quad n >0,
  \end{equation}
and we will always assume $p_0(x) := 1, p_1(x) = A_0 x+B_0$.  Therefore
  \begin{equation}
\label{eqzetan}
\zeta_n = \frac{A_0}{A_n} C_1 C_2 \cdots C_n.
  \end{equation}

We consider the linearization coefficients in the expansion of $\prod_{j=1}^{m-1} p_{n_j}(\la_j x)$ in $\{p_n(x)\}$. Equivalently we consider the numbers
\be
\label{eqdefnI}
I({\n}) := \int_\R\biggl( \prod_{j=1}^{m-1} p_{n_j}(\la_j x)\biggr)p_{n_m}(x) \; d\mu(x),
\ee
where  ${\n}=(n_1, \ldots, n_m)$, $n_j$ is a nonnegative integer for $1\le j \le m$.    
We  shall use the following notation:
$$
I_j^{\pm}({\n})=I(n_1, \ldots, n_{j-1}, n_j\pm 1, n_{j+1}, \ldots, n_m).
$$
Moreover we assume that $\la_m = 1$.  It is clear that
\be
\begin{split}
I_j^+(0,\ldots, 0, n) = \la_j C_1\, \frac{A_0}{A_1}\;  \delta_{n, 1} +  B_0(1-\la_j) \;
\delta_{n,0}, \quad   \textup{if} \quad n=0, 1,\quad \textup{and}\\
I(0,\ldots, 0)=1, \quad I(\n) = 0 \quad  \textup{if} \quad \sum_{j=1}^{m-1} n_j < n.
\end{split}
\ee
\begin{thm}
The numbers $I({\n})$ satisfy the system of difference equations
\begin{equation}\label{eqIplusminus}
I_j^{+}({\n}) -  u_{jk}(\n) I_k^+({\n}) \\
= \left[B_{n_j}   - u_{jk}  B_{n_k}\right] I({\n}) -C_{n_j}I_j^-({\n})
+  u_{jk}(\n)  C_{n_k}I_k^-({\n}),
\end{equation}
where $u_{jk}(\n)=v_j(\n)/v_k(\n)$ and $v_j(\n)=A_{n_j}\la_j$.
\end{thm}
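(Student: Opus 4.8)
The plan is to exploit the fact that every factor $p_{n_j}(\la_j x)$ inside the integrand obeys the three-term recurrence \eqref{eqgen3trr}, but with its argument scaled by $\la_j$. Rewriting \eqref{eqgen3trr} at index $n_j$ and multiplying by $\la_j$ gives
\[
A_{n_j}\la_j\, x\, p_{n_j}(\la_j x) = p_{n_j+1}(\la_j x) - B_{n_j}\, p_{n_j}(\la_j x) + C_{n_j}\, p_{n_j-1}(\la_j x).
\]
The point I would stress is that this identity trades multiplication by $x$ inside the integrand for the shifts $n_j \mapsto n_j \pm 1$ of the $j$-th index, at the price of the scalar $v_j(\n) = A_{n_j}\la_j$. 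This is precisely the mechanism that will generate the operators $I_j^{\pm}$.

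First I would introduce the auxiliary quantity
\[
J(\n) := \int_\R x \prod_{j=1}^{m} p_{n_j}(\la_j x)\, d\mu(x),
\]
where I use $\la_m = 1$ so that the $m$-th factor is handled by the same convention as the others. The essential observation is that $J(\n)$ carries no reference to a distinguished index: it is a single number attached to $\n$. Next, for each fixed $j$ I would multiply the displayed recurrence by $\prod_{i\neq j} p_{n_i}(\la_i x)$ and integrate against $d\mu$. By linearity of the integral the left-hand side becomes $v_j(\n)\,J(\n)$, while the three terms on the right produce $I_j^{+}(\n)$, $-B_{n_j} I(\n)$, and $C_{n_j} I_j^{-}(\n)$, respectively, so that
\[
v_j(\n)\, J(\n) = I_j^{+}(\n) - B_{n_j}\, I(\n) + C_{n_j}\, I_j^{-}(\n).
\]

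The final step is elimination. The same identity holds verbatim with $j$ replaced by $k$, so I would solve both for the common quantity $J(\n)$, equate the two expressions, and clear denominators by multiplying through by $v_j(\n)$. Writing $u_{jk}(\n) = v_j(\n)/v_k(\n)$ and collecting the $B$-terms onto the coefficient of $I(\n)$ then yields \eqref{eqIplusminus} directly. There is no genuine analytic obstacle here; the argument is purely algebraic and rests only on the linearity of the integral together with the recurrence. The one point that warrants care is the bookkeeping of the scalars $v_j(\n)$ and the signs attached to the $C$-terms, and the mild but decisive observation that $J(\n)$ is independent of the chosen expansion index, which is exactly what makes the elimination between $j$ and $k$ legitimate.
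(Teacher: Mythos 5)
Your proposal is correct and follows essentially the same route as the paper: apply the three-term recurrence to the $j$-th factor to express $I_j^{+}(\n)$ in terms of $v_j(\n)\int_\R x\prod_r p_{n_r}(\la_r x)\,d\mu(x)$, $I(\n)$ and $I_j^{-}(\n)$, then eliminate the common integral between the indices $j$ and $k$. The sign bookkeeping in your elimination also checks out against \eqref{eqIplusminus}.
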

\begin{proof}
For $1\leq t\leq m$, we have by \eqref{eqgen3trr}
\begin{align*}
I_t^{+}({\n})&= \int_\R [(A_{n_t} \la_t x +B_{n_t}) \, p_{n_t}(\la_t
x)-C_{n_t}p_{n_t-1}(\la_t
x)] \prod_{r\neq t}p_{n_r}(\la_r x) d\mu(x)\\
&=v_t(\n) \int_\R   x\prod_{r=1}^m p_{n_r}(\la_r x) d\mu(x)
+B_{n_t}I(\n)- C_{n_t} I_t^{-}({\n}).
\end{align*}
Specializing the above equation at $t=j$ and $t=k$ immediately leads
to \eqref{eqIplusminus}.
\end{proof}
Observe that in the system  \eqref{eqIplusminus} we assume $j, k \ge 1$. It is more convenient to write
\eqref{eqIplusminus} in the more symmetric form
\begin{equation}
\label{eqIplusminus2}
\frac{1}{v_j(\n)}I_j^{+}({\n}) -  \frac{1}{v _k(\n)}   I_k^+({\n})
= \biggl[\frac{B_{n_j}}{v_j(\n)}    - \frac{B_{n_k}}{v_k(\n)} \biggr] I({\n}) -
\frac{C_{n_j}}{v_j(\n)}       I_j^-({\n})
+   \frac{C_{n_k}} {v _k(\n)}  I_k^-({\n}).
\end{equation}
We will show that the system \eqref{eqIplusminus2} describes many combinatorial problems.
From now on we will consider different combinatorial problems and derive a system of equations
of the type \eqref{eqIplusminus2} for the combinatorial numbers under consideration. Theorems~\ref{existence}
and \ref{Uniqueness} identify the combinatorial numbers as integrals of products of  orthogonal polynomials.
\begin{thm} \label{existence}
One solution to
\begin{equation}\label{eqIplusminusy}
y_j^{+}({\n}) -  u_{jk}(\n)  y_k^+({\n})
= \bigl[B_{n_j}   - u_{jk}(\n)B_{n_k}\bigr] y({\n}) -C_{n_j}y_j^-({\n})
+  u_{jk}(\n)  C_{n_k}y_k^-({\n}),
\end{equation}
is given by
\be
y({\n}) =
 \int_\R \prod_{j=1}^m p_{n_j}(\la_j x) \; d\nu(x),
 \label{eqallsol}
 \ee
 for any measure $\nu$ having finite moments of all orders.
\end{thm}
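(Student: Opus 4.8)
The plan is to notice that the derivation used to prove the preceding theorem for the orthogonality measure $\mu$ never actually invoked orthogonality: it relied only on the three-term recurrence \eqref{eqgen3trr} and on the linearity of the integral. Hence I expect the identical argument to go through verbatim after replacing $\mu$ by an arbitrary measure $\nu$ with finite moments of all orders. The only genuinely new point to verify is that each quantity involved is well defined for such a $\nu$, and this is immediate: every $p_{n_j}$ is a polynomial of fixed degree, so the integrand $\prod_{j=1}^m p_{n_j}(\la_j x)$ is a polynomial in $x$, and its integral against $\nu$ is a finite linear combination of the moments of $\nu$, hence finite.

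With convergence settled, I would fix an index $t$ with $1\le t\le m$ and substitute the recurrence relation $p_{n_t+1}(\la_t x) = [A_{n_t}\la_t x + B_{n_t}]\,p_{n_t}(\la_t x) - C_{n_t}\,p_{n_t-1}(\la_t x)$ inside the integral that defines $y_t^{+}(\n)$. Splitting the result into its three additive pieces and using the notation $v_t(\n)=A_{n_t}\la_t$ gives
\[
y_t^{+}(\n) = v_t(\n)\,J(\n) + B_{n_t}\,y(\n) - C_{n_t}\,y_t^{-}(\n),
\]
where $J(\n) = \int_\R x\prod_{r=1}^m p_{n_r}(\la_r x)\,d\nu(x)$ is a single quantity that does \emph{not} depend on the chosen index $t$. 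This is the exact analogue of the displayed identity in the proof of the first theorem, with $\mu$ replaced by $\nu$.

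The final step is to specialize this identity to $t=j$ and to $t=k$ and then eliminate the common term $J(\n)$. Forming the combination $y_j^{+}(\n) - u_{jk}(\n)\,y_k^{+}(\n)$ and using $u_{jk}(\n)\,v_k(\n) = v_j(\n)$, which holds by the very definition $u_{jk}(\n)=v_j(\n)/v_k(\n)$, makes the two $J(\n)$ contributions cancel identically. What remains is precisely
\[
y_j^{+}(\n) - u_{jk}(\n)\,y_k^{+}(\n) = \bigl[B_{n_j} - u_{jk}(\n)B_{n_k}\bigr]\,y(\n) - C_{n_j}\,y_j^{-}(\n) + u_{jk}(\n)\,C_{n_k}\,y_k^{-}(\n),
\]
which is \eqref{eqIplusminusy}.

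I do not anticipate a real obstacle: the computation is entirely parallel to that of the first theorem, and the essential observation is the conceptual one rather than a technical one—orthogonality is never used, so the same relation persists for every measure $\nu$ with finite moments, and finiteness of the moments is exactly the condition needed to guarantee convergence of all the integrals. This is also what accounts for the breadth of the solution family, and it is what sets up the companion uniqueness statement that will pin down which particular $\nu$ recovers the combinatorial numbers.
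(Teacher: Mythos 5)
Your proposal is correct, but it proves the theorem by a genuinely different route than the paper. The paper's proof runs the separation-of-variables machine: it seeks a product solution $y(\n)=\prod_j F_j(n_j)$, substitutes into \eqref{eqIplusminusy}, observes that each side of the resulting identity must equal a constant $x$ independent of $j$ and $k$, recognizes the ensuing difference equation $F_j(n_j+1)=(\la_j A_{n_j}x+B_{n_j})F_j(n_j)-C_{n_j}F_j(n_j-1)$ with $F_j(-1)=0$, $F_j(0)=1$ as the recurrence \eqref{eqgen3trr}, so that $F_j(n_j)=p_{n_j}(\la_j x)$, and then invokes the principle of linear superposition to pass to the integral \eqref{eqallsol}. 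You instead verify the claimed solution directly: substitute the three-term recurrence for $p_{n_t+1}(\la_t x)$ under the integral sign, obtain $y_t^{+}(\n)=v_t(\n)J(\n)+B_{n_t}y(\n)-C_{n_t}y_t^{-}(\n)$ with $J(\n)$ independent of $t$, and eliminate $J(\n)$ using $u_{jk}(\n)v_k(\n)=v_j(\n)$ --- which is exactly the paper's proof of the \emph{first} theorem of Section~2, with $\mu$ replaced by $\nu$. The paper's route has expository value: it derives the polynomials $p_n$ rather than assuming them, thereby explaining why orthogonal polynomials appear at all and motivating the method of the whole paper. Your route is more elementary and arguably tighter: it avoids the division by $F_j(n_j)$ implicit in \eqref{eqsep} (which presumes non-vanishing) and makes the informal ``superposition'' step fully rigorous via linearity of the integral, while also isolating the genuinely new content --- that orthogonality is never used and finiteness of the moments of $\nu$ is exactly what is needed for all integrals to converge.
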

\begin{proof}
We try the separation of variables $
 y({\n})=\prod_{j=1}^m F_j(n_j)$.  When we substitute in \eqref{eqIplusminusy} we get
  \begin{align} \label{eqsep}
 \frac{F_j(n_j+1)}{v_j(\n) F_j(n_j)} +  \frac{C_{n_j}F_j(n_j-1)}{v_j(\n)F_j(n_j)}
 -\frac{B_{n_j}}{v_j(\n)} = \frac{F_k(n_k+1)}{v_k(\n)F_k(n_k)} +
\frac{C_{n_k}F_k(n_k-1)}{v_k(\n)F_k(n_k)}
 -\frac{B_{n_k}}{v_k(\n)}.
 \end{align}
 Thus each side of the above equation is a constant independent of $j$ or $k$, so we denote the constant by  $x$.
 This leads to the difference equation
 $$
 F_j(n_j+1)=(\la_j A_{n_j}x+B_{n_j}) F_j(n_j)-C_{n_j}F_j(n_j-1),
 $$
 and the $F's$ now depend on $x$. Comparing with  \eqref{eqgen3trr} and noting
 that $ F_j(-1) =0$ and  $ F_j(0) =1$,
 we see that the above recurrence relation has a solution given by
 $F_j(n_j) = p_{n_j}(\la_j x)$ and by the principle of linear superposition
 the function in \eqref{eqallsol} is a solution.
\end{proof}
\begin{thm}\label{Uniqueness}
The system of equations \eqref{eqIplusminusy} and the boundary conditions
\begin{equation}
\label{eqsys}
\begin{split}
y_j^+(0,\ldots, 0, n) = \la_j C_1\, \frac{A_0}{A_1}\;  \delta_{n, 1} +  B_0(1-\la_j) \;
\delta_{n,0}, \quad   \textup{if} \quad n=0, 1,\quad \textup{and}\\
y(0,\ldots, 0)=1, \quad y(\n) = 0\quad  \textup{if} \quad \sum_{j=1}^{m-1} n_j < n,
\end{split}
\end{equation}
have a unique solution which is given by  \eqref{eqdefnI}.
\end{thm}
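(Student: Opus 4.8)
The plan is to prove uniqueness; existence is already in hand, since Theorem~\ref{existence} with $d\nu=d\mu$ shows that $I(\n)$ defined by \eqref{eqdefnI} solves \eqref{eqIplusminusy} (recall $\la_m=1$, so $\prod_{j=1}^m p_{n_j}(\la_j x)=\bigl(\prod_{j<m}p_{n_j}(\la_j x)\bigr)p_{n_m}(x)$), and the boundary relations for $I(\n)$ displayed just before the first theorem of this section show that $I(\n)$ satisfies \eqref{eqsys}. Thus it remains to show that \emph{any} solution $y(\n)$ of \eqref{eqIplusminusy}--\eqref{eqsys} coincides with $I(\n)$.

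The first step is to recover the separation ``constant'' as a function of $\n$. Rewriting the system in the symmetric form \eqref{eqIplusminus2} and rearranging shows that, for any solution $y$, the quantity
\[
G(\n):=\frac{1}{v_t(\n)}\Bigl(y_t^{+}(\n)-B_{n_t}\,y(\n)+C_{n_t}\,y_t^{-}(\n)\Bigr)
\]
is independent of the index $t\in\{1,\dots,m\}$; indeed \eqref{eqIplusminus2} for the pair $(j,k)$ says precisely that the value computed from $t=j$ equals that computed from $t=k$. Hence every solution satisfies
\[
y_t^{+}(\n)=v_t(\n)\,G(\n)+B_{n_t}\,y(\n)-C_{n_t}\,y_t^{-}(\n),\qquad t=1,\dots,m,
\]
the discrete analogue of the fact that the factors $F_j$ in the proof of Theorem~\ref{existence} share the single separation constant $x$.

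Next I would induct on $L:=\sum_{j=1}^{m-1}n_j$, the total degree of the ``input'' factors, with hypothesis that all values $y(\m)$ with $\sum_{i<m}m_i\le L$ are determined. For the base case $L=0$ we have $n_1=\dots=n_{m-1}=0$, and \eqref{eqsys} forces $y(0,\dots,0)=1$ together with $y(0,\dots,0,n_m)=0$ for $n_m\ge 1$ (triangularity), so all these values are fixed. For the inductive step, fix inputs with $\sum_{i<m}n_i=L+1\ge 1$, choose $j\le m-1$ with $n_j\ge 1$, and write $\tilde\n=(n_1,\dots,n_j-1,\dots,n_{m-1},n_m)$, which has input degree $L$. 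First I would use the displayed relation with $t=m$, where $v_m(\tilde\n)=A_{n_m}$ since $\la_m=1$, to compute
\[
G(\tilde\n)=\frac{1}{A_{n_m}}\Bigl(y_m^{+}(\tilde\n)-B_{n_m}\,y(\tilde\n)+C_{n_m}\,y_m^{-}(\tilde\n)\Bigr);
\]
the operators $y_m^{\pm}$ change only $n_m$, so the right side involves only values of input degree $L$ and is known by the induction hypothesis. Then the relation with $t=j$ gives the target value,
\[
y(\n)=y_j^{+}(\tilde\n)=v_j(\tilde\n)\,G(\tilde\n)+B_{n_j-1}\,y(\tilde\n)-C_{n_j-1}\,y_j^{-}(\tilde\n),
\]
where now $y(\tilde\n)$ has input degree $L$ and $y_j^{-}(\tilde\n)$ has input degree $L-1$, so every term is known. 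As this holds for all $n_m$, the whole level $L+1$ is determined, and by induction every $y(\n)$ is forced; hence the solution is unique and equals $I(\n)$. (As a consistency check, running this with $L=0$ reproduces the first line of \eqref{eqsys} automatically.)

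The key point — and the main obstacle to getting the bookkeeping right — is the choice of induction variable: one must induct on the input degree $\sum_{j<m}n_j$ rather than on the full degree $\sum_j n_j$, precisely so that the auxiliary quantity $G(\tilde\n)$ can be evaluated through the distinguished index $m$ (where $\la_m=1$) using only same-level data, while the genuine increase in level is produced by an index $j<m$. One should also dispatch the degenerate case $m=1$ separately, where there are no input factors, $L\equiv 0$, and the boundary conditions alone give $y(n_1)=\delta_{n_1,0}=I(n_1)$.
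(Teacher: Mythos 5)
Your proof is correct and follows essentially the same route as the paper's: both determine $y(\n)$ inductively from the boundary data by solving \eqref{eqIplusminusy} with $k=m$ (where $v_m=A_{n_m}\neq 0$) for $y_j^{+}$, the only difference being that you organize the induction on the input degree $\sum_{j<m}n_j$ via the separation function $G$, whereas the paper inducts on the number of nonzero entries of $\n$ and on the size of the newest entry. Your bookkeeping is tidier and makes explicit that the triangularity condition alone already forces uniqueness, but the underlying argument is the same.
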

\begin{proof}
We know that  the multisequence \eqref{eqdefnI} satisfies the system of equations \eqref{eqIplusminusy}
and boundary condition  \eqref{eqsys}, hence a solution exists.
The  second  boundary condition defines $y$  for $n\geq 0$ and when the rest  are zero.
The first boundary condition in  \eqref{eqsys} defines $y$ for  $n$ and when one other entry
$=1$ and the rest are zero in a unique way.
Letting ${\n} = (0, \ldots, 1, 0, \ldots, n)$, $n>0$ in  \eqref{eqIplusminusy} with $k =m$ we evaluate
$y(0, \ldots, 2, 0, \ldots, n)$ and by induction we evaluate $y(0, \ldots, n_s, 0, \ldots, n), 1 \le s < m$.
Next we use \eqref{eqIplusminusy} to evaluate $y$ for general $n_r, n$
 and another nonzero $n_s$: if $n > n_r +1$, then $y({\n})$ with nonzero entries in the positions $r,  m$
 of $\n$  is zero when we have 1 in the  position $s$;
if $n \leq n_r +1$, we use  \eqref{eqIplusminusy} with $j =s, k =m$ to  evaluate $y$.  Thus
we can evaluate $y$  inductively  when  $\n$ has three nonzero entries.
We continue this argument until we reach any desired general ${\n}$.
\end{proof}

\begin{rem}
It is important to note that \eqref{eqIplusminusy} is satisfied by solutions of the form \eqref{eqallsol}
where $\nu$ is any probability measure with finite moments. It is the boundary conditions \eqref{eqsys} that force $\nu$ to be an
orthogonality measure of $\{p_n(x)\}$.
\end{rem}

An important class of orthogonal polynomials is the class of birth and death process polynomials. They are generated by
\eqref{eqBandDPol}.
These polynomials have only positive  zeros so they  are orthogonal
with respect to a probability measure supported on a subset of  $[0, \infty)$.
The idea of separation of variables is also used to solve the differential-difference equations describing this model, see \S 5.2 and Theorem 7.2.1 in \cite{Ism2}.
Birth and death processes have many applications in applied probability and queueing theory.

An immediate consequence of Theorem \ref{Uniqueness} is the following result for the polynomials $\{Q_n(x)\}$ generated by
\eqref{eqBandDPol}.

\begin{thm}
The system of difference  equations
\begin{equation}
\label{eqIplusminusBandD}
\frac{b_{n_j}}{\la_j} y_j^{+}({\n}) -  \frac{b_{n_k}}{\la _k}   y_k^+({\n})
= \left[\frac{b_{n_j}d_{n_j}}{\la_j}    - \frac{b_{n_k} d_{n_k}}{\la _k}  \right] y({\n}) -
 \frac{d_{n_j}}{\la_j}y_j^-({\n})
+   \frac{d_{n_k}}{\la _k}  y_k^-({\n}),
\end{equation}
and the boundary conditions
\begin{equation}\label{eqsysBandD}
\begin{split}
y_j^+(0,\ldots, 0,n) = \la_j\, \frac{d_1}{b_0}\;  \delta_{n, 1} + \left(1+ \frac{d_0}{b_0}\right) \;
\delta_{n,0}, \quad   \textup{if} \quad n=0, 1,\; \textup{and}\;  \\
y(0,\ldots, 0)=1, \quad y(\n) = 0 \quad  \textup{if} \quad \sum_{j=1}^{m-1} n_j < n,
\end{split}
\end{equation}
have a unique  solution which is given by
\be
 y({\n}) = \int_0^\infty  \prod_{j=1}^m Q_{n_j}(\la_j x) \; d\mu(x),
\ee
where $\mu$ is an orthogonality measure for the polynomials $\{Q_n(x)\}$ in \eqref{eqBandDPol}.
\end{thm}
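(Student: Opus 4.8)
The plan is to read this off as a direct specialization of Theorem~\ref{Uniqueness}, so that essentially all the work is translating the birth and death recurrence \eqref{eqBandDPol} into the normalized form \eqref{eqgen3trr} and then feeding the resulting coefficients through the general system \eqref{eqIplusminusy} and boundary data \eqref{eqsys}. First I would solve \eqref{eqBandDPol} for $Q_{n+1}(x)$, writing
\[
Q_{n+1}(x)=\Bigl(-\tfrac{1}{b_n}\,x+\tfrac{b_n+d_n}{b_n}\Bigr)Q_n(x)-\tfrac{d_n}{b_n}Q_{n-1}(x),
\]
and comparing with \eqref{eqgen3trr} to obtain
\[
A_n=-\frac{1}{b_n},\qquad B_n=\frac{b_n+d_n}{b_n},\qquad C_n=\frac{d_n}{b_n}.
\]
Since $\{Q_n\}$ is a system of birth and death polynomials it has only positive zeros and is therefore orthogonal with respect to a probability measure $\mu$ on $[0,\infty)$; in particular $\zeta_0=\int d\mu=1$, so the normalization built into \eqref{eqorth}--\eqref{eqzetan} holds and Theorem~\ref{Uniqueness} applies verbatim with $p_n=Q_n$ and $\nu=\mu$.

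Next I would substitute these coefficients into the symmetric system \eqref{eqIplusminus2}, equivalently \eqref{eqIplusminusy}. Here $v_j(\n)=A_{n_j}\la_j=-\la_j/b_{n_j}$, so that
\[
\frac{1}{v_j(\n)}=-\frac{b_{n_j}}{\la_j},\qquad \frac{C_{n_j}}{v_j(\n)}=-\frac{d_{n_j}}{\la_j},
\]
while the diagonal term $B_{n_j}/v_j(\n)$ supplies the bracketed $y(\n)$-coefficient. Inserting these into \eqref{eqIplusminus2} and then multiplying the whole identity by $-1$ clears every sign at once and reproduces \eqref{eqIplusminusBandD}. The one point that genuinely needs attention is precisely this global sign: because the birth and death recurrence carries the factor $-x$, the leading coefficients $A_n$ are negative, hence $v_j(\n)<0$, and the target system emerges only after the overall change of sign.

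For the boundary data I would specialize \eqref{eqsys}, using $C_1=d_1/b_1$ together with $A_0/A_1=b_1/b_0$, whence $\la_j C_1 A_0/A_1=\la_j d_1/b_0$ gives the $\delta_{n,1}$-coefficient and $B_0=1+d_0/b_0$ gives the $\delta_{n,0}$-coefficient of \eqref{eqsysBandD}; the terminal conditions $y(0,\dots,0)=1$ and $y(\n)=0$ for $\sum_{j<m}n_j<n$ transcribe unchanged. With the system \eqref{eqIplusminusBandD} and the data \eqref{eqsysBandD} thus identified as the $p_n=Q_n$ instance of \eqref{eqIplusminusy}--\eqref{eqsys}, Theorem~\ref{Uniqueness} delivers both the integral solution and its uniqueness. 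As the statement is flagged an immediate consequence, I expect no real obstacle; the only subtlety I would guard against is the $n=0$ normalization, namely reading off $A_0$ from the $n=0$ instance of \eqref{eqBandDPol} so that the constant used in the boundary step is consistent with the recurrence.
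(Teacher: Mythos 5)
Your proposal is correct and is exactly the argument the paper intends: the paper offers no proof beyond declaring the theorem an immediate consequence of Theorem~\ref{Uniqueness}, and your specialization $A_n=-1/b_n$, $B_n=(b_n+d_n)/b_n$, $C_n=d_n/b_n$ fed into \eqref{eqIplusminus2} with the global sign change, together with the check $\la_j C_1 A_0/A_1=\la_j d_1/b_0$, is precisely the routine verification being left to the reader. One caveat worth recording: had you written out the two terms you gloss over, your (correct) computation would give the bracket coefficient $\bigl[(b_{n_j}+d_{n_j})/\la_j-(b_{n_k}+d_{n_k})/\la_k\bigr]$ rather than the printed $\bigl[b_{n_j}d_{n_j}/\la_j-b_{n_k}d_{n_k}/\la_k\bigr]$, and the $\delta_{n,0}$ coefficient from \eqref{eqsys} is $B_0(1-\la_j)=\bigl(1+d_0/b_0\bigr)(1-\la_j)$ rather than the printed $\bigl(1+d_0/b_0\bigr)$ --- both apparent typos in the theorem's statement, not flaws in your approach.
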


From combinatorial point of view, sometimes it is easier to establish a different  kind of difference equations from \eqref{eqIplusminus}.
Since   $p_1(x)=A_0x+B_0$  and
$p_{n+1}(x)=[A_nx+B_n]p_n(x)-C_np_{n-1}(x)$, we have
\begin{align}
\la_jp_1(x)p_{n_j}(\la_jx)&=\frac{A_0}{A_{n_j}}p_{n_j+1}(\la_jx)\\
&\quad+\Big(\la_jB_0-\frac{A_0}{A_{n_j}}B_{n_j}\Big)p_{n_j}(\la_jx)+\frac{A_0}{A_{n_j}}C_{n_j}p_{n_j-1}(\la_jx).\nonumber
\end{align}
Substituting in \eqref{eqdefnI} yields
\begin{align}\label{eq1}
I(1,\n)=\frac{A_0}{\la_j A_{n_j}}I_j^+(\n)
+\bigg[B_0-\frac{A_0}{A_{n_j}}\frac{B_{n_j}}{\la_j}\bigg]I(\n)
+\frac{A_0}{A_{n_j}}\frac{C_{n_j}}{\la_j}I_j^-(\n).
\end{align}
Subtracting \eqref{eq1}   from itself  with $j$ replaced by  $k$,  we obtain \eqref{eqIplusminus}.
For the Laguerre polynomials, $q$-Charlier polynomials and $q$-Laguerre polynomials, we shall first establish
combinatorially \eqref{eq1} before passing to  \eqref{eqIplusminus}. Finally we have the following result.

\begin{thm} \label{thmbis} Let $\n=(n_1, \ldots, n_m)$ with  $m\geq 1$ and $n_1, \ldots, n_m\geq 0$.
Any sequence $I(\n)$ satisfying the system \eqref{eq1} is  uniquely determined
by its special values at $\n=(1, \ldots, 1)$ and the
symmetry with respect to the indices $n_1, \ldots, n_m$.
\end{thm}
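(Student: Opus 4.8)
The plan is to set up a descent that expresses $I(\n)$ in terms of the prescribed all-ones values, reading \eqref{eq1} (solved for its top term) as a rule that strictly lowers a complexity statistic. For a tuple $\n=(n_1,\ldots,n_m)$ I would use the \emph{excess}
$$
e(\n)=\sum_{i=1}^m\max(n_i-1,0),
$$
so that $e(\n)=0$ precisely when $\n$ is a $0$--$1$ vector. The base case $e(\n)=0$ is then handled by the two assumed features: symmetry shows that the value of a $0$--$1$ vector depends only on the number $r$ of entries equal to $1$ (and the number of zero entries), while the elementary reduction that a part of size $0$ may be deleted—its factor being $p_0\equiv 1$—identifies it with the special value $I(1,\ldots,1)$ ($r$ ones), the all-zero tuple giving the total mass $1$. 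The substance of the theorem is to run a strong induction on $e(\n)$.

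For the inductive step I would assume $I$ is already pinned down on every tuple of excess $<e$ and take $\n$ with $e(\n)=e\ge 1$, so that $n_j\ge 2$ for some $j$. Letting $\m$ be $\n$ with $n_j$ replaced by $n_j-1$, one has $I(\n)=I_j^+(\m)$, and since the coefficient $A_0/(\la_j A_{m_j})$ never vanishes, \eqref{eq1} can be solved for this term:
$$
I(\n)=\frac{\la_j A_{m_j}}{A_0}\Bigl\{I(1,\m)-\Bigl[B_0-\frac{A_0B_{m_j}}{A_{m_j}\la_j}\Bigr]I(\m)-\frac{A_0C_{m_j}}{A_{m_j}\la_j}\,I_j^-(\m)\Bigr\}.
$$
The heart of the argument is the bookkeeping that all three index tuples on the right have excess $<e$: directly $e(\m)=e-1$ and the index of $I_j^-(\m)$ has excess at most $e-1$, while—this is the decisive point—the \emph{longer} tuple $(1,\m)$ obtained by prepending a singleton part also satisfies $e(1,\m)=e(\m)=e-1$, the new part of size $1$ contributing nothing to the excess. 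Hence the right-hand side is determined by the induction hypothesis, and therefore so is $I(\n)$.

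Because each instance of \eqref{eq1} invoked above is an identity that the sequence is assumed to satisfy, the value $I(\n)$ is \emph{forced} at every stage; in particular the freedom in choosing the index $j$ with $n_j\ge 2$ is immaterial, since any solution obeys all these relations at once, so a single valid choice suffices to pin the value down and no consistency check is needed for uniqueness. Running the descent on $e(\n)$ then shows that two symmetric solutions of \eqref{eq1} agreeing at all tuples $(1,\ldots,1)$ must coincide, which is the claim. I expect the main obstacle to be exactly the choice of statistic together with its strict decrease on the length-raising term: the naive candidates $\sum_i n_i$ and $m$ are not monotone under \eqref{eq1} (the prepended $1$ raises the length while the exchange leaves the total size fixed), and only the excess collapses the length-increasing term $I(1,\m)$ to a strictly simpler object; the one further point requiring care is the deletion of zero parts, which is what makes the $0$--$1$ base cases readable off from the special values.
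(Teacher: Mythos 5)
The paper states Theorem~\ref{thmbis} without proof, so there is no official argument to compare against; your descent supplies one, and it is almost certainly the intended one. The core of your proposal is correct and well isolated: solving \eqref{eq1} for the term $I_j^{+}(\m)$ (legitimate since $A_0/(\la_j A_{m_j})\neq 0$), and measuring progress by the excess $e(\n)=\sum_i\max(n_i-1,0)$ rather than by $\sum_i n_i$ or by $m$, is exactly what makes the reduction terminate, because the length-raising term $I(1,\m)$ has the \emph{same} excess as $\m$. Your bookkeeping ($e(\m)=e-1$, $e$ of the index of $I_j^{-}(\m)$ at most $e-1$, $e(1,\m)=e-1$) checks out, and your remark that uniqueness needs no consistency check across the different admissible choices of $j$ is also right. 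This matches how the paper actually uses the theorem (the remark after Theorem~\ref{thm:qlinear} reduces \eqref{eq:qlin} to the case $\n=1^m$, i.e.\ to the moments).

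The one genuine soft spot is your base case for tuples containing zero entries, and in particular the all-zero tuples. You justify deleting a zero part by ``its factor being $p_0\equiv 1$'' and assign the all-zero tuple ``the total mass $1$''; but the theorem concerns an \emph{abstract} sequence satisfying \eqref{eq1}, which has no factors and no underlying measure, so neither assertion is available from the hypotheses as stated. What \emph{is} derivable from the system is the following: the instance of \eqref{eq1} with $n_j=0$ (where the $C_{n_j}I_j^{-}$ term is absent) reads $I(1,\n)=\tfrac{1}{\la_j}I_j^{+}(\n)+B_0(1-\tfrac{1}{\la_j})I(\n)$, and combined with symmetry this determines $I(1^r,0^s)$ recursively from $I(1^r,0^{s-1})$ and $I(1^{r-1},0^s)$ --- so every $0$--$1$ tuple containing at least one $1$ is pinned down by the special values $I(1^r)$, as you need. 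The recursion bottoms out, however, at $I(0^s)$, which occurs in the system only paired with new unknowns such as $I(2,0^{s-1})$ (via \eqref{eq1} at $(1,0^{s-1})$) and is therefore \emph{not} forced by \eqref{eq1}, symmetry, and the values at $1^m$: one may perturb $I(0^s)$ and propagate. So either the theorem must be read on tuples of positive parts (equivalently, with the convention that zero parts are deleted and $I(0,\ldots,0)=1$ is part of the prescribed data, as it is in every application in the paper), or this normalization must be added to your base case as a hypothesis rather than deduced. You correctly flagged the zero parts as the delicate point; the fix is to state the needed convention explicitly instead of importing it from the integral representation.
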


\begin{rem} Evaluating
the special values  of the linearization coefficients at $\n=(1, \ldots, 1)$ amounts to computing
the moments of the corresponding orthogonal polynomials, while the boundary
condition \eqref{eqsysBandD} is much easier to check and does not need  the knowledge of the moments,
though the latter would be a source of inspiration for the linearization coefficients.
\end{rem}


\section{Linearization coefficients of Hermite and Charlier  polynomials}
In this section we consider the linearization coefficients of
Hermite and Charlier  polynomials.  We  start with some  combinatorial setup, which will also be used in the later sections.

\subsection{Combinatorial definitions}

 In the sequel, we denote by $\M_n$, $\Pi_n$ and $\s_n$ the set of perfect matchings, of partitions and of permutations, respectively,
of $[n]:=\{1,2,\ldots,n\}$. Recall that a perfect matching of $[n]$
is just a set partition of $[n]$ the blocks of which have exactly
two elements.
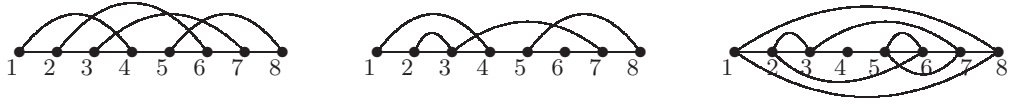
\begin{figure}[h!]
\unitlength=1mm
\begin{picture}(35,15)(0,-5)
\put(0,0){\line(1,0){35}}
\qbezier(0,0)(7.5,10)(15,0)\qbezier(5,0)(15,13)(25,0)
\qbezier(10,0)(20,10)(30,0)\qbezier(20,0)(27.5,10)(35,0)
\put(0,0){\circle*{1,3}}\put(0,0){\makebox(-2,-4)[c]{\footnotesize1}}
\put(5,0){\circle*{1,3}}\put(5,0){\makebox(-2,-4)[c]{\footnotesize2}}
\put(10,0){\circle*{1,3}}\put(10,0){\makebox(-2,-4)[c]{\footnotesize3}}
\put(15,0){\circle*{1,3}}\put(15,0){\makebox(-2,-4)[c]{\footnotesize4}}
\put(20,0){\circle*{1,3}}\put(20,0){\makebox(-2,-4)[c]{\footnotesize5}}
\put(25,0){\circle*{1,3}}\put(25,0){\makebox(-2,-4)[c]{\footnotesize6}}
\put(30,0){\circle*{1,3}}\put(30,0){\makebox(-2,-4)[c]{\footnotesize7}}
\put(35,0){\circle*{1,3}}\put(35,0){\makebox(-2,-4)[c]{\footnotesize8}}
\end{picture}
\hspace{1cm}
\begin{picture}(35,15)(0,-5)
\put(0,0){\line(1,0){35}}
\qbezier(0,0)(7.5,10)(15,0)\qbezier(5,0)(7,5)(10,0)
\qbezier(10,0)(20,8)(30,0)\qbezier(20,0)(27.5,10)(35,0)
\put(0,0){\circle*{1,3}}\put(0,0){\makebox(-2,-4)[c]{\footnotesize1}}
\put(5,0){\circle*{1,3}}\put(5,0){\makebox(-2,-4)[c]{\footnotesize2}}
\put(10,0){\circle*{1,3}}\put(10,0){\makebox(-2,-4)[c]{\footnotesize3}}
\put(15,0){\circle*{1,3}}\put(15,0){\makebox(-2,-4)[c]{\footnotesize4}}
\put(20,0){\circle*{1,3}}\put(20,0){\makebox(-2,-4)[c]{\footnotesize5}}
\put(25,0){\circle*{1,3}}\put(25,0){\makebox(-2,-4)[c]{\footnotesize6}}
\put(30,0){\circle*{1,3}}\put(30,0){\makebox(-2,-4)[c]{\footnotesize7}}
\put(35,0){\circle*{1,3}}\put(35,0){\makebox(-2,-4)[c]{\footnotesize8}}
\end{picture}
\hspace{1cm}
\begin{picture}(35,15)(0,-5)
\put(0,0){\line(1,0){35}}
\qbezier(0,0)(17.5,12)(35,0)\qbezier(5,0)(7,5)(10,0)\qbezier(20,0)(22,5)(25,0)
\qbezier(10,0)(20,8)(30,0)
\qbezier(5,0)(12.5,-8)(25,0)\qbezier(20,0)(25,-6)(30,0)
\qbezier(0,0)(17.5,-12)(35,0)
\put(0,0){\circle*{1,3}}\put(0,0){\makebox(-2,-4)[c]{\footnotesize1}}
\put(5,0){\circle*{1,3}}\put(5,0){\makebox(0,-4)[c]{\footnotesize2}}
\put(10,0){\circle*{1,3}}\put(10,0){\makebox(-1,-4)[c]{\footnotesize3}}
\put(15,0){\circle*{1,3}}\put(15,0){\makebox(-2,-4)[c]{\footnotesize4}}
\put(20,0){\circle*{1,3}}\put(20,0){\makebox(-3,-4)[c]{\footnotesize5}}
\put(25,0){\circle*{1,3}}\put(25,0){\makebox(1,-4)[c]{\footnotesize6}}
\put(30,0){\circle*{1,3}}\put(30,0){\makebox(1.5,-4)[c]{\footnotesize7}}
\put(35,0){\circle*{1,3}}\put(35,0){\makebox(1,-4)[c]{\footnotesize8}}
\end{picture}
\caption{Diagrams of, from left to right, the matching $M=1\,4/2\,6/3\,7/5\,8$, the partition  $\pi=1\,4/2\,3\,7/5\,8/6$ and the
permutation $\sigma=8\, 3\, 7\,4\,6 \,2\,5\,1$}\label{fig:diagrams}
\end{figure}
 It is often convenient to represent pictorially set  partitions and permutations of $[n]$.
We first draw $n$ elements on a line labeled $1,2,\ldots,n$ in
increasing order. Then, the diagram of a partition of $[n]$ is
obtained by joining \textit{successive}  elements of each block by
arcs drawn in the upper half-plane. Here, we say that two elements
$i<j$ in the block $B$ are successive, or more precisely that $j$
\emph{follows} $i$, if there is no element $p\in B$ such that
$i<p<j$.
 We denote by $(i,j)$ the arc whose extremities are $i$ and $j$.
The diagram of a  permutation $\sigma\in\s_n$ is obtained by drawing
an arc $i\to \sigma(i)$ above (resp. under) the line if
$i<\sigma(i)$ (resp. $i> \sigma(i)$). Arcs are always drawn in a way
such that any two arcs cross at most once.

 In  what follows, we fix an $m$-tuple of  nonnegative  integers $\n=(n_1,\ldots, n_m)$
 such that $n=n_1+\cdots +n_m$ and partition the $n$ balls $\{1, \ldots, n\}$ into $m$ boxes $S_1, \ldots, S_m$
 where   $ S_j=\{n_1+\cdots +n_{j-1}+1, \ldots, n_1+\cdots +n_{j}\}$, $n_0=0$,  for $j=1, \ldots, m$.
 We denote by  $[\n]$ the set $\{1, \ldots, n\}$ with underlying 
 boxes $S_1,\ldots, S_m$, and the corresponding sets of matching, partitions and permutations by
 \be
 \M(\n):=\M_n,  \quad \Pi(\n):=\Pi_n\quad  \textrm{and} \quad \s(\n):=\s_n.
\ee
   A partition $\pi$ of $[\n]$ is said to be \emph{inhomogeneous}  if each  block of $\pi$
contains at least two elements and no  two elements in the same
block belong to  the same box  $S_i$ ($1\leq i\leq m$). Similarly, a
permutation $\sigma$  of $[\n]$ is an \emph{inhomogeneous
derangement} if $\sigma(S_i)\cap S_i=\emptyset$ for all $i\in [m]$.
We let ${\mathcal K}(\n)$ (resp., $\P(\n)$ and ${\mathcal D}(\n)$)
denote  the set of inhomogeneous perfect matchings
(resp., partitions and derangements) of $[\n]$.  Note that a set
partition (resp., permutation)  is inhomogeneous if and only in its
diagram, there is no isolated vertex and no arc connecting two
elements in the same \emph{box} $S_j$ ($1\leq j\leq m$). For instance, if
$\n=(2,3,3)$, then in Figure~\ref{fig:diagrams} the matching drawn
is in $\K(\n)$ while the partition and the permutation are not in
$\P(\n)$ and $\D(\n)$ (they have isolated points). Inhomogeneous
objects are drawn in Figure~\ref{fig:diagrams-inhomogeneous}.

\begin{figure}[h]
\unitlength=1mm
\begin{picture}(35,15)(0,-5)
\put(0,0){\line(1,0){5}}\put(10,0){\line(1,0){10}}\put(25,0){\line(1,0){10}}
\qbezier(0,0)(7.5,10)(15,0)\qbezier(5,0)(12.5,12)(20,0)\qbezier(10,0)(15.5,12)(25,0)
\qbezier(15,0)(25,12)(35,0)\qbezier(20,0)(25,7)(30,0)
\put(0,0){\circle*{1,3}}\put(0,0){\makebox(-2,-4)[c]{\footnotesize1}}
\put(5,0){\circle*{1,3}}\put(5,0){\makebox(-2,-4)[c]{\footnotesize2}}
\put(10,0){\circle*{1,3}}\put(10,0){\makebox(-2,-4)[c]{\footnotesize3}}
\put(15,0){\circle*{1,3}}\put(15,0){\makebox(-2,-4)[c]{\footnotesize4}}
\put(20,0){\circle*{1,3}}\put(20,0){\makebox(-2,-4)[c]{\footnotesize5}}
\put(25,0){\circle*{1,3}}\put(25,0){\makebox(-2,-4)[c]{\footnotesize6}}
\put(30,0){\circle*{1,3}}\put(30,0){\makebox(-2,-4)[c]{\footnotesize7}}
\put(35,0){\circle*{1,3}}\put(35,0){\makebox(-2,-4)[c]{\footnotesize8}}
\end{picture}
\hspace{2cm}
\begin{picture}(35,15)(0,-5)
\put(0,0){\line(1,0){5}}\put(10,0){\line(1,0){10}}\put(25,0){\line(1,0){10}}
\qbezier(0,0)(7.5,12)(15,0)\qbezier(15,0)(25,12)(35,0)\qbezier(5,0)(7,5)(10,0)
\qbezier(20,0)(22,5)(25,0)\qbezier(10,0)(20,8)(30,0)
\qbezier(5,0)(12.5,-8)(25,0)\qbezier(20,0)(25,-6)(30,0)
\qbezier(0,0)(17.5,-12)(35,0)
\put(0,0){\circle*{1,3}}\put(0,0){\makebox(-2,-4)[c]{\footnotesize1}}
\put(5,0){\circle*{1,3}}\put(5,0){\makebox(0,-4)[c]{\footnotesize2}}
\put(10,0){\circle*{1,3}}\put(10,0){\makebox(-1,-4)[c]{\footnotesize3}}
\put(15,0){\circle*{1,3}}\put(15,0){\makebox(-2,-4)[c]{\footnotesize4}}
\put(20,0){\circle*{1,3}}\put(20,0){\makebox(-3,-4)[c]{\footnotesize5}}
\put(25,0){\circle*{1,3}}\put(25,0){\makebox(1,-4)[c]{\footnotesize6}}
\put(30,0){\circle*{1,3}}\put(30,0){\makebox(1.5,-4)[c]{\footnotesize7}}
\put(35,0){\circle*{1,3}}\put(35,0){\makebox(1,-4)[c]{\footnotesize8}}
\end{picture}
\caption{Diagrams of, from left to right, a partition in $\P(2,3,3)$
and permutation in $\D(2,3,3)$}\label{fig:diagrams-inhomogeneous}
\end{figure}
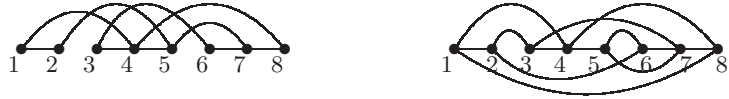

\subsection{Hermite polynomials and inhomogeneous matchings}
The Hermite polynomials
$\{H_n(x)\}_{n\geq 0}$ can be defined by one of the following five equivalent conditions:
\begin{enumerate}
\item (Coefficients) $H_n(x)=\sum_{0\leq 2k\leq n} (-1)^k \frac{n!}{2^kk!(n-2k)!} (2x)^{n-2k}$.
\item  (Generating function)
$
\sum_{n=0}^\infty H_n(x)\frac{t^n}{n!}=\exp(2xt-t^2).
$
\item (Orthogonality relation)
$
\int_\R H_m(x)H_n(x) e^{-x^2} dx= 2^n n! \; \sqrt{\pi}\; \delta_{mn}.
$
\item (Recurrence relation)
$2xH_n(x)= H_{n+1}(x)+2nH_{n-1}(x)$,  with $H_{-1}(x)=0,\;  H_0(x) =1$.
\item (Moments) $\mu_{2n+1}=0$, $\mu_{2n}=1\cdot 3\cdots (2n-1)/2^n$.
\end{enumerate}

Let $K(\n)$  be the number of inhomogeneous perfect matchings of $[\n]$.
 \begin{lem}\label{hermite}
 For  $k, j\in [m]$ and $k\neq j$ the numbers  $K(\n)$ satisfy
\be
\label{eqmatchequation}
 K_j^+({\n})- K_k^+({\n})=n_k K_k^-({\n})-n_j K_j^-({\n}),
 \ee
 and the boundary condition \eqref{eqsys} with $\la_j=1$ for all $j$ and $A_0C_1=A_1$.
\end{lem}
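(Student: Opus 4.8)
The plan is to treat the two claims---the difference equation \eqref{eqmatchequation} and the boundary conditions---separately, both by elementary enumeration. First I would record the parity observation that a perfect matching requires an even number of points, so that every term in \eqref{eqmatchequation} vanishes unless $n_1+\cdots+n_m$ is odd; I would therefore assume $\sum_i n_i$ is odd, the complementary case being the trivial identity $0=0$.

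The heart of the argument will be a single ``master'' recurrence,
\[
K_j^+(\n)=\sum_{i\neq j} n_i\,K_i^-(\n),\qquad j\in[m],
\]
from which \eqref{eqmatchequation} follows by subtraction. To prove it I would distinguish a fixed point $a$ in the enlarged box $S_j$ (the box of size $n_j+1$ attached to the configuration $K_j^+(\n)$) and classify inhomogeneous matchings by the box containing the partner $b$ of $a$. Inhomogeneity forces $b\notin S_j$, say $b\in S_i$ with $i\neq j$; erasing the arc $(a,b)$ then leaves an inhomogeneous perfect matching of the remaining points, whose box profile is precisely $\n$ with $n_i$ decreased by one (box $j$ having shrunk back to size $n_j$). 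Since this deletion is reversible and $S_i$ offers exactly $n_i$ candidates for $b$, summing over $i\neq j$ yields the displayed identity; I would phrase this as a bijection between the matchings enumerated by $K_j^+(\n)$ and the disjoint union, over $i\neq j$, of $n_i$ copies of those enumerated by $K_i^-(\n)$.

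Granting the master recurrence, subtracting the instance for index $k$ from that for index $j$ cancels every term $n_iK_i^-(\n)$ with $i\notin\{j,k\}$ and leaves exactly $K_j^+(\n)-K_k^+(\n)=n_kK_k^-(\n)-n_jK_j^-(\n)$. For the boundary data I would observe that with $\la_j=1$, $A_0C_1=A_1$, and $B_0=0$ the right-hand side of \eqref{eqsys} reduces to $\delta_{n,1}$, which matches $K_j^+(0,\ldots,0,n)$: the latter counts inhomogeneous matchings of a configuration with a single point in $S_j$ and $n$ points in $S_m$, equal to $1$ for $n=1$ and $0$ for $n=0$. The normalization $K(0,\ldots,0)=1$ is the empty matching, and $K(\n)=0$ when $n_m>\sum_{j<m}n_j$ because a surplus of points in $S_m$ would force a forbidden within-box arc. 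The only real obstacle is careful bookkeeping of how the box sizes transform under deletion of the arc $(a,b)$, to be sure the reduced object is genuinely the one counted by $K_i^-(\n)$; the parity restriction and the use of a single distinguished point $a$ are what keep this correspondence clean.
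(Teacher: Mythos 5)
Your proposal is correct and follows essentially the same route as the paper: the authors likewise prove the master recurrence $K_r^+(\n)=\sum_{i\neq r} n_i K_i^-(\n)$ by classifying matchings according to the box containing the partner of the newly added point, and then obtain \eqref{eqmatchequation} by subtraction, with the boundary conditions checked directly. The extra remarks on parity and on $B_0$ are harmless but not needed, since the term $B_0(1-\la_j)\delta_{n,0}$ already vanishes when $\la_j=1$.
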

\begin{proof} Let $r\in [m]$  and set $N_r=n_1+\cdots +n_r$.
For any $i\neq r$, the number of matchings in~$\K_r^+(\n)$ in which
 $N_r+1$ is matched with an element in $S_i$ is clearly $n_i K_i^{-}(\n)$. This implies that
for any $r\in [m]$, we have
$$
K_r^+(\n)=\sum_{\substack{ i=1\\
   i \neq r
  }}^m n_i K_i^{-}(\n),
$$
from which we  immediately deduce~\eqref{eqmatchequation}. The
boundary conditions in  \eqref{eqsys} are  obviously satisfied.
\end{proof}

\begin{thm}\label{thmAzGiVi}
The numbers  $K({\n})$ have the following integral representation
\begin{align}
\label{eqAzGiVi}
K({\n})=2^{-(n_1+\cdots +n_m)/2} \int_\R \frac{e^{-x^2}}{\sqrt{\pi}}\prod_{j=1}^mH_{n_j}(x)dx.
\end{align}
\end{thm}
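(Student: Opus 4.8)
The plan is to recognize the matching recurrence of Lemma~\ref{hermite} as the separation-of-variables system \eqref{eqIplusminusy} for a suitably normalized family of Hermite polynomials, apply the uniqueness Theorem~\ref{Uniqueness}, and then rescale to the physicist's Hermite polynomials appearing in \eqref{eqAzGiVi}.

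First I would match the recurrence data. The relation \eqref{eqmatchequation} is exactly the special case of \eqref{eqIplusminusy} obtained by taking $\la_j=1$ for all $j$ together with $A_n\equiv 1$, $B_n\equiv 0$, $C_n=n$ (so that $u_{jk}(\n)=1$). These three-term recurrence coefficients, fed into \eqref{eqgen3trr}, generate the monic (``probabilist's'') Hermite polynomials $\tilde H_n$, i.e. $\tilde H_{n+1}(x)=x\tilde H_n(x)-n\tilde H_{n-1}(x)$, which are orthogonal with respect to the unit-mass Gaussian $d\mu(x)=(2\pi)^{-1/2}e^{-x^2/2}\,dx$; in particular $A_0C_1=A_1$ as required in Lemma~\ref{hermite}, and $\zeta_n=n!$ by \eqref{eqzetan}. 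They are related to the physicist's Hermite polynomials by $\tilde H_n(x)=2^{-n/2}H_n(x/\sqrt2)$.

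Next I would invoke uniqueness. By Lemma~\ref{hermite} the numbers $K(\n)$ satisfy \eqref{eqmatchequation} and the boundary conditions \eqref{eqsys}; the latter are immediate combinatorially, since $K(0,\dots,0)=1$ counts the empty matching, $K(\n)=0$ whenever $\sum_{j<m}n_j<n_m$ because every ball of the last box must be matched outside it, and $K(0,\dots,1,\dots,0,n)=\delta_{n,1}$. Since by Theorem~\ref{existence} the integral $\int_\R\prod_j\tilde H_{n_j}(x)\,d\mu(x)$ solves the same system, and the Gaussian orthogonality measure forces the same boundary values, Theorem~\ref{Uniqueness} yields
\[
K(\n)=\int_\R\prod_{j=1}^m\tilde H_{n_j}(x)\,\frac{e^{-x^2/2}}{\sqrt{2\pi}}\,dx.
\]

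Finally I would rescale. Substituting $\tilde H_{n_j}(x)=2^{-n_j/2}H_{n_j}(x/\sqrt2)$ pulls out a global factor $2^{-(n_1+\cdots+n_m)/2}$, and the change of variables $x=\sqrt2\,t$ turns $(2\pi)^{-1/2}e^{-x^2/2}\,dx$ into $\pi^{-1/2}e^{-t^2}\,dt$, giving exactly \eqref{eqAzGiVi}. The only genuine subtlety is this normalization bookkeeping: the physicist's Hermite polynomials in \eqref{eqAzGiVi} do not themselves obey \eqref{eqmatchequation} (they would introduce spurious factors of $2$), so one must attach the combinatorial recurrence to the monic Hermite polynomials first and convert afterwards.
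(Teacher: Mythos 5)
Your proposal is correct and follows essentially the same route as the paper: identify the coefficients $A_n=1$, $B_n=0$, $C_n=n$, $\la_j=1$ in the general system, recognize the resulting polynomials as the normalized Hermite polynomials $\tilde H_n(x)=2^{-n/2}H_n(x/\sqrt2)$ orthogonal with respect to $(2\pi)^{-1/2}e^{-x^2/2}\,dx$, apply Theorem~\ref{Uniqueness}, and rescale. The extra care you take with the boundary conditions and the normalization bookkeeping is consistent with, and slightly more explicit than, the paper's own argument.
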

\begin{proof}
 When $\la_j =1$ for all $j$, and
 $$
 A_n=1, \quad B_n=0,\quad  C_n=n \quad \textrm{for all $n$},
$$
by Lemma~\ref{hermite}, the numbers $K({\n})$ satisfy \eqref{eqIplusminusy} and \eqref{eqsys}.
On the other hand,  the corresponding orthogonal polynomials $\H_n(x)$
satisfy the
 recurrence  relation
\begin{align}\label{eq:threenorm}
x\H_n(x)=\H_{n+1}(x)+n\H_{n-1}(x),  \quad\textrm{with}\quad   \H_{-1}(x)=0, \; \H_0(x) =1.
\end{align}
Hence, these 
are the normalized Hermite  polynomials  $\tilde H_{n}(x)=2^{-n/2}H_{n}(x/\sqrt{2})$
and their orthogonality relation is
\begin{align}\label{eq:normal-hermite}
\int_\R \H_m(x)\H_n(x) \frac{e^{-x^2/2}}{\sqrt{2\pi}} dx= n! \; \delta_{mn}.
\end{align}
Therefore
\begin{align*}
K(\n)=\int_\R \frac{e^{-x^2/2}}{\sqrt{2\pi}} \prod_{j=1}^m 2^{-n/2} H_{n_j}(x/\sqrt{2})dx,
\end{align*}
which is equal to \eqref{eqAzGiVi}.
\end{proof}

 Note that the
 \emph{exponential formula} (see \cite[Corollary 5.1.6]{Sta})  implies that
 \begin{align}
 \sum_{n_1, \ldots, n_m\geq 0} K(\n)\frac{x_1^{n_1}}{n_1!}\cdots \frac{x_m^{n_m}}{n_m!}=
 \exp\bigg(\sum_{ i<j}x_ix_j\bigg).
 \end{align}
Hence \eqref{eqAzGiVi}  can also be proved from the generating function of Hermite polynomials.

\subsection{Charlier polynomials and inhomogeneous partitions}

The Charlier polynomials  $C_n^{(a)}(x) $ can be defined by one of the following
five  equivalent conditions:
\begin{enumerate}
\item (Explicit  formula)
$C_n^{(a)}(x)=\sum_{k=0}^n {n\choose k} {x\choose k} k! (-a)^{n-k}$.
\item (Generating function)
$
\sum_{n=0}^\infty C_n^{(a)}(x) \frac{w^n}{n!}=e^{-aw}(1+w)^x.
$
\item (Orthogonality)
$
\int_0^\infty C_n^{(a)}(x)C_m^{(a)}(x)d\psi^{(a)}(x)=a^nn!\delta_{mn},
$
where $\psi^{(a)}$ is the step function of which the jumps at the points $x=0, 1, \ldots$ are
$
\psi^{(a)}(x)=\frac{e^{-a}a^x}{x!}.
$
\item (Recursion relation)
$
C_{n+1}^{(a)}(x)=(x-n-a)C_n^{(a)}(x)-anC_{n-1}^{(a)}(x).
$
\item (Moments) $\mu_n=\sum_{k=1}^n S(n,k)a^k$, where $S(n,k)$ are the Stirling numbers of the second kind.
\end{enumerate}

The number of blocks of a set partition $\pi$ is denoted by $\bl(\pi)$.
Consider the enumerative polynomial of inhomogeneous  partitions
\be
C(\n;a)=\sum_{\pi\in \P(\n)}a^{\bl( \pi)}.
\ee
\begin{lem}\label{lemCharlier} For  $k, j\in [m]$ and $k\neq j$ the polynomials $C(\n;a)$ satisfy
\begin{align}\label{eq:charlier}
C_j^{+}(\n; a)
-C_k^+(\n; a)=(n_k-n_j) C(\n;a)+an_k C_k^{-}(\n;a)-an_jC_j^{-}(\n;a),
\end{align}
and the boundary condition \eqref{eqsys} with $\la_j=1$ for all $j$ and $A_0C_1=A_1$.
\end{lem}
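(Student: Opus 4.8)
The plan is to mirror the proof of Lemma~\ref{hermite}: I would first derive, for each fixed $r\in[m]$, a recurrence expressing the single raised coefficient $C_r^+(\n;a)$ in terms of lowered coefficients $C_i^-(\n;a)$ and of $C(\n;a)$ itself, and then subtract the $r=j$ and $r=k$ instances to produce the symmetric relation \eqref{eq:charlier}. Here I read $C_r^+(\n;a)$ as the $a$-enumerator of inhomogeneous partitions of $[\n]$ with one extra ball $\ast$ adjoined to the box $S_r$, and I classify each such partition $\pi$ by the block $B$ containing $\ast$. Inhomogeneity forces $B\cap S_r=\{\ast\}$ and $|B|\ge 2$, leaving two cases. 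If $|B|=2$, then $B=\{\ast,x\}$ with $x\in S_i$ for some $i\ne r$; deleting $B$ yields an inhomogeneous partition counted by $C_i^-(\n;a)$, there are $n_i$ choices for $x$, and the deleted block supplies a factor $a$. If $|B|\ge 3$, then erasing $\ast$ from $B$ gives an inhomogeneous partition $\pi'$ of $[\n]$ with the same number of blocks, in which $B\setminus\{\ast\}$ is a block avoiding $S_r$, and conversely $\ast$ may be reinserted into any block of $\pi'$ that avoids $S_r$.

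Since the $n_r$ elements of $S_r$ occupy $n_r$ distinct blocks of $\pi'$, exactly $n_r$ blocks of $\pi'$ meet $S_r$, so the number of admissible blocks in the second case is $\bl(\pi')-n_r$. Combining the two cases gives
\[
C_r^+(\n;a)=a\sum_{i\ne r}n_i\,C_i^-(\n;a)+\sum_{\pi'\in\P(\n)}\bigl(\bl(\pi')-n_r\bigr)a^{\bl(\pi')}.
\]
The apparent obstacle is the second sum, which equals $D(\n;a)-n_r\,C(\n;a)$ with $D(\n;a):=\sum_{\pi'\in\P(\n)}\bl(\pi')\,a^{\bl(\pi')}$; this introduces a genuinely new, derivative-like quantity $D(\n;a)$ absent from \eqref{eq:charlier}. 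The decisive observation is that $D(\n;a)$ does not depend on $r$, so when I write the displayed identity for $r=j$ and for $r=k$ and subtract, the two copies of $D(\n;a)$ cancel. The remaining terms collapse to $a\bigl(n_k C_k^-(\n;a)-n_j C_j^-(\n;a)\bigr)+(n_k-n_j)C(\n;a)$, which is exactly \eqref{eq:charlier}. Setting up the $|B|\ge 3$ correspondence carefully and noticing that its troublesome output term is annihilated by the subtraction is the crux of the proof.

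It remains to verify the boundary conditions \eqref{eqsys} directly from the combinatorial definition, using the Charlier data $A_n=1$, $B_n=-(n+a)$, $C_n=an$ and $\la_j=1$. The empty partition gives $C(0,\dots,0;a)=1$; and if $n_m>\sum_{j<m}n_j$ there are too few balls outside $S_m$ to provide a distinct partner to each of the $n_m$ blocks that must meet $S_m$, so $C(\n;a)=0$. For the raised values $C_j^+(0,\dots,0,n;a)$, a single ball cannot sit in a block of size $\ge 2$, forcing the value $0$ when $n=0$, whereas for $n=1$ the two balls, lying in the distinct boxes $S_j$ and $S_m$, must form one block of weight $a$. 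This agrees with $\la_j C_1\,\tfrac{A_0}{A_1}\,\delta_{n,1}+B_0(1-\la_j)\delta_{n,0}=a\,\delta_{n,1}$, completing the verification and, via Theorems~\ref{existence} and~\ref{Uniqueness}, identifying $C(\n;a)$ with the corresponding integral of Charlier polynomials.
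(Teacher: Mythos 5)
Your argument is correct and all the steps check out: the dichotomy on the size of the block containing the adjoined element, the count $\bl(\pi')-n_r$ of admissible blocks in the reinsertion step (valid because inhomogeneity forces the $n_r$ elements of $S_r$ into $n_r$ distinct blocks of $\pi'$), the cancellation of $D(\n;a)$ under subtraction, and the boundary verification. However, your route differs from the paper's primary proof of the lemma, which classifies the block of the new element \emph{relative to the second index} $k$ (a two-element block with an element of $S_k$; a larger block meeting $S_k$, counted by $\sum_{\pi}(n_k-n_{k,j}(\pi))a^{\bl(\pi)}$; a block avoiding $S_k$, giving a remainder $R_{k,j}(\n;a)$) and must then argue the symmetry $R_{k,j}=R_{j,k}$ separately before subtracting. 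Your decomposition is intrinsic to the single raised index $r$, and the only extraneous quantity it produces, $D(\n;a)=\sum_{\pi\in\P(\n)}\bl(\pi)\,a^{\bl(\pi)}$, is manifestly independent of $r$, so the cancellation is immediate --- arguably a cleaner bookkeeping. That said, your identity $C_r^+(\n;a)=a\sum_{i\ne r}n_i\,C_i^-(\n;a)+D(\n;a)-n_rC(\n;a)$ is exactly equivalent to the alternative recurrence \eqref{eqChar} sketched in the Remark following the lemma, since $C(\n^{*};a)=a\sum_{i}n_i\,C_i^-(\n;a)+D(\n;a)$; so while you diverge from the paper's main proof, you have in effect rediscovered and fleshed out its secondary argument.
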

\begin{proof}  Let $N_j=n_1+\cdots +n_j$.
The partitions of $\P_j^{+}(\n)$ can be  divided into three categories:
\begin{itemize}
\item $N_j+1$ and one element of $S_k$ form a block of two elements, the corresponding generating function is
 $a  n_k C_k^{-}(\n;a)$;
\item $N_j+1$ and one element of $S_k$  belong to a block  containing at least one another element,
the corresponding generating function is  $\sum_{\pi\in \P(\n)}(n_k-n_{k,j}(\pi))a^{\bl( \pi)}$, where $n_{k,j}(\pi)$ 
is the number of blocks in $\pi$ containing both elements of $S_j$ and $S_k$
(clearly  $n_{k,j}(\pi)=n_{j,k}(\pi)$);
\item   $N_j+1$ is in a block without any element of $S_k$, let $R_{k,j}(\n;a)$ be the corresponding generating function.
\end{itemize}
Thus we have
\begin{align}\label{eqdiff-char}
C_j^+(\n;a)=\sum_{\pi\in \P(\n)}(n_k-n_{k,j}(\pi))a^{\bl( \pi)}+a n_k C_k^{-}(\n;a)+R_{k,j} (\n;a).
\end{align}
Exchanging $k$ and $j$ in the latter identity and subtracting the resulting identity from  the latter identity,
we obtain  \eqref{eq:charlier}  in view of the symmetry relation $R_{k,j}(\n;a)=R_{j,k}(\n;a)$.
This relation can be easily proved, for instance  by observing that a partition in $\P_j^{+}(\n)$
can be seen as an inhomogeneous partition
of the union $S_1\cup S_2\cup \cdots\cup S_m$ with $S_i=[N_{i-1}+1,N_i]$ for $i\neq j$ and $S_j=[N_{j-1}+1,N_j]\cup \{x\}$
where $x$ is any object which is not in $[n]$.
\end{proof}

\begin{rem} We  can also argue as follows.
Let $\n^{*}=(n_{1}, \ldots, n_{j}, 1, n_{j+1}, \ldots, n_{m})$ with $j\in [m]$.  It is fairly easy to show that
\begin{align}\label{eqChar}
C(\n^{*}; a)=C_j^{+}(\n; a)+an_{j}C_j^{-}(\n; a)+n_{j}C(\n; a).
\end{align}
Subtracting the above identity from \eqref{eqChar} with $j=k$  yields immediately \eqref{eq:charlier}.
We will use this argument for the Laguerre and Meixner polynomials in the next sections.
\end{rem}

We can solve the system \eqref{eq:charlier} by applying the method of separation of variables
which naturally leads to the  Charlier polynomials.
\begin{thm} The polynomials $C(\n;a)$ have the following integral representation
\begin{align}\label{zen}
C(\n;a)=\int_0^\infty C_{n_1}^{(a)}(x)\cdots C_{n_m}^{(a)}(x) d\psi^{(a)}(x).
\end{align}
\end{thm}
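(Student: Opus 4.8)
The plan is to recognize the desired identity as an immediate consequence of the uniqueness machinery developed in Section~2, specifically Theorem~\ref{Uniqueness}, applied to the Charlier parameter choices. The strategy mirrors the Hermite case in Theorem~\ref{thmAzGiVi}: having already established in Lemma~\ref{lemCharlier} that the combinatorial polynomials $C(\n;a)$ satisfy the difference system and boundary conditions, I only need to match these against the three-term recurrence of the Charlier polynomials and then invoke uniqueness.

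First I would read off the recurrence coefficients for the Charlier polynomials from definition~(4) in the subsection, namely $C_{n+1}^{(a)}(x)=(x-n-a)C_n^{(a)}(x)-an\,C_{n-1}^{(a)}(x)$, which matches the general form \eqref{eqgen3trr} with
\[
A_n=1,\qquad B_n=-(n+a),\qquad C_n=an,
\]
together with the specialization $\la_j=1$ for all $j$. With these values, the abstract system \eqref{eqIplusminusy} reduces to exactly the relation \eqref{eq:charlier}: the coefficient $B_{n_j}-B_{n_k}$ becomes $n_k-n_j$ and the terms $-C_{n_j}y_j^-+C_{n_k}y_k^-$ become $-an_j C_j^- + an_k C_k^-$, precisely reproducing the right-hand side of \eqref{eq:charlier}. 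I would verify that the boundary conditions \eqref{eqsys} hold with these coefficients; Lemma~\ref{lemCharlier} already asserts the boundary condition \eqref{eqsys} with $\la_j=1$ and the compatibility $A_0C_1=A_1$, which here reads $1\cdot a = a$ after accounting for normalization, so the hypotheses of Theorem~\ref{Uniqueness} are met.

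Next I would confirm that the normalization condition $\zeta_0=1$ from \eqref{eqorth} is respected. The Charlier orthogonality in definition~(3) gives $\int C_m^{(a)}C_n^{(a)}\,d\psi^{(a)}=a^n n!\,\delta_{mn}$, and since $\psi^{(a)}$ is a genuine probability measure (the Poisson distribution, whose jumps $e^{-a}a^x/x!$ sum to $1$), we indeed have total mass one and $\zeta_0=1$. Thus the integral $\int_0^\infty\prod_{j=1}^m C_{n_j}^{(a)}(x)\,d\psi^{(a)}(x)$ is a solution of \eqref{eqIplusminusy} by Theorem~\ref{existence}, and it satisfies the boundary conditions because $\psi^{(a)}$ is the orthogonality measure of the Charlier family. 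By the uniqueness asserted in Theorem~\ref{Uniqueness}, this integral must coincide with $C(\n;a)$, which is \eqref{zen}.

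I do not expect any genuine obstacle here, since the work is entirely front-loaded into Lemma~\ref{lemCharlier}. The only point requiring mild care is the bookkeeping of the boundary condition: one should check that the first line of \eqref{eqsys}, evaluated at the Charlier coefficients $B_0=-a$, $A_0=A_1=1$, $C_1=a$, produces the correct specialization, namely $y_j^+(0,\dots,0,n)=a\,\delta_{n,1}$ for $\la_j=1$ (the $B_0(1-\la_j)$ term vanishing). This matches the combinatorial fact that a single block of size two containing the new element carries weight $a$, confirming consistency. Everything else is the routine substitution of parameters into the general theorem.
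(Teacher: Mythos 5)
Your proposal is correct and follows essentially the same route as the paper: identify $A_n=1$, $B_n=-(n+a)$, $C_n=an$, $\la_j=1$ from the Charlier recurrence, note that \eqref{eqIplusminusy} then reduces to \eqref{eq:charlier}, and conclude via Lemma~\ref{lemCharlier} and Theorem~\ref{Uniqueness}. The extra verifications you carry out (normalization $\zeta_0=1$ and the explicit boundary-condition check) are consistent with, and slightly more detailed than, the paper's two-line argument.
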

\begin{proof}
Clearly
\eqref{eqIplusminusy} reduces to \eqref{eq:charlier}  when $\la_j  =1$ for all $j$, and
$$
A_n=1,\quad
B_n=-n-a, \quad
C_n=an  \quad\textrm{for all}\quad n\geq0.
$$
From Lemma~\ref{lemCharlier} and Theorem~\ref{Uniqueness} we deduce \eqref{zen}.
\end{proof}
The above formula was first established  by Zeng{~}\cite{Zen90} using the generating function and  the exponential formula.
A different proof was given by Gessel~\cite{Ges} using rook polynomials.

\section{Linearization coefficients of Laguerre polynomials}
The shifted factorials are
\be
(a)_0=1,\qquad (a)_n=a(a+1)\cdots (a+n-1), \quad n>0.
\ee
The Laguerre polynomials are defined by
\begin{align}
L_n^{(\alpha)}(x)=\frac{(\al+1)_n}{n!} \sum_{k=0}^n \frac{(-n)_k}{k! (\al+1)_k}  x^k,
\end{align}
and have the generating function
\begin{align}\label{eqGFLag}
\sum_{n=0}^\infty  L_n^{(\alpha)}(x){t^n}=(1-t)^{-\alpha-1} \exp\Bigl(\frac{-xt}{1-t}\Bigr).
\end{align}
They satisfy the recurrence relation
\begin{align}
\label{eqLaguerre3trr}
(n+1)L_{n+1}^{(\alpha)}(x)-(2n+\alpha+1-x)L_n^{(\alpha)}(x)+(n+\alpha)L_{n-1}^{(\alpha)}(x)=0,
\end{align}
and the orthogonality
\begin{align}
\int_{0}^\infty \frac{x^\alpha e^{-x}}{\Gamma(\alpha+1)}L_m^{(\alpha)}(x) L_n^{(\alpha)}(x)\,dx=\frac{(\alpha+1)_n}{n!}\,\delta_{m,n}.
\end{align}
The moments are
\be
\mu_n=\frac{1}{\Gamma(\alpha+1)} \int_0^\infty x^{\alpha+n} e^{-x} dx=(\alpha+1)_n.
\ee

In this section we shall  prove the results of Foata and Zeilberger~\cite{Foa:Zei} about the Laguerre polynomials through our  method of separation of variables.
For $\pi\in \s(\n)$, we let $\Fix_{i}\pi=\pi(S_{i})\cap S_i$ for  $i\in [m]$.
For an $m$-tuple $\La=(\la_1,\ldots,\la_m)$, define
\begin{align}\label{eq:fzgeneral}
L(\n; \alpha, \La)=\sum_{\pi\in \s(\n)}(\alpha+1)^{\cyc(\pi)}\prod_{i=1}^m (\lambda_i-1)^{|\Fix_i\pi|} \lambda_i^{|S_i\setminus \Fix_i\pi]},
\end{align}
where $\cyc(\pi)$ is the number of cycles of $\pi$. By definition,
for an inhomogeneous permutation $\pi\in \D(\n)$ we have
$|\Fix_i\pi|=0$. Hence, when $\La={\bf 1}:=(1, \ldots, 1)$  the  summands
in \eqref{eq:fzgeneral}  reduce to $(\alpha+1)^{\cyc(\pi)}$ if
$\pi\in D(\n)$ and 0 otherwise. Thus, we have
\begin{align}
L(\n;\alpha, {\bf 1})=\sum_{\pi\in {\mathcal D}(\n)}(\alpha+1)^{\cyc(\pi)}.
\end{align}

\begin{lem}\label{lemlaguerrelam} For $j,k\in [m]$ such that $j\neq k$ the polynomials $L(\n; \alpha, \La)$ satisfy
\begin{multline}\label{eq:laguerrelam}
\la_kL_j^{+}(\n; \alpha, \La)-\la _j   L_k^+(\n; \alpha, \La) \\
= \left[(2n_k+\al +1) \la_j    - (2n_j+\al +1) \la _k \right] L({\n}; \alpha, \La) \\
+ n_k(n_k+\al){\la_j} L_k^-(\n; \alpha, \La)- n_j(n_j+\al){\la_k}L_j^-(\n; \alpha, \La).
\end{multline}
\end{lem}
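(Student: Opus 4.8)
My plan is to recognize that \eqref{eq:laguerrelam} is nothing but the specialization of the general system \eqref{eqIplusminus} to the \emph{monic} Laguerre polynomials. Dividing \eqref{eq:laguerrelam} through by $\la_k$ and comparing with \eqref{eqIplusminus}, with $u_{jk}(\n)=\la_j/\la_k$ (so $A_n\equiv 1$), one reads off $A_n=1$, $B_n=-(2n+\al+1)$, $C_n=n(n+\al)$; these are exactly the recurrence coefficients of $\hat L_n:=(-1)^n n!\,L_n^{(\al)}$, for which $\hat L_1(x)=x-(\al+1)$. Hence it is enough to prove the one-index identity \eqref{eq1} for the enumerator $L(\n;\al,\La)$, since writing \eqref{eq1} for $j$ and for $k$ and subtracting yields \eqref{eq:laguerrelam}, as noted just after \eqref{eq1}. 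Inserting the above coefficients and clearing the denominator, \eqref{eq1} takes the form
\[
L_j^+(\n)=\la_j\,L(1,\n)+(\al+1)\la_j\,L(\n)-(2n_j+\al+1)\,L(\n)-n_j(n_j+\al)\,L_j^-(\n),
\]
where $L(1,\n)$ is the same enumerator for the multiset enlarged by one extra singleton box carrying the parameter $\la=1$.

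Write $w(\sigma)=(\al+1)^{\cyc(\sigma)}\prod_i(\la_i-1)^{|\Fix_i\sigma|}\la_i^{|S_i\setminus\Fix_i\sigma|}$ for the weight in \eqref{eq:fzgeneral}, and set $F_i:=\sum_{\sigma\in\s(\n)}|\Fix_i\sigma|\,w(\sigma)$ and $n=n_1+\cdots+n_m$. The core of the argument is to expand the two enumerators $L_j^+(\n)$ and $L(1,\n)$ by adjoining a distinguished element. For $L_j^+$ I would put a new element $z$ into box $S_j$ and split the counted permutation $\sigma$ according to whether $z$ is a fixed point --- contributing $(\al+1)(\la_j-1)L(\n)$ --- or is spliced into a cycle of some $\sigma'\in\s(\n)$ between an element $a$ and $b=\sigma'(a)$. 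The delicate point is that this splice changes $|\Fix_j\sigma|$ by $\mathbf 1\{a\in S_j\}+\mathbf 1\{b\in S_j\}-\mathbf 1\{a,b\in S_j\}$ and, when $a$ and $b$ share a box $S_i$ with $i\neq j$, destroys an internal arc of $S_i$, multiplying the weight by $\la_i/(\la_i-1)$. Summing the resulting factor over all $n$ insertion points (using $|\{a\in S_i:\sigma'(a)\in S_i\}|=|\Fix_i\sigma'|$) collapses the sum to
\[
L_j^+(\n)=(\al+1)(\la_j-1)L(\n)+\la_j n\,L(\n)-2n_jL(\n)+F_j+\la_j\sum_{i\neq j}\tfrac{1}{\la_i-1}F_i,
\]
and the analogous, simpler computation for the singleton box gives $L(1,\n)=n\,L(\n)+\sum_{i}\tfrac1{\la_i-1}F_i$.

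Substituting these two expansions into the displayed form of \eqref{eq1} is where everything lines up: the terms proportional to $L(\n)$ agree on both sides, and the cross-box sums $\sum_{i\neq j}\tfrac1{\la_i-1}F_i$ cancel \emph{without being evaluated}. The whole identity therefore collapses to the single statement
\[
F_j=\sum_{\sigma\in\s(\n)}|\Fix_j\sigma|\,w(\sigma)=(\la_j-1)\,n_j(n_j+\al)\,L_j^-(\n),
\]
which I have verified directly in the one-box case $m=1$, where both sides equal $n_1(n_1+\al)(\la_1-1)^{n_1}(\al+1)_{n_1-1}$.

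The main obstacle is this last identity, which is precisely the combinatorial content of the quadratic coefficient $C_{n_j}=n_j(n_j+\al)$. I would read $F_j$ as a sum over pairs $(\sigma,x)$ with $x\in S_j$ and $\sigma(x)\in S_j$ (a marked internal arc of box $j$), and delete the tail $x$, splicing its predecessor to $\sigma(x)$; this lowers $|\Fix_j|$ by exactly one, extracts the factor $\la_j-1$, and lands in a permutation $\tau$ counted by $L_j^-(\n)$. Reconstructing $(\sigma,x)$ from a given such $\tau$ then contributes a factor $n_j$ from the choice of rank of the re-adjoined element of the size-$n_j$ box, together with $(\al+1)$ if the marked arc was a fixed point and $n_j-1$ if it was a genuine arc $x\to\sigma(x)$ with $\sigma(x)\neq x$; these combine to $(\al+1)+(n_j-1)=n_j+\al$, yielding $F_j=(\la_j-1)n_j(n_j+\al)L_j^-(\n)$. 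Making the relabeling and fiber count rigorous --- in particular justifying the factor $n_j$ and checking that deletion leaves all boxes $i\neq j$ untouched --- is the only genuinely delicate step; the rest is the bookkeeping sketched above.
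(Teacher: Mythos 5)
Your proof is correct, and its skeleton --- reduce to the one‑index identity
$L_j^{+}(\n)=\la_j L(1,\n)+(\al+1)(\la_j-1)L(\n)-2n_jL(\n)-n_j(n_j+\al)L_j^{-}(\n)$
obtained by adjoining a singleton box with parameter $1$, then subtract the $j$ and $k$ instances --- is exactly the paper's: your displayed identity is a rearrangement of \eqref{eqLagPlusMinus4}, with your $L(1,\n)$ equal to the paper's $L(\n^*;\al,\La)$. Where you genuinely diverge is in how that one‑index identity is established. The paper compares $L_j^{+}(\n)$ (the new element $1^*$ adjoined to $S_j$) directly with $\la_jL(\n^*;\al,\La)$ ($1^*$ in its own box) and absorbs the discrepancy through a four‑way case analysis on whether $a=\sigma(1^*)$ and $b=\sigma^{-1}(1^*)$ lie in $S_j$, which produces the corrections $2n_jL$ and $n_j(n_j+\al)L_j^{-}$ in one pass. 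You instead expand \emph{both} $L_j^{+}(\n)$ and $L(1,\n)$ over $\s(\n)$ by arc‑splicing in terms of $L(\n)$ and the marked‑internal‑arc sums $F_i=\sum_{\sigma}|\Fix_i\sigma|\,w(\sigma)$; I checked both expansions and the cancellation, and the lemma does collapse to $F_j=(\la_j-1)\,n_j(n_j+\al)\,L_j^{-}(\n)$, whose delete‑the‑tail proof (fiber weight $n_j[(\al+1)+(n_j-1)](\la_j-1)$) is sound --- the point that deletion leaves the boxes $i\neq j$ untouched holds because the weight factors sit on the heads of arcs and only the heads $x$ and $\sigma(x)$, both in $S_j$, are disturbed. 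What your route buys is a clean isolation of the combinatorial content of the coefficient $C_n=n(n+\al)$ in that single $F_j$ identity, with the cross‑box terms $\sum_{i\neq j}F_i/(\la_i-1)$ cancelling unevaluated; what it costs is the auxiliary statistics $F_i$ and apparent denominators $\la_i-1$ (removable, since each $F_i$ is divisible by $\la_i-1$, but worth a sentence when some $\la_i=1$). The paper's version is shorter; yours makes more visible why the quadratic factor $n_j(n_j+\al)$ must appear.
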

\begin{proof} Let $n_0=\la_0=1$ and $\n^*=(n_0, n_1, \ldots, n_m)$ with $S_0=\{1^*\}$. Then
$\la_j L(\n^*; \alpha, \La)$ is the  generating function of $\sigma\in \s(\n^*)$ such that $\sigma(1^*)\neq 1^*$
and the edge $1^*\to \sigma(1^*)$ is weighted by $\la_j$.   We show that
\begin{multline}\label{eqLagPlusMinus4}
\la_j L(\n^*; \alpha, \La)
=L_j^{+}({\n}; \alpha, \La) -(\al +1)(\la_j-1)L({\n}; \alpha, \La) \\
+ 2n_jL({\n};\alpha, \La)+n_j(n_j+\al) L_j^-({\n};\alpha, \La).
\end{multline}
To do so,  we adjoin the element  $1^*$ to $S_j$.  Thus $(\al +1)(\la_j-1)L({\n};\alpha, \La)$ is the generating function of $\sigma\in \s_j^+(\n)$ such that
$\sigma(1^*)=1^*$.  Hence,  the difference
$$L_j^{+}({\n}; \alpha, \La) -(\al +1)(\la_j-1)L({\n};\alpha, \La)$$
 is the generating function of $\sigma\in \s_j^+(\n)$
such that $\sigma(1^*)\neq 1^*$, moreover
the edge $1^*\to \sigma(1^*)$ is weighted by $\la_j-1$ if $\sigma(1^*)\in S_j$ and $\la_j$
otherwise. To compensate the over counting,
we should add
\begin{itemize}
\item  the generating function of $\sigma\in \s_j^+(\n)$
such that $\sigma(1^*)\in S_j$ and
 the edge $1^*\to \sigma(1^*)$ is weighted by $1$;
\item  the generating function of $\sigma\in \s_j^+(\n)$
such that $\sigma^{-1}(1^*)\in S_j$ and
 the edge $\sigma^{-1}(1^*)\to 1^*$ is weighted by $1$.
\end{itemize}
For any $\sigma\in \s_j^+(\n)$, we let $a=\sigma(1^*)$ and $b=\sigma^{-1}(1^*)$.
There are four cases to consider.
\begin{enumerate}
\item $a\in S_j$ and $b\notin S_j$.  We can construct such a permutation $\sigma$  as follows:
 starting from a permutation $\tau\in \s(\n)$ and choosing a point $\xi\in S_j$, we define
 $\sigma(x)=\tau(x)$ if $x\neq 1^*, \tau^{-1}(\xi)$,  and $\sigma(1^*)=\xi$, $\sigma(\tau^{-1}(\xi))=1^*$
 As the weight of the edge $1^*\to \xi$ is 1 and that of $\tau^{-1}(\xi)\to 1^*$ in $\sigma$ is equal to that of $\tau^{-1}(\xi)\to \xi$ in
$\tau$, the weight of $\sigma$ is equal to that of $\tau$, hence
the generating function is $n_jL(\n; \alpha, \La)$.
\item $a\notin S_j$ and $b\in S_j$.  Similar to the above case,  the generating function is $n_jL(\n; \alpha, \La)$.
\item $a\in S_j$ and $b\in S_j$, but $a\neq b$.  Starting from $\sigma\in \s(\n^*)$ we can
 define the permutation  $\tau$ on $[n]\setminus \{a\}$ by
$\tau(j)=\sigma(j)$ for $j\neq a, b$ and $\tau(b)=\sigma(a)$.  Clearly $\cyc(\sigma)=\cyc(\tau)$. Inversely,  starting from      a permutation
$\tau\in \s_j^{-}(\n)$  there are $n_j(n_j-1)$ choices for
$a$ and $b$. Thus,
the corresponding generating function is $n_j(n_j-1)L_j^{-}(\n; \alpha, \La)$.
\item $a=b\in S_j$.
The generating function is $(\alpha+1)n_jL_j^{-}(\n; \alpha, \La)$.
\end{enumerate}
Summing up the above four cases we  obtain  \eqref{eqLagPlusMinus4}. Now,
substituting  $j$ by $k$  in \eqref{eqLagPlusMinus4} yields
\begin{multline}
\label{eqLagPlusMinus5}
\la_kL(\n^*; \alpha, \La)=L_k^{+}({\n};\alpha, \La) -(\al +1)(\la_k-1)L({\n};\alpha, \La) \\
+ 2n_kL({\n};\alpha, \La)+n_k(n_k+\al) L_k^-({\n};\alpha, \La).
\end{multline}
Multiplying  \eqref{eqLagPlusMinus4}  and   \eqref{eqLagPlusMinus5} by $\la_k$ and $\la_j$, respectively, and then subtracting,
we obtain  the  identity \eqref{eq:laguerrelam}.
\end{proof}

We need to state some preliminary results before proving the main result of this section.
Let $A$ and $B$ be two disjoint sets of cardinality $m$ and $n$, respectively. An injection $f$
from $A$ to $A\cup B$ can be depicted by a graph on $A\cup B$ such that there is an edge $x\to y$
if and only if $f(x)=y$. Hence the  connected components of the graph consists of
cycles, i.e., $x\to f(x)\to \cdots \to f^l(x)$ with $f^i(x)\in A$ and $f^l(x)=x$ and paths, i.e.,
$x\to f(x)\to \cdots \to f^l(x)$ with $f^l(x)\in B$.
Let $\cyc(f)$ be the number of cycles of $f$. Then, Foata and Strehl \cite{Foa:Str} proved
\be
\sum_{f: A\to A\cup B \;\textrm{injection}}\beta^{\cyc(f)}=(\beta+n)_m.
\ee

\begin{thm} \label{Thm6.2}The polynomials $L(\n;\alpha, \La)$ have the following integral representation
\begin{align}\label{foa-zei-lam}
L(\n; \alpha, \La)=
(-1)^{n_1+\cdots +n_m}\frac{n_1!\cdots n_m!}{\Gamma(\alpha+1)}  \int_0^\infty x^\alpha e^{-x} \prod_{j=1}^m L_{n_j}^{(\alpha)}(\la_j x)dx.
\end{align}
Moreover, this formula is equivalent to the special $\La={\bf 1}$ case.
\end{thm}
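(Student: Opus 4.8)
The plan is to read \eqref{foa-zei-lam} off the uniqueness principle of Theorem~\ref{Uniqueness}, so that Lemma~\ref{lemlaguerrelam} carries essentially all the analytic content. First I would take the parameters $A_n=1$, $B_n=-(2n+\al+1)$, $C_n=n(n+\al)$ and let $\{p_n\}$ be generated by \eqref{eqgen3trr}; a one-line induction from the Laguerre recurrence \eqref{eqLaguerre3trr} then gives $p_n(x)=(-1)^n n!\,L_n^{(\al)}(x)$, so that $p_0=1$ and $p_1(x)=x-(\al+1)$. The attached orthogonality measure is $d\mu(x)=x^\al e^{-x}\,dx/\Gamma(\al+1)$, which has total mass $1$, and \eqref{eqzetan} gives $\zeta_n=n!\,(\al+1)_n$, consistent with $\zeta_0=1$. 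Because $A_{n_j}=1$ we have $v_j(\n)=\la_j$ and $u_{jk}(\n)=\la_j/\la_k$, and multiplying \eqref{eqIplusminusy} through by $\la_k$ turns it, for these parameters, into precisely \eqref{eq:laguerrelam}. Hence Lemma~\ref{lemlaguerrelam} already shows that $L(\n;\al,\La)$ solves \eqref{eqIplusminusy}.

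It remains to verify the boundary conditions \eqref{eqsys}. The value $L(\mathbf 0;\al,\La)=1$ is the empty permutation. For the vanishing condition I would use the standing assumption $\la_m=1$: in the weight of any $\sigma\in\s(\n)$ the factor $(\la_m-1)^{|\Fix_m\sigma|}$ is zero unless $\Fix_m\sigma=\emptyset$, i.e. unless $\sigma(S_m)\subseteq S_1\cup\cdots\cup S_{m-1}$, which by injectivity forces $n_m\le n_1+\cdots+n_{m-1}$; thus $L(\n;\al,\La)=0$ whenever $n_m>n_1+\cdots+n_{m-1}$. The first line of \eqref{eqsys} is a two-case check: for $n_j=1$, $n_m=0$ (all other entries $0$) the unique permutation is a fixed point in $S_j$ of weight $(\al+1)(\la_j-1)$, matching $B_0(1-\la_j)$; for $n_j=1$, $n_m=1$ the identity has weight $0$ (again since $\la_m=1$) and the transposition has weight $(\al+1)\la_j$, matching $\la_j C_1 A_0/A_1$ with $C_1=\al+1$ and $A_0=A_1=1$. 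Theorem~\ref{Uniqueness} then identifies $L(\n;\al,\La)$ with $\int_0^\infty\prod_{j=1}^m p_{n_j}(\la_j x)\,d\mu(x)$, and substituting $p_{n_j}=(-1)^{n_j}n_j!\,L_{n_j}^{(\al)}$ produces \eqref{foa-zei-lam}.

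For the final assertion, one direction is immediate: setting $\La=\mathbf 1$ in \eqref{foa-zei-lam} gives the derangement formula. For the converse, deriving the general $\La$ from $\La=\mathbf 1$, I would move between the two sides via the connection coefficients of the dilated Laguerre polynomials: expanding each $L_{n_j}^{(\al)}(\la_j x)$ in the basis $\{L_k^{(\al)}(x)\}$ rewrites the general integral as a $\La$-weighted combination of the $\La=\mathbf 1$ integrals. On the combinatorial side I would decompose $L(\n;\al,\La)$ by recording, in each box $S_i$, the set of elements that $\sigma$ sends back into $S_i$; these elements then form a partial injection into the whole ground set, and summing $(\al+1)^{\cyc}$ over such within-box configurations is exactly the content of the Foata--Strehl identity $\sum_f\beta^{\cyc(f)}=(\beta+n)_m$. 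Matching the two expansions term by term reduces the general identity to its $\La=\mathbf 1$ case.

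The routine part is the uniqueness argument, which is bookkeeping once Lemma~\ref{lemlaguerrelam} is available; the only genuinely combinatorial input there is the use of $\la_m=1$ in the vanishing boundary condition. I expect the real obstacle to lie in the equivalence with the $\La=\mathbf 1$ case, since cycles of $\sigma$ weave across several boxes and the within-box injections are not independent: one must check that the Foata--Strehl evaluation reproduces \emph{precisely} the Laguerre connection coefficients, so that the combinatorial and analytic expansions agree coefficient by coefficient rather than merely in aggregate.
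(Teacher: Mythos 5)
Your proposal is correct and follows essentially the same route as the paper: identify $A_n=1$, $B_n=-(2n+\al+1)$, $C_n=n(n+\al)$ so that Lemma~\ref{lemlaguerrelam} plus Theorem~\ref{Uniqueness} gives \eqref{foa-zei-lam} for $\la_m=1$, and then pass to general $\La$ by fixing, box by box, the set $\Fix_i\pi$ and matching the Foata--Strehl evaluation $(\al+1+k)_{n-k}$ against the multiplication formula \eqref{multilaguerre}. Your explicit verification of the boundary conditions \eqref{eqsys} (which the paper leaves implicit) and your flagging of the coefficient-by-coefficient matching as the delicate point are both accurate.
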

\begin{rem} Let us first explain what we mean by the equivalence of  \eqref{foa-zei-lam} and its special $\La=\bf 1$ case.
We first prove that the definition~\eqref{eq:fzgeneral} implies that $L(\n;\alpha, \La)$ is given by the integral   \eqref{foa-zei-lam}.
By taking $\La={\bf 1}$ in  \eqref{foa-zei-lam}, the formula  reduces to the special $\La=\bf 1$ case.  The point is that we shall prove that knowing the equality
in  \eqref{foa-zei-lam} for  $\La=\bf 1$ proves that the two sides of  \eqref{foa-zei-lam} are equal via the use of the well-known formula
\cite[Theorem 4.6.5]{Ism2}:
\be\label{multilaguerre}
L_n^{(\alpha)}(cx)=(\alpha+1)_n \sum_{k=0}^n \frac{c^k(1-c)^{n-k}}{(n-k)! (\alpha+1)_k} L_k^{(\alpha)}(x).
\ee
The formula~\eqref{foa-zei-lam}  was first proved  by Even and Gillis~\cite{Eve:Gil}  for
 $\alpha=0$ and $\La=\bf 1$.   Foata and Zeilberger~\cite{Foa:Zei}  proved the
general case of \eqref{foa-zei-lam} by  introducing the cycles.
\end{rem}

\begin{proof}
Clearly  \eqref{eqIplusminusy} reduces to \eqref{eq:laguerrelam}
 when $\la_m =1$, and
 $$
 A_n=1, \quad B_n=-(2n+\alpha+1),\quad  C_n=n(n+\alpha)
$$
for all $n$.  That is, the orthogonal polynomials are the normalized Laguerre polynomials
$p_n(x)=(-1)^nn!L_n^{(\alpha)}(x)$, which satisfy the three-term recurrence relation
\be
xp_n(x)=p_{n+1}(x)+(2n+\alpha+1)p_n(x)+n(n+\alpha)p_{n-1}(x),
\ee
and the orthogonal  relation
$$
\int_0^\infty \frac{x^\alpha e^{-x}}{\Gamma(\al+1)}\;  p_n(x)p_m(x)dx
={n!} {(\alpha+1)_n}\delta_{m,n}.
$$
From Lemma~\ref{lemlaguerrelam} and Theorem~\ref{Uniqueness} we deduce \eqref{foa-zei-lam} when $\la_m=1$.
To recover the general $\la_m\neq 1$ case, we can proceed as follows:  let $E$ be a subset of $S_m$ with cardinality $n_m-k$,
we consider the permutations $\pi$ of $\s(\n)$ such that   $\Fix_m(\pi)=E$.
Any such a permutation corresponds to a pair $(\sigma, \tau)$ such that $\sigma$ is the restriction of $\pi$ on $E$, which is an injection from
$E$ to $S_m$, and
$\tau$ is a permutation on $S_1\cup\cdots \cup S_{m-1}\cup (S_m\setminus E)$ defined by
$\tau(x)=\pi(x)$ if $\pi(x)\notin E$ and $\tau(x)=\pi^l(x)$ where $l$ is the minimum integer such that $\pi^l(x)\notin E$.
Clearly, the correspondence $\pi\mapsto (\sigma, \tau)$ is a bijection and
 the
generating function of such permutations is
$$
(\alpha+1+k)_{n_m-k} (\la_m-1)^{n_m-k} \la_m^{k}\,L(\n_m^*;\alpha, \La^*),
$$
where $\n_m^*=(n_1, \ldots, n_{m-1}, k)$ and $\La^*=(\lambda_1, \ldots, \lambda_{m-1}, 1)$.
Applying the result for $\lambda_m=1$ case we obtain
\begin{align*}
L(\n;\alpha, \La) &=\sum_{k=0}^{n_m} {n_m\choose k} (\alpha+1+k)_{n_m-k} (\la_m-1)^{n_m-k} \la_m^{k} \;L(\n_m^*;\alpha, \La^*) \\
&=\prod_{j=1}^{m}
(-1)^{n_j} n_j! \int_\R  \frac{x^\alpha e^{-x}}{\Gamma(\al+1)}\prod_{j=1}^{m-1}\, L_{n_j}^{(\al)}
(\la_j x) \\
&\quad\times \sum_{k=0}^{n_m} \frac{(\alpha+1+k)_{n_m-k} (1-\la_m)^{n_m-k} \la_m^{k}}{k!} L_{k}^{(\alpha)}(x) \;dx.
\end{align*}
Now, invoking the known formula~\eqref{multilaguerre},
 we deduce \eqref{foa-zei-lam}.

It remains to show the special  ${\mathbf \La}=\mathbf 1$  case of \eqref{foa-zei-lam} implies   \eqref{foa-zei-lam} for general ${\mathbf \La}$.
As in the above argument,
instead of operating within the last \emph{box},  applying  the same operation to all the boxes and  using   \eqref{foa-zei-lam}  for $\La=\bf 1$
we obtain
\begin{multline*}
L(\n;\alpha, \La) =\sum_{k_1, \ldots, k_m\geq 0}\prod_{j=1}^m{n_j\choose k_j} (\alpha+1+k_j)_{n_j-k_j} (\la_j-1)^{n_j-k_k} \la_j^{k_j} L({\mathbf  k};\alpha, {\bf 1}) \\
= \int_\R  \frac{x^\alpha e^{-x}}{\Gamma(\al+1)}\Biggl(\prod_{j=1}^{m}(-1)^{n_j} n_j! \sum_{k=0}^{n_j} \frac{(\alpha+1+k)_{n_j-k} (1-\la_j)^{n_j-k} 
\la_j^{k}}{(n_j-k)!} L_{k}^{(\alpha)}(x)\Biggr) \;dx.
\end{multline*}
Thus,  the general formula \eqref{foa-zei-lam} follows by applying the multiplication formula \eqref{multilaguerre}.
\end{proof}

\begin{rem} The analogue of \eqref{foa-zei-lam} for Hermite polynomials \cite[Proposition 5.1]{Kim:Zen2} reads
\begin{align}\label{kim-zen-hermite}
2^{-(n_1+\cdots +n_m)/2}
\int_\R \frac{e^{-x^2}}{\sqrt{\pi}} \prod_{j=1}^m
H_{n_j}(\la_j x) dx
=\sum_{\pi\in {\mathcal M}(\n)}
\prod_{i=1}^m (\la_i^2-1)^{\hom_i(\pi) } \la_i^{|S_i|-2\hom_i (\pi)} ,
\end{align}
where $\hom_i(\pi)$ denotes the number of homogeneous edges in $S_i$ for $1\leq i\leq m$.
When $\la_i=1$, the  right-hand side of  \eqref{kim-zen-hermite}  reduces obviously  to the number  of inhomogeneous matchings of $[\n]$, so
the formula \eqref{kim-zen-hermite}  becomes \eqref{eqAzGiVi}.
As
the analogue of \eqref{multilaguerre}  for Hermite polynomials \cite[(4.6.33)]{Ism2}  is
\be\label{multihermite}
H_n(cx)=\sum_{k=0}^{\lfloor n/2\rfloor} \frac{n!(-1)^k}{k!(n-2k)!} (1-c^2)^k c^{n-2k} H_{n-2k}(x),
\ee
a similar proof of \eqref{kim-zen-hermite} from  \eqref{eqAzGiVi} using \eqref{multihermite}  can be given. We leave this to the interested reader.
\end{rem}

\section{Linearization coefficients of Meixner polynomials}

The Meixner polynomials are \cite{Ism2, Koe:Swa} \be M_n(x;\bet, c)
=(\bet)_n {}_2F_1(-n, -x; \bet; 1-1/c), \ee 
and satisfy the
orthogonality relation
 \be\label{OrthMeixner} \sum_{x=0}^\infty M_m(x;\bet,
c)M_n(x;\bet, c) \frac{(\bet )_x}{x!} \, c^x = \frac{(\bet)_n
\,n!}{c^n \; (1-c)^\bet} \; \delta_{m,n}, \quad \bet >0, \quad 0 < c < 1.
\ee The Meixner polynomials generalize the Laguerre polynomials in
the sense
$$
\lim_{c \to 1} M_n(x/(1-c); \al+1, c) = n! L_n^\al(x).
$$
They have the generating function
\be
\Sum  M_n(x; \bet, c) \frac{t^n}{n!} = (1-t/c)^x (1-t)^{-x-\bet}.
\label{eqgfM}
\ee
The notation here is slightly different from \cite[Chapter 6]{Ism2}.  The three-term recurrence relation is
\begin{multline}\label{eqMeixner3trr}
-xM_n(x;\beta,c)=c(1-c)^{-1}M_{n+1}(x;\beta,c) \\
-[c(\beta+n)+n] (1-c)^{-1}M_n(x; \beta,c)+(1-c)^{-1}(\beta+n-1)nM_{n-1}(x;\beta,c).
\end{multline}
The moments are, see  \cite{Ksa:Zen,Vie,Zen},
\be
\mu_n(\beta,c)=(1-c)^\beta\sum_{k\geq 0} k^n c^k \frac{(\beta)_k}{k!}
=\frac{\sum_{\pi\in \s_n} c^{\wex(\pi)} \beta^{\cyc (\pi)}}{(1-c)^n},
\ee
where  $\wex(\pi)$ is  the number of \emph{weak excedances} of $\pi$, i.e.,
\begin{align}
{\wex}(\pi)=|\{i|\;1\leq i\leq n\;\textrm{and}\; i\leq \pi(i)\}|.
\end{align}

Let $\pi$ be a permutation of $[\n]$. We say that $\pi$ has an excedance (resp.   \emph{box-excedance})
 at $i\in [n]$ if $i<\pi(i)$ (resp.
$i\in S_k$, $\pi(i)\in S_j$ and $j>k$).  Denote by $\exc (\pi)$ (resp. $\exc_b (\pi)$)  the number of excedances (resp. box-excedances) of $\pi$.
Clearly, if $\pi$ is an inhomogeneous derangement, then $\exc(\pi)=\exc_b(\pi)$.
Consider the generating function of the derangements with respect to the numbers of cycles and  (box-)excedances:
\begin{align}
M(\n;\beta,c)=\sum_{\pi\in {\mathcal D}(\n)}\beta^{\cyc(\pi)} c^{\exc (\pi)}.
\end{align}
\begin{lem}\label{lemmeixner} For any $k,j\in [m]$ such that $k\neq j$ we have
\begin{multline}\label{eq:meixner}
M_j^{+}(\n;\beta, c)-M_k^{+}(\n;\beta, c)=(c+1)(n_k-n_j)M(\n;\beta, c)\\
+cn_k(n_k+\beta-1)M_k^{-}(\n;\beta,c)
-cn_j(n_j+\beta-1)M_j^{-}(\n;\beta,c),
\end{multline}
and the boundary condition \eqref{eqsys} with $\la_j=1$ for all $j$ and $A_0C_1=A_1$.
\end{lem}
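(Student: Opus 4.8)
The plan is to transpose the argument of Lemma~\ref{lemlaguerrelam} (and of the Remark following Lemma~\ref{lemCharlier}) to the excedance-weighted setting, i.e. to adjoin a single auxiliary element and reduce \eqref{eq:meixner} to an identity of the shape \eqref{eq1}. The new feature, absent in the Laguerre/Charlier cases, is the weight $c^{\exc}$, so the placement of the auxiliary element matters. Concretely, I would introduce a singleton box $S_0=\{0^*\}$ and place $0^*$ \emph{immediately before} the block $S_j$ in the linear order; write $\n^*$ for the resulting composition. Since $\exc=\exc_b$ on inhomogeneous derangements, this placement guarantees that the box-excedance status of an edge incident to $0^*$ agrees with that of an edge incident to an element of $S_j$ whenever the other endpoint lies outside $S_j$ — this is exactly the device that keeps the $c$-weight under control.

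First I would establish the analogue of \eqref{eq1}, namely
\[
M(\n^*;\beta,c)=M_j^{+}(\n;\beta,c)+(c+1)n_j\,M(\n;\beta,c)+c\,n_j(n_j+\beta-1)\,M_j^{-}(\n;\beta,c).
\]
Classify $\sigma\in\D(\n^*)$ by the cycle-neighbours $a=\sigma(0^*)$ and $b=\sigma^{-1}(0^*)$. If neither meets $S_j$, then reading $0^*$ as an element of $S_j$ is a weight-preserving bijection onto $\D_j^{+}(\n)$ (the excedances match by the placement), producing the term $M_j^{+}$. The remaining configurations split into four cases mirroring the four cases of Lemma~\ref{lemlaguerrelam}: (i) $0^*$ in a $2$-cycle with an element of $S_j$; (ii) $a\in S_j$, $b\notin S_j$; (iii) $b\in S_j$, $a\notin S_j$; (iv) $a,b\in S_j$ distinct. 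Tracking the cycle factor $\beta$ and the single $c$ produced by the two edges at $0^*$, these contribute $\beta c\,n_jM_j^{-}$, $c\,n_jM$, $n_jM$ and $c\,n_j(n_j-1)M_j^{-}$ respectively (case (iii) producing no extra $c$, which is the point of the placement), and summing gives $(c+1)n_jM+c\,n_j(n_j+\beta-1)M_j^{-}$.

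Writing this identity for $j$ and for $k$ and subtracting yields \eqref{eq:meixner} \emph{provided} the left-hand side $M(\n^*;\beta,c)$ is independent of where $S_0$ is inserted. This position-independence is the heart of the matter and is where the excedance statistic bites. I would prove it by showing that sliding $0^*$ past a single adjacent box $G$ leaves $\sum_{\sigma}\beta^{\cyc}c^{\exc}$ unchanged: only edges between $0^*$ and $G$ change box-excedance status, and the configurations having no such edge, or having both an in- and an out-edge to $G$, are individually invariant (the two flips cancel). For the two remaining types I would use the involution transposing $0^*$ with its unique neighbour in $G$ inside its cycle; one checks this preserves $\cyc$, carries the "out-neighbour in $G$" configurations bijectively to the "in-neighbour in $G$" ones, and lowers $\exc$ by exactly $1$ — precisely the shift making the two families cancel after the slide.

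The main obstacle is this last bijection: producing the correct unit shift in $\exc$ forces both the ``just before $S_j$'' placement and the transposition involution, and it is the step with no counterpart in the Laguerre case (where no excedance weight is present and a fixed position of the auxiliary element suffices). Once position-independence is available the subtraction is immediate. The boundary conditions \eqref{eqsys} (with $\la_j=1$) are then routine: $M(\mathbf 0)=1$ counts the empty derangement, $M(\n)=0$ whenever $n_m>n_1+\cdots+n_{m-1}$ because $S_m$ cannot be deranged into too few available slots, and a direct inspection gives $M_j^{+}(0,\dots,0,n)=\beta c\,\delta_{n,1}$, matching the value $C_1\delta_{n,1}$ prescribed by the three-term recurrence with $A_n=1,\ B_n=-(c+1)n-\text{const},\ C_n=c\,n(n+\beta-1)$.
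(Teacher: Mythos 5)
Your proof follows the same core route as the paper's: adjoin a singleton box adjacent to $S_j$, classify the derangements of the enlarged multiset by the two cycle-neighbours $a=\sigma(0^*)$ and $b=\sigma^{-1}(0^*)$ of the auxiliary element, obtain the identity $M(\n^*;\beta,c)=M_j^{+}+(c+1)n_j M+c\,n_j(n_j+\beta-1)M_j^{-}$ (the paper places the singleton just \emph{after} $S_j$, so its cases appear with the roles of $a$ and $b$ interchanged relative to yours, but the bookkeeping of the single unit of excedance is identical), and then subtract the $j$- and $k$-versions. Where you go beyond the paper is precisely the step you single out as the heart of the matter: since the auxiliary box sits next to $S_j$ in one identity and next to $S_k$ in the other, the two left-hand sides are generating polynomials over two \emph{different} compositions, and cancelling them requires knowing that $\sum_{\sigma}\beta^{\cyc(\sigma)}c^{\exc(\sigma)}$ is unchanged when the singleton part is moved. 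The paper performs the subtraction without comment, implicitly invoking the symmetry of $M(\n;\beta,c)$ in the parts $n_i$ (which does hold, e.g.\ as a consequence of the $\beta$-MacMahon computation of Section 9 or of Theorem~\ref{thlinmeix} itself, but is not manifest from the definition because $\exc$ depends on the linear order). Your sliding argument --- only edges joining $0^*$ to the adjacent box $G$ change excedance status; configurations with zero or two such edges are individually invariant; and the cycle-transposition of $0^*$ with its unique neighbour in $G$ is a $\cyc$-preserving bijection between the two remaining families that shifts $\exc$ by exactly one, which is exactly the factor needed for the two families to cancel after the slide --- is correct (I checked the excedance bookkeeping in all cases) and supplies a self-contained combinatorial proof of this invariance. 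The verification of the boundary conditions is routine, as you say, up to the harmless difference of normalization ($c$ versus $c^{-1}$) between your recurrence coefficients and those used in the proof of Theorem~\ref{thlinmeix}. So your argument is a valid proof, and in fact slightly more complete than the one in the paper.
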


\begin{proof} 
Let $j\in[m]$ and set $\n^*=(n_1,\ldots,n_j,1,n_{j+1},\ldots, n_m)$.  We 
first show that 
\begin{align}\label{eqMeixPlus}
M(\n^*;\beta, c)=M_j^{+}(\n;\beta, c) + n_j (c+1)M(\n;\beta, c) + n_j(n_j+\beta-1) M_j^{-}(\n;\beta, c).
\end{align}
Let $u=n_1+\cdots +n_j+1$ and for each $\pi\in D(\n^*)$, let   $a:=\pi(u)$ and $b:=\pi^{-1}(u)$.
We partition  the derangements in $D(\n^*)$ into  five  categories:
\begin{itemize}
\item[(1)] $a\notin S_j$ and $b\notin S_j$.  These derangements can be easily identified with  the derangements in $D_j^{+}(\n)$, so 
the corresponding enumerative polynomial is  $M_j^{+}(\n;\beta, c)$.
\item[(2)]   $a\notin S_j$ and $b\in S_j$.
Define the  derangement $\pi'$ on $[\n^*]\setminus \{u\}$ by
$\pi'(j)=\pi(j)$ for $j\neq b$ and $\pi'(b)=a$. Clearly $\cyc (\pi')=\cyc(\pi)$ and $\exc(\pi')=\exc(\pi)-1$.
Conversely, starting with any derangement $\pi'$ of $[\n^*]\setminus \{u\}$, 
we can recover a derangement $\pi\in D(\n^*)$ by
choosing  any  element  in  $S_j$ as  $b$ and breaking  the arrow $b\to \pi'(b)$ into  $b\to u$ and $u\to \pi'(b)$,
so the corresponding enumerative polynomial is 
$c n_j M(\n;\beta, c)$.
\item[(3)]  $a\in S_j$ and $b\notin S_j$.
Define the  derangement $\pi'$ on $[\n^*]\setminus \{u\}$ by
$\pi'(j)=\pi(j)$ for $j\neq b$ and $\pi'(b)=a$. Clearly $\cyc (\pi')=\cyc(\pi)$ and $\exc(\pi')=\exc(\pi)$.
As in the case (2),  the corresponding enumerative polynomial is 
$n_j M(\n;\beta, c)$.
\item[(4)]    $a=b$ and $a\in S_j$. The corresponding enumerative polynomial  is $c\beta n_jM_j^{-}(\n;\beta,c)$.
\item[(5)]   $a\in S_j$,  $b\in S_j$, and $a\neq b$.  Define the  derangement $\pi'$ on $[\n^*]\setminus \{a,u\}$ by
$\pi'(j)=\pi(j)$ for $j\neq  b$ and $\pi'(b)=\pi(a)$. Clearly $\cyc (\pi')=\cyc(\pi)$ and $\exc(\pi')=\exc(\pi)-1$.
Conversely, starting with a derangement on $[\n^*]\setminus \{u,a\}$,  we can reverse this process by choosing any  element  
in  $S_j\setminus \{a\}$ as $b$. As there are $n_j(n_j-1)$ ways to choose
two different elements $a$ and $b$ in $S_j$,
the corresponding enumerative polynomial  is $c n_j(n_j-1)M_j^{-}(\n;\beta,c)$.
\end{itemize}
Summarizing the above five  cases leads to~\eqref{eqMeixPlus}. Specializing~\eqref{eqMeixPlus}
at $j=k$ and then subtracting the resulted equation from~\eqref{eqMeixPlus} ends the proof.
\end{proof}

\begin{thm}\label{thlinmeix} We have
\be\label{eqlinmeixner}
M(\n;\beta, c^{-1})=  (-1)^{n_1+\cdots +n_m}  (1-c)^\beta  \sum_{x=0}^\infty \prod_{j=1}^m M_{n_j}(x;\beta,c) \frac{c^x(\beta)_x}{x!}.
\ee
\end{thm}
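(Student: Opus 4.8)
The plan is to apply the uniqueness machinery of Theorem~\ref{Uniqueness} to the normalized Meixner polynomials, matched against the \emph{sign-twisted} derangement numbers $(-1)^{n_1+\cdots+n_m}M(\n;\beta,c^{-1})$. First I would record the orthogonal system underlying the right-hand side of \eqref{eqlinmeixner}. The measure $d\nu(x)=(1-c)^\beta\,c^x(\beta)_x/x!$ is a probability measure, since $\sum_{x\ge 0}c^x(\beta)_x/x!=(1-c)^{-\beta}$, and $M_0(x;\beta,c)=1$, so the $M_n(x;\beta,c)$ are precisely the normalized polynomials $\{p_n\}$ of Section~2 attached to $\nu$. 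Rewriting the recurrence \eqref{eqMeixner3trr} in the form \eqref{eqgen3trr} yields the \emph{constant} leading coefficient $A_n=(c-1)/c$ together with $B_n=\beta+n+n/c$ and $C_n=n(n+\beta-1)/c$. Because $A_n$ does not depend on $n$ and all $\la_j=1$, the multiplier $u_{jk}(\n)$ collapses to $1$, so by Theorem~\ref{existence} the integral $y(\n):=\int_\R\prod_{j=1}^m M_{n_j}(x;\beta,c)\,d\nu(x)$ solves
\begin{equation*}
y_j^+(\n)-y_k^+(\n)=\tfrac{c+1}{c}(n_j-n_k)\,y(\n)-\tfrac{n_j(n_j+\beta-1)}{c}\,y_j^-(\n)+\tfrac{n_k(n_k+\beta-1)}{c}\,y_k^-(\n).
\end{equation*}

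Next I would show that $z(\n):=(-1)^{n_1+\cdots+n_m}M(\n;\beta,c^{-1})$ satisfies the very same system. Replacing $c$ by $c^{-1}$ in the combinatorial identity \eqref{eq:meixner} of Lemma~\ref{lemmeixner} and then multiplying through by $(-1)^{n_1+\cdots+n_m}$ is exactly the bookkeeping that turns it into the displayed equation for $y$: incrementing one index flips the global sign, so the $y^+$ and $y^-$ terms change sign uniformly while the coefficient $(c^{-1}+1)(n_k-n_j)$ becomes $\tfrac{c+1}{c}(n_j-n_k)$ and the factors $c^{-1}n(n+\beta-1)$ become $\tfrac{n(n+\beta-1)}{c}$ with the correct signs. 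Thus $y$ and $z$ obey the same difference system.

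It then remains to check that $z$ meets the boundary conditions \eqref{eqsys} for this choice of $A_n,B_n,C_n$, so that Theorem~\ref{Uniqueness} forces $y\equiv z$. All four checks are short and combinatorial: $z(0,\ldots,0)=M(0,\ldots,0;\beta,c^{-1})=1$; $z(\n)=0$ whenever $\sum_{j<m}n_j<n_m$, since then $S_m$ cannot be injected into its complement by a derangement; $z_j^+(0,\ldots,0,0)=0$ because a singleton box admits no inhomogeneous derangement; and $z_j^+(0,\ldots,0,1)=\beta/c$, the weight $\beta^{\cyc}(c^{-1})^{\exc}$ of the unique transposition, which matches $C_1A_0/A_1=\beta/c$. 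Uniqueness then identifies $z(\n)$ with $y(\n)=\int_\R\prod_j M_{n_j}(x;\beta,c)\,d\nu(x)$; unwinding $d\nu$ and the sign gives \eqref{eqlinmeixner}, and since both sides are rational in $c$ the identity established for $0<c<1$ holds identically.

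I expect the main obstacle to be the combined sign/parameter-inversion bookkeeping of the second step: one must carry the substitution $c\mapsto c^{-1}$ in Lemma~\ref{lemmeixner} together with the parity factor $(-1)^{n_1+\cdots+n_m}$ and verify that the two conspire to reproduce precisely the $y$-system coming from \eqref{eqMeixner3trr} (in particular that the $c+1$ term lands on $(n_j-n_k)$ with the right sign). Everything else — reading off $A_n,B_n,C_n$, the collapse $u_{jk}\equiv 1$, and the boundary checks — is routine once the normalization of $\nu$ and of the polynomials has been pinned down.
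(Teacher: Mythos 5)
Your proposal is correct and follows essentially the same route as the paper: read off $A_n=1-1/c$, $B_n=\beta+n+n/c$, $C_n=n(\beta+n-1)/c$ from \eqref{eqMeixner3trr}, observe that the sign-twisted quantity $(-1)^{n_1+\cdots+n_m}M(\n;\beta,c^{-1})$ satisfies \eqref{eqIplusminusy} and \eqref{eqsys} via Lemma~\ref{lemmeixner}, and invoke Theorem~\ref{Uniqueness}. You merely spell out the sign/inversion bookkeeping and the boundary checks that the paper leaves implicit, and these all check out.
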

\begin{proof}
When $\la_j =1$ for all $j$,  and
$$
A_n=1-1/c, \quad B_n=\beta+n+n/c, \quad C_n=n(\beta+n-1)/c\quad \textrm{for all}\quad n\geq 0,
$$
by Lemma~\ref{lemmeixner}, the polynomials $ (-1)^{n_1+\cdots +n_m}M(\n;\beta, c^{-1})$ satisfy 
\eqref{eqIplusminusy} and \eqref{eqsys}.  Theorem~\ref{Uniqueness} implies then  \eqref{eqlinmeixner}.
\end{proof}
This formula was  first given by Askey and Ismail~\cite{Ask:Ism} when $\beta=1$,  and by Zeng~\cite{Zen90} for general~$\beta$.

\section{Linearization coefficients of Meixner--Pollaczek polynomials}
The Meixner--Pollaczek polynomials $P_n(x):=P_n(x; \delta, \eta)$ can be defined by\cite{Chi,Koe:Swa},
\be
\sum_{n=0}^\infty P_n(x; \delta, \eta)\frac{t^n}{n!}=[(1+\delta t)^2+t^2]^{-\eta/2} \exp\Bigl[x\arctan\Bigl(\frac{t}{1+\delta t}\Bigr)\Bigr].
\ee
They satisfy the recurrence relation:
\be
P_{n+1}(x; \delta, \eta)=(x-(\delta +2n)\eta)P_n(x; \delta, \eta)-n(\eta+n-1)(1+\delta^2)P_{n-1}(x; \delta, \eta).
\ee
The orthogonality relation is
\be
\frac{1}{ \int_\R w(x)\,dx}\int_\R P_n(x)P_m(x)w(x)\,dx=(\delta^2+1)^n n! (\eta)_n \delta_{mn},
\ee
where $w(x)=x(x;\delta,\eta)$ is given by
$$
w(x;\delta,\eta)=[\Gamma(\eta/2)]^{-2}\Bigl| \Gamma\Bigl(\frac{\eta+\imath x}{2}\Bigr)\Bigr|^2 \exp(-x\arctan \delta).
$$

Recall that a    permutation $\pi$ of $[\n]$ has a \emph{drop} (resp. \emph{box-drop})  at $i\in [n]$ if
$i>\pi(i)$ (resp. $i\in S_k$, $\pi(i)\in S_j$ and $j<k$).  Denote by $\drop (\pi)$  (resp. $\drop_b (\pi)$)  the number of drops  (resp. box-drops) of $\pi$.

The moments  of Meixner--Pollaczek polynomials \cite{Zen} are
\be
\mu_n(\delta, \eta)=\frac{1}{ \int_\R w(x)\,dx}\int_\R x^nw(x)\,dx= \sum_{\sigma\in \s_n}(\delta+\imath)^{\drop (\sigma)} (\delta-\imath)^{\exc (\sigma)}  \eta^{\cyc (\sigma)},
\ee
where $\imath^2=-1$.

Consider the enumerative polynomial of the inhomogeneous derangements
\begin{align}
P(\n;\delta,\eta)=\sum_{\pi\in {\mathcal D}(\n)}(\delta+\imath)^{\drop (\pi)} (\delta-\imath)^{\exc (\pi)}  \eta^{\cyc (\pi)}.
\end{align}

\begin{lem}\label{lemMP}  For any $k,j\in [m]$ such that $k\neq j$ we have
\begin{multline}\label{eq:mp}
P_j^{+}(\n;\delta,\eta)-P_k^{+}(\n;\delta,\eta)=2\delta (n_k-n_j)P(\n;\delta,\eta)\\
+n_k(n_k+\eta-1)(\delta^2+1)P_k^{-}(\n;\delta,\eta) -n_j(n_j+\eta-1)(\delta^2+1)P_j^{-}(\n;\delta,\eta),
\end{multline}
and the boundary condition \eqref{eqsys} with $\la_j=1$ for all $j$ and $A_0C_1=A_1$.
\end{lem}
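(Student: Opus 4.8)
My plan is to follow the strategy of the Meixner Lemma~\ref{lemmeixner} as closely as possible: adjoin a singleton box immediately after $S_j$, establish a single ``plus'' identity, and then symmetrize in $j$ and $k$. Throughout I shall use that for an inhomogeneous derangement drops and excedances coincide with their box-analogues: if $i\in S_k$ and $\pi(i)\in S_l$ with $l\neq k$, then $i<\pi(i)$ precisely when $k<l$, so the weight $(\delta+\imath)^{\drop(\pi)}(\delta-\imath)^{\exc(\pi)}\eta^{\cyc(\pi)}$ is governed entirely by the box structure.

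First I fix $j$, set $\n^*=(n_1,\ldots,n_j,1,n_{j+1},\ldots,n_m)$, and let $u=n_1+\cdots+n_j+1$ be the lone element of the new box, which sits immediately after $S_j$. For $\pi\in\D(\n^*)$ I put $a=\pi(u)$ and $b=\pi^{-1}(u)$ and partition $\D(\n^*)$ into the five categories of Lemma~\ref{lemmeixner}, according to whether $a$ and $b$ lie in $S_j$. The goal of this step is the identity
\begin{equation}\label{plusMP}
P(\n^*;\delta,\eta)=P_j^{+}(\n;\delta,\eta)+2\delta\,n_j\,P(\n;\delta,\eta)+(\delta^2+1)\,n_j(n_j+\eta-1)\,P_j^{-}(\n;\delta,\eta).
\end{equation}
The contributions come from the same splicing bijections as for Meixner, but now tracking drops and excedances simultaneously. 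When $a,b\notin S_j$ one absorbs $u$ into $S_j$; because $u$ sits just after $S_j$ and is joined only to elements outside $S_j$, every box-comparison is preserved and this case yields $P_j^{+}(\n)$. When exactly one of $a,b$ lies in $S_j$ one deletes $u$ and splices $b\to a$: if $b\in S_j,\,a\notin S_j$ the edge $b\to u$ is a vanishing box-excedance, giving a factor $(\delta-\imath)$ and $n_j$ choices of $b$, while if $a\in S_j,\,b\notin S_j$ the edge $u\to a$ is a box-drop, giving $(\delta+\imath)$ and $n_j$ choices of $a$; these combine to $[(\delta-\imath)+(\delta+\imath)]n_j\,P(\n)=2\delta\,n_j\,P(\n)$, which is how the real coefficient $2\delta$ emerges. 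The last two cases put $u$ on a $2$-cycle $(u,a)$ with $a\in S_j$ (cycle factor $(\delta-\imath)(\delta+\imath)\eta=(\delta^2+1)\eta$, with $n_j$ choices) or on a longer cycle $b\to u\to a\to\pi(a)$ with $a,b\in S_j$ distinct (deleting $u$ and $a$ costs one excedance and one drop, i.e. $(\delta^2+1)$, with $n_j(n_j-1)$ choices); together they give $(\delta^2+1)\,n_j(n_j+\eta-1)\,P_j^{-}(\n)$.

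Finally I write \eqref{plusMP} with $k$ in place of $j$ and subtract; since both sides refer to a configuration whose box sizes form the multiset $\{n_1,\ldots,n_m,1\}$, the left-hand sides cancel and \eqref{eq:mp} falls out (the $P_j^-,P_k^-$ coefficients already match the recurrence coefficients $C_n=n(n+\eta-1)(\delta^2+1)$ of the Meixner--Pollaczek polynomials). The boundary conditions \eqref{eqsys} are checked directly from the definition, as in Lemmas~\ref{hermite} and \ref{lemmeixner}: the empty derangement has weight $1$, there are no inhomogeneous derangements once $\sum_{j<m}n_j<n_m$, and a single cross-pair forms one $2$-cycle of weight $(\delta^2+1)\eta$. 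The main obstacle is twofold. First, unlike the Meixner case one must carry \emph{both} statistics through every splicing and verify that the imaginary parts cancel, leaving only the real quantities $2\delta$ and $\delta^2+1$; placing the new box immediately after $S_j$ is essential, since it is what makes each incident edge an unambiguous box-excedance or box-drop. Second, the cancellation of the left-hand sides requires that $P(\n^*;\delta,\eta)$ be independent of the position of the singleton box, i.e. the symmetry of $P(\cdot;\delta,\eta)$ in its arguments; this is the same implicit step as in Lemma~\ref{lemmeixner}, and I would justify it by a relabelling/exponential-formula argument (or, a posteriori, from the integral representation furnished by Theorem~\ref{Uniqueness}).
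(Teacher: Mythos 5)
Your proposal is correct and follows essentially the same route as the paper: the paper's proof also adjoins the singleton box to form $\n^*=(n_1,\ldots,n_j,1,n_{j+1},\ldots,n_m)$, derives the identity $P(\n^*;\delta,\eta)=P_j^{+}(\n;\delta,\eta)+2n_j\delta\,P(\n;\delta,\eta)+(\delta^2+1)n_j(n_j+\eta-1)P_j^{-}(\n;\delta,\eta)$ by "following the proof of Lemma~\ref{lemmeixner}", and then subtracts the $j=k$ specialization. Your five-case bookkeeping of drops and excedances (giving $(\delta-\imath)+(\delta+\imath)=2\delta$ and $(\delta-\imath)(\delta+\imath)=\delta^2+1$), and your explicit remark that the cancellation of the two $P(\n^*;\delta,\eta)$ terms rests on the symmetry of $P$ in its box sizes, simply make precise what the paper leaves implicit.
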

\begin{proof}   
For $j\in[m]$  let  $\n^*=(n_1,\ldots,n_j,1,n_{j+1},\ldots, n_m)$.  
Following the  proof of Lemma~\ref{lemmeixner} we obtain 
\begin{align}\label{eqMeix-Polla}
P(\n^*;\delta, \eta)=P_j^{+}(\n;\delta, \eta) + 2n_j \delta P(\n;\delta, \eta) +(\delta^{2}+1) n_j(n_j+\eta-1) P_j^{-}(\n;\delta, \eta).
\end{align}
Subtracting the last equation  from \eqref{eqMeix-Polla}  with $j=k$ yields
\eqref{eq:mp}.
\end{proof}

By the method of separation of variables we  can solve \eqref{eq:mp} and obtain the following result.
\begin{thm} We have
\be\label{eqlinMP}
P(\n;\delta,\eta)=\frac{1}{\int_{\R} w(x)dx}\int_{\R}\prod_{j=1}^mP_{n_j}(x)w(x)dx.
\ee
\end{thm}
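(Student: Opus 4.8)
The plan is to follow verbatim the template used in the Charlier, Laguerre and Meixner cases, culminating in Theorem~\ref{thlinmeix}: having already produced, in Lemma~\ref{lemMP}, the difference system \eqref{eq:mp} together with the boundary conditions \eqref{eqsys} (with $\la_j=1$), it remains only to exhibit the Meixner--Pollaczek polynomials as the orthogonal family whose recurrence \eqref{eqgen3trr} turns the abstract system \eqref{eqIplusminusy} into \eqref{eq:mp}, and then to invoke the uniqueness statement of Theorem~\ref{Uniqueness}. Since the right-hand side of \eqref{eqlinMP} carries the prefactor $1/\int_\R w(x)\,dx$, the underlying measure $d\nu=w\,dx/\int_\R w\,dx$ has total mass $1$, so the normalization $\zeta_0=1$ demanded by \eqref{eqorth} holds and no further rescaling of the measure is needed.

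First I would set $\la_j=1$ for every $j$ and read the coefficients $A_n,B_n,C_n$ off the Meixner--Pollaczek three-term recurrence, namely $A_n=1$, $B_n=-(\delta+2n)\eta$ and $C_n=n(\eta+n-1)(1+\delta^2)$. With $\la_j=1$ and $A_n$ constant in $n$, the quantity $v_j(\n)=A_{n_j}\la_j$ is independent of $j$, hence $u_{jk}(\n)=1$, and \eqref{eqIplusminusy} collapses to $y_j^+(\n)-y_k^+(\n)=[B_{n_j}-B_{n_k}]y(\n)-C_{n_j}y_j^-(\n)+C_{n_k}y_k^-(\n)$. Comparing term by term with \eqref{eq:mp}, the down-shift coefficients match because $C_n=n(n+\eta-1)(\delta^2+1)$; here the factorization $\delta^2+1=(\delta+\imath)(\delta-\imath)$ is exactly the product of the excedance weight and the drop weight appearing in the definition of $P(\n;\delta,\eta)$, and this is the structural reason the two families coincide, paralleling the role of the single parameter $c$ in the Meixner computation of Lemma~\ref{lemmeixner}. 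Because $A_n=1>0$, no global sign $(-1)^{n_1+\cdots+n_m}$ is required, in contrast with Theorem~\ref{thlinmeix} where $A_n=1-1/c$ forced one.

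The step I expect to be delicate is matching the \emph{middle} coefficient: the reduced \eqref{eqIplusminusy} contributes $B_{n_j}-B_{n_k}$, whereas \eqref{eq:mp} carries $2\delta(n_k-n_j)=\bigl((\delta+\imath)+(\delta-\imath)\bigr)(n_k-n_j)$, so one must verify that the linear-in-$n$ part of $B_n$ reproduces the factor $2\delta$ in the normalization of $P_n$ actually used in \eqref{eqlinMP}, and that the constant part is consistent with the initial data. The safest way to pin down the convention is to confirm the first two values directly against the moment formula $\mu_n(\delta,\eta)=\sum_{\sigma\in\s_n}(\delta+\imath)^{\drop(\sigma)}(\delta-\imath)^{\exc(\sigma)}\eta^{\cyc(\sigma)}$, adjusting the scaling of the argument of $P_n$ (and correspondingly $A_n$) should the parameter conventions demand it. Once this bookkeeping is settled, Lemma~\ref{lemMP} shows that $P(\n;\delta,\eta)$ satisfies both \eqref{eqIplusminusy} and the boundary conditions \eqref{eqsys}; Theorem~\ref{Uniqueness} then identifies it with the unique solution $\int_\R\prod_{j=1}^m P_{n_j}(x)\,d\nu(x)$, which is precisely \eqref{eqlinMP}.
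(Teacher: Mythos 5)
Your proposal is correct and follows essentially the same route as the paper: read off $A_n=1$, $B_n$, $C_n=n(\eta+n-1)(1+\delta^2)$ from the three-term recurrence with $\la_j=1$, observe that \eqref{eqIplusminusy} collapses to \eqref{eq:mp}, and conclude by Lemma~\ref{lemMP} and Theorem~\ref{Uniqueness}. The ``delicate'' middle-coefficient issue you flag is real but is only a typographical inconsistency in the stated recurrence (the coefficient of $P_n$ should be $x-\delta(2n+\eta)$, so that $B_{n_j}-B_{n_k}=2\delta(n_k-n_j)$ matches \eqref{eq:mp}), and your plan to pin down the convention via the moments resolves it without altering the argument.
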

\begin{proof}
Clearly
\eqref{eqIplusminusy} reduces to \eqref{eq:mp}  when $\la_j =1$ for all $j$,  and
$$
A_n=1, \quad B_n=-(\delta+2n)\eta, \quad C_n=n(\eta+n-1)(1+\delta^2)\quad \textrm{for all}\quad n\geq 0.
$$
From Lemma~\ref{lemMP} and Theorem~\ref{Uniqueness} we deduce \eqref{eqlinMP}.
\end{proof}
This formula was first given by Zeng~\cite{Zen}, and  later generalized  by Kim and Zeng~\cite{Kim:Zen}.


\section{Linearization coefficients  of   $q$-Hermite polynomials}
The continuous $q$-Hermite polynomials $H_n(x|\,q)$ are generated  by
\begin{align}\label{eq:recurrence-qhermite}
H_0(x|\,q) :=1, \; \; H_1(x|\,q)= 2x, \; \;   2xH_n(x|\,q)=H_{n+1}(x|\,q)+(1-q^n)H_{n-1}(x|\,q), \; \; n >0,
\end{align}
and have the  orthogonal relation
\begin{align}\label{eq:q-orth}
\int_0^\pi H_n(\cos \theta|\,q)H_m(\cos \theta|\,q) v(\cos \theta |\,q)\,d\theta=(q;q)_n \delta_{mn},
\end{align}
where
$$
v(\cos \theta|\,q)=\frac{(q;q)_\infty}{2\pi}(e^{2\imath \theta}, e^{-2\imath \theta};q)_\infty.
$$
If we rescale  the $q$-Hermite polynomials by
$$
\tilde H_n(x|\,q)=H_n\big(\frac{1}{2}ax|\,q\big)/a^n, \qquad  a=\sqrt{1-q},
$$
then \eqref{eq:recurrence-qhermite}  reads
$$
x\tilde H_n(x|\,q)=\tilde H_{n+1}(x|\,q)+[n]_q\tilde H_{n-1}(x|\,q),
$$
and
the orthogonality relation \eqref{eq:q-orth} becomes
\begin{align}\label{eq:q-orthbis}
\int_{-2/\sqrt{1-q}}^{2/\sqrt{1-q}} \tilde H_n(x |\,q)\tilde H_m(x|\,q)\tilde v(x|\,q)\,d x=n!_q \,\delta_{mn}.
\end{align}
Here $n!_q=(q;q)_n/(1-q)^n$ and
\be
\tilde v(x|\,q)=\frac{\sqrt{1-q}(q;q)_\infty}{\sqrt{1-(1-q)x^2/4}{4\pi}}\prod_{k=0}^\infty \{1+(2-x^2(1-q))q^k+q^{2k}\}.
\ee

 Given a perfect matching $M$ (or more generally, a set partition), a pair of arcs $(e_1,e_2)$ of $M$ is said to cross
if $e_1=(i,j)$, $e_2=(k,\ell)$, and $i<k<j<\ell$. The number of arc
crossings in $M$ is denoted by $\rc(M)$. For instance, if $M$ is the
matching drawn in Figure~\ref{fig:diagrams}, we have $\rc(M)=5$. Let
\begin{align}
K({\n}|\,q)= \sum_{M\in {\mathcal K}(\n)}q^{\rc (M)}.
\end{align}
For any  nonnegative integer $n$ we set
\be
[n]_q:=\frac{1-q^n}{1-q}=1+q+\cdots +q^{n-1}.
\ee
\begin{lem} \label{qhermite}
 For  $k, j\in [m]$ and $k\neq j$ the polynomials $K({\n}|\,q)$ satisfy
 \begin{align}\label{eqqmatchequation}
 K_j^+({\n}|\,q)- K_k^+({\n}|\,q)=[n_k]_q K_k^-({\n}|\,q)-[n_j]_q K_j^-({\n}|\,q),
 \end{align}
  and the boundary condition \eqref{eqsys} with $\la_j=1$ for all $j$ and $A_0C_1=A_1$.
 \end{lem}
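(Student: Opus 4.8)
The plan is to follow the proof of Lemma~\ref{hermite}, but the presence of the weight $q^{\rc}$ blocks the direct one-shot argument: if one singles out the new point $N_r+1$ of a matching in $\K_r^{+}(\n)$ and records only the box $S_i$ of its partner, the count does \emph{not} factor, since the number of arcs crossing the newly created arc depends on the whole matching rather than on $S_i$ alone. (For instance $K_1^{+}(1,1,1|\,q)=1+q$, whereas $\sum_{i\neq 1}[n_i]_q K_i^{-}(1,1,1|\,q)=2$.) I would therefore first prove an auxiliary identity of the kind used for the Charlier and Meixner polynomials, see \eqref{eqChar} and \eqref{eqMeixPlus}, and only afterwards symmetrize. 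Fix $j\in[m]$, let $\n^{*}=(n_1,\ldots,n_j,1,n_{j+1},\ldots,n_m)$ be obtained by inserting a singleton box immediately after $S_j$, and let $u=N_j+1$ be its unique element. The target identity is
\be
K(\n^{*}|\,q)=K_j^{+}(\n|\,q)+[n_j]_q\,K_j^{-}(\n|\,q).
\ee

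To prove it I would split the matchings $M\in\K(\n^{*})$ according to the partner $p$ of $u$. If $p\notin S_j$, then appending $u$ to $S_j$ (i.e.\ forgetting that $\{u\}$ is its own box) turns $M$ into an inhomogeneous matching on the boxes $(n_1,\ldots,n_j+1,\ldots,n_m)$ with the same arc set and hence the same crossing number; this gives a crossing-preserving bijection onto $\K_j^{+}(\n)$, accounting for $K_j^{+}(\n|\,q)$. If $p\in S_j$, delete the arc $(p,u)$ to obtain $M'\in\K_j^{-}(\n)$. The key point is that, since $u=N_j+1$ lies immediately to the right of $S_j$, the open interval $(p,u)$ equals the set of elements of $S_j$ larger than $p$; each of these $N_j-p$ elements is matched outside $S_j$ by inhomogeneity, so each supplies exactly one endpoint inside $(p,u)$ and the corresponding arc crosses $(p,u)$, while no arc can have both endpoints inside $(p,u)\subseteq S_j$. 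Hence $\rc(M)=\rc(M')+(N_j-p)$, a correction depending on $p$ only; summing $q^{N_j-p}$ over $p\in S_j$ produces $1+q+\cdots+q^{n_j-1}=[n_j]_q$ independently of $M'$, which is the claimed identity.

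The second step, which I expect to be the main obstacle, is to show that $K(\n^{*}|\,q)$ is independent of where the singleton box is inserted, equivalently that $K(\n|\,q)$ is symmetric in the box sizes $n_1,\ldots,n_m$. Granting this, writing the identity above for $j$ and for $k$ and subtracting cancels the common left-hand side and yields \eqref{eqqmatchequation} at once. This symmetry is genuinely non-trivial for $q\neq1$: it is not induced by any order-preserving or reflecting operation on the diagram, since reflecting a block of boxes does not preserve $\rc$. By decomposing a box permutation into adjacent transpositions it suffices to build a weight-preserving bijection interchanging two neighbouring boxes (only the case where one of them is the singleton is actually needed), which is the matching analogue of the crucial symmetry isolated for the $q$-Charlier polynomials in Lemma~\ref{lem:sym-charlier}, and I would construct it with the same crossing/nesting machinery. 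Finally, the boundary conditions \eqref{eqsys} are verified just as in Lemma~\ref{hermite}: the empty matching gives $K(0,\ldots,0|\,q)=1$; a parity/counting argument gives $K(\n|\,q)=0$ once $\sum_{j<m}n_j<n_m$; and the single-arc case yields $\delta_{n,1}$, matching \eqref{eqsys} under $\la_j=1$ and $A_0C_1=A_1$ (here $A_0=A_1=1$, $C_1=[1]_q=1$).
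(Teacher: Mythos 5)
Your auxiliary identity $K(\n^*|\,q)=K_j^{+}(\n|\,q)+[n_j]_q\,K_j^{-}(\n|\,q)$ is correct (the crossing count $\rc(M)=\rc(M')+N_j-p$ is exactly right), and your counterexample showing that the one-shot argument of Lemma~\ref{hermite} does not survive the $q$-deformation is a sound observation. But your route and the paper's diverge precisely at the point you yourself flag as ``the main obstacle''. The paper never needs the symmetry of $K(\n|\,q)$ in the box sizes. Its device is to compare $K_j^{+}(\n|\,q)$ and $K_{j+1}^{+}(\n|\,q)$ for \emph{adjacent} boxes: in both cases the ground set is $[n+1]$ and the new point is $u=N_j+1$; only its box assignment differs. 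One splits according to whether the partner of $u$ lies in the neighbouring box --- contributing $[n_{j+1}]_q K_{j+1}^{-}(\n|\,q)$, resp.\ $[n_j]_q K_j^{-}(\n|\,q)$, by the same crossing count you carried out --- or outside $S_j\cup S_{j+1}$, contributing a remainder $R_u(\n|\,q)$ which is \emph{literally the same polynomial} in both cases, since neither the inhomogeneity condition nor $\rc$ is affected by which of the two boxes $u$ is declared to belong to. Subtraction cancels $R_u$ and yields \eqref{eqqmatchequation} for adjacent $j,k$; the general case follows by telescoping over adjacent pairs. This is entirely self-contained.

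As written, your proposal therefore has a genuine gap: the independence of $K(\n^*|\,q)$ of the insertion point of the singleton box is only asserted, with a plan (``decompose into adjacent transpositions and build a weight-preserving bijection''), and that bijection is exactly the hard part --- for partitions the paper devotes the whole of Section~11 to the analogous statement. The gap is fillable from within the paper: for $\pi\in\P(\n)$ one has $\sg(\pi)=0$, and setting $b=0$ kills every partition with a transient, so $\F(\n;a,0,c|\,q)=a^{n/2}K(\n|\,q)$ and the symmetry you need is the $b=0$ specialization of Lemma~\ref{lem:sym-charlier}. If you take that route you should cite it rather than promise a new construction; but note that you would then be importing all of Section~11 to prove a lemma that the adjacent-box cancellation handles with none of that machinery. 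Your verification of the boundary conditions \eqref{eqsys} is fine.
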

 \begin{proof}
Let $u=n_1+\cdots +n_j+1$. The matchings in ${\mathcal K}_j^+(\n)$ (resp. ${\mathcal K}_{j+1}^+(\n)$) can be divided into two categories:
\begin{itemize}
\item the integer $u\in S_j$  (resp, $u\in S_{j+1}$)  is matched with the $\ell$th element  $u+\ell$ in $S_{j+1}$ (resp. $u-\ell$ in $S_j$),
from left (resp., right), with $\ell\in [n_{j+1}]$ (resp. $\ell\in
[n_j]$), then the corresponding arc crosses each of the  $\ell-1$
arcs of which one vertex is $u+t$ (resp. $u-t$) with $1\leq t\leq
\ell-1$. An illustration is given in Figure~\ref{fig:match}(a) (resp., Figure~\ref{fig:match}(b)). 
Hence the generating function of such matchings is
$$
(1+q+\cdots +q^{n_{j+1}-1}) K_{j+1}^-({\n}|\,q) \quad
(\textrm{resp.}\quad  (1+q+\cdots +q^{n_j-1})K_j^-({\n}|\,q));
$$
\begin{figure}[h!]
{\setlength{\unitlength}{1mm}
\begin{picture}(45,22)(5,-10)
\put(5,0){\line(1,0){15}}\put(25,0){\line(1,0){20}}
\put(20,0){\circle*{1,3}}\put(20,0){\makebox(-2,-4)[c]{\tiny $u$}}
\put(25,0){\circle*{1,3}}\put(25,0){\makebox(1,-4)[c]{\tiny $u+1$}}
\put(40,0){\circle*{1,3}}\put(40,0){\makebox(3,-4)[c]{\tiny $u+\ell$}}
\qbezier(20,0)(30,10)(40,0)
\put(25,0){\line(0,2){10}}\put(28,0){\line(0,2){10}}\put(37,0){\line(0,2){10}}
\put(32.5,8){\makebox(0,0)[c]{......}}
\put(24,10){$\overbrace{\hspace{1.4cm}}$}
\put(24,12){\makebox(13,4)[c]{\footnotesize $\ell-1$ crossings}}
\put(-2,-5){\makebox(57,-6)[c]{\footnotesize (a) the blocks $S_j$ and $S_{j+1}$ in $\K_j^+(\n)$}}
\end{picture}}
\hspace{2.5cm}
{\setlength{\unitlength}{1mm}
\begin{picture}(45,22)(0,-10)
\put(0,0){\line(1,0){20}}\put(25,0){\line(1,0){15}}
\put(20,0){\circle*{1,3}}\put(20,0){\makebox(-2,-4)[c]{\tiny $u-1$}}
\put(25,0){\circle*{1,3}}\put(25,0){\makebox(2,-4)[c]{\tiny $u$}}
\put(5,0){\circle*{1,3}}\put(5,0){\makebox(-3,-4)[c]{\tiny $u-\ell$}}
\qbezier(5,0)(15,10)(25,0)
\put(20,0){\line(0,2){10}}\put(17,0){\line(0,2){10}}\put(8,0){\line(0,2){10}}
\put(12.5,8){\makebox(0,0)[c]{......}}
\put(7,10){$\overbrace{\hspace{1.4cm}}$}
\put(7,12){\makebox(13,4)[c]{\footnotesize $\ell-1$ crossings}}
\put(-7,-5){\makebox(57,-6)[c]{\footnotesize (b) the blocks $S_j$ and $S_{j+1}$ in $\K_{j+1}^+(\n)$}}
\end{picture}}
\caption{Crossings in an inhomogeneous perfect matching}\label{fig:match}
\end{figure}

\item the integer $u$ is matched with an element  not in $S_j\cup S_{j+1}$,  
let $R_{u}(\n|\,q)$  be the generating polynomial of  such matchings.
\end{itemize}
It follows that $K_j^+({\n}|\,q)=[n_{j+1}]_q K_{j+1}^-({\n}|\,q)+R_{u}({\n}|\,q)$ 
and $K_{j+1}^+({\n}|\,q)=[n_j]_q K_j^-({\n}|\,q)+R_{u}({\n}|\,q)$.
By subtraction we obtain  \eqref{eqqmatchequation} for adjacent $k$ and $j$. 
The general case follows from the simple identity
$u_k-u_j=\sum_{i=j}^{k-1}(u_{i+1}-u_i)$ for any integers $j$ and $k$ such that $j<k$.
 \end{proof}
 
 \begin{thm} \label{thmqhermite} We have
 \begin{align}\label{eq:isv}
K({\n}|\,q)= \int_\R \tilde v(x|\,q) \prod_{j=1}^m\tilde H_{n_j}(x|\,q)\,dx.
\end{align}
\end{thm}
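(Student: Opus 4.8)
The plan is to recognize \eqref{eq:isv} as a direct instance of the uniqueness principle of Theorem~\ref{Uniqueness}, exactly as in the Hermite, Charlier, Laguerre, Meixner, and Meixner--Pollaczek cases treated above. First I would set $\la_j=1$ for every $j$ and compare the normalized recurrence $x\tilde H_n(x|\,q)=\tilde H_{n+1}(x|\,q)+[n]_q\tilde H_{n-1}(x|\,q)$ with the general form \eqref{eqgen3trr}, reading off
\[
A_n=1,\qquad B_n=0,\qquad C_n=[n]_q\qquad\textrm{for all }n.
\]
With these choices $v_j(\n)=A_{n_j}\la_j=1$, hence $u_{jk}(\n)=1$, so the master system \eqref{eqIplusminusy} collapses to
\[
y_j^{+}(\n)-y_k^{+}(\n)=[n_k]_q\,y_k^{-}(\n)-[n_j]_q\,y_j^{-}(\n),
\]
which is precisely the relation \eqref{eqqmatchequation} established combinatorially for $K(\n|\,q)$ in Lemma~\ref{qhermite}.

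Next I would dispatch the boundary data. Lemma~\ref{qhermite} already asserts that $K(\n|\,q)$ satisfies the boundary conditions \eqref{eqsys} with $\la_j=1$ and $A_0C_1=A_1$; here $A_0C_1=1\cdot[1]_q=1=A_1$, so the required normalization is consistent, while $B_0=0$ annihilates the $\delta_{n,0}$ term and leaves $y_j^{+}(0,\ldots,0,n)=\delta_{n,1}$, which is immediate from the matching interpretation (the lone point of $S_j$ must be joined to the single point of the last box, forcing $n=1$, and this matching has no crossings). Since $K(\n|\,q)$ thus solves both \eqref{eqIplusminusy} and \eqref{eqsys} for this parameter set, Theorem~\ref{Uniqueness} identifies it with the integral \eqref{eqdefnI} taken against the orthogonality measure of the $\tilde H_n(x|\,q)$, namely $d\mu(x)=\tilde v(x|\,q)\,dx$, which is \eqref{eq:isv}.

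The only remaining point to verify is that $\tilde v(x|\,q)\,dx$ is the correctly normalized measure demanded by \eqref{eqorth}, i.e. that its total mass equals $1$; this follows by taking $n=m=0$ in \eqref{eq:q-orthbis}, since $\tilde H_0=1$ and $0!_q=1$ give $\int \tilde v(x|\,q)\,dx=1$ (and, being compactly supported for $0<q<1$, the measure trivially has finite moments of all orders). I expect no genuine obstacle here: all the combinatorial substance---classifying a matching in $\mathcal K_j^{+}(\n)$ by the partner of the newly adjoined point and counting the $\ell-1$ crossings it creates---was already carried out in Lemma~\ref{qhermite}, so the proof reduces to the routine parameter matching above together with the normalization check. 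It is worth emphasizing that this short derivation is exactly the promised elementary proof of the Ismail--Stanton--Viennot linearization formula for the continuous $q$-Hermite polynomials.
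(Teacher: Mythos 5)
Your proposal is correct and follows exactly the paper's own route: match the normalized $q$-Hermite recurrence to \eqref{eqgen3trr} with $\la_j=1$, $A_n=1$, $B_n=0$, $C_n=[n]_q$, invoke Lemma~\ref{qhermite} for the difference system and boundary conditions, and conclude by Theorem~\ref{Uniqueness}. The extra checks you include (the boundary data and the normalization $\int\tilde v(x|\,q)\,dx=1$) are details the paper leaves implicit, but the argument is the same.
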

 \begin{proof}
Clearly
\eqref{eqIplusminusy} reduces to \eqref{eqqmatchequation}  when $\la_j =1$ for all $j$, $B_k=0$, $C_k=[k]_q$  for all $k$, and
$A_k$ is a constant independent of $k$.  From Lemma~\ref{qhermite} and Theorem~\ref{Uniqueness} we deduce \eqref{eq:isv}.
\end{proof}

\begin{rem}
The representation \eqref{eq:isv} is due to Ismail, Stanton and Viennot~\cite{Ism:Sta:Vie}.
Three  different proofs were later given in \cite{Ans, Eff:Pop,Kim:Sta:Zen}. As we can see, the new proof of \eqref{eq:isv}
given above   parallels  our proof in the case  $q=1$.
\end{rem}

Note that $K({\n}|0)$ is the number of perfect inhomogeneous matchings of $[\n]$ without crossings and
$H_n(x|0)$ is  the $n$-th Chebyshev polynomial of the second kind $U_n(x)$.  Hence,
letting  $q=0$ in Theorem~\ref{thmqhermite} we obtain the following result, due to de Sainte--Catherine and  Viennot{~}\cite{Des:Vie}.
\begin{cor} The number of perfect inhomogeneous matchings of $[\n]$  without crossings is given by
 \begin{align} 
K({\n}|0)=\frac{2}{\pi} \int_{-1}^1  U_{n_1}(x)\cdots U_{n_m}(x) (1-x^2)^{1/2}dx.
\end{align}
\end{cor}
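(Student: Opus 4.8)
The corollary is the $q=0$ specialization of Theorem~\ref{thmqhermite}, so the plan is to substitute $q=0$ into the integral representation \eqref{eq:isv} and identify both sides with their classical (crossing-free) counterparts. First I would set $q=0$ in the rescaled $q$-Hermite recurrence $x\tilde H_n(x|\,q)=\tilde H_{n+1}(x|\,q)+[n]_q\tilde H_{n-1}(x|\,q)$. Since $[n]_0=(1-0^n)/(1-0)=1$ for all $n\geq 1$ (and $[0]_0=0$), the recurrence collapses to $x\tilde H_n(x|0)=\tilde H_{n+1}(x|0)+\tilde H_{n-1}(x|0)$ with $\tilde H_0=1$, $\tilde H_1=x$. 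This is precisely the three-term recurrence for the Chebyshev polynomials of the second kind $U_n$ written in the variable $x=2\cos\theta$; as the excerpt already notes, $H_n(x|0)=U_n(x)$, so after accounting for the rescaling $\tilde H_n(x|q)=H_n(\tfrac12 ax|q)/a^n$ with $a=\sqrt{1-q}\to 1$, I would identify $\tilde H_n(x|0)$ with $U_n$ on the appropriate interval.

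The second ingredient is the weight function. I would take the limit $q\to 0$ in $\tilde v(x|\,q)$. The support $[-2/\sqrt{1-q},\,2/\sqrt{1-q}]$ shrinks to $[-2,2]$, and the infinite product $\prod_{k\geq 0}\{1+(2-x^2(1-q))q^k+q^{2k}\}$ retains only its $k=0$ factor in the limit, namely $1+(2-x^2)+0=3-x^2$; combined with the prefactor $\sqrt{1-q}(q;q)_\infty/(\sqrt{1-(1-q)x^2/4}\cdot 4\pi)$ this should collapse to the Chebyshev–$U$ orthogonality weight $\tfrac{1}{2\pi}\sqrt{4-x^2}$ on $[-2,2]$. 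A cleaner route, which I would prefer, is to bypass the explicit density altogether: Theorem~\ref{thmqhermite} holds for each $q$, and $K(\n|0)=\sum_{M\in\mathcal K(\n)}0^{\rc(M)}$ counts exactly the inhomogeneous matchings with $\rc(M)=0$, i.e. the crossing-free ones. So the left-hand side is already the desired combinatorial quantity by definition, and only the right-hand side needs to be put in standard form.

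To convert the right-hand side to the stated $\tfrac{2}{\pi}\int_{-1}^1 U_{n_1}\cdots U_{n_m}(1-x^2)^{1/2}\,dx$, I would make the substitution $x=2t$ (so the interval $[-2,2]$ becomes $[-1,1]$) together with the standard normalization of the Chebyshev weight, and verify that the rescaling constants $a^{-n_j}$ and the Jacobian combine to give the factor $\tfrac{2}{\pi}$ and the classical weight $(1-x^2)^{1/2}$. Here I would invoke the known orthogonality $\tfrac{2}{\pi}\int_{-1}^1 U_m(x)U_n(x)\sqrt{1-x^2}\,dx=\delta_{mn}$, which fixes the total-mass normalization consistent with $\zeta_0=1$ in \eqref{eqorth}.

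\textbf{Main obstacle.} The only genuinely delicate step is justifying the interchange of the limit $q\to 0$ with the integral in \eqref{eq:isv}, since both the integrand and the interval of integration depend on $q$. I expect this to be routine but not automatic: one must check that $\tilde v(x|\,q)$ converges to the Chebyshev weight uniformly on compact subsets of $(-2,2)$ and that the contributions near the moving endpoints are controlled (dominated convergence, using that the polynomials are bounded and the weight is integrable). Alternatively, and more safely, one avoids analysis entirely by observing that $K(\n|0)$ is by definition the number of crossing-free inhomogeneous matchings, and that the integral on the right of the corollary can be shown to satisfy the same recurrence \eqref{eqqmatchequation} at $q=0$ together with the boundary conditions \eqref{eqsys}; uniqueness via Theorem~\ref{Uniqueness} (applied with $A_k$ constant, $B_k=0$, $C_k=1$ for $k\geq 1$) then forces equality without any limiting argument. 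I would present this uniqueness-based derivation as the primary proof and relegate the limit-interchange to a remark.
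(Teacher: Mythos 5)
Your proposal takes the same route as the paper, which simply observes that $K(\n|0)$ counts the crossing-free inhomogeneous matchings and sets $q=0$ in Theorem~\ref{thmqhermite}. Two remarks. First, the ``main obstacle'' you identify is illusory: Theorem~\ref{thmqhermite} already holds at $q=0$ itself, so no limit $q\to 0$ and hence no interchange of limit and integral is needed --- one substitutes $q=0$ directly into the recurrence, the weight $\tilde v(x|\,q)$ and the support $[-2/\sqrt{1-q},2/\sqrt{1-q}]$, all of which are perfectly well defined there. Your fallback via the recurrence \eqref{eqqmatchequation} and Theorem~\ref{Uniqueness} is correct but amounts to re-proving the $q=0$ case of the theorem rather than specializing it. Second, there is a small computational slip in your evaluation of the weight: at $q=0$ the $k=0$ factor of the product is $1+(2-x^2)q^0+q^{0}=4-x^2$, not $3-x^2$ (you dropped the term $q^{2k}\big|_{k=0}=1$). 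With the correct value one gets $\tilde v(x|0)=\pi^{-1}\sqrt{1-x^2/4}$ on $[-2,2]$ and $\tilde H_n(x|0)=U_n(x/2)$, after which the substitution $x=2t$ produces exactly the factor $2/\pi$ and the weight $(1-t^2)^{1/2}$ in the statement.
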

Another generalization of the above corollary was given by Kim and Zeng \cite{Kim:Zen2}.


 \section{Linearization coefficients of $q$-Charlier and $q$-Laguerre polynomials}
\subsection{Al--Salam-Chihara polynomials} Since our $q$-Charlier and $q$-Laguerre polynomials are two rescaled
special Al--Salam--Chihara polynomials, we first recall
the definition of these polynomials.
The Al--Salam--Chihara polynomials $Q_n(x):=Q_n(x; t_1, t_2|\,q)$ may
be defined by the recurrence relation~\cite[Chapter 3]{Koe:Swa}:
\begin{align}\label{AC-def}
\begin{cases}
Q_0(x)=1,\quad Q_{-1}(x)=0,\\
Q_{n+1}(x)=(2x-(t_1+t_2)q^n)Q_n(x)-
(1-q^n)(1-t_1t_2{q}^{n-1})Q_{n-1}(x),\quad n\geq 0.
\end{cases}
\end{align}
Let $Q_n(x)=2^np_n(x)$ then
\begin{align}\label{eq:normalizedrecurr}
xp_n(x)=p_{n+1}(x)+\frac{1}{2}
(t_1+t_2)q^np_n(x)+\frac{1}{4}(1-q^n)(1-t_1t_2
q^{n-1})p_{n-1}(x).
\end{align}
They also have the following explicit expressions:
\begin{align*}
Q_{n}(x; t_1,t_2|\,q) &=\frac{(t_1t_2;\,q)_{n}}{t_1^n}\,
{}_3\phi_{2}\left( {{q^{-n},t_1 u,t_1
u^{-1}}\atop{t_1t_2,0}} \Big| \,q;\,q
\right)\\
&=(t_1 u;\,q)_{n}u^{-n}\, {}_2\phi_{1}\left( {{q^{-n},t_2
u^{-1}}\atop{t_1^{-1}q^{-n+1}u^{-1}}} \Big| \,q;t_1^{-1}q u
\right)\\
&=(t_2 u^{-1};\,q)_{n}u^{n}\, {}_2\phi_{1}\left( {{q^{-n},t_1
u}\atop{t_2^{-1}q^{-n+1}u}} \Big| \,q;t_2^{-1}q u^{-1} \right),
\end{align*}
where $x=\frac{u+u^{-1}}2$ or $x=\cos \theta$ if $u=e^{\imath \theta}$.

The Al--Salam--Chihara polynomials have the following generating
function
$$
G(t,x)=\sum_{n=0}^\infty Q_n(x;t_1,t_2|\,q)\frac{t^n}{(q;\,q)_n}=
\frac{(t_1 t, t_2 t;\,q)_\infty}{(te^{\imath \theta},
te^{-\imath \theta};\,q)_\infty}.
$$
They are orthogonal with respect to the  linear functional
$\mathcal L$:
\begin{align}
{\mathcal L}(x^n)=\frac{1}{2\pi}\int_{0}^\pi\cos^n\theta
\frac{(q, t_1t_2,e^{2\imath \theta}, e^{-2\imath \theta};\,q)_\infty}
{(t_1 e^{\imath \theta}, t_1 e^{-\imath \theta}, t_2 e^{\imath \theta}, t_2
e^{-\imath \theta};\,q)_\infty} d\theta,
\end{align}
where $x=\cos \theta$. Equivalently, the Al--Salam--Chihara polynomials $Q_n(x; t_1, t_2|\,q)$ are orthogonal on $[-1,\;1]$ with respect to the probability measure
\be
\frac{(q, t_1t_2;q)_\infty}{2\pi}\; \prod_{k=0}^\infty \frac{1-2(2x^2 -1) q^k + q^{2k}}{[1-2x t_1q^k+ t_1^2q^{2k}][1-2x t_2q^k+ t_2^2q^{2k}]} \frac{dx}{\sqrt{1-x^2}}.
\ee


As in \cite{Ans, Kim:Sta:Zen}, we shall consider  the $q$-Charlier polynomials $C_n(x|\,q):=C_n(x, a,b,c|\,q)$ defined recursively by
\begin{align}\label{eq:recurrqCharlier}
C_{n+1}(x|\,q)=\left(x-c-b[n]_q\right) C_{n}(x|\,q) - a[n]_q C_{n-1}(x|\,q),
\end{align}
where $C_{-1}(x|\,q)=0$ and $C_0(x|\,q)=1$.
Comparing with \eqref{AC-def}  we see that this is a rescaled version of the Al--Salam--Chihara polynomials:
\be
C_n(x|\,q)=\bigg(\frac{a}{1-q}\bigg)^{n/2}Q_n
\Bigl(\frac{1}{2}\sqrt{\frac{1-q}{a}}\bigl(x-c-\frac{b}{1-q}\bigr);\frac{-b}{\sqrt{a(1-q)}}, 0\left | \right. q\Bigr).
\ee

We define $u_1(x)$ and $v_1(x)$ by
\begin{equation}
\begin{split}
u_1(x) &= \frac{1-q}{2a}\,  x^2  - \frac{c(1-q)+b}{a} \, x + \frac{b^2+c^2(1-q)^2+2(1-q)(bc-a)}{2a(1-q)},  \\
v_1(x) &= \frac{1}{2} \sqrt{\frac{1-q}{a}}\;  \Big(x -c -\frac{b}{1-q}\Big).
\end{split}
\end{equation}
The moment functional for $C_n(x|\,q)$ is
\begin{multline}
{\mathcal L}_1(f) = \frac{(q;q)_\infty}{2\pi }\frac{1}{2}\sqrt{\frac{1-q}{a}} \\
\times \int_{A_{-}}^{A_{+}}
\prod_{k=0}^\infty \frac{[1-2u_1(x) q^k + q^{2k}]f(x)}{1+2v_1(x)q^k/(\sqrt{a(1-q)})+ q^{2k}/a(1-q)} \frac{dx }
{\sqrt{1-v_1(x)^2}},
\end{multline}
where
\begin{align*}
A _{\pm}= c + \frac{b}{1-q}\pm {2}\sqrt{\frac{a}{1-q}}.
\end{align*}


As  in \cite{Kas:Sta:Zen},  we shall consider    the  $q$-Laguerre polynomials $L_n(x|\,q):=L_n(x,y|\,q)$ defined by
  the recurrence:
\begin{align}\label{eq:recurrqlaguerre}
L_{n+1}(x|\,q)=(x-y[n+1]_q-[n]_q)L_n(x|\,q)-y[n]_q^2L_{n-1}(x|\,q),
\end{align}
with the initial condition $L_{-1}(x|\,q)=0$ and $L_{0}(x|\,q)=1$. Hence these are the
re-scaled  Al--Salam--Chihara polynomials:
\begin{equation}\label{eq:polydef}
L_n(x|\,q)=\left(\frac{\sqrt{y}}{q-1}\right)^n
Q_n\left(\frac{(q-1)x+y+1}{2\sqrt{y}}; \frac{1}{\sqrt{y}},
\sqrt{y}q|\,q\right).
\end{equation}
One  deduces then the explicit formula:
\begin{equation}\label{eq:explaguerre}
L_n(x|\,q)=\sum_{k=0}^n (-1)^{n-k}\frac{n!_q}{k!_q}\,{n\brack k}_q
q^{k(k-n)}y^{n-k} \prod_{j=0}^{k-1}\left(x-(1-y q^{-j})[j]_q\right).
\end{equation}

Define $u_2(x)$ and $v_2(x)$ by
\be
u_2(x) = \frac{(1-q)^2}{2y}\,  x^2 -\frac{(1-q) (1+y)}{y} \, x  + \frac{y^2+1}{2y}, \quad v_2(x) = \frac{q-1}{2\sqrt{y}} x +
\frac{y+1}{2\sqrt{y}}.
\ee
Then the moment functional in this case is
\begin{multline}
{\mathcal L}_2(f) = \frac{(q, q;q)_\infty }{2\pi}\frac{1-q}{2\sqrt{y}}\;   \\
\times \int_{B_{-}}^{B_{+}}
\prod_{k=0}^\infty \frac{[1-2u_2(x) q^k + q^{2k}]f(x)}{[1-2v_2(x)q^k/\sqrt{y}+ q^{2k}/y]
[1-2v_2(x)  q^{k+1}\sqrt{y} + q^{2k+2}y]} \frac{dx }{\sqrt{1-v_2(x)^2}},
\end{multline}
where
\be
B_{\pm} = \frac{(1\pm \sqrt{y})^2}{1-q}.
\ee

 For the combinatorial approach to the linearization  coefficients,  the $q$-Hermite and $q$-Charlier cases were proved  by first combining  the combinatorial models for the polynomials and
moments to obtain a messy sum, and then using  a killing involution to reduce it to some nicer models, \cite{Des:Vie, Ism:Sta:Vie, Kim:Sta:Zen}.
 However,  this approach seems  difficult  to deal with the $q$-Laguerre case.
So, a  recursive approach based on the symmetry is used in \cite{Kas:Sta:Zen}, but such  a proof for the $q$-Charlier polynomials is new.
\subsection{Linearization coefficients of  $q$-Charlier polynomials}
Recall that if $\pi$ is a partition of $[n]$,
an arc crossing of $\pi$ is a pair of arcs $(e_1,e_2)$ such that $e_1=(i,j)$, $e_2=(k,\ell)$, and $i<k<j<\ell$.
For instance, if $\pi$ is the partition drawn in Figure~\ref{fig:diagrams} (resp., in Figure~\ref{fig:diagrams-inhomogeneous}),
then $\rc(\pi)=2$ (resp., $\rc(\pi)=6$). We let $\rc(\pi)$ denote the number of arc crossings in $\pi$.

For each partition ${\pi\in \Pi_n} $ we define the weight
\be\label{charlier-weight}
w(\pi)=a^{\bl (\pi) } b^{\tr (\pi) } c^{\sg (\pi)} q^{\rc (\pi)},
\ee
where $\bl(\pi)$, $\sg(\pi)$ and $\tr(\pi)$ are respectively the numbers of blocks, singletons and
transients of $\pi$. Here, a singleton is just a block of size 1 and a transient is an element which is neither
the least nor the greatest element in a block of $\pi$.

Consider the enumerative polynomial of inhomogeneous partitions
\begin{align}\label{eq:defF}
\F(\n|\,q):=\F(\n;a,b,c|\,q)=\sum_{\pi\in \P(\n)}w(\pi).
\end{align}

Note that by the general theory of orthogonal polynomials,  the three-term recurrence relation \eqref{eq:recurrqCharlier}  and Proposition~4.1 in \cite{Kas:Zen2}
imply that
the linear functional ${\mathcal L}_1$  has the following combinatorial interpretation:
\begin{align}\label{defmoment}
{{\mathcal L}}_1(x^n)=\sum_{\pi\in \Pi_n} w(\pi).
\end{align}
To find  the partial difference equations satisfied by $\F(\n|\,q)$ we need the following key result.
\begin{lem}\label{lem:sym-charlier} The polynomials $\F(\n|\,q)$ are symmetric
with respect  to the permutation of indices $n_1, \ldots, n_m$.
\end{lem}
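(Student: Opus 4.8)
The plan is to prove Lemma~\ref{lem:sym-charlier} by reducing to adjacent transpositions of the box sizes and producing, for each such swap, a weight-preserving bijection. Since the transpositions $n_j\leftrightarrow n_{j+1}$ generate the full symmetric group on the indices $1,\dots,m$, it suffices to construct for each $1\le j<m$ a bijection $\Phi_j\colon \P(\n)\to \P(s_j\n)$, where $s_j\n=(n_1,\dots,n_{j-1},n_{j+1},n_j,n_{j+2},\dots,n_m)$, with $w(\Phi_j(\pi))=w(\pi)$ for every $\pi\in\P(\n)$. A first simplification lightens this goal: every block of an inhomogeneous partition has at least two elements, so $\sg(\pi)=0$, and summing $|B|$ over the $\bl(\pi)$ blocks gives $\tr(\pi)=n-2\bl(\pi)$. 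Hence the weight \eqref{charlier-weight} becomes $w(\pi)=b^{\,n}(ab^{-2})^{\bl(\pi)}q^{\rc(\pi)}$, and since $n=n_1+\cdots+n_m$ is symmetric it is enough for $\Phi_j$ to preserve the single pair $(\bl,\rc)$.

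Next I would fix the local picture. Because the boxes are consecutive intervals and no block meets a box twice, the elements of any block listed in increasing order are automatically sorted by box index, so every arc joins an element of some box to an element of a strictly higher box. I would therefore design $\Phi_j$ so that it preserves, for \emph{every} block, the ordered list of boxes that the block occupies; this keeps all block sizes, and in particular $\bl$, unchanged. The only freedom the bijection exercises is the choice of which representative each block uses inside the two boxes $S_j$ and $S_{j+1}$ forming the region $R=S_j\cup S_{j+1}$: it rewires only the incidences of arcs at the $n_j+n_{j+1}$ elements of $R$, reassigning representatives so as to realize the exchange of the two subinterval lengths. With this framing, $\bl$ is preserved by construction, and the entire problem collapses to showing that the reassignment inside $R$ can be done while keeping $\rc$ fixed.

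The heart of the matter, and the step I expect to be the main obstacle, is precisely this invariance of the crossing number. Reordering the positions of $R$ so that the shorter subinterval comes first changes the linear order and naively alters every crossing incident to $R$, most seriously the crossings between the arcs attached to $R$ and the arcs that span entirely over $R$ (which couple $R$ to the rest of the diagram). The plan is to recast the data inside $R$ as a $0$--$1$ filling, with one box indexing the rows and the other the columns and the spanning arcs recorded as fixed boundary data, so that $\rc$ becomes the count of a fixed chain pattern while $\bl$ becomes a fixed function of the line sums; swapping $n_j$ and $n_{j+1}$ then amounts to exchanging the roles of rows and columns, and the invariance of such chain counts under that exchange supplies $\rc(\Phi_j(\pi))=\rc(\pi)$. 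Equivalently, I would scan the positions of $R$ from left to right and run a charge argument in which each closing element contributes a factor $[h]_q$ depending only on the number $h$ of arcs open at that moment; the subtlety is that the inhomogeneity constraint restricts which open arc may be closed, so establishing that the through-arcs can be frozen as boundary data \emph{without} disturbing this factorization is the delicate point on which the whole argument turns. Once $\rc$-invariance is secured for adjacent swaps, composing the maps $\Phi_j$ yields invariance under an arbitrary permutation of $n_1,\dots,n_m$, which is the assertion of Lemma~\ref{lem:sym-charlier}.
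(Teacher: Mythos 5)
Your reduction is sound as far as it goes: on $\P(\n)$ one has $\sg(\pi)=0$ and $\tr(\pi)=n-2\bl(\pi)$, so only the joint distribution of $(\bl,\rc)$ matters, and it suffices to treat a generating set of transpositions. (The paper uses a cyclic shift together with the swap of $n_1,n_2$ rather than arbitrary adjacent swaps; that choice is not cosmetic --- it arranges that the region being rewired sits at one end of the diagram, so all external arcs leave it in a single direction, whereas your region $R=S_j\cup S_{j+1}$ in mid-diagram receives arcs from the left, emits arcs to the right, and is spanned by through-arcs, all of which interact in the crossing count.) But your argument stops exactly where the work begins. The entire content of the lemma is the existence of a bijection preserving $\rc$, and you do not construct one: you propose to recast the data inside $R$ as a $0$--$1$ filling so that ``$\rc$ becomes the count of a fixed chain pattern,'' assert that invariance of such counts under transposing the filling gives $\rc(\Phi_j(\pi))=\rc(\pi)$, and then yourself flag that whether the through-arcs ``can be frozen as boundary data without disturbing this factorization is the delicate point on which the whole argument turns.'' That delicate point \emph{is} the lemma. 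No filling is defined, no transpose-invariance statement is formulated or cited in a form covering the boundary data you need, and the alternative charge argument with $[h]_q$ factors is acknowledged to break on the same obstacle.

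A second, more structural worry: you impose on $\Phi_j$ the constraint that it preserve, for every block, the ordered list of boxes the block occupies. The paper's bijections satisfy no such constraint --- in the cyclic-shift step the elements of the moved box are reattached to maximal elements $\psi_\tau(O)$ lying in possibly different blocks (the depth-preserving map $\min(\tau)\to\max(\tau)$ built from the Motzkin path in Proposition~\ref{prop:psi_pi} does not respect blocks), and in the transposition step the bipartite graph between the two boxes is reversed by complementation in Proposition~\ref{prop:psi(a,b)}, again reshuffling blocks. It is therefore not evident that an $\rc$-preserving bijection exists under your stronger constraint, and you give no construction. To complete the proof you would need either to carry out the filling argument in full (specifying the filling, the boundary data encoding left-tails, right-tails and through-arcs, and proving the required transpose-invariance in that generality), or to follow the paper's route: decompose each partition into a core plus attachment data and exhibit explicit depth- and crossing-preserving bijections on the attachment data alone.
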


We postpone  the proof of this  crucial lemma to Section~\ref{bijection11}.

\begin{lem} \label{lemqCharlier} For  $j\in [m]$, the polynomials $\F(\n|\,q)$ satisfy
\begin{align}\label{eqIplusminus-char}
\F_j^{+}({\n}|\,q) = \F(\n^*|\,q)-b[n_j]_q  \F({\n}|\,q)
-a[n_j]_q \F_j^-({\n}|\,q),
\end{align}
where $\n^*=(1, n_1, \ldots, n_m)$.
\end{lem}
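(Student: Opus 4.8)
The plan is to realize the right-hand side of \eqref{eqIplusminus-char} by decomposing the inhomogeneous partitions counted by $\F(\n^*|\,q)$ according to the block of the single new element. Write $\n^*=(1,n_1,\dots,n_m)$ and let $u$ be the unique element of the added singleton box. By the symmetry Lemma~\ref{lem:sym-charlier}, $\F(\n^*|\,q)$ is unchanged if we move this box, so I would place $u$ in the linear order immediately after the block $S_j$ (and before $S_{j+1}$). For $\pi\in\P(\n^*)$ the block $B\ni u$ has $|B|\ge 2$, and by inhomogeneity $B$ meets $S_j$ in at most one element; this gives three cases, which I expect to produce, respectively, the three terms $\F_j^+(\n|\,q)$, $b[n_j]_q\,\F(\n|\,q)$ and $a[n_j]_q\,\F_j^-(\n|\,q)$.

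First, if $B\cap S_j=\emptyset$, then relabelling $u$ as the new (largest) element of $S_j$ turns $\pi$ into an inhomogeneous partition of $[\n]$ with $S_j$ enlarged; since $u$ sits right after $S_j$ the linear order is untouched, so $\bl,\tr,\rc$ are all preserved and these partitions contribute exactly $\F_j^+(\n|\,q)$. If instead $B\cap S_j=\{y\}$, then because $u$ lies just past $S_j$ the element $y$ is forced to be the predecessor of $u$ in $B$, so $(y,u)$ is a genuine arc. When $B=\{y,u\}$ I would delete this $2$-block, losing one block (a factor $a$) and one element of $S_j$, landing in $\F_j^-(\n|\,q)$; when $|B|\ge 3$ I would delete $u$ alone, and a short comparison of the transient counts (the factor $b$ being carried either by $u$ itself, or, when $u$ is the maximum of $B$, by its neighbour $y$, whose status changes) shows these partitions land in $\F(\n|\,q)$ with an overall factor $b$. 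The crossing bookkeeping is where the factor $[n_j]_q$ must come from: the elements strictly between $y$ and $u$ are precisely the elements of $S_j$ above $y$, and — exactly as in the $q$-Hermite computation of Lemma~\ref{qhermite} — inhomogeneity forbids any arc joining two of them, so each such intermediate element contributes a controlled number of crossings with the arc incident to $u$.

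The genuine obstacle is that, unlike the matching case of Lemma~\ref{qhermite} where every vertex carries a single arc, here an intermediate element of $S_j$ may be a transient and then contributes two crossings rather than one. Consequently the crossing statistic does not factor, term by term, as $\rc(\pi')+(\text{a power collecting to }[n_j]_q)$ for the naively reduced partition $\pi'$: the bare ``delete $u$'' map gets the blocks and transients right but scrambles the $q$-grading, and one checks on small examples that the total only reassembles into $1+q+\cdots+q^{n_j-1}=[n_j]_q$ after summing over all configurations. This is precisely where Lemma~\ref{lem:sym-charlier} is indispensable, since it lets me reorganize that sum (moving the singleton box, equivalently repositioning $S_j$) so that the crossing contribution of the intermediate $S_j$-block collapses to the clean factor $[n_j]_q$. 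Making this summation rigorous, i.e.\ producing a genuinely weight-preserving bijection that realizes $[n_j]_q$ in the presence of transients, is the hard step; once it is in hand, collecting the three cases yields $\F(\n^*|\,q)=\F_j^+(\n|\,q)+b[n_j]_q\,\F(\n|\,q)+a[n_j]_q\,\F_j^-(\n|\,q)$, which is \eqref{eqIplusminus-char}.
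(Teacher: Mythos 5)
Your three-way decomposition is the right one and matches the paper's, but the argument has a genuine gap exactly where you flag it, and the gap is caused by where you chose to insert the new element. Placing $u$ immediately after $S_j$ means that an element $z\in S_j$ lying strictly between $y$ and $u$ can be a minimum, a maximum, \emph{or} a transient of its block (its block is free to reach both left into $S_1,\dots,S_{j-1}$ and right into $S_{j+1},\dots,S_m$), and a transient $z$ contributes two crossings with the arc $(y,u)$ while the others contribute one. So the number of crossings on $(y,u)$ is not determined by the position of $y$ within $S_j$, and no amount of ``reorganizing the sum'' via Lemma~\ref{lem:sym-charlier} by itself produces the geometric weights $1,q,\dots,q^{n_j-1}$ that must sum to $[n_j]_q$; you would need an additional weight-preserving bijection, which you do not supply.

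The fix is to invoke Lemma~\ref{lem:sym-charlier} \emph{before} the case analysis, to reduce to $j=1$, and then to insert the new element at the far left of the linear order, exactly as the statement's $\n^*=(1,n_1,\dots,n_m)$ suggests. With the new element labelled $1$ and $r_1$ denoting its partner (the second smallest element of its block), every $\ell$ with $2\le\ell\le r_1-1$ lies in $S_1$; its block contains no other element of $S_1$ and cannot contain $1$ (whose block already meets $S_1$ at $r_1$), so $\ell$ is forced to be the \emph{minimum} of its block and carries exactly one arc, which necessarily lands beyond $S_1$ and hence crosses $(1,r_1)$ exactly once. Thus the arc $(1,r_1)$ contributes exactly $q^{\,r_1-2}$; deleting $1$ (or the whole block $\{1,r_1\}$) then changes $\bl$, $\tr$, $\rc$ precisely as you describe, and summing over $r_1=2,\dots,n_1+1$ yields the clean factors $b[n_1]_q$ and $a[n_1]_q$. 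This is the paper's proof: the leftmost placement is not a convenience but the step that makes the $q$-count collapse.
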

 \begin{proof} By
Lemma~\ref{lem:sym-charlier},  we can suppose that $j=1$. Hence,
 it   suffices to check that
\begin{align}\label{keyrecurrence}
\sum_{\pi\in \P({\n^*})} w(\pi)
&= \F_1^{+}({\n}|\,q)+b [n_1]_q\F({\n}|\,q)+a [n_1]_q\F_1^{-}({\n}|\,q).
\end{align}
where $w(\pi)=a^{\bl (\pi)} b^{\tr(\pi)} q^{\rc(\pi)}$ since $\sg(\pi)=0$ for any $\pi\in \P(\n)$.

Given a partition $\pi\in \P(\n^{*})$, we denote by $r_1$ the integer $i>1$ which is connected to 1 by an arc. 
We classify the partitions in $\P(\n^{*})$  into three categories according to the value of $r_1$ (The reader is suggested to
draw diagrams as we do in the proof of Lemma~\ref{qhermite}):
\begin{itemize}
\item[(a)] $r_1>n_1+1$; such partitions are exactly the partitions
in $\P_1^+(\n)$, whence the enumerative
polynomial of such partitions is $\F_1^{+}({\n}|\,q)$.
\item[(b)] $2\leq r_1\leq n_1+1$;  then
the arc $(1,r_1)$ crosses with each of the $r_1-2$ arcs of which one vertex is $\ell$ with $2\leq \ell \leq r_1-1$.
Suppose $\{1,r_1\}$ is a block of $\pi$ (resp., is not a block of $\pi$).
Summing over all $r_1=2,3,\ldots,n_1+1$, it is readily seen that the
enumerative polynomial of such partitions is $
\sum_{r_1=2}^{n_1+1} a q^{r_1-2}\F_1^-(\n|\,q)=a [n_1]_q \F_1^{-}({\n}|\,q)
$ (resp., $
\sum_{r_1=2}^{n_1+1} b q^{r_1-2}\F_1(\n|\,q)=b [n_1]_q \F({\n}|\,q)
$).
\end{itemize}
Summing up the above three cases we obtain \eqref{keyrecurrence}.
 \end{proof}

 The following result is due to Anshelevich~\cite{Ans}
 and a combinatorial proof was later given by  Kim, Stanton and Zeng~\cite{Kim:Sta:Zen}.
\begin{thm}\label{thm:qlinear}  For $m\geq 1$ and  $ n_{1},\ldots, n_{m}\geq 0$,  we have
\begin{align}\label{eq:qlin}
\F(\n|\,q)={{\mathcal L}}_1\left(C_{n_1}(x|\,q)\cdots C_{n_m}(x|\,q)\right).
\end{align}
\end{thm}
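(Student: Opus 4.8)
The plan is to recognize the one-sided identity \eqref{eqIplusminus-char} as the $q$-Charlier specialization of the master system of Section~2 and then to conclude by the uniqueness theorem. First I would put the recurrence \eqref{eq:recurrqCharlier} into the normalized shape \eqref{eqgen3trr}. Since $C_0(x|\,q)=1$ and $C_1(x|\,q)=x-c$, the $q$-Charlier polynomials correspond to
\[
A_n=1,\qquad B_n=-c-b[n]_q,\qquad C_n=a[n]_q\qquad(n\ge 0),
\]
together with $\la_j=1$ for every $j$. Moreover $\mathcal{L}_1$ is normalized, $\mathcal{L}_1(1)=1$ by \eqref{defmoment}, so it is an orthogonality measure in the sense of \eqref{eqorth} and Theorems~\ref{existence}--\ref{Uniqueness} are available for it.

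Next I would turn \eqref{eqIplusminus-char} into the symmetric difference system \eqref{eqIplusminusy}. The decisive point is that Lemma~\ref{lemqCharlier}, which rests on the symmetry Lemma~\ref{lem:sym-charlier}, holds for \emph{every} index $j\in[m]$ and not merely for $j=1$. Writing \eqref{eqIplusminus-char} once for $j$ and once for $k$ and subtracting, the common term $\F(\n^*|\,q)$ cancels and one is left with
\[
\F_j^+(\n|\,q)-\F_k^+(\n|\,q)=-b([n_j]_q-[n_k]_q)\,\F(\n|\,q)-a[n_j]_q\,\F_j^-(\n|\,q)+a[n_k]_q\,\F_k^-(\n|\,q).
\]
For the coefficients above one has $u_{jk}(\n)=A_{n_j}\la_j/(A_{n_k}\la_k)=1$, $B_{n_j}-B_{n_k}=-b([n_j]_q-[n_k]_q)$ and $C_n=a[n]_q$, so this is exactly \eqref{eqIplusminusy}.

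Then I would verify the boundary conditions \eqref{eqsys}. As every $\la_j=1$, the summand $B_0(1-\la_j)$ vanishes and, using $A_0=A_1=1$ and $C_1=a[1]_q=a$, the conditions read $\F_j^+(0,\dots,0,n)=a\,\delta_{n,1}$ for $n=0,1$, together with $\F(0,\dots,0)=1$ and $\F(\n)=0$ whenever $n_1+\dots+n_{m-1}<n_m$. All four are immediate from \eqref{eq:defF}: the empty partition gives $\F(0,\dots,0)=1$; a single marked ball is forced into a forbidden singleton, giving value $0$ at $n=0$; two balls lying in distinct boxes can only be joined into one two-element block of weight $a$, giving value $a$ at $n=1$; and since a block of an inhomogeneous partition meets each box at most once, covering the $n_m$ balls of $S_m$ forces at least $n_m$ balls outside $S_m$, which is the stated vanishing.

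Having checked both the system \eqref{eqIplusminusy} and the boundary conditions \eqref{eqsys}, Theorem~\ref{Uniqueness} identifies $\F(\n|\,q)$ with $\int\prod_{j=1}^m C_{n_j}(x|\,q)\,d\mu(x)=\mathcal{L}_1\bigl(\prod_{j=1}^m C_{n_j}(x|\,q)\bigr)$, which is \eqref{eq:qlin}. I expect none of this to be the genuine difficulty: the whole weight of the argument rests on the symmetry Lemma~\ref{lem:sym-charlier}, because without it Lemma~\ref{lemqCharlier} is available only for $j=1$ and the cancellation of $\F(\n^*|\,q)$ that produces \eqref{eqIplusminusy} cannot be carried out. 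The main obstacle, deferred to Section~\ref{bijection11}, is therefore the construction of a weight-preserving bijection proving that $\F(\n|\,q)$ is invariant under permuting the box sizes $n_1,\dots,n_m$, the delicate point being the control of the crossing statistic $\rc$ under such a rearrangement.
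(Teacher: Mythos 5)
Your proposal is correct and follows essentially the same route as the paper: subtract the one-sided recurrence \eqref{eqIplusminus-char} (valid for all $j$ thanks to the symmetry Lemma~\ref{lem:sym-charlier}) written at $j$ and at $k$ to obtain the symmetric system \eqref{eqIplusminusy} with $A_n=1$, $B_n=-c-b[n]_q$, $C_n=a[n]_q$, check the boundary conditions \eqref{eqsys}, and invoke Theorem~\ref{Uniqueness}. You correctly retain the factor $b$ in the coefficient $b\bigl([n_k]_q-[n_j]_q\bigr)$ of $\F(\n|\,q)$, which is inadvertently dropped in the paper's displayed equation \eqref{eqIplusminus-char-bis}, and you rightly identify Lemma~\ref{lem:sym-charlier} as the place where the real work lies.
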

\begin{proof}

 For $j,k\in [m]$ and $j\neq k$ we deduce from   \eqref{eqIplusminus-char} that
\begin{align}\label{eqIplusminus-char-bis}
\F_j^{+}({\n}| \,q) -   \F_k^{+}({\n}|\,q)
= \left([n_k]_q-[n_j]_q   \right) \F({\n}|\,q)
-a[n_j]_q \F_j^-({\n}|\,q) +  a[n_k]_q\F_k^{-}({\n}|\,q),
\end{align}
and the boundary condition \eqref{eqsys} with $\la_j=1$ for all $j$ and $A_0C_1=A_1$. 
The result then follows by applying Theorem~\ref{Uniqueness}.
\end{proof}

\begin{rem}
When $q=0$,   the polynomials $C_n(x|\,0)$ are the so-called perturbed Chebyshev polynomials of the second kind and
$\F(\n|\,0)$ is the enumerative polynomial of inhomogeneous partitions of $[\n]$ without any arc crossings.
\end{rem}

\begin{rem}
In view of Lemmas~\ref{lem:sym-charlier}, \ref{lemqCharlier} and Theorem~\ref{thmbis},
we can also prove the above theorem by checking \eqref{eq:qlin}  for the special
$\n=1^m:=(1, \ldots, 1)$.  As $C_1(x;q)=x-c$,
the latter identity reads
\be\label{simpleeq}
\F(1^m|\,q)={{\mathcal L}}_1((x-c)^m)\qquad (m\geq 1).
\ee
By the binomial formula,   this  is equivalent to
\be
 \L_1(x^m)
                =\sum_{k=0}^m{m\choose k}c^k \F(1^{m-k}|\,q).
\ee
In view of the combinatorial interpretation of the moments \eqref{defmoment} and  the definition  \eqref{eq:defF} 
the latter identity is obvious if we enumerate the partitions $\pi$ of $[m]$
by the weight \eqref{charlier-weight}  and  according to the number of singletons.
\end{rem}

 \subsection{Linearization coefficients of $q$-Laguerre polynomials}
For $\sigma\in \s_n$  the \emph{number of crossings} of $\sigma$ is
defined by
\be
\rc(\sigma)=\sum_{i=1}^n\#\{j|j<i\leq \sigma(j)<\sigma(i)\}+
\sum_{i=1}^n\#\{j|j>i>\sigma(j)>\sigma(i)\}.
\ee
Note that  the linear functional ${{\mathcal L}}_2$ has the following combinatorial interpretation~\cite{Kas:Sta:Zen}:
\begin{align}\label{qlagmoments}
{{\mathcal L}}_2(x^n)=\sum_{\sigma\in \s_n}y^{\exc(\sigma)}q^{\rc(\sigma)}.
\end{align}

Consider the enumerative polynomial of inhomogeneous derangements
\begin{align}
I(\n|\,q):=I(\n;y|\,q)=\sum_{\sigma\in \D(\n)}y^{\exc(\sigma)}q^{\rc (\sigma)}.
\end{align}
 \begin{lem} \label{lemqLaguerre}
 The polynomials $I(\n;y|\,q)$ satisfy
\begin{multline}\label{eqIplusminus-lag}
I_j^{+}(\n|\,q) -   I_k^{+}(\n|\,q)= (yq+1) \left([n_k]_q-[n_j]_q\right) I(\n|\,q)\\
-y[n_j]_q^2 I_j^-(\n|\,q) +  y[n_k]_q^2I_k^{-}(\n|\,q),
\end{multline}
and the boundary condition \eqref{eqsys} with $\la_j=1$ for all $j$ and $A_0C_1=A_1$.
\end{lem}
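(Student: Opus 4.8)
The plan is to recognize \eqref{eqIplusminus-lag} as the $q$-Laguerre instance of the general system \eqref{eqIplusminusy} and then to verify the underlying recurrence combinatorially. Reading the data off \eqref{eq:recurrqlaguerre}, the polynomials $L_n(x|\,q)$ have $A_n=1$, $B_n=-(y[n+1]_q+[n]_q)$ and $C_n=y[n]_q^2$; taking $\la_j=1$ for all $j$ forces $v_j(\n)=1$ and $u_{jk}(\n)=1$, so \eqref{eqIplusminusy} collapses to $y_j^{+}-y_k^{+}=(B_{n_j}-B_{n_k})\,y-C_{n_j}y_j^{-}+C_{n_k}y_k^{-}$. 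Since $[n+1]_q=1+q[n]_q$, one has $[n_k+1]_q-[n_j+1]_q=q([n_k]_q-[n_j]_q)$, whence $B_{n_j}-B_{n_k}=(yq+1)([n_k]_q-[n_j]_q)$; together with $C_{n_j}=y[n_j]_q^2$ this is exactly \eqref{eqIplusminus-lag}. It therefore suffices to prove \eqref{eqIplusminus-lag}, and the boundary conditions \eqref{eqsys}, directly from the definition $I(\n|\,q)=\sum_{\sigma\in\D(\n)}y^{\exc(\sigma)}q^{\rc(\sigma)}$.

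As for the Laguerre and $q$-Charlier cases, I would first establish the single-box recurrence \eqref{eq1}, which here takes the form
\[
I(1,\n|\,q)=I_j^{+}(\n|\,q)+(yq+1)[n_j]_q\,I(\n|\,q)+y[n_j]_q^2\,I_j^{-}(\n|\,q),
\]
and then subtract its $j$ and $k$ instances: the left-hand side $I(1,\n|\,q)$ is independent of $j$, so the subtraction cancels it and returns \eqref{eqIplusminus-lag}. To prove the displayed recurrence one inserts a new point $u$ and classifies each $\sigma\in\D(\n^*)$ by $a=\sigma(u)$ and $b=\sigma^{-1}(u)$ relative to the box $S_j$ being grown. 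If $a,b\notin S_j$, then $u$ is absorbed into $S_j$ and the configuration is one counted by $I_j^{+}(\n|\,q)$. If exactly one of $a,b$ lies in $S_j$, one splices $u$ out, replacing the path $b\to u\to a$ by the single arc $b\to a$, obtaining a derangement counted by $I(\n|\,q)$; here the excedance status of the suppressed arc and the crossings it creates combine to the factor $(yq+1)[n_j]_q$. If both $a,b\in S_j$, one deletes $u$ together with one point of $S_j$, reaching a derangement counted by $I_j^{-}(\n|\,q)$, the two independent position choices yielding $[n_j]_q\cdot[n_j]_q=[n_j]_q^2$ and the single excedance yielding $y$. The mechanism is that summing $q^{(\textrm{crossings created})}$ over the $n_j$ attachment points in $S_j$ produces $1+q+\cdots+q^{n_j-1}=[n_j]_q$, exactly as the $\ell-1$ crossings are summed in Lemma~\ref{qhermite}.

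The delicate issue is that \eqref{eq1} must hold for every $j$, and the crossing statistic depends on the position of $S_j$ in the linear order. I would handle this in one of two ways. Either one proves the symmetry of $I(\n|\,q)$ in $n_1,\ldots,n_m$ (the derangement analogue of Lemma~\ref{lem:sym-charlier}) and reduces to $j=1$, where the inserted point is the least element and the analysis above is clean; or, remaining entirely within the technique of Lemma~\ref{qhermite}, one proves only the adjacent case $k=j+1$ by placing $u$ on the boundary of $S_j$ and $S_{j+1}$, so that the configurations in which both $a$ and $b$ avoid $S_j\cup S_{j+1}$ form a common term that cancels in $I_j^{+}-I_{j+1}^{+}$, and then telescopes, using $\sum_{i=j}^{k-1}([n_{i+1}]_q-[n_i]_q)=[n_k]_q-[n_j]_q$ and the parallel telescoping of the $[n_i]_q^2I_i^{-}$ terms. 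The boundary conditions \eqref{eqsys} are then immediate: $I(0,\ldots,0|\,q)=1$; $I(\n|\,q)=0$ when $\sum_{i=1}^{m-1}n_i<n_m$, since $S_m$ cannot be mapped injectively into its complement; and $I_j^{+}(0,\ldots,0,1|\,q)=y$, the only inhomogeneous derangement of two singleton boxes being the transposition, with one excedance and no crossing.

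I expect the main obstacle to be precisely this crossing bookkeeping: one must control how $\rc(\sigma)$ changes when $u$ and its two incident arcs are inserted or spliced out, so that the position-sums collapse to the $q$-integers $[n_j]_q$ and $[n_j]_q^2$. The sharpest point is the extra factor $q$ that distinguishes $(yq+1)$ from the classical $(y+1)$ of the Meixner computation (Lemma~\ref{lemmeixner} at $\beta=1$, which is the $q=1$ specialization here); it comes from the single crossing between the in-arc and the out-arc at $u$ once the spliced arc is forced to one side, and carrying it correctly through all the cases is where the argument demands genuine care.
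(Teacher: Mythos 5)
Your proposal follows essentially the same route as the paper: the paper's proof of this lemma consists precisely of quoting the single-box recurrence
$I_j^{+}(\n|\,q)=I(\n^*|\,q)-(yq+1)[n_j]_q I(\n|\,q)-y[n_j]_q^2 I_j^-(\n|\,q)$ with $\n^*=(1,n_1,\ldots,n_m)$ from \cite[eq.~(38)]{Kas:Sta:Zen}, replacing $j$ by $k$, and subtracting — exactly your rearranged displayed identity and cancellation of $I(1,\n|\,q)$. The only difference is that the paper imports that recurrence as a known result, whereas you sketch a combinatorial derivation of it; the crossing bookkeeping you flag as the main obstacle is precisely the content of the cited result and is not reproved in this paper, so nothing further is needed for the lemma as the authors prove it. (Incidentally, your reading $C_n=y[n]_q^2$ from \eqref{eq:recurrqlaguerre} is the correct one; the paper's later proof of Theorem~\ref{eq:qlinear} prints $C_n=q[n]_q^2$, which appears to be a typo.)
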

 \begin{proof} It is proved in \cite[eq. (38)]{Kas:Sta:Zen} that
 \begin{align}
 I_j^{+}(\n|\,q)=I({\n}^*|\,q)-(yq+1)[n_j]_qI(\n|\,q)-y[n_j]_q^2I_j^-(\n|\,q),
\end{align}
where $\n^*=(1, n_1, \ldots, n_m)$. Replacing $j$ by $k$  in the above equation and then subtracting the resulting equation from the above one
we get \eqref{eqIplusminus-lag}. The boundary condition is obvious.
 \end{proof}
The following  result is due to  Kasraoui, Stanton and Zeng~\cite{Kas:Sta:Zen}.
\begin{thm}\label{eq:qlinear} We have
\begin{align}\label{eq:qlin2}
I(\n|\,q)={{\mathcal L}}_2(L_{n_1}(x|\,q)\cdots L_{n_m}(x|\,q)).
\end{align}
\end{thm}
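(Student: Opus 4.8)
The plan is to recognize the system \eqref{eqIplusminus-lag} together with its boundary conditions as the specialization of the master system of Theorem~\ref{Uniqueness} to the $q$-Laguerre recurrence, after which the integral representation \eqref{eq:qlin2} is immediate. First I would read off from \eqref{eq:recurrqlaguerre} the coefficients of \eqref{eqgen3trr}, namely
\[
A_n=1,\qquad B_n=-\bigl(y[n+1]_q+[n]_q\bigr),\qquad C_n=y[n]_q^2,
\]
noting that $p_0=L_0(x|\,q)=1$ and $p_1=L_1(x|\,q)=x-y=A_0x+B_0$, so that $p_n(x)=L_n(x|\,q)$ is the orthogonal sequence $\{p_n\}$ of Section~2. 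By the moment formula \eqref{qlagmoments} the linear functional $\mathcal{L}_2$ has total mass $\mathcal{L}_2(1)=1$ and is the associated orthogonality measure $\nu$.

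Next I would set $\la_j=1$ for all $j$ in \eqref{eqIplusminusy}, so that $v_j(\n)=A_{n_j}=1$ and $u_{jk}(\n)=1$, and verify that the master system collapses to exactly \eqref{eqIplusminus-lag}. The terms in $C_{n_j},C_{n_k}$ match verbatim since $C_{n_j}=y[n_j]_q^2$, and the only point needing care is the coefficient of $I(\n|\,q)$, i.e.\ the identity
\[
B_{n_j}-B_{n_k}=(yq+1)\bigl([n_k]_q-[n_j]_q\bigr).
\]
Using $[n+1]_q=[n]_q+q^n$ I would write $B_n=-\bigl((y+1)[n]_q+yq^n\bigr)$, so $B_{n_j}-B_{n_k}=(y+1)([n_k]_q-[n_j]_q)+y(q^{n_k}-q^{n_j})$; then $(1-q)[n]_q=1-q^n$ gives $q^{n_k}-q^{n_j}=-(1-q)([n_k]_q-[n_j]_q)$, and the simplification $(y+1)-y(1-q)=1+yq$ yields the claimed coefficient. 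This single $q$-algebraic manipulation is the core of the reduction.

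With \eqref{eqIplusminus-lag} thus identified as \eqref{eqIplusminusy}, and with the boundary conditions \eqref{eqsys} (for $\la_j=1$) already furnished by Lemma~\ref{lemqLaguerre} --- the only nonzero value being $y_j^+(0,\dots,0,1)=C_1A_0/A_1=y$, which is just the weight $y^{\exc}q^{\rc}=y$ of the unique inhomogeneous derangement on two singleton boxes --- every hypothesis of Theorem~\ref{Uniqueness} is met. That theorem then identifies $I(\n|\,q)$ as the unique solution of \eqref{eqIplusminus-lag}--\eqref{eqsys}, namely $\int_\R\prod_{j=1}^m L_{n_j}(x|\,q)\,d\nu(x)=\mathcal{L}_2\bigl(\prod_{j=1}^m L_{n_j}(x|\,q)\bigr)$, which is precisely \eqref{eq:qlin2}.

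The substantive content lies entirely upstream: the partial difference equation \eqref{eqIplusminus-lag} is supplied by Lemma~\ref{lemqLaguerre} (resting on \cite[eq.~(38)]{Kas:Sta:Zen}), and the uniqueness engine is Theorem~\ref{Uniqueness}. The sole place an error could enter is the algebraic identification of $B_{n_j}-B_{n_k}$, so that is the step I would spell out in full; everything else is a direct application of the Section~2 framework, in exact parallel with the Charlier, Meixner, and $q$-Charlier cases.
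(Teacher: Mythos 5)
Your proposal is correct and follows the paper's own proof exactly: read off $A_n$, $B_n$, $C_n$ from \eqref{eq:recurrqlaguerre}, observe that \eqref{eqIplusminusy} with $\la_j=1$ collapses to \eqref{eqIplusminus-lag}, and invoke Lemma~\ref{lemqLaguerre} together with Theorem~\ref{Uniqueness}. The only differences are that you spell out the $q$-algebra behind the coefficient $(yq+1)\bigl([n_k]_q-[n_j]_q\bigr)$, which the paper dismisses with ``clearly,'' and that you correctly take $C_n=y[n]_q^2$ where the paper's displayed $C_n=q[n]_q^2$ appears to be a typo.
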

\begin{proof}
Clearly
\eqref{eqIplusminusy} reduces to \eqref{eqIplusminus-lag}  when $\la_j  =1$ for all $j$,  and
$$
A_n=1, \quad B_n=-(y[n+1]_q+[n]_q), \quad C_n=q[n]_q^2,  \qquad  n\geq 0.
$$
  From Lemma~\ref{lemqLaguerre} and Theorem~\ref{Uniqueness} we deduce \eqref{eq:qlin2}.
\end{proof}

\begin{rem}
In the above proof, we do not require the combinatorial interpretation of the moments \eqref{qlagmoments}, which was needed in
\cite{Kas:Sta:Zen}.
\end{rem}

\section{More  integrals of orthogonal polynomials}

In this section,  for  a sequence of orthogonal polynomials $\{p_n(x)\}$, we shall consider  integrals of type
\be\label{type1}
 \int_\R x^{n_0}\prod_{j=1}^m p_{n_j}(x)d\mu(x), \qquad n_0\in \N,
 \ee
and
\be\label{type2}
\int_\R x(x-1)\cdots (x-{n_0}+1)\prod_{j=1}^m p_{n_j}(x)d\mu(x), \qquad n_0\in \N,
\ee
where $\mu$ is an orthogonal  measure for $\{p_n(x)\}$.

  One important tool used in this work is  MacMahon's  Master theorem, \cite[Vol.1, pp. 93--98]{Mac} and its $\beta$-extension due to
Foata--Zeilberger~\cite{Foa:Zei}, which we now recall.

Let $V_m$ be the determinant $\det(\delta_{ij}-x_ia_{i,j})$ ($1\leq i,j\leq m$).  The MacMahon master theorem asserts that the coefficient of
$x_1^{n_1}x_2^{n_2}\dotsm x_m^{n_m}$ in the expansion of $V_m^{-1}$ is equal to the coefficient of $x_1^{n_1}x_2^{n_2}\dotsm x_m^{n_m}$ in
\be\label{macproduct}
\prod_{k=1}^m \left( a_{k,1}x_1 + \cdots + a_{k,m}x_m\right)^{n_k}.
\ee

It will be convenient to restate this in a slightly different form.
Let ${\mathcal C}({\m})$ be the set of rearrangements of the word $1^{n_1}\ldots m^{n_m}$.
For any rearrangement
$$
\gamma=\gamma(1, 1)\ldots \gamma(1, n_1)\ldots \gamma(m, 1)\ldots \gamma(m, n_m)\in {\mathcal C}({\m}),
$$
 we associate the weight
$$
w(\gamma)=\prod_{i,j}a_{i, \gamma(i,j)}\quad (1\leq i\leq m, \quad 1\leq j\leq n_i).
$$
Then, the coefficient of $x_1^{n_1}x_2^{n_2}\dotsm x_m^{n_m}$ in  \eqref{macproduct} is equal to the sum of
 all the $w(\gamma)$ with $\gamma$ running over all the elements in ${\mathcal C}({\m})$.
On the other hand,  each sequence $\n=(n_1, \ldots, n_m)$ of positive integers defines a unique mapping $\chi$ from $[\n]$ to $[m]$ given by
$\chi(j)=i$ if $j\in S_i$. For each permutation $\pi\in \s(\n)$ we let
$$
w(\pi)=\prod_{j=1}^{n} a_{\chi(j), \chi(\pi(j))}.
$$
Clearly, to each rearrangement $\gamma$ in ${\mathcal C}({\m})$,  there corresponds exactly $n_1!\cdots n_m!$ permutations  $\pi$ in $\s(\n)$ with the property that
$w(\pi)=w(\gamma)$. Therefore, the coefficient of $x_1^{n_1}x_2^{n_2}\dotsm x_m^{n_m}$ in  \eqref{macproduct} is also equal to
$$
\frac{1}{n_1!\cdots n_m!} \sum_{\pi\in \s(\n)} w(\pi).
$$
The MacMahon Master theorem can now be restated as
$$
\sum_{n_{1},\ldots, n_{m}\geq 0} \frac{x_1^{n_1}\dotsm x_m^{n_m}}{n_1!\cdots n_m!} \sum_{\pi \in \s(\n)} w(\pi)= V_m^{-1}.
$$
The $\beta$-extension of the MacMahon Master theorem~\cite{Foa:Zei} reads as follows.
\begin{thm} \label{MacMahon} We have
\be
\label{foa-zeil-mac}
\sum_{n_{1},\ldots, n_{m}\geq 0} \frac{x_1^{n_1}\dotsm x_m^{n_m}}{n_1!\cdots n_m!} \sum_{\pi \in \s(\n)} \beta^{\cyc (\pi)}w(\pi)= V_m^{-\beta}.
\ee
\end{thm}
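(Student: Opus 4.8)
The plan is to prove the $\beta$-extension \eqref{foa-zeil-mac} by reducing it to the classical MacMahon Master theorem (the $\beta=1$ case, already stated in the excerpt) through a decomposition of permutations into cycles. The key observation is that the cycle structure of a permutation $\pi\in\s(\n)$ is exactly what the weight $w(\pi)=\prod_{j=1}^n a_{\chi(j),\chi(\pi(j))}$ is built to respect: since $w$ is a product over all $j$ of edge-weights $a_{\chi(j),\chi(\pi(j))}$, it factors as a product over the cycles of $\pi$. This multiplicativity over cycles is precisely what lets the marker $\beta^{\cyc(\pi)}$ be inserted cleanly, and it suggests the \emph{exponential formula}: a structure that decomposes into connected pieces (here, cycles) has an exponential generating function equal to the exponential of the generating function of its pieces.

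First I would set up the connected (cyclic) version of the weighted enumeration. For a single cycle $C=(c_1\,c_2\,\cdots\,c_\ell)$ on a label-multiset recorded by the map $\chi$, the associated cycle-weight is $\prod_{t} a_{\chi(c_t),\chi(c_{t+1})}$ (indices cyclic). I would introduce the exponential generating function in the variables $x_1,\dots,x_m$ whose coefficient of $x_1^{n_1}\cdots x_m^{n_m}/(n_1!\cdots n_m!)$ is the sum of cycle-weights over all single cycles using exactly $n_i$ letters equal to $i$. A direct computation — summing over cycle length $\ell$ and over the ways to form a cyclic word, accounting for the $1/\ell$ cyclic symmetry — identifies this ``cycle generating function'' $L$ as $-\log V_m=-\log\det(\delta_{ij}-x_ia_{ij})$. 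Concretely, $\sum_{\ell\ge 1}\frac{1}{\ell}\tr(X A)^\ell$, where $X=\mathrm{diag}(x_1,\dots,x_m)$ and $A=(a_{ij})$, equals $-\log\det(I-XA)=-\log V_m$.

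Next I would invoke the exponential formula: because an arbitrary permutation is an unordered disjoint union of cycles and the total weight $w(\pi)$ is the product of its cycle-weights, the full exponential generating function with each cycle marked by $\beta$ is $\exp(\beta L)=\exp(-\beta\log V_m)=V_m^{-\beta}$. Matching coefficients of $x_1^{n_1}\cdots x_m^{n_m}/(n_1!\cdots n_m!)$ on both sides gives exactly \eqref{foa-zeil-mac}. The $\beta=1$ instance recovers the restatement $\sum \frac{x^{\n}}{\n!}\sum_{\pi}w(\pi)=V_m^{-1}$ derived in the excerpt, which serves as a consistency check and, read in reverse, pins down the normalization of $L$ so that no spurious constant appears.

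The main obstacle is the bookkeeping in the single-cycle computation: one must show carefully that summing the cycle-weights over all cyclic arrangements of a given letter-multiset, with the correct $1/\ell$ factor for cyclic symmetry and the $n_i!$ factors absorbed into the exponential normalization, reproduces $\frac{1}{\ell}\tr(XA)^\ell$ term-by-term. The subtlety is that cyclic words can have nontrivial stabilizers (periodic words), so the naive count of linear words divided by $\ell$ is not literally the number of distinct cycles; the standard resolution is that, once one works at the level of labeled structures underlying the exponential formula rather than distinct cyclic words, these overcounting factors cancel exactly against the factorials built into the EGF, and the trace identity $\sum_\ell \frac{1}{\ell}\tr(XA)^\ell=-\log\det(I-XA)$ holds as a formal power series. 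I would treat this as the one place demanding genuine care, while the exponential-formula step itself is then routine given the multiplicativity of $w$ over cycles.
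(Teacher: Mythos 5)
The paper does not actually prove Theorem~\ref{MacMahon}: it is quoted as a known result of Foata--Zeilberger \cite{Foa:Zei}, with only the $\beta=1$ restatement in terms of $\s(\n)$ worked out beforehand, so there is no in-paper argument to compare yours against. Your proof is correct and self-contained. The two pillars are both sound: (i) the single-cycle computation, where the passage from cycles on a labelled coloured set to colour sequences contributes a factor $n_1!\cdots n_m!$ that cancels the denominators of the exponential generating function, and the division by $\ell$ is exact because the labels in a cycle are distinct (so, as you note, the periodicity worry for cyclic words evaporates at the level of labelled structures), giving the cycle series $\sum_{\ell\ge 1}\frac{1}{\ell}\tr\bigl((XA)^{\ell}\bigr)=-\log\det(I-XA)=-\log V_m$; and (ii) the exponential formula for coloured labelled structures, legitimate here because $w(\pi)=\prod_j a_{\chi(j),\chi(\pi(j))}$ is multiplicative over the cycles of $\pi$, which turns the $\beta$-marking of cycles into $\exp(-\beta\log V_m)=V_m^{-\beta}$. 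This is the standard ``$\tr\log=\log\det$'' proof of the $\beta$-extension; Foata and Zeilberger's own treatment is more combinatorial in flavour, but your route has the advantage of making the $\beta=1$ case and the general case come out of a single computation, and it uses exactly the tool (the exponential formula, \cite[Corollary 5.1.6]{Sta}) that the paper already invokes elsewhere. One small point of rigour you could add: state explicitly that the identity $\sum_{\ell\ge1}\frac1\ell\tr(M^{\ell})=-\log\det(I-M)$ and the subsequent exponentiation are being read as identities of formal power series in $x_1,\dots,x_m$, so no convergence issues arise and $V_m^{-\beta}$ means $\exp(-\beta\log V_m)$ with $\log V_m$ the formal logarithm of a series with constant term $1$.
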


Now,  we consider  the determinant
\begin{align*}
\Delta_{m+1}&:=\left|\begin{array}{ccccc}
    1 & -cx_1& \cdots & -cx_1&-cx_1 \\
   -x_2 & 1& \cdots  & -cx_2 &-cx_2\\
    \vdots &  \vdots & \ddots&  \vdots &\vdots\\
-x_m&-x_m&\cdots&1&-cx_m\\
  -x_{0} & -x_{0} & \cdots & -x_{0}&1-x_{0} \\
  \end{array}\right|.
\end{align*}
The  proof of the following  determinant formula is left to the reader.
\begin{lem} \label{Lemma1}
Let $a$ and $b$ be any variables in a commutative ring. Then
$$
\left|\begin{array}{ccccc}
    x_1 & a& \cdots & a&a \\
   b & x_2 & \cdots  & a &a\\
    \vdots &  \vdots & \ddots&  \vdots &\vdots\\
b&b&\cdots&x_{n-1}&a\\
  b & b & \cdots & b&x_n \\
  \end{array}\right|=\frac{a\phi_n(b)-b\phi_n(a)}{a-b},
$$
where $\phi_n(x)=(x_1-x)(x_2-x)\cdots (x_n-x)$. When $a=b$, the right side should be taken as the limit
$\phi_n(a)\left(1+a\sum_{j=1}^m\frac{1}{x_j-a}\right)$.
\end{lem}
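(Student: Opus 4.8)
The plan is to avoid expanding the determinant directly and instead to exploit the fact that the matrix differs from a triangular one by a rank-one perturbation. Write $D_n$ for the determinant in question and let $M=(M_{ij})$ be the underlying $n\times n$ matrix, so that $M_{ii}=x_i$, $M_{ij}=a$ for $i<j$, and $M_{ij}=b$ for $i>j$. Let $J$ denote the $n\times n$ matrix all of whose entries equal $1$, and set
\[
f(t)=\det(M-tJ).
\]
The point is that $M-tJ$ becomes triangular for two special values of $t$: taking $t=a$ kills every entry strictly above the diagonal (since $a-a=0$), leaving a lower triangular matrix with diagonal $x_1-a,\dots,x_n-a$, whereas taking $t=b$ kills every entry strictly below the diagonal, leaving an upper triangular matrix with diagonal $x_1-b,\dots,x_n-b$. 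Hence $f(a)=\phi_n(a)$ and $f(b)=\phi_n(b)$.

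First I would prove the crucial structural fact that $f(t)$ is \emph{affine} in $t$, i.e.\ $f(t)=D_n-ct$ for a single ring element $c$. This follows from multilinearity of the determinant in its columns: since every column of $tJ$ equals $t\mathbf 1$, expanding $\det(M-tJ)$ column by column produces the term $\det M=D_n$ together with terms in which one or more columns are replaced by $-t\mathbf 1$; any term replacing two or more columns vanishes, since it then has two proportional columns, so only the constant term and the terms linear in $t$ survive. (Equivalently, this is just the rank-one nature of $J$.) Writing $f(t)=D_n-ct$ and substituting the two triangular evaluations yields the pair of identities
\[
\phi_n(a)=D_n-ca,\qquad \phi_n(b)=D_n-cb .
\]
Multiplying the first by $b$, the second by $a$, and subtracting eliminates $c$ together with the term $abc$, giving
\[
a\,\phi_n(b)-b\,\phi_n(a)=(a-b)\,D_n ,
\]
which is exactly the claimed formula once one divides by $a-b$. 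I would stress that this derivation produces the identity $(a-b)D_n=a\phi_n(b)-b\phi_n(a)$ with no division at all, so it is valid over an arbitrary commutative ring; dividing by $a-b$ is legitimate precisely when that element is not a zero-divisor.

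The remaining case $a=b$ is the limiting form, and I would read it off from the same two identities rather than by a separate computation. Subtracting them gives $\phi_n(a)-\phi_n(b)=-c(a-b)$, so in the limit $b\to a$ one has $c=-\phi_n'(a)$; differentiating $\phi_n(x)=\prod_{k}(x_k-x)$ gives $\phi_n'(a)=-\phi_n(a)\sum_{k}1/(x_k-a)$, whence $c=\phi_n(a)\sum_k 1/(x_k-a)$, and then $D_n=\phi_n(a)+ca=\phi_n(a)\bigl(1+a\sum_k 1/(x_k-a)\bigr)$, matching the stated limit. The only step requiring genuine care is the affineness of $f$; everything else is formal once that is in hand, so I expect the rank-one/multilinearity argument to be the heart of the proof, while the commutative-ring and $a=b$ caveats amount to routine bookkeeping.
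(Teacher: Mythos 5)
The paper does not supply a proof of this lemma (it states that ``the proof \ldots is left to the reader''), so there is nothing to compare your argument against; I can only assess it on its own terms, and it is correct. The three ingredients all check out: $f(t)=\det(M-tJ)$ is affine in $t$ by column-multilinearity (any term with two columns replaced by $-t\mathbf 1$ has two equal columns and dies), the evaluations $f(a)=\phi_n(a)$ and $f(b)=\phi_n(b)$ follow from the matrix becoming triangular, and eliminating $c$ from $\phi_n(a)=D_n-ca$ and $\phi_n(b)=D_n-cb$ gives the division-free identity $(a-b)D_n=a\phi_n(b)-b\phi_n(a)$, which is exactly the right form to assert over an arbitrary commutative ring. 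Your treatment of the $a=b$ case via $c=-\phi_n'(a)$ and the logarithmic-derivative computation $\phi_n'(a)=-\phi_n(a)\sum_k 1/(x_k-a)$ reproduces the stated limit (the paper's upper summation index $m$ there is evidently a typo for $n$). This is a clean, standard rank-one-perturbation argument and is very likely the proof the authors had in mind when they left it to the reader.
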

Applying the above lemma to $\Delta_{m+1}$ we obtain
\begin{align}
\Delta_{m+1}=\frac{1}{c-1}\left[ c(1+x_1)\cdots (1+x_m)-(1+cx_1)\cdots (1+cx_m)(1-(1-c)x_{0}) \right].
\end{align}
Therefore,  denoting  the elementary symmetric functions of the indeterminates $x_1, \ldots, x_m$ by $e_1({\x}),
 \ldots, e_m({\x})$, we have
\be\label{detcomputation}
\Delta_{m+1}=1-\sum_{k=2}^m (c+\cdots + c^{k-1}) e_{k}(\x)-x_{0}\prod_{j=1}^m (1+cx_j).
\ee

Let
 \be
 A^{(\al)}(n_0,\n) =  (-1)^{\sum_{j=1}^m n_j}
 \int_0^\infty {x^{n_{0}}} \frac{x^\al e^{-x}}{\Gamma(\al+1)} \prod_{j=1}^m n_j!L_{n_j}^{(\al)} (x) dx.
 \label{eqgemLag}
 \ee

A main result of this section is     the following theorem.
 \begin{thm}\label{mresult}
   The integrals   $\{A^{(\al)}(n_0, \n)\}$ have  the generating function
 \begin{multline} \label{eqgFgLN}
 \sum_{n_0, \ldots, n_{m}\geq 0}  A^{(\al)}(n_0, \n) \frac{ x_0^{n_0}}{n_0!}  \cdots \frac{x_{m}^{n_m}}{n_{m}!}\\
= \Bigl[ 1  -x_{0} \prod_{j=1}^m(1+x_j)
 -  e_2({\x})  - 2 e_3({\x}) - \cdots - (m-1)e_m({\x})
 \Bigr]^{-\al-1}.
 \end{multline}
 Moreover, we have  the following combinatorial interpretation:
\begin{align}\label{eq:intmresult}
A^{(\al)}(n_0,\n) =\sum_{\pi\in \s^*(\n)}(\al+1)^{\cyc(\pi)},
\end{align}
where $\s^*(\n)$ is the set of permutations of $S_0\cup \cdots \cup S_{m}$ such that
all the elements in box $j$ should not stay in the original box after permutation for $1\leq j\leq m$ and
the objects in box $0$ are not  restricted.
\end{thm}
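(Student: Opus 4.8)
The plan is to prove the two assertions in tandem, the bridge between them being the observation that the bracketed base of the generating function \eqref{eqgFgLN} is exactly the determinant $\Delta_{m+1}$ of \eqref{detcomputation} specialized at $c=1$. Indeed, setting $c=1$ in \eqref{detcomputation} turns $c+\cdots+c^{k-1}$ into $k-1$ and $\prod_j(1+cx_j)$ into $\prod_j(1+x_j)$, which is precisely the expression in brackets in \eqref{eqgFgLN}. So once the generating function is computed analytically, the combinatorial interpretation \eqref{eq:intmresult} will follow by applying the $\beta$-extension of MacMahon's Master Theorem (Theorem~\ref{MacMahon}) to $\Delta_{m+1}$ at $c=1$.

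First I would establish \eqref{eqgFgLN} by a direct calculation. Substituting the definition \eqref{eqgemLag} into the left-hand side and interchanging summation and integration, the factor $n_j!$ cancels against the $1/n_j!$ from the generating variable, the sign $(-1)^{n_j}$ merges with $x_j^{n_j}$ into $(-x_j)^{n_j}$, and the sum over $n_0$ collapses to $e^{x_0 x}$. Each Laguerre factor is then summed using \eqref{eqGFLag} with $t=-x_j$, giving $(1+x_j)^{-\al-1}\exp(x x_j/(1+x_j))$. Pulling out the constants leaves $\prod_{j=1}^m(1+x_j)^{-\al-1}$ times $\int_0^\infty \frac{x^\al}{\Gamma(\al+1)}\exp(-sx)\,dx$, where $s=1-x_0-\sum_{j=1}^m x_j/(1+x_j)$. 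The Gamma integral evaluates this to $s^{-\al-1}$, and a short symmetric-function computation — using $\sum_{j} x_j\prod_{i\neq j}(1+x_i)=\sum_{k\geq 1}k\,e_k(\x)$ — shows that $s\prod_j(1+x_j)$ equals the base of \eqref{eqgFgLN}, yielding the generating function.

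Next I would obtain \eqref{eq:intmresult} by reading $\Delta_{m+1}$ as $\det(\delta_{ij}-x_i a_{ij})$ over the index set $\{1,\ldots,m,0\}$ and invoking Theorem~\ref{MacMahon} with $\beta=\al+1$. The matrix entries are $a_{ii}=0$ and $a_{ij}\in\{1,c\}$ for $1\le i,j\le m$, while the row and column indexed by $0$ have all entries equal to $1$; in particular the diagonal entry $1-x_0$ forces $a_{00}=1$. Setting $c=1$, every off-diagonal entry becomes $1$, so for a permutation $\pi$ of $S_0\cup\cdots\cup S_m$ the weight $w(\pi)=\prod_\ell a_{\chi(\ell),\chi(\pi(\ell))}$ equals $1$ unless some element of a box $S_j$ with $1\le j\le m$ is sent back into that same box (forcing a factor $a_{jj}=0$); elements of $S_0$ may return to $S_0$ because $a_{00}=1$. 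Hence the surviving permutations are exactly those in $\s^*(\n)$, each weighted $(\al+1)^{\cyc(\pi)}$, and Theorem~\ref{MacMahon} identifies the coefficient of $x_0^{n_0}\prod x_j^{n_j}/(n_0!\prod n_j!)$ in $\Delta_{m+1}^{-\al-1}$ at $c=1$ as $\sum_{\pi\in\s^*(\n)}(\al+1)^{\cyc(\pi)}$. Comparing this with \eqref{eqgFgLN} gives \eqref{eq:intmresult}.

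The main obstacle I anticipate is bookkeeping rather than conceptual: correctly translating $\Delta_{m+1}$ into the $\delta_{ij}-x_ia_{ij}$ normal form demanded by Theorem~\ref{MacMahon}, and above all handling the distinguished box $0$ whose diagonal entry $1-x_0$ yields $a_{00}=1$ — so box $0$ is \emph{unrestricted} — in contrast to $a_{ii}=0$ for $i\ge 1$. One must also confirm that the specialization $c=1$ is harmless (the formal power series identities persist at $c=1$ by continuity) and verify the symmetric-function identity for $\sum_j x_j\prod_{i\neq j}(1+x_i)$ that converts the analytic base of \eqref{eqgFgLN} into the determinantal form \eqref{detcomputation}.
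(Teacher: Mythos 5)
Your proposal is correct and follows essentially the same route as the paper: the generating function is obtained by the identical computation (substituting \eqref{eqGFLag} with $t=-x_j$, evaluating the Gamma integral, and simplifying via \eqref{eqsimpGF}), and the combinatorial interpretation is then read off from the $\beta$-extension of MacMahon's Master Theorem applied to $\Delta_{m+1}$ at $c=1$ via \eqref{detcomputation}. The only difference is that you spell out the identification of the matrix $(a_{ij})$ and the role of the unrestricted box $0$, which the paper leaves implicit.
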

\begin{proof}
We use \eqref{eqGFLag} to see that
\begin{align*}
&\sum_{n_0, \ldots, n_{m}\geq 0}A^{(\al)}(n_0, \n) \prod_{j=0}^{m} \frac{x_j^{n_j}}{n_j!} \\
&= \frac{1}{\Gamma(\al+1)}\prod_{j=1}^m (1+x_j)^{-\al-1}  \int_0^\infty
\exp\Bigl( -x\bigl(1- x_{0} - \sum_{k=1}^m x_k/(1+x_k)\bigr)\Bigr) x^\al dx \\
&= \prod_{j=1}^m (1+x_j)^{-\al-1}  \Bigl[ 1-x_{0} - \sum_{k=1}^m \frac{x_k}{1+x_k} \Bigr]^{-\al-1},
\end{align*}
which reduces to the right-hand side of \eqref{eqgFgLN} after some simplification using
the following identity, which was proved in \cite{Ask:Ism:Ras}, see also \cite[(2.8)]{Ask:Ism},
\be
\label{eqsimpGF}
\prod_{j=1}^m (1+t_j) \Bigl[1- \sum_{j=1}^m \frac{t_j}{1+t_j} \Bigr] = 1- e_2({\x}) - 2e_3({\x}) -\cdots - (m-1)e_m({\x}).
\ee
This proves  \eqref{eqgFgLN}.  The rest of Theorem \ref{mresult} follows from the $\beta$-MacMahon Master theorem and
\eqref{detcomputation}.
\end{proof}

\begin{rem}
When $\al=0$, $A^{(0)}(n_0, \n)/n_0!n_1!\cdots n_m!$ can be simply interpreted as follows:
we have  boxes  of sizes $n_0, n_1, \ldots, n_{m}$ and box $j$ contains $n_j$ indistinguishable elements and we arrange the
contents such that no object in box $j$ stays in its original box when $1 \le j \le m$ with no restriction on box number $0$.
The number of possible rearrangements is $A^{(0)}(n_0, \n)/n_0!n_1!\cdots n_m!$.
\end{rem}

\begin{cor} \label{pos}
We have
\be
A^{(0)}(m,n,s) = m!n!s!\sum_{j \ge 0} \binom{m}{j}\binom{s}{n+j-m} \binom{s+m-j}{m}.
\ee
\end{cor}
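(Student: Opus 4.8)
The plan is to combine the combinatorial meaning of $A^{(0)}(m,n,s)$ furnished by Theorem~\ref{mresult} with an explicit enumeration by transport type, the whole computation collapsing through a single Vandermonde convolution. Setting $\al=0$ in \eqref{eq:intmresult}, the quantity $A^{(0)}(m,n,s)$ is the cardinality of $\s^*$, i.e. the number of permutations $\pi$ of a ground set split into three boxes $S_0,S_1,S_2$ of sizes $s,m,n$ subject to $\pi(S_1)\cap S_1=\emptyset$ and $\pi(S_2)\cap S_2=\emptyset$, with no restriction on $S_0$. First I would stratify these permutations according to their \emph{transport matrix} $T=(t_{ij})_{0\le i,j\le 2}$, where $t_{ij}$ counts the elements carried from box $S_i$ into box $S_j$. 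A standard multinomial count shows that the number of permutations realizing a prescribed $T$ equals $(s!\,m!\,n!)^2\big/\prod_{i,j}t_{ij}!$ (partition each source box and each target box according to the row/column data of $T$, then biject within each block).

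The defining restrictions are precisely $t_{11}=t_{22}=0$, together with the row and column marginals forced by $|S_0|=s$, $|S_1|=m$, $|S_2|=n$. Solving this linear system leaves exactly two free parameters, which I would take to be $p=t_{12}$ and $q=t_{21}$; the remaining entries are then affine in $p,q$, namely $t_{00}=s-m-n+p+q$, $t_{01}=m-q$, $t_{02}=n-p$, $t_{10}=m-p$, $t_{20}=n-q$ (with the convention $1/k!=0$ for $k<0$ enforcing the ranges). This presents $A^{(0)}(m,n,s)$ as an explicit double sum over $p$ and $q$ with summand $(s!\,m!\,n!)^2/\prod_{i,j}t_{ij}!$.

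The decisive step is to perform the inner summation over $q$. After separating the $p$-only factors $(n-p)!\,(m-p)!\,p!$, the remaining $q$-sum is
\[
\sum_{q}\frac{1}{(s-m-n+p+q)!\,(m-q)!\,(n-q)!\,q!}
=\frac{(s+p)!}{m!\,n!\,(s-m+p)!\,(s-n+p)!},
\]
which I would establish by rewriting $\frac{1}{(n-q)!(s-m-n+p+q)!}=\frac{1}{(s-m+p)!}\binom{s-m+p}{n-q}$ and applying the Vandermonde convolution $\sum_q\binom{m}{q}\binom{s-m+p}{n-q}=\binom{s+p}{n}$. Substituting this back yields the single sum
\[
A^{(0)}(m,n,s)=(s!)^2\,m!\,n!\sum_{p}\frac{(s+p)!}{(s-m+p)!\,(s-n+p)!\,(n-p)!\,(m-p)!\,p!},
\]
and the reindexing $j=m-p$ rewrites each summand exactly as $m!\,n!\,s!\,\binom{m}{j}\binom{s}{n+j-m}\binom{s+m-j}{m}$, which is the asserted formula.

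I expect the main obstacle to be the two middle steps rather than the final algebra: correctly deriving the per-transport count $(s!\,m!\,n!)^2/\prod t_{ij}!$ and solving the marginal constraints down to the two parameters $p,q$, and then recognizing the inner $q$-sum as a Vandermonde convolution so that it telescopes to a single ratio of factorials. Once that convolution is identified, the passage to the single sum and the substitution $j=m-p$ are entirely routine, so the heart of the proof is the setup of the transport enumeration followed by one clean application of Vandermonde's identity.
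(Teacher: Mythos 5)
Your proof is correct, and every step checks out: the per-matrix count $(s!\,m!\,n!)^2/\prod_{i,j}t_{ij}!$ is right, the marginal constraints do reduce to the two parameters $p=t_{12}$, $q=t_{21}$ with the entries you list, the $q$-sum is indeed the Vandermonde convolution $\sum_q\binom{m}{q}\binom{s-m+p}{n-q}=\binom{s+p}{n}$, and the substitution $j=m-p$ reproduces the stated summand exactly. The route differs from the paper's in framing, though the two computations are ultimately two faces of the same thing. The paper starts from the generating function of Theorem~\ref{mresult}, observes that $1-x_0-x_1x_2-x_1x_0-x_2x_0-x_1x_2x_0$ is the determinant $V$, and invokes MacMahon's Master theorem to identify $A^{(0)}(m,n,s)$ with the coefficient of $x_1^m x_2^n x_0^s$ in $(x_2+x_0)^m(x_1+x_0)^n(x_1+x_2+x_0)^s$; it then simply asserts that this coefficient equals the claimed single sum, leaving the (Vandermonde) collapse of the resulting double sum unwritten. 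You instead bypass the generating function entirely and work from the combinatorial interpretation \eqref{eq:intmresult}, stratifying the permutations of $\s^*(\n)$ by their $3\times 3$ transport (contingency) matrix; this is precisely the combinatorial content of the Master-theorem coefficient extraction, since the coefficient of $x_1^mx_2^nx_0^s$ in that product enumerates the same data. What your version buys is self-containedness and completeness: it needs only the counting interpretation, not the analytic generating function, and it makes explicit the one nontrivial summation that the paper's proof elides. What the paper's version buys is brevity and uniformity with the $\beta$-deformed case, since the same Master-theorem setup handles general $\al$ (where the weight $(\al+1)^{\cyc(\pi)}$ would complicate a naive transport-matrix count).
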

\begin{proof}
By \eqref{mresult} we have the generating function
$$
\sum_{m,n, s \ge 0} A^{(\al)}(m,n,s) \; \frac{x_1^m}{m!}\frac{x_2^n}{n!}\frac{ x_0^s}{s!}   = \frac{1}{[1-x_0 - x_1x_2-x_1x_0-x_2x_0 - x_1x_2x_0]^{\al+1}}.
$$
Since
$$
 V = \begin{vmatrix}
1 & -x_1 & -x_1 \\
-x_2 & 1  & - x_2\\
-x_0 & - x_0 & 1 -x_0
\end{vmatrix}  = 1-x_0 - x_1x_2-x_1x_0-x_2x_0 - x_1x_2x_0
$$
by  the MacMahon Master theorem, Theorem
\ref{MacMahon},   we see that $A^{(0)}(m,n,s)$ is given by  the coefficient of $x_1^m\, x_2^n\, x_0^s$ in
$(x_2+x_0)^m (x_1 +  x_0)^n (x_1+ x_2+ x_0)^s$, which is equal to the claimed expression.
\end{proof}

Motivated by the numbers $A^{(\al)}(n_0,\n)$  we consider  the following generalized linearization coefficients of Meixner polynomials:
\begin{multline}  \label{eqgenMeixner}
 B^{(\bet)}(n_0,\n)  =  (-1)^{\sum_{j=1}^m n_j} c^{-n_{0}} (1-c)^{\bet+n_{0}}  \qquad \qquad  \\
\qquad \qquad  \times
 \sum_{x=0}^\infty {x(x-1) \cdots (x-n_{0}+1)} \frac{c^x (\bet)_x}{x!}
 \prod_{j=1}^m M_{n_j}(x; \bet,c).
 \end{multline}

\begin{thm}\label{mresultM}
The integrals $\{B^{(\bet)}(n_0,\ldots, n_{m})\}$ have the generating function
\begin{multline}\label{eqgfMnumbers}
\sum_{n_0, \ldots, n_{m}\geq 0} B^{(\bet)}(n_0,\n)   \prod_{j=0}^{m} \frac{x_j^{n_j}}{n_j!}
=\Bigg[1-  \sum_{k=2}^m \frac{1-c^{1-k}}{c(1-c^{-1})} \; e_k({\x}) - x_{0} \prod_{j=1}^m(1+x_j/c) \Bigg]^{-\bet}.
\end{multline}
Moreover,
we have  the following combinatorial interpretation:
\begin{align}\label{eq:intMeixnergen}
B^{(\beta)}(n_0, \n) =\sum_{\pi\in \s^*(\n)}\beta^{\cyc(\pi)} c^{-\exc(\pi)},
\end{align}
where $\s^*(\n)$ is the same as in Theorem~\ref{mresult}.
\end{thm}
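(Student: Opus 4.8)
The plan is to mirror the proof of Theorem~\ref{mresult}: I would first obtain the generating function~\eqref{eqgfMnumbers} by a direct summation built on the Meixner generating function~\eqref{eqgfM}, and then extract the combinatorial interpretation~\eqref{eq:intMeixnergen} from the $\beta$-extension of MacMahon's Master theorem (Theorem~\ref{MacMahon}) applied to the determinant $\Delta_{m+1}$.

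For the generating function, I would sum over each $n_j$ with $1\le j\le m$ using~\eqref{eqgfM} at $t=-x_j$, so that the sign $(-1)^{\sum_{j\ge1}n_j}$ is absorbed and $\sum_{n_j\ge0}(-1)^{n_j}M_{n_j}(x;\beta,c)x_j^{n_j}/n_j!=(1+x_j/c)^{x}(1+x_j)^{-x-\beta}$. The sum over $n_0$ of the falling factorials is the binomial theorem $\sum_{n_0\ge0}x(x-1)\cdots(x-n_0+1)\,t^{n_0}/n_0!=(1+t)^{x}$ with $t=(1-c)x_0/c$, the factor $c^{-n_0}(1-c)^{n_0}$ being folded in. Pulling out $(1-c)^{\beta}\prod_{j=1}^m(1+x_j)^{-\beta}$ leaves a sum over $x$ of the form $\sum_{x\ge0}\frac{(\beta)_x}{x!}z^{x}=(1-z)^{-\beta}$, and after collecting terms the whole expression becomes $\bigl[(1-c)^{-1}\prod_{j=1}^m(1+x_j)\,(1-W)\bigr]^{-\beta}$ with $W=(c+(1-c)x_0)\prod_{j=1}^m\frac{1+x_j/c}{1+x_j}$. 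The one genuine computation is to identify the bracket: expanding $\prod_j(1+x_j)=\sum_k e_k(\x)$ and $\prod_j(1+x_j/c)=\sum_k e_k(\x)/c^{k}$, it coincides with the value of $\Delta_{m+1}$ from~\eqref{detcomputation} after the substitution $c\mapsto c^{-1}$, since the coefficient simplifies to $\frac{1-c^{1-k}}{c(1-c^{-1})}=c^{-1}+c^{-2}+\cdots+c^{1-k}$. This is exactly the base appearing in~\eqref{eqgfMnumbers}.

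For the combinatorial interpretation, I would observe that this base is precisely $\det(\delta_{ij}-x_ia_{ij})$ for the matrix underlying $\Delta_{m+1}$ with $c\mapsto c^{-1}$: ordering the boxes as $1\prec2\prec\cdots\prec m\prec0$, one has $a_{ij}=c^{-1}$ when $i$ lies in a strictly lower box than $j$, $a_{ij}=1$ when $i$ lies in a strictly higher box, $a_{ii}=0$ for $1\le i\le m$, and $a_{00}=1$. Theorem~\ref{MacMahon} with exponent $\beta$ then expresses the coefficient of $\prod_j x_j^{n_j}/n_j!$ in the $(-\beta)$th power of this determinant as $\sum_{\pi\in\s(\n^*)}\beta^{\cyc(\pi)}w(\pi)$, where $w(\pi)=\prod_j a_{\chi(j),\chi(\pi(j))}$. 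The vanishing diagonal entries $a_{ii}=0$ $(1\le i\le m)$ annihilate every $\pi$ in which some element of a box $j\ge1$ is sent back into its own box, so the surviving permutations are exactly those in $\s^*(\n)$; on these $w(\pi)=c^{-\exc_b(\pi)}$ because each box-excedance contributes $c^{-1}$ while every box-drop or box-$0$ internal step contributes $1$. Comparing with~\eqref{eqgfMnumbers} yields $B^{(\beta)}(n_0,\n)=\sum_{\pi\in\s^*(\n)}\beta^{\cyc(\pi)}c^{-\exc_b(\pi)}$, which is~\eqref{eq:intMeixnergen}.

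The step I expect to require the most care is the correct reading of the excedance statistic. For elements of the boxes $1,\ldots,m$ the constraint defining $\s^*(\n)$ forbids a same-box image, so every such step is a genuine box-excedance or box-drop and the box count agrees with the ordinary excedance count; the subtle point is box $0$, whose internal steps are weighted by $a_{00}=1$ and hence must be recorded through $\exc_b$ rather than through ordinary excedances. A one-box check (for instance $n_0=2$ with all other $n_j=0$, where the direct evaluation of the sum gives $B^{(\beta)}=\beta^2+\beta$) fixes this convention. Once the statistic is read off in this way, both halves of the theorem follow just as in the Laguerre case of Theorem~\ref{mresult}.
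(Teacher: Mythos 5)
Your proposal follows the paper's proof essentially verbatim: the generating function is obtained by summing \eqref{eqgfM} together with the binomial series exactly as you describe, reaching the same intermediate expression $\bigl[(\prod_{j}(1+x_j)-(c+(1-c)x_0)\prod_j(1+x_j/c))/(1-c)\bigr]^{-\beta}$, and the combinatorial interpretation is then read off from the $\beta$-extension of MacMahon's Master theorem applied to the matrix underlying $\Delta_{m+1}$ with $c\mapsto c^{-1}$, just as in the paper's (much terser) second paragraph. Your extra care with the statistic --- reading the weight as the box-excedance count dictated by the matrix entries, with internal box-$0$ steps weighted by $a_{00}=1$, and confirming this with the $n_0=2$ check giving $\beta^2+\beta$ --- is a correct clarification of what the paper leaves implicit in writing $c^{-\exc(\pi)}$.
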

\begin{proof}
We use  \eqref{eqgfM}  to see that
\begin{align*}
&\sum_{n_0, \ldots, n_{m}\geq 0} B^{(\beta)}(n_0,\n)
 \frac{x_0^{n_0}}{n_0!}\cdots \frac{x_m^{n_m}}{n_m!} \\
&=\sum_{x\geq 0}(1+(1-c)x_{0}/c)^x \frac{c^{x}(\beta)_x}{x!} (1-c)^\beta\prod_{j=1}^m (1+x_j/c)^x (1+x_j)^{-x-\beta}\\
&=\bigg[\frac{\prod_{j=1}^m (1+x_j)-(c+(1-c)x_{0}) \prod_{j=1}^m (1+x_j/c)}{1-c}\bigg]^{-\beta}.
\end{align*}
This gives \eqref{eqgfMnumbers} after simplification.

Comparing with \eqref{detcomputation} we see that  the $\beta=1$ case of~\eqref{eq:intMeixnergen} comes from the
MacMahon Master theorem associated with the matrix   $(a_{ij})$ with $a_{ii}=0,\, a_{ij}=1/c$ for $j>i$
and $a_{ij}=1$ for $j<i$.
The general case follows from using  the $\beta$-extension of MacMahon's Master theorem.
\end{proof}

\begin{rem} For the Charlier polynomials we have a similar result for the integral
\be
C^{(a)}(n_0, \n):=\sum_{x\geq 0} x(x-1)\cdots (x-n_0+1) \frac{e^{-a}a^x}{x!} \prod_{j=1}^m C_{n_j}^{(a)}(x).
\ee
A straight computation shows that
\begin{multline}
\sum_{n_0, \ldots, n_m\geq 0} C^{(a)}(n_0, \n) \frac{x_0^{n_0}}{n_0!} \frac{x_1^{n_1}}{n_1!} \cdots \frac{x_m^{n_m}}{n_m!}\\
=\exp\left( a[x_0+ x_0e_1(\x)+(x_0+1)e_2(\x)+\cdots +(x_0+1)e_m(\x)]  \right).
\end{multline}
We apply the exponential formula to see that
\be
C^{(a)}(n_0, \n)=\sum_{\pi\in \P^*(n_0,\n)}a^{\bl(\pi)},
\ee
where $\P^*(n_0,\n)$ is the set of partitions of $S_0\cup S_1\cup\cdots \cup S_m$ such that
each block is either a singleton of an element in $S_0$ or  inhomogeneous, i.e.,
no two elements of  $S_j$ ($0\leq j\leq m$) can be in  the
 block.
\end{rem}

It is clear that Theorem \ref{mresult} is the limit $c \to 1^-$ of Theorem \ref{mresultM}.
Similarly we have the following analogue of Corollary \ref{pos}.
\begin{cor} \label{pos-meix}
We have
$$
B^{(1)}(m,n,s) = m!n!s!\sum_{j \ge 0} \binom{m}{j}\binom{s}{n+j-m} \binom{s+m-j}{m} c^{n-2m+j}.
$$
\end{cor}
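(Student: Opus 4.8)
The plan is to mirror the proof of Corollary~\ref{pos}, simply replacing the plain incidence matrix there by its $c$-deformed analogue. First I would specialize the generating function \eqref{eqgfMnumbers} of Theorem~\ref{mresultM} to the case of three boxes (that is $m=2$ together with the distinguished free box $0$) and to $\beta=1$. Writing $x_1,x_2$ for the two restricted boxes and $x_0$ for the free one, the sum $\sum_{k=2}^{2}$ contributes only $k=2$, whose coefficient is $\frac{1-c^{-1}}{c(1-c^{-1})}=c^{-1}$, so that the denominator in \eqref{eqgfMnumbers} collapses to $1-c^{-1}x_1x_2-x_0(1+x_1/c)(1+x_2/c)$.

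Next I would identify this denominator with a $3\times3$ determinant, namely the $c\mapsto 1/c$ specialization of the $\Delta_{m+1}$ computed in \eqref{detcomputation}:
\[
V=\begin{vmatrix} 1 & -x_1/c & -x_1/c \\ -x_2 & 1 & -x_2/c \\ -x_0 & -x_0 & 1-x_0\end{vmatrix}.
\]
A one-line cofactor expansion (or an appeal to Lemma~\ref{Lemma1}) confirms that $V$ equals the denominator above. This $V$ is exactly $\det(\delta_{ij}-x_i a_{ij})$ for the matrix with $a_{ii}=0$ for $i\le 2$, $a_{00}=1$, $a_{ij}=1/c$ for $j>i$ and $a_{ij}=1$ for $j<i$, which is the very matrix singled out in the proof of Theorem~\ref{mresultM}. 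Since $\beta=1$, the generating function is just $V^{-1}$, so the MacMahon Master theorem (Theorem~\ref{MacMahon} with $\beta=1$) identifies $B^{(1)}(m,n,s)/(m!\,n!\,s!)$ with the coefficient of $x_1^m x_2^n x_0^s$ in the product of linear forms
\[
c^{-m}(x_2+x_0)^m\,(x_1+c^{-1}x_0)^n\,(x_1+x_2+x_0)^s.
\]
Setting $c=1$ recovers the product $(x_2+x_0)^m(x_1+x_0)^n(x_1+x_2+x_0)^s$ of Corollary~\ref{pos}, so the entire computation is a $c$-refinement of that one, consistent with the remark that Theorem~\ref{mresult} is the $c\to1^-$ limit of Theorem~\ref{mresultM}.

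Finally I would extract the coefficient. Expanding the three factors and imposing the three degree constraints eliminates all but one summation index; the surviving binomial factors reproduce those of Corollary~\ref{pos}, while each choice of how many $x_0$'s are drawn from the middle factor $(x_1+c^{-1}x_0)^n$ contributes the matching power of $c^{-1}$, on top of the global prefactor $c^{-m}$. Collecting these powers yields the asserted closed form. The hard part will be precisely this last bookkeeping: one must track the exponent of $c$ faithfully through the extraction and reindex the resulting sum so that the powers of $c$ line up correctly with the stated binomial coefficients (the asymmetric weight $c^{-1}$ on $x_0$ in the middle factor, but not in the outer two, is what makes this delicate). The structural scaffolding -- generating function, determinant evaluation, and the appeal to MacMahon -- is a direct transcription of the Laguerre argument and poses no difficulty.
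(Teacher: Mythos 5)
Your scaffolding is exactly what the paper intends: Corollary~\ref{pos-meix} is stated without proof as the ``similar'' analogue of Corollary~\ref{pos}, and your reduction of the denominator of \eqref{eqgfMnumbers} (at $\beta=1$, two restricted boxes) to $1-c^{-1}x_1x_2-x_0(1+x_1/c)(1+x_2/c)$, its identification with the determinant $V$, and the resulting MacMahon product $c^{-m}(x_2+x_0)^m(x_1+c^{-1}x_0)^n(x_1+x_2+x_0)^s$ are all correct. The gap is precisely in the step you defer as ``bookkeeping,'' and it cannot be closed as you describe: collecting the powers of $c$ does \emph{not} yield the asserted closed form. Writing $i$ for the number of $x_1$'s taken from $(x_1+c^{-1}x_0)^n$ and $j$ for the number of $x_0$'s taken from $(x_2+x_0)^m$, the coefficient of $x_1^mx_2^nx_0^s$ equals
\[
c^{-m}\sum_{i,j}\binom{m}{j}\binom{n}{i}\,c^{\,i-n}\,\frac{s!}{(m-i)!\,(n-m+j)!\,(s-n+i-j)!},
\]
so the exponent of $c$ depends only on $i$. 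In Corollary~\ref{pos} the double sum collapses to a single sum by a Vandermonde convolution over $i$; here that convolution is blocked by the weight $c^{i}$, and the only available collapse is over the $c$-free index $j$, which gives
\[
B^{(1)}(m,n,s)=m!\,n!\,s!\sum_{i\ge0}\binom{n}{i}\binom{s}{m-i}\binom{s+i}{n}\,c^{\,i-m-n}.
\]
This agrees with Corollary~\ref{pos} at $c=1$, but it carries different binomial coefficients and a different exponent of $c$ than the statement you set out to prove.

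Indeed the statement as printed appears to be false, so no amount of reindexing will rescue the last step. For $(m,n,s)=(1,1,1)$ the printed formula gives $2c^{-1}+1$, whereas a direct computation from \eqref{eqgenMeixner} (using $M_1(x;1,c)=1-x(1-c)/c$ and the sums $\sum_x x^kc^x$), or from the generating function of Theorem~\ref{mresultM}, gives $B^{(1)}(1,1,1)=2c^{-1}+c^{-2}$. Moreover the printed right-hand side is not symmetric under $m\leftrightarrow n$ (it yields $c^{-1}$ at $(1,0,1)$ but $c$ at $(0,1,1)$), while $B^{(1)}(m,n,s)$ manifestly is symmetric in the two polynomial degrees. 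So you should either prove the corrected single-sum formula displayed above or identify a different normalization that the corollary presupposes; what you cannot do is assert that ``collecting these powers yields the asserted closed form'' --- the asymmetric $c^{-1}$ weight on $x_0$ in the middle factor, which you rightly flagged as the delicate point, is exactly what prevents it.
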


\begin{cor} \label{connection-meix}
We have
\be
x^n=\frac{c^n}{(1-c)^{n}}\sum_{k=0}^n {n\choose k} (\bet+k)_{n-k} (-1)^k M_k(x;\bet,c).
\ee
\end{cor}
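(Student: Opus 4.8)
The plan is to prove the identity by comparing exponential generating functions in an auxiliary variable $s$. Multiplying the asserted expansion by $s^n/n!$ and summing over $n\ge 0$ turns the left-hand side into $\sum_{n\ge 0}x^n\,s^n/n!=e^{sx}$, so it suffices to show that the generating function of the right-hand side equals this same series.

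First I would absorb the scalar $c^n/(1-c)^n$ by setting $u=cs/(1-c)$, so that the right-hand generating function becomes $\sum_{n\ge 0}\frac{u^n}{n!}\sum_{k=0}^{n}\binom{n}{k}(\bet+k)_{n-k}(-1)^k M_k(x;\bet,c)$. Interchanging the two sums (valid for small $|u|$, or formally) and putting $j=n-k$, the identity $\binom{n}{k}/n!=1/(k!\,j!)$ factors the double sum, and the inner sum over $j$ is the binomial series $\sum_{j\ge 0}(\bet+k)_j\,u^j/j!=(1-u)^{-(\bet+k)}$. This collapses the expression to $(1-u)^{-\bet}\sum_{k\ge 0}M_k(x;\bet,c)\frac{1}{k!}\bigl(\tfrac{-u}{1-u}\bigr)^{k}$.

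Next I would recognize the remaining sum as the Meixner generating function $\sum_{k\ge 0}M_k(x;\bet,c)\,t^k/k!=(1-t/c)^x(1-t)^{-x-\bet}$ specialized at $t=-u/(1-u)$. Substituting this closed form, using $1-t=1/(1-u)$ to handle the factor $(1-t)^{-x-\bet}$ and reducing $1-t/c$ to a single rational function of $u$ and $c$, and then re-substituting $u=cs/(1-c)$, the powers of $(1-u)$ should cancel against $(1-u)^{-\bet}$ and the exponent $x$ should combine into one clean power series in $s$, which is then matched term by term against $\sum_n x^n s^n/n!$ to read off the coefficient of each $M_k$. An equivalent route would be induction on $n$, feeding $x^n=x\cdot x^{n-1}$ together with the three-term recurrence \eqref{eqMeixner3trr} for $M_k$ and collecting the coefficients of each $M_j$.

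The hard part will be the final algebraic collapse of the composed generating function: one must track carefully the powers of $(1-u)$ arising from the binomial series and from $(1-t)^{-x-\bet}$, and simplify the $1-t/c$ factor after the substitution $t=-u/(1-u)$. This is the decisive step, since it is exactly here that the closed form of the generating function — and hence the precise polynomial identified on the left-hand side — is determined; by comparison the interchange of summation and the convergence bookkeeping are routine.
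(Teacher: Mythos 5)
Your generating-function plan is methodologically sound, but the decisive step you deferred --- the ``final algebraic collapse'' --- does not land where you claim it will, and this is a genuine gap: carried out, your computation refutes the statement as printed rather than proving it. With $u=cs/(1-c)$ and $t=-u/(1-u)$ one has $1-t=1/(1-u)$ and $1-t/c=\bigl(c+(1-c)u\bigr)/\bigl(c(1-u)\bigr)$, so by \eqref{eqgfM}
\be
(1-u)^{-\bet}\Bigl(1-\frac{t}{c}\Bigr)^{x}(1-t)^{-x-\bet}
=\Bigl(1+\frac{(1-c)u}{c}\Bigr)^{x}=(1+s)^{x},
\ee
and $(1+s)^x=\sum_{n\ge 0}x(x-1)\cdots(x-n+1)\,s^n/n!$ is the exponential generating function of the \emph{falling factorials}, not $e^{sx}$. (A direct check at $n=2$ confirms this: the right-hand side of the corollary equals $x(x-1)$, not $x^2$.) So your final matching ``against $\sum_n x^n s^n/n!$'' would fail; the corollary is correct only with $x(x-1)\cdots(x-n+1)$ in place of $x^n$ on the left. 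This reading is in fact forced by the paper's own section: the numbers $B^{(\bet)}(n_0,\n)$ in \eqref{eqgenMeixner} are defined with the falling factorial $x(x-1)\cdots(x-n_0+1)$, and the paper's proof --- expand in the Meixner basis, use the orthogonality \eqref{OrthMeixner} to express the connection coefficient $c(n,k)$ through $B^{(\bet)}$, then read off $B^{(\bet)}$ as the coefficient of $x_0^n x_1^k/(n!\,k!)$ in the $m=1$ specialization $[1-x_0(1+x_1/c)]^{-\bet}$ of Theorem~\ref{mresultM} --- only ever evaluates the sum $\sum_{x\ge 0}x(x-1)\cdots(x-n+1)M_k(x;\bet,c)\,c^x(\bet)_x/x!$, so it too establishes the falling-factorial identity; the $x^n$ in the statement (and in the first line of the paper's proof) is a misprint.

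Setting that aside, your route is genuinely different from the paper's and arguably more self-contained: the paper goes through orthogonality plus the two-variable generating function of the generalized linearization coefficients $B^{(\bet)}$, whereas you need only the Meixner generating function \eqref{eqgfM} and the binomial series, at the price of the composition bookkeeping you described. The same caveat applies to your fallback of induction via the three-term recurrence \eqref{eqMeixner3trr}: it would likewise verify only the falling-factorial version. Had you executed the collapse instead of asserting its outcome, your calculation would have detected the misprint --- which is exactly why that step, as you rightly observed, is the one that matters.
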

\begin{proof} Let
$x^n=\sum_{k=0}^n c(n,k) M_k(x;\bet,c)$.
Using the orthogonality \eqref{OrthMeixner} we obtain
$$
(1-c)^\bet\sum_{x\geq 0} x^n M_k(x;\bet,c) \frac{(\bet)_k}{x!} c^x=c(n,k) \frac{(\bet)_kk!}{c^k}.
$$
Comparing with \eqref{eqgenMeixner} we see that the left side is equal to
$(-1)^kc^n(1-c)^{-n}B^{(\bet)}(k,n)$.  It remains to compute   $B^{(\bet)}(k,n)$, which, by Theorem~\ref{mresultM},
 is the coefficient of $\frac{x_0^n\,x_1^k}{n!\,k!}$
in the expansion
\begin{align*}
\left[1-x_0(1+x_1/c)\right]^{-\bet}&=\sum_{n\geq 0} \frac{(\beta)_n}{n!} x_0^n (1+x_1/c)^n
=\sum_{n,k\geq 0}   \frac{n!(\bet)_n}{(n-k)!} c^{-k}\frac{x_0^n\,x_1^k}{n!\,k!}.
\end{align*}
Hence $B^{(\bet)}(k,n)=\frac{n!(\bet)_n}{(n-k)!} c^{-k}$. This yields the desired result.
\end{proof}

 Let $\varphi$ be the linear functional  defined by $\varphi(f(x))=\int_\R f(x)d\mu(x)$.
 Then the integral \eqref{type1} contains the following four  special cases:
\begin{enumerate}
\item the evaluation  of $\varphi(x^{n})$  corresponds to  the moments,
\item the evaluation  of  $\varphi(\prod_{j=1}^2 p_{n_j}(x))$ corresponds to the orthogonality,
\item the evaluation  of  $\varphi(x^{n}p_{k}(x))$  combined with the orthogonality corresponds to
the  coefficient $c_{n,k}$ in the expansion $x^{n}=\sum_{k=0}^{n} c_{n,k}p_k(x)$,
\item the evaluation  of  $\varphi(\prod_{j=1}^m p_{n_j}(x))$ corresponds to the linearization coefficients.
\end{enumerate}

Since $A_0x=p_1(x)-B_0$, we have
$$
(A_0x)^{n_0}=
\sum_{l=0}^{n_0} {N\choose l} (-B_0)^{N-l} p_1(x)^l, \quad n_0\in \N.
$$
Therefore,
\begin{align}\label{eq:generalintegral}
\varphi\Big( (A_0x)^{n_0}\prod_{j=1}^m p_{n_j}(x) \Big)
=\sum_{l=0}^{n_0} {n_0\choose l} (-B_0)^{n_0-l} \varphi\Big( p_1(x)^l\prod_{j=1}^m p_{n_j}(x) \Big).
\end{align}

We can deduce the combinatorial interpretations
of  the integrals \eqref{eq:generalintegral} for the orthogonal Sheffer polynomials and the three $q$-analogues  
from the combinatorial interpretation of the corresponding linearization coefficients.

For example,  as $H_1(x)=2x$, it follows from Theorem~\ref{thmAzGiVi} that
\begin{align}
 2^{-(n_0+n_1+\cdots +n_m)/2}\int_\R \frac{e^{-x^2}}{\sqrt{\pi}} (2x)^{n_0}  \prod_{j=1}^m H_{n_j}(x) dx
 \end{align}
 is the number of perfect  inhomogeneous matchings in $\K(\n)$ with
 $$\n=(\underbrace{1, \ldots, 1}_{n_0}, n_1,n_2,\ldots, n_m).
 $$

 For the Laguerre polynomials we have  $x=-L_1^{(\alpha)}(x)+\alpha+1$, so
 \begin{multline}\label{gen-foa-zei}
\int_0^\infty \frac{x^\alpha e^{-x}}{\Gamma(\alpha+1)}  x^{n_0}\prod_{j=1}^m (-1)^{n_j} n_j!L_{n_j}^{(\alpha)}(x)dx\\
=\sum_{l=0}^{n_0} {n_0\choose l}
(\alpha+1)^{n_0-l}
\int_0^\infty \frac{x^\alpha e^{-x}}{\Gamma(\alpha+1)}   (-L_1^{(\alpha)}(x))^l\prod_{j=1}^m (-1)^{n_j} n_j!L_{n_j}^{(\alpha)}(x)dx.
\end{multline}
 We can easily recover the combinatorial interpretation~\eqref{eq:intmresult} in Theorem~\ref{mresult} from the above equation and~\eqref{foa-zei-lam}.

 For the Meixner polynomials we have $\frac{1-c}{c}x=\bet-M_1(x;\bet,c)$, so
  \begin{multline}
 \tilde B^{(\beta)}(n_0, \n)=c^{-n_0}(1-c)^{\beta+n_0}\sum_{x=0}^\infty x^{n_0} \frac{c^x(\beta)_x}{x!}\prod_{j=1}^m(-1)^{n_j}M_{n_j}(x;\beta,c)\\
= \sum_{l=0}^{n_0}{n_0\choose l}  \beta^{n_0-l}(1-c)^\beta \sum_{x=0}^\infty  \frac{c^x(\beta)_x}{x!}\left(-M_1(x;\beta,c)\right)^l
 \prod_{j=1}^m(-1)^{n_j}M_{n_j}(x;\beta,c).
 \end{multline}
 Using Theorem~\ref{thlinmeix}, we see  the following combinatorial interpretation
 \be
  \tilde B^{(\beta)}(n_0, \n)=\sum_{\pi\in \s^*(\n)}\beta^{\cyc(\pi)} c^{-\exc(\pi)-\exc_0(\pi)},
\ee
where $\s^*(\n)$ is the same as in Theorem~\ref{mresult} and $\exc_0(\pi)$ is the number of excedances of two elements in $S_0$, i.e.,
$\exc_0(\pi)=|\{i\in S_0: \pi(i)\in S_0 \quad \textrm{and}\quad \pi(i)>i\}|$.

\section{ Laguerre and Meixner polynomials revisited}

Recall \cite[p. 100]{Ism2} that the Hermite polynomials can be viewed as special Laguerre polynomials since
\begin{align*}
H_{2n+1/2\pm 1/2}(x)=(-1)^n 2^{2n} n! (2x)^{1/2\pm 1/2}L_n^{(\pm 1/2)} (x^2).
\end{align*}
Therefore the integral in \eqref{eqAzGiVi} is a special case of the integral

\begin{multline}\label{eqdefW}
W_{j,k} (m; \alpha, \beta; {\m}, {\n}) :=  \frac{(-1)^{\sum_{i=1}^j m_i + \sum_{r=1}^k n_r}}
{\Gamma(\al+1)} \\
\times
\int_0^\infty  x^{m+\al} e^{-x} \bigg[\prod_{i=1}^j m_i! L_{m_i}^{(\alpha)}(x) \bigg]
 \bigg[\prod_{r=1}^k n_r! L_{n_r}^{(\beta)}(x) \bigg]\; dx,
\end{multline}
 where ${\m} = (m_1, m_2, \ldots, m_j)$ and  ${\n} = (n_1, n_2, \ldots, n_k)$.

 In this section we study the combinatorics of  the integrals of the type in \eqref{eqdefW} 
 and their discrete analogues which result by replacing the Laguerre polynomials by Meixner polynomials.

\begin{thm}\label{mresult2}
Let $e_i(\x)$, $i=0,1, \ldots, j+k$, be  the $i$th elementary symmetric polynomial of $x_1,\ldots, x_{j+k}$.
The integrals  $\{W_{j,k} (m; \alpha, \beta; \m, {\n}) \}$ have the generating function
\begin{multline}\label{eqGFW}
\sum_{m, \,  m_i,\,n_r\ge 0} W_{j,k} (m; \alpha, \beta; \m, {\n})\frac{x_0^m}{m!}  \frac{x_1^{m_1}}{m_1!} \cdots \frac{x_j^{m_j}}{m_j!}\;
\frac{x_{j+1}^{n_1}}{n_1!} \cdots \frac{x_{j+k}^{n_k} }{n_k!}\\
=   \prod_{r=1}^k (1+ x_{j+r})^{\al-\beta}\bigg[ 1 -x_0\prod_{i=1}^{j+k}(1+x_i)- \sum_{l=2}^{j+k} (l-1)e_l(\x) \bigg]^{-\al-1}.
\end{multline}
\end{thm}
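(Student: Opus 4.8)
The plan is to imitate the proof of Theorem~\ref{mresult}, feeding the two-parameter product of Laguerre polynomials into the generating function \eqref{eqGFLag} one family at a time and then collapsing the remaining $x$-integral with the Gamma function. The only genuinely new feature is the presence of two parameters $\al$ and $\bet$, which forces us to track two different exponents through the final simplification.

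First I would multiply $W_{j,k}(m;\al,\bet;\m,\n)$ by the indicated monomials and sum over all indices. The factor $x^{m}$ coming from $x^{m+\al}$, weighted by $x_0^m/m!$, produces $e^{x_0 x}$. Each factor $(-1)^{m_i}m_i!\,L_{m_i}^{(\al)}(x)$ weighted by $x_i^{m_i}/m_i!$ is summed by \eqref{eqGFLag} at $t=-x_i$, giving $(1+x_i)^{-\al-1}\exp\bigl(x x_i/(1+x_i)\bigr)$, and likewise each $\bet$-factor contributes $(1+x_{j+r})^{-\bet-1}\exp\bigl(x x_{j+r}/(1+x_{j+r})\bigr)$. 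Collecting the exponentials, the entire $x$-dependence in the exponent is $x\bigl(-1+x_0+\sum_{i=1}^{j+k}x_i/(1+x_i)\bigr)$, so the surviving integral is $\frac{1}{\Gamma(\al+1)}\int_0^\infty x^\al e^{-Sx}\,dx=S^{-\al-1}$ with $S=1-x_0-\sum_{i=1}^{j+k}x_i/(1+x_i)$. This produces the intermediate form
\[
\prod_{i=1}^j(1+x_i)^{-\al-1}\prod_{r=1}^k(1+x_{j+r})^{-\bet-1}\Bigl[1-x_0-\sum_{i=1}^{j+k}\frac{x_i}{1+x_i}\Bigr]^{-\al-1}.
\]

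The remaining work is the algebraic simplification, and the key is to reuse \eqref{eqsimpGF}. Splitting off the $-x_0$ term and applying \eqref{eqsimpGF} with $m=j+k$ gives
\[
\prod_{i=1}^{j+k}(1+x_i)\Bigl[1-x_0-\sum_{i=1}^{j+k}\frac{x_i}{1+x_i}\Bigr]=1-x_0\prod_{i=1}^{j+k}(1+x_i)-\sum_{l=2}^{j+k}(l-1)e_l(\x),
\]
so that raising to the power $-\al-1$ pulls a factor $\prod_{i=1}^{j+k}(1+x_i)^{\al+1}$ out of the bracket. This factor cancels $(1+x_i)^{-\al-1}$ completely for $1\le i\le j$, while for each $\bet$-factor it combines with $(1+x_{j+r})^{-\bet-1}$ to leave $(1+x_{j+r})^{\al-\bet}$, producing precisely the right-hand side of \eqref{eqGFW}.

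I do not anticipate any serious obstacle, as the argument is essentially a one-line adaptation of Theorem~\ref{mresult}. The only point demanding care is the bookkeeping of the two exponents: one must verify that pulling $\prod(1+x_i)^{\al+1}$ through the product of prefactors annihilates the $\al$-family contributions and leaves the asymmetric factor $(1+x_{j+r})^{\al-\bet}$ for the $\bet$-family. One should also read the interchange of summation and integration together with the evaluation $\int_0^\infty x^\al e^{-Sx}\,dx=\Gamma(\al+1)S^{-\al-1}$ as identities of formal power series in $x_0,\dots,x_{j+k}$, which is the sense in which \eqref{eqGFW} is to be understood.
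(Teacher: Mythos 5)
Your proposal is correct and follows exactly the paper's argument: apply \eqref{eqGFLag} at $t=-x_i$ to each Laguerre factor, evaluate the resulting Gamma integral to obtain $\prod_{i=1}^j(1+x_i)^{-\al-1}\prod_{r=1}^k(1+x_{j+r})^{-\bet-1}\bigl[1-x_0-\sum_l x_l/(1+x_l)\bigr]^{-\al-1}$, and simplify via \eqref{eqsimpGF}. Your explicit bookkeeping of the exponent cancellation (leaving $(1+x_{j+r})^{\al-\bet}$) is precisely the "simplification" the paper leaves to the reader.
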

\begin{proof}
Apply the generating function \eqref{eqGFLag} to see that the left-hand side of \eqref{eqGFW} is given by
\begin{multline}
 \prod_{i=1}^j     (1+x_i)^{-\al -1}    \prod_{r=1}^k (1+ x_{j+r})^{-\beta-1}\int_0^\infty  \frac{x^{\al}}
 {\Gamma(\al+1)}
 \exp\bigg(-x+xx_0+ \sum_{l=1}^{j+k} \frac{x x_l}{1+x_l}\bigg)\; dx      \\
=   \prod_{i=1}^j     (1+x_i)^{-\al -1}    \prod_{r=1}^k (1+ x_{j+r})^{-\beta-1} \Big[1-x_0- \sum_{l=1}^{j+k}
\frac{x_l}{1+x_l}\Big]^{-\al-1}.
 \end{multline}
This establishes \eqref{eqGFW} after some simplification using \eqref{eqsimpGF}.
\end{proof}

\begin{cor}
The numbers $\{W_{j,k} (m; \alpha, \beta; {\m}, {\n}) \}$ are positive when $\al > -1$ and $\al - \beta$ is a nonnegative integer.
\end{cor}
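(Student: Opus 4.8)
The plan is to read the claim off directly from the generating function \eqref{eqGFW} of Theorem~\ref{mresult2}, by exhibiting its right-hand side as a power series in $x_0, x_1, \ldots, x_{j+k}$ all of whose coefficients are nonnegative. Since $W_{j,k}(m;\al,\bet;\m,\n)$ is, up to the positive factor $m!\,m_1!\cdots m_j!\,n_1!\cdots n_k!$, exactly the coefficient of the corresponding monomial in that series, coefficientwise nonnegativity of the right-hand side immediately yields nonnegativity of the numbers $W_{j,k}$.

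First I would handle the two factors on the right of \eqref{eqGFW} separately. The prefactor $\prod_{r=1}^k(1+x_{j+r})^{\al-\bet}$ is where the hypothesis that $\al-\bet$ is a nonnegative integer enters: under this assumption it is a genuine polynomial, and the binomial theorem shows it has nonnegative (indeed nonnegative integer) coefficients. For the remaining factor, set
$$
P = x_0\prod_{i=1}^{j+k}(1+x_i) + \sum_{l=2}^{j+k}(l-1)\,e_l(\x),
$$
so that the bracket in \eqref{eqGFW} is precisely $1-P$. The key observation is that $P$ is a polynomial with nonnegative coefficients and no constant term: the elementary symmetric polynomials $e_l(\x)$ have coefficients in $\{0,1\}$, the weights $l-1$ are nonnegative, and $x_0\prod_i(1+x_i)$ manifestly has coefficients in $\{0,1\}$.

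Next I would expand $(1-P)^{-\al-1}=\sum_{n\ge 0}\binom{\al+n}{n}P^n$ via the negative binomial series. Here the hypothesis $\al>-1$ is used: one has $\binom{\al+n}{n}=(\al+1)_n/n!>0$ for every $n\ge 0$, and each $P^n$, being a product of power series with nonnegative coefficients, again has nonnegative coefficients. Hence $(1-P)^{-\al-1}$ has nonnegative coefficients, and multiplying by the nonnegative-coefficient polynomial $\prod_{r=1}^k(1+x_{j+r})^{\al-\bet}$ preserves this property. This establishes $W_{j,k}(m;\al,\bet;\m,\n)\ge 0$.

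I expect the only real subtlety to be bookkeeping rather than a genuine obstacle: one must verify that the two subtracted terms in \eqref{eqGFW} reassemble into the single nonnegative-coefficient series $P$ with \emph{no} sign cancellation, which is exactly what puts $1-P$ in the form ``one minus a nonnegative series'' amenable to the negative binomial expansion. I would also note that ``positive'' must here be read as ``nonnegative'': for instance, when $j=1$ and $k=0$ the number $W_{1,0}(m;\al;m_1)$ is, up to a positive constant, the integral $\int_0^\infty x^{m+\al}e^{-x}L_{m_1}^{(\al)}(x)\,dx$, which vanishes whenever $m_1>m$ by the orthogonality of $\{L_n^{(\al)}\}$, so strict positivity genuinely fails and cannot be the intended assertion.
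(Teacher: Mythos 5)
Your proposal is correct and is exactly the argument the paper intends: the corollary is stated immediately after Theorem~\ref{mresult2} with no separate proof because it is read off from the generating function \eqref{eqGFW} in precisely the way you describe (polynomiality with nonnegative coefficients of $\prod_{r=1}^k(1+x_{j+r})^{\al-\bet}$ when $\al-\bet\in\N$, plus the negative binomial expansion of $[1-P]^{-\al-1}$ with $(\al+1)_n/n!>0$ for $\al>-1$). Your closing observation that ``positive'' must be read as ``nonnegative'' (e.g.\ the orthogonality relation forces some of these integrals to vanish) is a valid and worthwhile caveat that the paper glosses over.
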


Assuming that $\alpha-\beta$ is a positive integer $N$,
we can  give  a combinatorial interpretation for $W_{j,k} (m; \alpha, \beta; {\m}, {\n}) $.
Let $\s_N^*(\n)$ be  the set of $(k+1)$-tuples $(\pi, f_1, \ldots, f_k)$ such that
\begin{itemize}
\item $\sigma$ is an inhomogeneous permutation of $S_0^*\cup S_1\cup \cdots \cup S_{j}\cup S^*_{j+1}\cup\cdots \cup S^*_{j+k}$, where
$S_0^*\subseteq S_0$ and
$S^*_{j+r}\subseteq S_{j+r}$ for $r=1,\ldots, k$.
\item  $f_r:  S_{j+r}\setminus S^*_{j+r}\to [N]$ is an injection for $r=1,\ldots, k$.
\end{itemize}

{}From Theorems~\ref{mresult}  and \ref{mresult2} we deduce   the following combinatorial interpretation:
\begin{align}
W_{j,k} (m; \alpha, \beta; {\m}, {\n}) =\sum_{(\pi, f_1, \ldots, f_k) \in \s_N^*(\n)}(\al+1)^{\cyc(\pi)}.
\end{align}
%
%
Motivated by the numbers $W_{j,k} (m; \alpha, \beta; {\m}, {\n})$  we consider  the following generalized linearization coefficients of Meixner polynomials:
\begin{multline} \label{eqgenMeixner2}
 Y_{j,k}(m; \alpha, \beta; c; \m,\n):
 =  (-1)^{\sum_{i=1}^j m_i+\sum_{r=1}^k n_r} c^{-m} (1-c)^{\al+m}
  \qquad \qquad  \\
 \times
 \sum_{x=0}^\infty {x(x-1) \cdots (x-m+1)} \frac{c^x (\al)_x}{x!}
\bigg[ \prod_{i=1}^j M_{m_i}(x; \al,c)\bigg]  \bigg[ \prod_{r=1}^k M_{n_r}(x; \bet,c)\bigg],
 \end{multline}
 where ${\m} = (m_1, m_2, \ldots, m_j)$ and  ${\n} = (n_1, n_2, \ldots, n_k)$.

\begin{thm}\label{mresultM2}
The integrals $ Y_{j,k}(m; \alpha, \beta; c; \m,\n)$ have the generating function
\begin{multline}\label{eqgfMnumbers2}
\sum_{m, \, m_i, \, n_r \ge 0} Y_{j,k}(m; \alpha, \beta; c; \m,\n) \frac{x_0^m}{m!}
\prod_{i=1}^{j} \frac{x_i^{m_i}}{m_i!}\; \prod_{r=1}^{k} \frac{x_{j+r}^{n_r}}{n_r!} \\
=\prod_{r=1}^k (1+x_{j+r})^{\al-\bet} \bigg[1-  \sum_{l=2}^{j+k} \frac{1-c^{1-l}}{c(1-c^{-1})} \; e_l(\x) - x_{0} \prod_{i=1}^{j+k}(1+x_i/c) \bigg]^{-\al}.
\end{multline}
\end{thm}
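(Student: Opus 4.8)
The plan is to follow verbatim the strategy of the single-parameter result, Theorem~\ref{mresultM}, carrying out the same generating-function manipulation but now keeping track of the two parameters $\al$ and $\bet$. First I would substitute the definition~\eqref{eqgenMeixner2} into the left-hand side of~\eqref{eqgfMnumbers2}, factor out the constant $(1-c)^\al$, and interchange the formal summations so that the inner sums over $m_1,\dots,m_j$, over $n_1,\dots,n_k$, and over $m$ are carried out before the sum over $x$. Each sum over an index $m_i$ is resolved by the Meixner generating function~\eqref{eqgfM} evaluated at $t=-x_i$, which absorbs the sign $(-1)^{m_i}$ and produces $(1+x_i/c)^x(1+x_i)^{-x-\al}$; each sum over an $n_r$ produces $(1+x_{j+r}/c)^x(1+x_{j+r})^{-x-\bet}$ in the same way.

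The sum over $m$ uses $x(x-1)\cdots(x-m+1)/m!=\binom{x}{m}$ and the binomial theorem to give
\be
\sum_{m\ge 0}\binom{x}{m}\Bigl(\tfrac{(1-c)x_0}{c}\Bigr)^m=\Bigl(1+\tfrac{(1-c)x_0}{c}\Bigr)^x .
\ee
Pulling the $x$-independent factors $(1+x_i)^{-\al}$ and $(1+x_{j+r})^{-\bet}$ outside, the residual sum over $x$ becomes the negative-binomial series $\sum_{x\ge 0}(\al)_x z^x/x!=(1-z)^{-\al}$ with
\be
z=(c+(1-c)x_0)\prod_{i=1}^{j+k}\frac{1+x_i/c}{1+x_i}.
\ee
Writing $P=\prod_{i=1}^{j+k}(1+x_i)$ and $Q=\prod_{i=1}^{j+k}(1+x_i/c)$, one has $1-z=[P-(c+(1-c)x_0)Q]/P$, so the factor $P^\al$ arising from $(1-z)^{-\al}$ cancels the $\al$-powers coming from the $\al$-Meixner factors and converts the $\bet$-powers, leaving exactly the prefactor $\prod_{r=1}^k(1+x_{j+r})^{\al-\bet}$ times $[(P-(c+(1-c)x_0)Q)/(1-c)]^{-\al}$.

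It then remains to identify the bracket with the one in~\eqref{eqgfMnumbers2}. Splitting off $-(1-c)x_0Q/(1-c)=-x_0\prod_{i=1}^{j+k}(1+x_i/c)$ and expanding in elementary symmetric functions gives $P-cQ=\sum_{l=0}^{j+k}(1-c^{1-l})e_l(\x)$; here the $l=0$ term equals $1-c$ and the $l=1$ term vanishes since $1-c^{1-1}=0$, so after dividing by $1-c$ and using $\frac{1-c^{1-l}}{1-c}=-\frac{1-c^{1-l}}{c(1-c^{-1})}$ one recovers precisely the coefficient appearing in~\eqref{eqgfMnumbers2}. This step is identical to the simplification performed in the proof of Theorem~\ref{mresultM}, with the $m$ variables there replaced by the $j+k$ variables here.

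The computation is routine; the only delicate point is the final bookkeeping, where one must verify that mixing the parameters $\al$ and $\bet$ collapses the separate $\al$- and $\bet$-factors into the single clean prefactor $\prod_{r=1}^k(1+x_{j+r})^{\al-\bet}$, and that the vanishing of the $l=1$ elementary symmetric term is exactly what removes the linear contribution from the sum. As a sanity check, setting $\bet=\al$ (so that the two Meixner families coincide and the prefactor becomes $1$) recovers the single-parameter identity~\eqref{eqgfMnumbers}.
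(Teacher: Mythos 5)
Your proposal is correct and follows essentially the same route as the paper's proof: apply the Meixner generating function \eqref{eqgfM} termwise, evaluate the resulting sum over $x$ as a negative binomial series, and then simplify the bracket via the elementary symmetric function expansion of $\prod(1+x_i)-c\prod(1+x_i/c)$. Your bookkeeping of the $\al$- versus $\bet$-factors, the vanishing of the $l=1$ term, and the sanity check at $\bet=\al$ are all accurate and merely make explicit the "simplification" the paper leaves to the reader.
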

\begin{proof}
Applying \eqref{eqgfM} to  see that the left-hand side of \eqref{eqgfMnumbers2} is
\begin{align*}
&\sum_{x\geq 0}(1+(1-c)x_{0}/c)^x \frac{c^{x}(\al)_x}{x!} (1-c)^\al\\
&\qquad \times \prod_{i=1}^j (1+x_i/c)^x (1+x_i)^{-x-\al} \prod_{r=1}^k (1+x_{j+r}/c)^x (1+x_{j+r})^{-x-\bet}\\
&=(1-c)^\al \prod_{i=1}^j (1+x_i)^{-\al}  \prod_{r=1}^k (1+x_{j+r})^{-\bet}
\bigg[1-(c+(1-c)x_{0}) \prod_{i=1}^{j+k} \frac{ 1+x_i/c}{1+x_i}\bigg]^{-\al}.
\end{align*}
This establishes \eqref{eqgfMnumbers2} after some simplification using \eqref{eqsimpGF}.
\end{proof}

In the same vein, assuming that $\alpha-\beta$ is a positive integer $N$,   Theorems~\ref{mresultM} and \ref{mresultM2} imply
the following combinatorial interpretation:
\begin{align}
Y_{j,k}(m; \alpha, \beta; c; \m,\n)=\sum_{(\pi, f_1, \ldots, f_k)\in \s_N^*(\n)}\al^{\cyc(\pi)} c^{-\exc(\pi)}.
\end{align}

Note that Theorem  \ref{mresultM2}  shows that the numbers $Y_{j,k}(m; \alpha, \beta; c; \m,\n)$ are positive when $\al -\beta$ is a nonnegative integer.



\section{Proof of  Lemma~\ref{lem:sym-charlier}: Symmetry of $\F(\n|\,q)$}\label{bijection11}
Recall that $\n=(n_1, \ldots, n_m)$ is a sequence of positive
integers and $n=n_1+\cdots +n_m$. Clearly we need only to prove the
invariance  of $\F(\n|\,q)$  for the two following permutations of the
indices $n_j$'s: the transposition exchanging 1 and 2, and the
cyclic permutation mapping $i$ to $i+1 \pmod{m}$ for $i=1, \ldots,
m$. Moreover, since $\sg(\pi)=0$ and $\tr(\pi)=n-2\bl(\pi)$ for any
partition $\pi\in \P(\n)$, we see that Lemma~\ref{lem:sym-charlier}
is equivalent to the following result.

\begin{lem}\label{lem:symmetry} We have
\begin{align}\label{eqlem:1}
\sum_{\pi\in \P(\n)}a^{\bl (\pi)}q^{\rc(\pi)}&=
\sum_{\pi\in \P(n_2,n_3,\ldots,n_m,n_1)}a^{\bl (\pi)}q^{\rc(\pi)},\\
\sum_{\pi\in \P(\n)}a^{\bl (\pi)} q^{\rc(\pi)}&=
\sum_{\pi\in \P(n_2,n_1,n_3\ldots,n_m)}a^{\bl (\pi)} q^{\rc(\pi)}.\label{eqlem:2}
\end{align}
\end{lem}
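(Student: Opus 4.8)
The plan is to produce, for each of the two generating operations on the index sequence, an explicit weight-preserving bijection between $\P(\n)$ and the corresponding reindexed set, the weight being $a^{\bl(\pi)}q^{\rc(\pi)}$. Since $\bl(\pi)$ depends only on the \emph{set} of blocks and not on the linear order of $[\n]$, and since inhomogeneity refers only to box-membership (which is merely permuted), every such bijection automatically preserves $a^{\bl}$ and sends inhomogeneous partitions to inhomogeneous partitions; thus the whole difficulty is concentrated in the statistic $\rc$. As a sanity check and source of intuition, the global reflection $i\mapsto n+1-i$ already establishes invariance of $\F(\n|\,q)$ under the \emph{reversal} $(n_1,\ldots,n_m)\mapsto(n_m,\ldots,n_1)$: reflection visibly preserves the crossing condition $i<k<j<\ell$ and reverses the box order. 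Reversal together with a cyclic shift generates only the dihedral group, however, so one genuinely needs the two operations isolated in the statement, namely the cyclic shift of \eqref{eqlem:1} and the transposition of \eqref{eqlem:2}.

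For the cyclic shift I would adopt a circular viewpoint. Place $1,\ldots,n$ on a circle and observe that, because a linear diagram has no arc passing through the outer gap between $n$ and $1$, the linear crossing number $\rc(\pi)$ equals the number of \emph{cyclic} crossings of the same chords (two chords cross cyclically iff their endpoints interleave around the circle, and cutting at the outer gap recovers exactly $i<k<j<\ell$). Moving $S_1=\{1,\ldots,n_1\}$ to the end is then a rigid rotation of the circle by $n_1$ positions, which preserves cyclic crossings of a \emph{fixed} chord configuration. The subtlety is that the new linear diagram is not the rotation of the old chords: each block $B$ meets $S_1$ in at most one vertex $b_1=\min B$ (by inhomogeneity), and relocating $b_1$ to the end deletes the front arc $(b_1,b_2)$ and creates a back arc $(\max B,\,b_1)$. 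Hence I would first pass to the \emph{closed-up} diagram, in which every block of size $\ge 2$ is drawn as a cycle rather than a path; this closed diagram is honestly rotation-invariant, and the problem reduces to controlling the cyclic crossings contributed by the extra wrap-around arc of each block and showing that opening up the cycles at the two relevant vertices is crossing-preserving.

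For the adjacent transposition I would instead interchange the two contiguous intervals $S_1$ and $S_2$ and reroute arcs locally. Again each block meets $S_1$ (resp.\ $S_2$) in at most one vertex, so only the at most two arcs incident to the first $n_1+n_2$ positions need to be redirected, and I would track the change in $\rc$ arc by arc, exploiting that interleavings involving two vertices lying entirely to the right of position $n_1+n_2$ are unaffected. Organizationally, both bijections are most transparent in the \emph{Charlier history} encoding: reading $1,\ldots,n$ left to right and recording at each vertex whether it opens, closes, or continues a block, a closing or continuing vertex sitting at height $h$ and attaching to the $s$-th lowest open arc contributes $q^{h-s}$, so that summing over $s$ yields the factors $[\,\cdot\,]_q$, and passing through a box of size $n_j$ produces, after the inhomogeneity restriction, a transfer weighted by $q$-binomial factors. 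In this language $\F(\n|\,q)$ is a vacuum-to-vacuum amplitude $\langle 0|\,T_{n_1}\cdots T_{n_m}\,|0\rangle$, and symmetry amounts to the claim that this amplitude is a symmetric function of the block sizes.

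The main obstacle is precisely that the obvious relabeling does \emph{not} preserve $\rc$: rotating or interchanging intervals genuinely redistributes crossings, so one must either engineer the arc-rerouting so that the crossings lost at the front are exactly recovered at the back (the delicate wrap-around bookkeeping in the circular model), or prove directly that the transfer operators $T_{n_j}$, though non-commuting, yield a symmetric vacuum amplitude, equivalently that they lie in a common commutative subalgebra. I expect the cleanest route is the explicit bijection: define it first on a single block carrying an $S_1$-vertex, verify crossing-preservation there against all other arcs, and then argue that the contributions of distinct blocks decouple. Establishing that decoupling, i.e.\ that the wrap-around corrections for different blocks do not interfere, is the step I anticipate requiring the most care.
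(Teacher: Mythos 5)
Your reduction of the lemma to the cyclic shift \eqref{eqlem:1} and the adjacent transposition \eqref{eqlem:2}, and your observation that the reflection $i\mapsto n+1-i$ gives the reversal for free, both match the paper. But beyond that point the proposal is a plan rather than a proof: the two constructions that would actually establish \eqref{eqlem:1} and \eqref{eqlem:2} are exactly the steps you defer (``the delicate wrap-around bookkeeping,'' ``the step I anticipate requiring the most care''), and they are not routine. For the cyclic shift, the circular picture is misleading in a specific way: after rotating $S_1$ to the end, the arcs of the new partition are determined by successive elements of the relabelled blocks, so the new chord set is \emph{not} the rotation of the old one, and the discrepancy is measured precisely by the depths $\d_i(\pi)$ of the points where blocks attach to $S_1$ versus where they attach at the far end. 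Your ``closed-up diagram'' does not dispose of this: the cyclic crossing number of the closed diagram differs from $\rc(\pi)$ by contributions of the wrap-around arcs that depend on the nesting structure, so ``opening up the cycles at the two relevant vertices is crossing-preserving'' is exactly the unproved claim, not a reduction of it.

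What the paper supplies, and what is missing from your proposal, is the quantitative control of those attachment depths. It factors a partition in $^{(k)}\P_n$ (no arc inside the first $k$ points) as a triple $(\tau,O,\sigma)$ with $\rc(\pi)=\rc(\tau)+\ninv(\sigma)+\sum_{i\in O}\d_i(\tau)$, where $O\subseteq\min(\tau)$, and similarly for $\P_n^{(k)}$ with a set $C\subseteq\max(\tau)$; the cyclic shift then reduces to exhibiting a bijection $\psi_\tau:\min(\tau)\to\max(\tau)$ with $\d_{\psi_\tau(i)}(\tau)=\d_i(\tau)$, which is built by pairing NE- with SE-steps of the associated Motzkin path (Proposition~\ref{prop:psi_pi}). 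For the transposition, the analogous decomposition isolates a bipartite partition $\tau\in\P^*(n_1,n_2)$ on $[1,n_1+n_2]$ together with a marked subset of $\max(\tau)$, and the required bijection is the complementation $i\mapsto n_1+n_2+1-i$ applied to this middle layer (Proposition~\ref{prop:psi(a,b)}), again checked to preserve $\rc$, cardinalities, and total depth. Your ``reroute arcs locally and track $\rc$ arc by arc'' would have to reproduce this depth bookkeeping for \emph{all} blocks meeting $S_1\cup S_2$ simultaneously (there can be up to $n_1+n_2$ such blocks, not two arcs in total), and the interaction between distinct blocks is exactly the decoupling you acknowledge you have not established. As it stands the proposal identifies the obstacle correctly but does not overcome it.
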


For a positive  integer $k$ such that  $k<n$,  we introduce  two sets of inhomogeneous partitions:
$$
^{(k)}\P_{n}:=\P(k, \underbrace{1, \ldots, 1}_{n-k}), \quad  \P_{n}^{(k)}:=\P(\underbrace{1, \ldots, 1}_{n-k}, k).
$$
In other words,  a partition $\pi$ of $[n]$ is in $^{(k)}\P_{n}$
(resp., $\P_{n}^{(k)}$)  if and only if it has no singleton and
there is no arc in $\pi$ joining two elements in $[1,k]$ (resp.,
$[n-k+1,n]$). For instance, the two partitions $\pi_1$ and $\pi_2$
drawn at the top of Figure~\ref{fig:mapping F and G} are in
${^{(4)}\P_{13}}$ and ${\P_{13}^{(4)}}$. We first show that the
following  result implies \eqref{eqlem:1}.

\begin{prop}\label{prop:Phi}
 For any positive integer $k$,  there is a  bijection  $\Phi_{n,k}: {^{(k)}\P_{n}} \mapsto {\P_{n}^{(k)}}$
 such that for any $\pi\in {^{(k)}\P_{n}}$, we have
\begin{enumerate}
\item[(I)]  for $k< i<j$, the pair $(i,j)$ is an arc of $\pi$ if and
     only if the pair $(i-k,j-k)$ is an arc of $\Phi_{n,k}(\pi)$;
\item[(II)]  $\bl (\Phi_{n,k}(\pi))=\bl(\pi)$ and $\rc (\Phi_{n,k}(\pi))=\rc(\pi)$.
\end{enumerate}
\end{prop}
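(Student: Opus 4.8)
The plan is to construct $\Phi_{n,k}$ explicitly on arc diagrams, dispose of the cheap properties (the shape condition (I) and the preservation of $\bl$) essentially for free, and then concentrate all of the work on the single hard point, the invariance of $\rc$.

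First I would record the structural facts that drive everything. In $\pi\in{}^{(k)}\P_n$ the hypothesis that no arc joins two points of $[1,k]$ forces each of $1,\dots,k$ to have no predecessor inside its block, hence to be the minimum of its block; since distinct blocks have distinct minima, $1,\dots,k$ are exactly $k$ openers lying in $k$ distinct blocks, each carrying a single arc $(i,s(i))$ with $s(i)>k$. Dually, $\pi\in\P_n^{(k)}$ has $n-k+1,\dots,n$ as the maxima of $k$ distinct blocks, each the right end of an arc $(p(j),j)$ with $p(j)\le n-k$. Thus the whole task of $\Phi_{n,k}$ is to turn the $k$ ``head arcs'' issuing from $1,\dots,k$ into $k$ ``tail arcs'' absorbed at $n-k+1,\dots,n$, while leaving untouched everything happening strictly to the right of $k$.

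I would then define $\Phi_{n,k}$ in two stages. Stage one rigidly translates the sub-diagram carried by $\{k+1,\dots,n\}$ down onto $\{1,\dots,n-k\}$; because a shift is order preserving, this is forced by (I) and it already reproduces exactly the crossings between two arcs both living on $\{k+1,\dots,n\}$ (the ``short--short'' crossings). After this stage the $k$ former second-smallest points $s(i)$ are exposed as new block-minima, and the former block-maxima remain exposed at the top. Stage two attaches the $k$ new closers $n-k+1,\dots,n$, each joined to one currently exposed maximum; since every new arc has its right end $>n-k$, no arc on $\{1,\dots,n-k\}$ is created or destroyed, so (I) persists, and since each attachment merely lengthens an existing block, $\bl$ is automatically preserved. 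The only freedom, and the only thing left to pin down, is the matching between the $k$ new closers and the exposed maxima: it must cover every exposed singleton (so that $\Phi_{n,k}(\pi)$ has no singleton) and, crucially, it must be chosen so that $\rc$ is preserved.

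To govern the crossings I would pass to the Flajolet--Fran\c{c}on--Viennot encoding: reading $1,2,\dots,n$ from left to right and recording at each point whether it is an opener, closer or transient together with the height (the number of currently open arcs), a no-singleton partition becomes a labelled Motzkin path in which $\bl$ is the number of up-steps and $\rc$ is the sum of the labels. In this language ${}^{(k)}\P_n$ is the set of labelled paths beginning with $k$ up-steps and $\P_n^{(k)}$ the set ending with $k$ down-steps, so preserving $\rc$ becomes the purely path-theoretic statement that the sum of the labels is invariant. The decisive observation is that, because the arc from a head point is opened first, it is closed with the \emph{maximal} admissible label, whereas the arc absorbed at a tail closer is closed with label $0$; I would prove $\rc$-invariance by induction on $k$, stripping off one long arc at a time and thereby reducing to the single move ``send the first opener to a new last closer'', for which the compensating relabelling of the remaining steps (reverse-and-complement the step sequence, transporting the labels) is explicit. \textbf{The main obstacle} is exactly this label transport: one must verify that the transported labels still fall into their height-shifted admissible ranges, so that the image is a legal labelled path, and that their total is conserved exactly; in diagram terms this is the balancing of the ``short--long'' and ``long--long'' crossing counts against one another, and it is the real content of Lemma~\ref{lem:symmetry}. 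Once $\Phi_{n,k}$ is shown to be an $\rc$- and $\bl$-preserving bijection satisfying (I), the cyclic identity \eqref{eqlem:1}, and then (with the analogous adjacent-box swap for \eqref{eqlem:2}) Lemma~\ref{lem:sym-charlier}, follow.
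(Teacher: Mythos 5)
Your overall architecture matches the paper's: you strip off the $k$ long arcs, translate the residual partition $\tau$ on $\{k+1,\dots,n\}$ down to $[n-k]$ (forced by (I)), observe that $\bl$ is automatic, and reduce everything to re-attaching the $k$ long arcs so that $\rc$ is conserved. The paper packages this as bijections $G_{n,k}$ and $F_{n,k}$ onto triples $(\tau,R,\sigma)$ and proves the exact bookkeeping $\rc(\pi)=\rc(\tau)+\ninv(\sigma)+\sum_{i\in R}\d_i(\tau)$, which reduces the whole proposition to one clean lemma: for every partition $\tau$ there is a bijection $\psi_\tau:\min(\tau)\to\max(\tau)$ fixing singletons and preserving the depth $\d_i(\tau)$ (constructed on the Motzkin path by pairing each NE-step at height $h$ with the first SE-step to its right at height $h+1$). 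Keeping $\sigma$ unchanged and replacing the attachment set $O\subseteq\min(\tau)$ by $\psi_\tau(O)\subseteq\max(\tau)$ then preserves each of the three summands separately.

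This is precisely the point your proposal does not supply. You correctly isolate the obstacle (``the label transport ... is the real content''), but the resolution you sketch does not work as stated. First, ``reverse-and-complement the step sequence'' cannot be the right move: reversing the path reverses the arcs of $\tau$, which contradicts property (I) that you secured in stage one; the correct operation leaves $\tau$ and its path entirely fixed and only migrates the marked set from minima to maxima. Second, your ``decisive observation'' is half wrong: a head arc is indeed closed with maximal label, but a tail closer is generally \emph{not} closed with label $0$ --- the $k$ long arcs cross one another, contributing $\ninv(\sigma)$ at the tail closers; this term survives unchanged on both sides and must be separated out before the depth sums can be compared, which is exactly what the paper's decomposition $\rc=\rc(\tau)+\ninv(\sigma)+\sum\d_i$ accomplishes. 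Third, the induction ``strip one long arc at a time'' changes the depths of the remaining elements, so it does not obviously reduce to a single-arc move. To complete your argument you would need to state and prove the depth-preserving minima-to-maxima bijection (or an equivalent statement about admissible label ranges at NE- versus SE-steps of the same height), which is the actual mathematical content of the proposition.
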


Indeed, assuming the existence of such a bijection $\Phi_{n,k}$ with $k=n_1$, as
$\P(\n)\subseteq {}^{(n_1)}\P_{n}$,  the
property (I)  implies that
$\Phi_{n,n_1}(\P(\n))\subseteq\P(n_2,n_3,\ldots,n_m,n_1)$.
Since the cardinality of $\P(\n)$ is invariant by permutations of the
$n_i$'s and $\Phi_{n,n_1}$ is bijective, we  deduce that
$$
\Phi_{n,n_1}(\P(\n))=\P(n_2,n_3,\ldots,n_m,n_1),
$$
 and then~\eqref{eqlem:1} by applying the property (II).

 We now turn our attention to~\eqref{eqlem:2}. Define the set of inhomogeneous partitions
$$
{\P_{n}^{\,(n_1,n_2)}}:=\P(n_1,n_2, \underbrace{1, \ldots, 1}_{n-n_1-n_2}).
$$
In other words,  a partition $\pi$ of $[n]$ is in ${\P_{n}^{\,(n_1,n_2)}}$  if
and only if it has no singleton and  there is no arc connecting two integers in $[1,n_1]$
or in $[n_1+1, n_1+n_2]$. For instance,  the partitions $\pi_1$ and $\pi_2$  drawn
 in Figure~\ref{fig:mapping H,Gamma,Theta} are, respectively,  in ${\P_{14}^{(3,4)}}$  and ${\P_{14}^{(4,3)}}$.
Similarly,   we deduce \eqref{eqlem:2} from  the
  following result.

\begin{prop}\label{prop:Theta}
 There is a bijection $\Theta_n^{(n_1,n_2)}:\P^{\,(n_1,n_2)}_{n}\to \P^{\,(n_2,n_1)}_{n}$
  such that for any  $\pi\in \P^{\,(n_1,n_2)}_{n}$, we have
\begin{enumerate}
\item[(I)] for $N_2< i<j$, the pair $(i,j)$ is an arc of $\pi$ if and
     only if the pair $(i,j)$ is an arc of~$\Theta_n^{(n_1,n_2)}(\pi)$, where $N_2:=n_1+n_2$;
\item[(II)]  $\bl (\Theta_n^{(n_1,n_2)}(\pi))=\bl(\pi)$ and $\rc (\Theta_n^{(n_1,n_2)}(\pi))=\rc(\pi)$.
\end{enumerate}
\end{prop}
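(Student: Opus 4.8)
The plan is to build $\Theta_n^{(n_1,n_2)}$ as a boundary-preserving surgery on the prefix $[1,N_2]$, in exact analogy with the role of $\Phi_{n,k}$ in the proof of \eqref{eqlem:1}. The first step is to record the rigid shape forced by the two box constraints: in any $\pi\in\P^{(n_1,n_2)}_n$ no block can contain two elements of $[1,n_1]$ (they would force a forbidden arc inside $[1,n_1]$) nor two elements of $[n_1+1,N_2]$, so the trace $P_\beta:=\beta\cap[1,N_2]$ of each block $\beta$ has at most one point in each sub-box, while its tail $Q_\beta:=\beta\cap(N_2,n]$ is unrestricted. Consequently $\pi$ is determined by its restriction to $(N_2,n]$, by the \emph{internal} arcs, which form a partial matching $\mu$ from $[1,n_1]$ to $[n_1+1,N_2]$, and by the \emph{boundary} arcs, each joining a position of $[1,N_2]$ to $\min Q_\beta$. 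The reflection $\rho(p)=N_2+1-p$ sends $[1,n_1]$ onto $[n_2+1,N_2]$ and $[n_1+1,N_2]$ onto $[1,n_2]$, hence turns the two forbidden zones of $\P^{(n_1,n_2)}_n$ into those of $\P^{(n_2,n_1)}_n$; this is the mechanism that produces the swap and will be the backbone of the intermediate maps $H$ and $\Gamma$.

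The second step is to isolate what must be preserved. Keeping the restriction to $(N_2,n]$ and the set of tail-minima fixed makes property~(I) automatic, and also freezes every crossing internal to $(N_2,n]$ and every crossing between a boundary arc and an arc of $(N_2,n]$, since these depend only on that frozen data. Writing $\rc(\pi)$ as the sum of the three remaining types of crossings (internal--internal, internal--boundary, boundary--boundary) and observing that, for fixed tail data, $\bl(\pi)$ is controlled by the single parameter $a=\#\mu$, the problem reduces to a statistic-preserving bijection between the reduced configurations for the profiles $(n_1,n_2)$ and $(n_2,n_1)$: it must keep $a$, the number of crossings of $\mu$, the number of boundary stubs lying strictly under each internal arc, and the order pattern between the stubs and their (fixed, ordered) tails.

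The third step is to realize $\Theta_n^{(n_1,n_2)}=\Gamma\circ H$. The boundary arcs are the easy half: an order-preserving rehoming of the stub positions that keeps the $k$-th stub attached to the $k$-th tail preserves every boundary--boundary crossing automatically. The map $H$ straightens $\pi$ into a canonical intermediate form recording $\mu$ and the stub data; $\Gamma$ then transplants this form into the swapped box sizes, applying $\rho$ to the matching $\mu$ (which, since $\rho$ reverses order on both endpoint-sets, preserves the number of crossings of $\mu$) and redistributing the stubs. One checks that $\Gamma\circ H$ is invertible by running the same recipe for the profile $(n_2,n_1)$, and that (I)--(II) hold; \eqref{eqlem:2} then follows from $\P(\n)\subseteq\P^{(n_1,n_2)}_n$ together with property~(I), word for word as \eqref{eqlem:1} was deduced from $\Phi_{n,n_1}$.

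The hard part will be the reconciliation of the reflected matching with the rehomed stubs so that the internal--boundary statistic is reproduced exactly. A bare reflection of the whole prefix preserves internal--internal crossings but \emph{reverses} the left-to-right order of the boundary stubs, which wrecks the boundary--boundary count; one can exhibit small partitions for which this alone changes $\rc$. Hence $H$ and $\Gamma$ must decouple the matching from the stubs, reflecting only the former while rehoming the latter order-preservingly, and then decide on the shared position set $[1,N_2]$ which matched points of $[n_2+1,N_2]$ are transients (and so also carry a stub) and which are closers. This choice is forced by the requirement that the number of stubs trapped under each reflected internal arc equal the original count, and verifying that such a choice always exists, is unique, and yields a bijection is the delicate combinatorial heart of the argument, which the diagrams for $H$, $\Gamma$ and $\Theta$ are designed to make transparent.
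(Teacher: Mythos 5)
Your architecture matches the paper's: decompose $\pi$ via a map $H$ into its restriction $\gamma$ to $(N_2,n]$, its restriction $\tau$ to $[1,N_2]$ (a partial matching from $[1,n_1]$ to $[n_1+1,N_2]$ plus isolated points), the set of prefix endpoints of boundary arcs, and a permutation $\sigma$ recording the attachment pattern; then act only on the prefix data by a reflection-based map $\Gamma$ and recompose. Your crossing bookkeeping (the five types, of which three are frozen) and your observation that an order-preserving rehoming of the stubs with $\sigma$ fixed preserves the boundary--boundary count are exactly the content of the paper's Proposition~\ref{prop:H}. So far so good.

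But the proof has a genuine gap, and you have located it yourself: everything reduces to a bijection on pairs $(\tau,A)$ with $\tau\in\P^*(n_1,n_2)$ and $\sing(\tau)\subseteq A\subseteq\max(\tau)$ that preserves $\rc(\tau)$, $|A|$, and $\sum_{i\in A}\d_i(\tau)$, and you assert that the required choice of stub-carriers ``is forced,'' ``always exists,'' and ``is unique'' without proving any of it. This is precisely the paper's Proposition~\ref{prop:psi(a,b)}, and it is not a formality. The resolution is explicit: writing the arcs of $\tau$ as $(i_1,j_{\rho(1)}),\dots,(i_k,j_{\rho(k)})$ with $i_1<\dots<i_k$ and $j_1<\dots<j_k$, one takes $\tau'$ to have arcs $(\overline{j_r},\overline{i_{\rho(r)}})$ where $\overline{i}=N_2+1-i$ (so $\rc(\tau)=\ninv(\rho)=\rc(\tau')$), and one sends the stub-carrying closers $\{j_{\ell(1)}<\dots<j_{\ell(t)}\}$ to $\{u_{\ell(1)}<\dots<u_{\ell(t)}\}$, where $u_1<\dots<u_k$ enumerate $\{\overline{i_1},\dots,\overline{i_k}\}$, i.e.\ by matching up closers \emph{in increasing order of position}. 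The point that makes this work --- and that your ``forced by the trapped-stub counts'' heuristic does not establish --- is the pointwise identity $\d_{u_{\ell(s)}}(\tau')=\d_{j_{\ell(s)}}(\tau)$: in $\tau$ the depth of the $\ell$-th closer $j_\ell$ is $k-\ell$ (every arc starts to its left, and exactly the arcs ending at $j_{\ell+1},\dots,j_k$ cover it), and the same computation in $\tau'$ gives depth $k-\ell$ for $u_\ell$. Together with $\sing(\tau')=\overline{\sing(\tau)}$ and depth-invariance of singletons under reflection, this yields $\sum_{i\in A'}\d_i(\tau')=\sum_{i\in A}\d_i(\tau)$, and invertibility is immediate since $\psi_{(n_2,n_1)}\circ\psi_{(n_1,n_2)}$ is the identity. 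Without this computation your argument is a correct reduction of the Proposition to its hard case, not a proof of it.
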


Indeed, since $\P(\n)\subseteq \P_{n}^{(n_1,n_2)}$, the property (I)
of  $\Theta_n^{(n_1,n_2)}$ implies that
$\Theta_n^{(n_1,n_2)}(\P(\n))\subseteq\P(n_2,n_1,n_3,\ldots,n_m)$.
This, combined with the fact that the cardinality of $\P(\n)$ is
invariant by permutations of the $n_i$'s and $\Theta_n^{(n_1,n_2)}$
is bijective, implies that
$$\Theta_n^{(n_1,n_2)}(\P(\n))=\P(n_2,n_1,n_3,\ldots,n_m).
$$
Equation~\eqref{eqlem:2} then follows by applying the property (II)
of $\Theta_n^{(n_1,n_2)}$.

The next two subsections are dedicated to the proof of
Propositions~\ref{prop:Phi} and \ref{prop:Theta}.

\subsection{Construction of the bijection $\Phi_{n,k}$ }\label{sect:bijection1}

Given a partition $\pi\in \Pi_n$, an element $i\in [n]$ is said to be
\emph{minimal} (resp., \emph{maximal})  if $i$ is the least
(resp., largest) element of a block of $\pi$. The set of the minimal
(resp., maximal) elements in $\pi$ will be denoted $\min(\pi)$
(resp., $\max(\pi)$). For example, for $\pi= 1\,4\,6/2/3\,5$,
$\min(\pi) = \{1, 2, 3\}$ and $\max(\pi) = \{2, 5, 6\}$. Note that
 $\min(\pi)\cap\max(\pi)=\sing(\pi)$ where $\sing(\pi)$ is for the set of singletons of $\pi$.
Let $S$ be a subset of $X$.
 The \textit{restriction} of a partition $\pi=\{B_1,B_2,\ldots,B_k\}$  of $X$ on $S$
is the partition  $\{B_1\cap S, B_2\cap S, \ldots, B_k\cap S\}$ of $S$.

The key idea  for the definition of the mapping  $\Phi_{n,k}$ is some
appropriate decomposition of partitions in ${^{(k)}\P_{n}}$ and
${\P_{n}^{(k)}}$.  Let ${^{(k)} A_{n}}$ (resp., ${A_{n}^{(k)}}$) be the set of 3-tuples
$(\tau, R, \sigma)$ where
\begin{itemize}
\item $\tau\in \Pi_{n-k}$ and  $\sigma\in \s_k$,
\item $\sing(\tau)\subseteq R\subseteq \min(\tau)$ (resp., $\sing(\tau)\subseteq R\subseteq \max(\tau)$) and $|R|=k$.
\end{itemize}
For instance, in Figure~\ref{fig:mapping F and G}, we have
$(\tau_1,O,\sigma_1)\in \,{^{(4)}} A_{13}$  and
$(\tau_2,C,\sigma_2)\in \,A_{13}^{(4)}$.

We first define two simpler  mappings $F_{n,k}: {\P_{n}^{(k)}}\to
{A_{n}^{(k)}}$ and $G_{n,k}: {^{(k)}\P_{n}}\to {^{(k)}A_{n}}$.
\begin{itemize}
\item For $\pi\in {\P_{n}^{(k)}}$,  set $F_{n,k}(\pi)=(\tau, C,\sigma)$,  where
\begin{itemize}
\item $\tau$ is the restriction of $\pi$ on $[n-k]$;
\item $C$ is the set of elements in $\pi$ which are connected to an element $>n-k$ by an arc;
\item By definition of ${\P_{n}^{(k)}}$, we have $|C|=k$. Suppose $C=\{c_1<c_2<\cdots<c_k\}$, 
then $\sigma$ is the unique permutation in $\s_k$ such that  $(c_1,n-k+\sigma(1))$,
$(c_2,n-k+\sigma(2))$,\ldots, $(c_k,n-k+\sigma(k))$ are arcs of $\pi$.
\end{itemize}
\item For  $\pi\in {^{(k)}\P_{n}}$, set $G_{n,k}(\pi)=(\tau, O,\sigma)$, where
\begin{itemize}
\item $\tau\in \Pi_{n-k}$ is the partition obtained by subtracting
 $k$ from each element in the restriction of $\pi$ on $[k+1,n]$;
\item Let $M$ be the set of elements in $\pi$ which are connected to an
 element $j\leq k$ by an arc.  By definition of ${\P_{n}^{(k)}}$, we have $|M|=k$.
 Suppose $M=\{m_1<m_2<\cdots<m_k\}$, then $O$ is obtained  by subtracting $k$ from each element of $M$, i.e.,
$O=\{m_1-k,m_2-k,\ldots,m_k-k\}$;
\item  $\sigma$ is the unique permutation in $\s_k$  such that $(\sigma(1),m_1)$, $(\sigma(2),m_2)$,$\ldots$, $(\sigma(k),m_k)$
 are arcs of $\pi$.
\end{itemize}
\end{itemize}
The mappings $F_{n,k}$ and~$G_{n,k}$ are illustrated  in Figure~\ref{fig:mapping F and G}.

\begin{figure}[h!]
{\setlength{\unitlength}{0.45mm}
\begin{picture}(135,95)(-40,-65)
\put(10,0){\line(1,0){80}}\put(-35,0){\line(1,0){30}}
\put(-35,10){\makebox(-10,-4)[c]{\small $\pi_1=$}}
\put(-35,0){\circle*{1}}\put(-35,0){\makebox(-2,-4)[c]{\tiny 1}}
\put(-25,0){\circle*{1}}\put(-25,0){\makebox(-2,-4)[c]{\tiny 2}}
\put(-15,0){\circle*{1}}\put(-15,0){\makebox(-2,-4)[c]{\tiny 3}}
\put(-5,0){\circle*{1}}\put(-5,0){\makebox(-2,-4)[c]{\tiny 4}}
\put(10,0){\circle*{1}}\put(10,0){\makebox(-2,-4)[c]{\tiny 5}}
\put(20,0){\circle*{1}}\put(20,0){\makebox(-2,-4)[c]{\tiny 6}}
\put(30,0){\circle*{1}}\put(30,0){\makebox(-2,-4)[c]{\tiny 7}}
\put(40,0){\circle*{1}}\put(40,0){\makebox(-2,-4)[c]{\tiny 8}}
\put(50,0){\circle*{1}}\put(50,0){\makebox(-2,-4)[c]{\tiny 9}}
\put(60,0){\circle*{1}}\put(60,0){\makebox(-2,-4)[c]{\tiny 10}}
\put(70,0){\circle*{1}}\put(70,0){\makebox(-2,-4)[c]{\tiny 11}}
\put(80,0){\circle*{1}}\put(80,0){\makebox(-2,-4)[c]{\tiny 12}}
\put(90,0){\circle*{1}}\put(90,0){\makebox(-2,-4)[c]{\tiny 13}}
\qbezier(60,0)(70,15)(80,0)\qbezier(10,0)(25,23)(40,0)
\qbezier(30,0)(55,27)(70,0) \qbezier(80,0)(85,15)(90,0)
\red{\qbezier(-35,0)(-7.5,30)(20,0)\qbezier(-25,0)(12.5,40)(50,0)
\qbezier(-15,0)(-2.5,18)(10,0)\qbezier(-5,0)(12.5,27)(30,0)}
\put(40,-6){\vector(0,-1){20}}\put(18,-16){$G_{13,4}$}
\put(-5,-36){$\tau_1=$}
\put(10,-40){\line(1,0){80}}
\put(10,-40){\circle*{1}}\put(10,-40){\makebox(-2,-4)[c]{\tiny 1}}
\put(20,-40){\circle*{1}}\put(20,-40){\makebox(-2,-4)[c]{\tiny 2}}
\put(30,-40){\circle*{1}}\put(30,-40){\makebox(-2,-4)[c]{\tiny 3}}
\put(40,-40){\circle*{1}}\put(40,-40){\makebox(-2,-4)[c]{\tiny 4}}
\put(50,-40){\circle*{1}}\put(50,-40){\makebox(-2,-4)[c]{\tiny 5}}
\put(60,-40){\circle*{1}}\put(60,-40){\makebox(-2,-4)[c]{\tiny 6}}
\put(70,-40){\circle*{1}}\put(70,-40){\makebox(-2,-4)[c]{\tiny 7}}
\put(80,-40){\circle*{1}}\put(80,-40){\makebox(-2,-4)[c]{\tiny 8}}
\put(90,-40){\circle*{1}}\put(90,-40){\makebox(-2,-4)[c]{\tiny 9}}
\qbezier(60,-40)(70,-25)(80,-40)\qbezier(10,-40)(25,-17)(40,-40)
\qbezier(30,-40)(55,-13)(70,-40) \qbezier(80,-40)(85,-25)(90,-40)
\put(-4,-55){$O=\{1,2,3,5\}$}
\put(-5,-65){$\sigma_1=3\,1\,4\,2$}
\put(96,0){\vector(1,0){30}}  \put(98,4){$\Phi_{13,4}$}
\put(93,-50){\vector(1,0){25}}\put(96,-60){$\Psi_{13,4}$}
\end{picture}}\hspace{1.5cm}
{\setlength{\unitlength}{0.45mm}
\begin{picture}(135,95)(10,-65)
\put(10,0){\line(1,0){80}}\put(105,0){\line(1,0){30}}
\put(10,0){\circle*{1}}\put(10,0){\makebox(-2,-4)[c]{\tiny 1}}
\put(20,0){\circle*{1}}\put(20,0){\makebox(-2,-4)[c]{\tiny 2}}
\put(30,0){\circle*{1}}\put(30,0){\makebox(-2,-4)[c]{\tiny 3}}
\put(40,0){\circle*{1}}\put(40,0){\makebox(-2,-4)[c]{\tiny 4}}
\put(50,0){\circle*{1}}\put(50,0){\makebox(-2,-4)[c]{\tiny 5}}
\put(60,0){\circle*{1}}\put(60,0){\makebox(-2,-4)[c]{\tiny 6}}
\put(70,0){\circle*{1}}\put(70,0){\makebox(-2,-4)[c]{\tiny 7}}
\put(80,0){\circle*{1}}\put(80,0){\makebox(-2,-4)[c]{\tiny 8}}
\put(90,0){\circle*{1}}\put(90,0){\makebox(-2,-4)[c]{\tiny 9}}
\put(105,0){\circle*{1}}\put(105,0){\makebox(-2,-4)[c]{\tiny10}}
\put(115,0){\circle*{1}}\put(115,0){\makebox(-2,-4)[c]{\tiny11}}
\put(125,0){\circle*{1}}\put(125,0){\makebox(-2,-4)[c]{\tiny 12}}
\put(135,0){\circle*{1}}\put(135,0){\makebox(-2,-4)[c]{\tiny 13}}
\put(135,10){\makebox(10,-4)[c]{\small $=\pi_2$}}
\qbezier(60,0)(70,15)(80,0)\qbezier(10,0)(25,23)(40,0)
\qbezier(30,0)(55,27)(70,0) \qbezier(80,0)(85,15)(90,0)
\red{\qbezier(40,0)(72.5,35)(105,0)\qbezier(90,0)(102.5,20)(115,0)
\qbezier(20,0)(72.5,43)(125,0)\qbezier(50,0)(90.5,30)(135,0)}
\put(40,-6){\vector(0,-1){20}}\put(44,-16){$F_{13,4}$}
\put(-5,-36){$\tau_2=$}
\put(10,-40){\line(1,0){80}}
\put(10,-40){\circle*{1}}\put(10,-40){\makebox(-2,-4)[c]{\tiny 1}}
\put(20,-40){\circle*{1}}\put(20,-40){\makebox(-2,-4)[c]{\tiny 2}}
\put(30,-40){\circle*{1}}\put(30,-40){\makebox(-2,-4)[c]{\tiny 3}}
\put(40,-40){\circle*{1}}\put(40,-40){\makebox(-2,-4)[c]{\tiny 4}}
\put(50,-40){\circle*{1}}\put(50,-40){\makebox(-2,-4)[c]{\tiny 5}}
\put(60,-40){\circle*{1}}\put(60,-40){\makebox(-2,-4)[c]{\tiny 6}}
\put(70,-40){\circle*{1}}\put(70,-40){\makebox(-2,-4)[c]{\tiny 7}}
\put(80,-40){\circle*{1}}\put(80,-40){\makebox(-2,-4)[c]{\tiny 8}}
\put(90,-40){\circle*{1}}\put(90,-40){\makebox(-2,-4)[c]{\tiny 9}}
\qbezier(60,-40)(70,-25)(80,-40)\qbezier(10,-40)(25,-17)(40,-40)
\qbezier(30,-40)(55,-13)(70,-40) \qbezier(80,-40)(85,-25)(90,-40)
\put(-4,-65){$\sigma_2=3\,1\,4\,2$}
\put(-3,-55){$C=\{2,4,5,9\}$}
\end{picture}}
\caption{The mappings $\Phi_{n,k}$, $F_{n,k}$, $G_{n,k}$ and $\Psi_{n,k}$ }\label{fig:mapping F and G}
\end{figure}
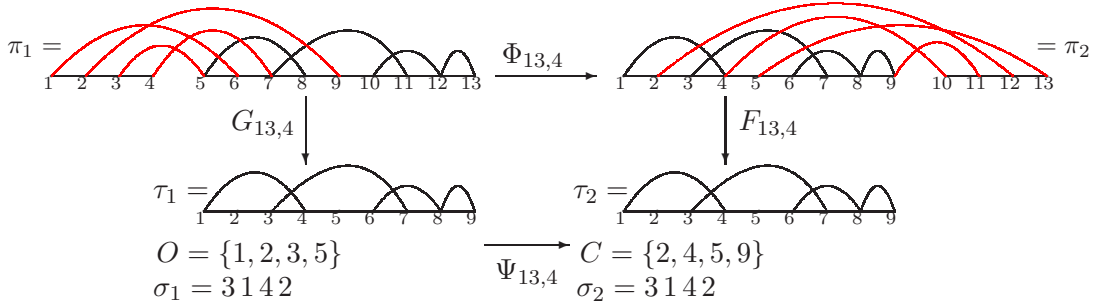

\begin{defi}\label{def: depth}
 Let $\pi$ be a partition of a set $S$ consisting of positive integers.
The \emph{depth} of an element $i$ in $\pi$, denoted $\d_i(\pi)$, is the
number of arcs $(a,b)$ in $\pi$ satisfying $a<i<b$.
\end{defi}

\begin{defi}\label{def: noninversion}
Let $\sigma=\sigma(1)\,\sigma(2)\cdots\sigma(n)$ be a permutation of $[n]$.
A pair $(i,j)$, $1\leq i<j\leq n$,  is said to be a \textit{non-inversion} in $\sigma$ if $\sigma(i)<\sigma(j)$.
The number of non-inversions in $\sigma$ will be denoted $\ninv(\sigma)$.
\end{defi}

Some useful properties of $F_{n,k}$ and $G_{n,k}$  are summarized in
the following result.
\begin{prop}\label{prop:F-lem}
The mappings $F_{n,k}: {\P_{n}^{(k)}} \to {A_{n}^{(k)}}$ and
$G_{n,k}: {^{(k)}\P_{n}} \to {^{(k)}A_{n}}$ are   bijections.
Moreover, for any $\pi\in {\P_{n}^{(k)}}$, if $F_{n,k}(\pi)=(\tau,C,
\sigma)$, then
\begin{align}\label{eq:propF}
\bl(\pi)=\bl(\tau)\quad\text{and}\quad
\rc(\pi)=\rc(\tau)+\ninv(\sigma)+\sum_{i\in C}\d_i(\tau),
\end{align}
and, for any $\pi\in {^{(k)}\P_{n}}$, if
$G_{n,k}(\pi)=(\tau,O, \sigma)$, then
\begin{align}\label{eq:propG}
\bl(\pi)=\bl(\tau)\quad\text{and}\quad
\rc(\pi)=\rc(\tau)+\ninv(\sigma)+\sum_{i\in O}\d_i(\tau).
\end{align}
\end{prop}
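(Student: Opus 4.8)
The plan is to prove all four assertions for $F_{n,k}$ in full, and then to deduce the statements for $G_{n,k}$ from them via the reflection $i\mapsto n+1-i$ of $[n]$, which interchanges the two situations. Throughout, fix $\pi\in\P_n^{(k)}$ and write $F_{n,k}(\pi)=(\tau,C,\sigma)$ with $C=\{c_1<\cdots<c_k\}$.

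First I would check that $F_{n,k}$ really lands in $A_n^{(k)}$. Since $\pi$ has no singleton, each of the $k$ elements of the box $\{n-k+1,\dots,n\}$ lies in a block of size at least two; inhomogeneity forces each such block to contain exactly one element $>n-k$, so every top element $j$ has its immediate predecessor $\mathrm{pred}(j)$ in $[n-k]$. These predecessors lie in distinct blocks, hence are pairwise distinct, so $|C|=k$. Deleting the top box makes each $\mathrm{pred}(j)$ the largest element of its block in $\tau$, whence $C\subseteq\max(\tau)$; and any singleton of $\tau$ must have had its only partner among the top elements, so $\sing(\tau)\subseteq C$. Thus $(\tau,C,\sigma)\in A_n^{(k)}$.

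For bijectivity I would exhibit the inverse: given $(\tau,C,\sigma)\in A_n^{(k)}$, adjoin new points $n-k+1,\dots,n$ and, for $1\le i\le k$, declare $n-k+\sigma(i)$ to be the immediate successor of $c_i$. Because $C\subseteq\max(\tau)$ each $c_i$ is maximal, so appending a larger element is consistent and keeps the new points in pairwise distinct blocks (inhomogeneity); because $\sing(\tau)\subseteq C$ no singleton survives. The resulting partition lies in $\P_n^{(k)}$ and plainly reverses $F_{n,k}$, so $F_{n,k}$ is a bijection. The block count is then immediate, since every new point merely extends an existing block: $\bl(\pi)=\bl(\tau)$.

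The heart of the matter is the crossing formula, which I would obtain by splitting the arc pairs of $\pi$ into three types. Pairs of arcs both lying in $[n-k]$ contribute exactly $\rc(\tau)$. For two new arcs $(c_i,n-k+\sigma(i))$ and $(c_{i'},n-k+\sigma(i'))$ with $i<i'$ (so $c_i<c_{i'}$ and both right endpoints exceed $n-k$), the crossing condition reduces to $\sigma(i)<\sigma(i')$, that is, to a non-inversion of $\sigma$, so these pairs contribute $\ninv(\sigma)$ by Definition~\ref{def: noninversion}. Finally a new arc $(c_i,n-k+\sigma(i))$ can cross a $\tau$-arc $(a,b)$ only through the pattern $a<c_i<b$, since the alternative $c_i<a<n-k+\sigma(i)<b$ is impossible (as $b\le n-k<n-k+\sigma(i)$); the number of such $\tau$-arcs is exactly $\d_{c_i}(\tau)$ by Definition~\ref{def: depth}, so summation over $i$ gives $\sum_{i\in C}\d_i(\tau)$. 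Adding the three contributions yields \eqref{eq:propF}. The statement \eqref{eq:propG} for $G_{n,k}$ then follows by applying the reflection $i\mapsto n+1-i$, under which $\P_n^{(k)}$ and ${}^{(k)}\P_n$ correspond, blocks, crossings and depths are all preserved, $\max$ and $\min$ are interchanged, and the relevant permutation statistic remains $\ninv$; alternatively one repeats the computation above verbatim. The main obstacle is precisely this crossing bookkeeping — in particular verifying that among the new arcs one obtains non-inversions rather than inversions, and that each new–old crossing is registered exactly once by the depth.
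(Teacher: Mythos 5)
Your proof is correct and follows essentially the same route as the paper: the same three-way classification of crossing pairs (old--old, new--new, new--old) yielding $\rc(\tau)$, $\ninv(\sigma)$ and $\sum_{i}\d_i(\tau)$, with bijectivity via the explicit inverse and the block count handled as in the paper. The only real difference is that you deduce the $G_{n,k}$ half from the $F_{n,k}$ half via the reflection $i\mapsto n+1-i$ (correctly noting that $\ninv$ survives the induced conjugation of $\sigma$ by the order-reversing permutation), whereas the paper simply repeats the crossing classification with the classes $L_1,L_2,L_3$; both are fine.
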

\begin{proof}
It is easy to see  that $F_{n,k}$ (resp., $G_{n,k}$) is a bijection
by constructing its inverse (use Figure~\ref{fig:mapping F and G}).

Let $S$ be  a finite subset of positive integers.  Clearly, if $\pi$
is a partition of $S$, then each block~$B$ of $\pi$ is represented
by $|B|-1$ arcs. This easily leads to the following result.
\begin{fact} \label{fact:blocks-arcs}
The number of blocks of a partition $\pi$ of $S$
is equal to $|S|-(\text{number of arcs in $\pi$})$.
\end{fact}
The first equation in~\eqref{eq:propF} and \eqref{eq:propG} is just
a consequence of the above fact.  We now turn our attention to the
second equation in~\eqref{eq:propF} and \eqref{eq:propG}.
 Let $\pi\in {\P_{n}^{(k)}}$.  Clearly, the arc crossings in the partition~$\pi$  can be divided into
three classes $R_1(\pi)$, $R_2(\pi)$ and $R_3(\pi)$ illustrated in
Table~\ref{tab:forme-cr-1}.
\begin{table}[ht]
$$
\begin{array}{|c|c|c|}
\hline
 i&L_i(\pi)&R_i(\pi)\\
 \hline
1&\hspace{1cm}{\setlength{\unitlength}{0.6mm}
\begin{picture}(80,20)(0,-10)
\put(0,0){\line(1,0){25}}\put(35,0){\line(1,0){35}}
\put(0,0){\circle*{1,3}}\put(0,0){\makebox(-2,-6)[c]{\tiny 1}}
\put(25,0){\circle*{1,3}}\put(25,0){\makebox(-2,-6)[c]{\tiny $k$}}
\put(35,0){\circle*{1,3}}
\put(70,0){\circle*{1,3}}\put(70,0){\makebox(-2,-6)[c]{\tiny $n$}}
\qbezier(40,0)(50,10)(60,0) \qbezier(45,0)(55,10)(65,0)
\end{picture}}
&\hspace{1cm} {\setlength{\unitlength}{0.6mm}
\begin{picture}(80,10)(0,-10)
\put(0,0){\line(1,0){35}}\put(45,0){\line(1,0){25}}
\put(0,0){\circle*{1,3}}\put(0,0){\makebox(-2,-6)[c]{\tiny 1}}
\put(35,0){\circle*{1,3}}\put(35,0){\makebox(-2,-6)[c]{\tiny $n-k$}}
\put(45,0){\circle*{1,3}}
\put(70,0){\circle*{1,3}}\put(70,0){\makebox(-2,-6)[c]{\tiny $n$}}
\qbezier(5,0)(15,10)(25,0) \qbezier(10,0)(20,10)(30,0)
\end{picture}
}\\
\hline
2&\hspace{1cm}{\setlength{\unitlength}{0.6mm}
\begin{picture}(80,20)(0,-10)
\put(0,0){\line(1,0){25}}\put(35,0){\line(1,0){35}}
\put(0,0){\circle*{1,3}}\put(0,0){\makebox(-2,-6)[c]{\tiny 1}}
\put(25,0){\circle*{1,3}}\put(25,0){\makebox(-2,-6)[c]{\tiny $k$}}
\put(35,0){\circle*{1,3}}
\put(70,0){\circle*{1,3}}\put(70,0){\makebox(-2,-6)[c]{\tiny $n$}}
\qbezier(5,0)(30,10)(45,0) \qbezier(20,0)(40,10)(60,0)
\end{picture}
}&\hspace{1cm}{\setlength{\unitlength}{0.6mm}
\begin{picture}(80,20)(0,-10)
\put(0,0){\line(1,0){35}}\put(45,0){\line(1,0){25}}
\put(0,0){\circle*{1,3}}\put(0,0){\makebox(-2,-6)[c]{\tiny $1$}}
\put(35,0){\circle*{1,3}}\put(35,0){\makebox(-2,-6)[c]{\tiny $n-k$}}
\put(45,0){\circle*{1,3}}
\put(70,0){\circle*{1,3}}\put(70,0){\makebox(-2,-6)[c]{\tiny $n$}}
\qbezier(10,0)(40,10)(50,0) \qbezier(25,0)(45,10)(65,0)
\end{picture}
}\\
\hline
3&\hspace{1cm}{\setlength{\unitlength}{0.6mm}
\begin{picture}(80,20)(0,-10)
\put(0,0){\line(1,0){25}}\put(35,0){\line(1,0){35}}
\put(0,0){\circle*{1,3}}\put(0,0){\makebox(-2,-6)[c]{\tiny 1}}
\put(25,0){\circle*{1,3}}\put(25,0){\makebox(-2,-6)[c]{\tiny $k$}}
\put(35,0){\circle*{1,3}}
\put(70,0){\circle*{1,3}}\put(70,0){\makebox(-2,-6)[c]{\tiny $n$}}
\qbezier(12.5,0)(37.5,10)(52.5,0) \qbezier(45,0)(52.5,10)(60,0)
\end{picture}
}&\hspace{1cm}{\setlength{\unitlength}{0.6mm}
\begin{picture}(80,10)(0,-10)
\put(0,0){\line(1,0){35}}\put(45,0){\line(1,0){25}}
\put(0,0){\circle*{1,3}}\put(0,0){\makebox(-2,-6)[c]{\tiny 1}}
\put(35,0){\circle*{1,3}}\put(35,0){\makebox(-2,-6)[c]{\tiny $n-k$}}
\put(45,0){\circle*{1,3}}
\put(70,0){\circle*{1,3}}\put(70,0){\makebox(-2,-6)[c]{\tiny $n$}}
\qbezier(10,0)(17.5,10)(25,0) \qbezier(17.5,0)(37.5,10)(57.5,0)
\end{picture}
}\\
\hline
\end{array}
$$
\caption{Sketch of crossings in $L_i(\pi)$ and
$R_i(\pi)$.}\label{tab:forme-cr-1}
\end{table}

They are defined formally as follows:
\begin{align*}
R_1(\pi)&=\{(i_1,j_1)(i_2,j_2)\in\pi\;|\;1\leq i_1<i_2<j_1<j_2\leq n-k \},\\
R_2(\pi)&=\{(i_1,j_1)(i_2,j_2)\in\pi\;|\;1\leq i_1<i_2\leq n-k <j_1<j_2\},\\
R_3(\pi)&=\{(i_1,j_1)(i_2,j_2)\in\pi\;|\;1\leq i_1<i_2< j_1\leq n-k<j_2\};
\end{align*}
and satisfy $\rc(\pi)=|R_1(\pi)|+|R_2(\pi)|+|R_3(\pi)|$. Suppose
$F_{n,k}(\pi)=(\tau,C, \sigma)$. Then it is easily checked that
$|R_1(\pi)|=\rc(\tau)$, $|R_2(\pi)|=\ninv(\sigma)$ and
$|R_3(\pi)|=\sum_{i\in C}\d_i(\tau)$ (see  Figure~\ref{fig:mapping F
and G}). This proves the second equation in~\eqref{eq:propF}.

 Similarly, let $\pi\in {\P_{n}^{(k)}}$.
The arc crossings of the partition $\pi$  can be divided into
three parts $L_1(\pi)$, $L_2(\pi)$ and $L_3(\pi)$ illustrated in
Table~\ref{tab:forme-cr-1}, defined formally as follows:
\begin{align*}
L_1(\pi)&=\{(i_1,j_1)(i_2,j_2)\in\pi\;|\;k< i_1<i_2< j_1<j_2\leq n \},\\
L_2(\pi)&=\{(i_1,j_1)(i_2,j_2)\in\pi\;|\;1\leq i_1 <i_2 \leq k <j_1<j_2 \leq n\},\\
L_3(\pi)&=\{(i_1,j_1)(i_2,j_2)\in\pi\;|\;1\leq i_1 \leq k< i_2< j_1< j_2\leq n\},
\end{align*}
and such that $\rc(\pi)=|L_1(\pi)|+|L_2(\pi)|+|L_3(\pi)|$. Suppose
$G_{n,k}(\pi)=(\tau,O, \sigma)$. Then it is easily checked that
$|L_1(\pi)|=\rc(\tau)$, $|L_2(\pi)|=\ninv(\sigma)$ and
$|L_3(\pi)|=\sum_{i\in O}\d_i(\tau)$ (see  Figure~\ref{fig:mapping F
and G}). This proves the second equation in~\eqref{eq:propG}.
\end{proof}

In view of Proposition~\ref{prop:F-lem}, to prove
Proposition~\ref{prop:Phi}, it suffices  to prove the following
result.
\begin{prop}\label{prop:psi_pi}
 For any partition $\pi$,  there is a bijection  $\psi_{\pi}:\min(\pi)\mapsto \max(\pi)$  such that
$\d_i(\pi)=\d_{\psi(i)}(\pi)$ for each $i\in\min(\pi)$
and $\psi_{\pi}(j)=j$ for $j\in\sing(\pi)$.
\end{prop}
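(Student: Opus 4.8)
The plan is to reinterpret the depth statistic as the height profile of a lattice path attached to $\pi$, and then to invoke the elementary fact that a path returning to its initial height crosses every intermediate half-level equally often upward and downward. First I would classify the non-singleton elements of $\pi$ according to the arcs incident to them: writing each block $\{b_1<b_2<\cdots<b_\ell\}$ through its successive arcs $(b_1,b_2),\dots,(b_{\ell-1},b_\ell)$, every non-singleton element is exactly one of an \emph{opener} (a left endpoint only, that is, a non-singleton minimal element), a \emph{closer} (a right endpoint only, that is, a non-singleton maximal element), or a \emph{transient} (both). A singleton carries no arc. Since $\min(\pi)\cap\max(\pi)=\sing(\pi)$, this gives the disjoint decompositions $\min(\pi)=\{\text{openers}\}\sqcup\sing(\pi)$ and $\max(\pi)=\{\text{closers}\}\sqcup\sing(\pi)$.

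Next I would encode $\pi$ as a path. For $0\le i\le n$ let $h_i$ be the number of arcs $(a,b)$ of $\pi$ with $a\le i<b$, i.e.\ the number of arcs crossing the gap between $i$ and $i+1$; then $h_0=h_n=0$. Reading off the arcs incident to $i$ shows that passing $i$ changes the level by $+1$ at an opener, by $-1$ at a closer, and by $0$ at a transient or a singleton, so the sequence $h_0,h_1,\dots,h_n$ is a path of $\pm1$ and flat steps starting and ending at $0$. A direct count of the arcs with $a\le i-1<b$ and with $a\le i<b$ gives the dictionary $\d_i(\pi)=h_{i-1}$ when $i$ is an opener (the level just before its up-step) and $\d_i(\pi)=h_i$ when $i$ is a closer (the level just after its down-step). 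Consequently an opener of depth $d$ is exactly an up-step from level $d$ to $d+1$, and a closer of depth $d$ is exactly a down-step from level $d+1$ to $d$.

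The key step is then the counting identity
\[
\#\{\text{openers of depth } d\}=\#\{\text{closers of depth } d\}\qquad\text{for every } d\ge 0.
\]
This is immediate from the path picture: since $h_0=h_n=0$ lie below the half-level $d+\tfrac12$, the path crosses $d+\tfrac12$ upward exactly as many times as it crosses it downward, and by the dictionary above these upward (resp.\ downward) crossings are precisely the openers (resp.\ closers) of depth $d$. Finally I would assemble $\psi_\pi$ by setting $\psi_\pi(j)=j$ on $\sing(\pi)$ — legitimate and trivially depth-preserving since a singleton lies in both $\min(\pi)$ and $\max(\pi)$ — and, for each $d\ge0$, fixing any bijection from the openers of depth $d$ onto the closers of depth $d$ supplied by the identity above. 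Because $\min(\pi)$ and $\max(\pi)$ are the disjoint unions recorded in the first paragraph, the union over all $d$ yields a well-defined bijection $\psi_\pi:\min(\pi)\to\max(\pi)$ that preserves depth and fixes every singleton, as required.

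I expect the only genuinely substantive point to be the depth–level dictionary, namely that $\d_i(\pi)$ equals $h_{i-1}$ for an opener and $h_i$ for a closer; once this is verified the crossing argument is routine. The one place demanding bookkeeping care is ensuring that transients and singletons, which leave the level unchanged, are correctly excluded from the up/down-step tally, so that the crossings of $d+\tfrac12$ are accounted for solely by openers and closers.
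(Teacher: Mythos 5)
Your proof is correct and follows essentially the same route as the paper: both encode $\pi$ as a Motzkin-type path whose heights record the depths of openers and closers (your ``dictionary'' is exactly the paper's Fact~\ref{fact:depth-height}), and then pair up-steps with down-steps at each level. The only cosmetic difference is that the paper specifies a canonical pairing (each NE-step at height $h$ is matched to the first SE-step to its right at height $h+1$), whereas you settle for an equicardinality argument and an arbitrary level-by-level bijection, which suffices since the proposition only asserts existence.
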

\begin{proof}
 It is worth noting that such a bijection was already described in the literature (e.g., see Remark 7.2 in \cite{Kas:Zen3}).
For reader's convenience we recall the construction of $\psi_{\pi}$.
 The mapping~$\psi_{\pi}$ can be nicely illustrated using \textit{Motzkin paths}.
Recall that a Motzkin path of length $n$ is a lattice path
in the plane of integer lattice $\mathbb Z^2$ from~$(0,0)$ to $(n,0)$,
consisting of NE-steps $(1,1)$, E-steps  $(1,0)$ and SE-steps  $(1,-1)$,
which never passes below the $x$-axis. The usual way to associate a set partition to a Motzkin path works as follows:
to a partition $\pi$ of $[n]$ we associate the Motzkin path $M$ of length $n$ whose $i$-th step is NE
if $i\in \min(\pi)\setminus \sing(\pi)$, SE if $i\in \max(\pi)\setminus \sing(\pi)$ and E otherwise.
An illustration of this correspondence
is given in Figure~\ref{fig:MotzkinPath}.
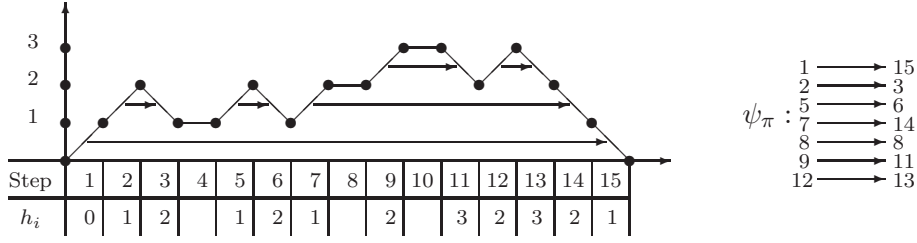
\begin{figure}[h]
\begin{center}
{\setlength{\unitlength}{0.5mm}
\begin{picture}(222,65)(0,-20)
\put(0,0){\circle*{3}}\put(-15,0){\linethickness{0.3mm}\vector(1,0){176}}
\put(-15,-10){\linethickness{0.3mm}\line(1,0){165}}
\put(0,-20){\linethickness{0.3mm}\vector(0,1){62}}\put(0,0){\linethickness{0.2mm}\line(1,1){11.5}}
\put(10,10){\circle*{3}}\put(10,10){\linethickness{2mm}\line(1,1){10.5}}
\put(20,20){\circle*{3}}\put(20,20){\linethickness{2mm}\line(1,-1){10.5}}
\put(30,10){\circle*{3}}\put(30,10){\linethickness{0.2mm}\line(1,0){10}}
\put(40,10){\circle*{3}}\put(40,10){\linethickness{2mm}\line(1,1){10.5}}
\put(50,20){\circle*{3}}\put(50,20){\linethickness{0.2mm}\line(1,-1){10.5}}
\put(60,10){\circle*{3}}\put(60,10){\linethickness{0.2mm}\line(1,1){10.5}}
\put(70,20){\circle*{3}}\put(70,20){\linethickness{0.2mm}\line(1,0){10}}
\put(80,20){\circle*{3}}\put(80,20){\linethickness{0.2mm}\line(1,1){10.5}}
\put(90,30){\circle*{3}}\put(90,30){\linethickness{0.2mm}\line(1,0){10}}
\put(100,30){\circle*{3}}\put(100,30){\linethickness{0.2mm}\line(1,-1){10.5}}
\put(110,20){\circle*{3}}\put(110,20){\linethickness{0.2mm}\line(1,1){10.5}}
\put(120,30){\circle*{3}}\put(120,30){\linethickness{0.2mm}\line(1,-1){10.5}}
\put(130,20){\circle*{3}}\put(130,20){\linethickness{0.2mm}\line(1,-1){10.5}}
\put(140,10){\circle*{3}}\put(140,10){\linethickness{0.2mm}\line(1,-1){10.5}}
\put(150,0){\circle*{3}}\put(10,0){\linethickness{0.2mm}\line(0,-1){20}}
\put(20,0){\linethickness{0.2mm}\line(0,-1){20}}\put(30,0){\linethickness{0.2mm}\line(0,-1){20}}
\put(40,0){\linethickness{0.2mm}\line(0,-1){20}}\put(50,0){\linethickness{0.2mm}\line(0,-1){20}}
\put(60,0){\linethickness{0.2mm}\line(0,-1){20}}\put(70,0){\linethickness{0.2mm}\line(0,-1){20}}
\put(80,0){\linethickness{0.2mm}\line(0,-1){20}}\put(90,0){\linethickness{0.2mm}\line(0,-1){20}}
\put(100,0){\linethickness{0.2mm}\line(0,-1){20}}\put(110,0){\linethickness{0.2mm}\line(0,-1){20}}
\put(120,0){\linethickness{0.2mm}\line(0,-1){20}}\put(130,0){\linethickness{0.2mm}\line(0,-1){20}}
\put(140,0){\linethickness{0.2mm}\line(0,-1){20}}\put(150,0){\linethickness{0.2mm}\line(0,-1){20}}
\put(-10,30){\scriptsize3}\put(-10,20){\scriptsize2}\put(-10,10){\scriptsize1}
\put(0,30){\circle*{3}}\put(0,20){\circle*{3}}\put(0,10){\circle*{3}}
\put(-15,-7){\scriptsize \textrm{Step}}
\put(-12,-17){\scriptsize $h_i$}
\put(5,-7){\scriptsize 1}\put(15,-7){\scriptsize 2}\put(25,-7){\scriptsize 3}\put(35,-7){\scriptsize 4}
\put(45,-7){\scriptsize 5}\put(55,-7){\scriptsize 6}\put(65,-7){\scriptsize 7}\put(75,-7){\scriptsize 8}
\put(85,-7){\scriptsize 9}\put(92,-7){\scriptsize 10}\put(102,-7){\scriptsize 11}\put(112,-7){\scriptsize 12}
\put(122,-7){\scriptsize 13}\put(132,-7){\scriptsize 14}\put(142,-7){\scriptsize 15}
\put(5,-17){\scriptsize 0}\put(15,-17){\scriptsize 1}\put(25,-17){\scriptsize 2}
\put(45,-17){\scriptsize 1}\put(55,-17){\scriptsize 2}\put(65,-17){\scriptsize 1}
\put(85,-17){\scriptsize 2}\put(104,-17){\scriptsize 3}\put(114,-17){\scriptsize 2}
\put(124,-17){\scriptsize 3}\put(134,-17){\scriptsize 2}\put(144,-17){\scriptsize 1}\put(-20,-35){.}
\put(6,5){\linethickness{0.2mm}\vector(1,0){138}}\put(16,15){\linethickness{0.2mm}\vector(1,0){8}}
\put(46,15){\linethickness{0.2mm}\vector(1,0){8}}\put(66,15){\linethickness{0.2mm}\vector(1,0){68}}
\put(86,25){\linethickness{0.2mm}\vector(1,0){18}}\put(116,25){\linethickness{0.2mm}\vector(1,0){8}}
\put(180,10){\textrm{$\psi_{\pi}:$}}
\put(195,23){\scriptsize 1} \put(200,25){\vector(1,0){18}} \put(220,23){\scriptsize 15}
\put(195,18){\scriptsize 2} \put(200,20){\vector(1,0){18}} \put(220,18){\scriptsize 3}
\put(195,13){\scriptsize 5} \put(200,15){\vector(1,0){18}} \put(220,13){\scriptsize 6}
\put(195,8){\scriptsize 7}  \put(200,10){\vector(1,0){18}} \put(220,8){\scriptsize 14}
\put(195,3){\scriptsize 8}  \put(200,5){\vector(1,0){18}}  \put(220,3){\scriptsize 8}
\put(195,-2){\scriptsize 9} \put(200,0){\vector(1,0){18}}  \put(220,-2){\scriptsize 11}
\put(193,-7){\scriptsize 12}\put(200,-5){\vector(1,0){18}} \put(220,-7){\scriptsize 13}
\end{picture}}
\end{center}
\caption{The Motzkin path associated  to the
partition {$\pi=1\,4\,15/2\,3/5\,6/7\,10\,13/8/9\,11/12\,14$}
and the mapping $\psi_{\pi}$}\label{fig:MotzkinPath}
\end{figure}

A  basic property of the above correspondence is the following fact~\cite{Kas:Zen2}.
\begin{fact}\label{fact:depth-height}
 Suppose $M$ is the Motzkin path associated to a partition $\pi$
and let $h_i$ be the height of the $i$-th step of $M$, i.e.,
the ordinate of its originate point. Then, $\d_{i}(\pi)=h_i$
if the $i$-th step of~$M$ is NE and $\d_{i}(\pi)=h_i-1$
if the $i$-th step of~$M$ is SE.
\end{fact}

We can now describe the mapping $\psi_{\pi}$.
We first set $\psi_{\pi}(j)=j$ for $j\in\sing(\pi)$.
Suppose $\O(\pi):=\min(\pi)\setminus \sing(\pi)=\{o_1<o_2<\cdots<o_r\}$,
$\C(\pi):=\max(\pi)\setminus \sing(\pi)=\{c_1<c_2<\cdots<c_r\}$
and let $M$ be the Motzkin path associated to $\pi$.
Note that the NE (resp., SE) steps in $M$ are exactly the steps
indexed by $\O(\pi)$ (resp., $\C(\pi)$). We then pair the NE-steps with SE-steps in $M$ two by two in the following way.
Suppose the $i$-th NE-step (i.e., the $o_i$-th step) of $M$ is at height $h$. Then, if the
first SE-step to its right at height $h+1$ is the $j$-th SE step (i.e., the $c_j$-th step) in $M$,
then we set $\psi_{\pi}(o_i)=c_j$. An illustration is given in Figure~\ref{fig:MotzkinPath}.
From  the construction of $\psi_{\pi}$ and Fact~\ref{fact:depth-height} it is easy to see that  $\psi_{\pi}$ is the desired bijection.
\end{proof}

For $(\tau,O,\sigma) \in {^{(k)}A_{n}}$, we set
$\Psi_{n,k}(\tau,O,\sigma):=(\tau,\psi_{\tau}(O),\sigma)$.
Clearly $\Psi_{n,k}$  is a mapping from ${^{(k)}A_{n}}$ to $ {A_{n}^{(k)}}$.
An illustration is given in Figure~\ref{fig:mapping F and G}. From Proposition~\ref{prop:psi_pi} we immediately deduce
the following result.

\begin{prop}\label{prop:Psi}
 The mapping $\Psi_{n,k}: {^{(k)}A_{n}} \to {A_{n}^{(k)}}$
is a bijection. Moreover, if $(\tau,O,\sigma) \in {^{(k)}A_{n}}$
and $\Psi_{n,k}(\tau,O,\sigma)=(\tau,C,\sigma)$,
then we have
$$\sum_{i\in C} \d_{i}(\tau)=\sum_{i\in O} \d_{i}(\tau).$$
\end{prop}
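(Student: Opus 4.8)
The plan is to read off all three assertions directly from the bijection $\psi_\tau$ supplied by Proposition~\ref{prop:psi_pi}, since by definition $\Psi_{n,k}$ acts only on the middle coordinate of a triple, replacing $O$ by $\psi_\tau(O)$ and leaving $\tau$ and $\sigma$ untouched. First I would check that $\Psi_{n,k}$ is well-defined, i.e.\ that $(\tau,\psi_\tau(O),\sigma)$ genuinely lies in ${A_{n}^{(k)}}$. Writing $C:=\psi_\tau(O)$, the inclusion $O\subseteq\min(\tau)$ and the fact that $\psi_\tau$ maps $\min(\tau)$ bijectively onto $\max(\tau)$ give $C\subseteq\max(\tau)$; since $\psi_\tau$ fixes $\sing(\tau)$ pointwise and $\sing(\tau)\subseteq O$, we get $\sing(\tau)=\psi_\tau(\sing(\tau))\subseteq C$; and $|C|=|O|=k$ by injectivity of $\psi_\tau$. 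Hence the squeeze condition $\sing(\tau)\subseteq C\subseteq\max(\tau)$ with $|C|=k$ holds, so $(\tau,C,\sigma)\in{A_{n}^{(k)}}$.

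Next I would establish bijectivity by exhibiting an explicit inverse. The inverse map $\psi_\tau^{-1}\colon\max(\tau)\to\min(\tau)$ is again a bijection fixing $\sing(\tau)$ pointwise (because $\psi_\tau(j)=j$ forces $\psi_\tau^{-1}(j)=j$ for $j\in\sing(\tau)$). Therefore the assignment $(\tau,C,\sigma)\mapsto(\tau,\psi_\tau^{-1}(C),\sigma)$ defines a map ${A_{n}^{(k)}}\to{^{(k)}A_{n}}$ by exactly the same verification as above with the roles of $\min$ and $\max$ interchanged, and it is a two-sided inverse of $\Psi_{n,k}$ since $\psi_\tau$ and $\psi_\tau^{-1}$ are mutually inverse. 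This proves that $\Psi_{n,k}$ is a bijection.

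Finally, for the depth identity I would reindex the sum over $C=\psi_\tau(O)$ through the bijection $\psi_\tau$ and apply the depth-preservation $\d_{\psi_\tau(i)}(\tau)=\d_i(\tau)$ from Proposition~\ref{prop:psi_pi}:
$$
\sum_{i\in C}\d_i(\tau)=\sum_{i\in O}\d_{\psi_\tau(i)}(\tau)=\sum_{i\in O}\d_i(\tau).
$$
I do not expect any genuine obstacle at this stage: the entire combinatorial substance has already been absorbed into Proposition~\ref{prop:psi_pi}, and the present statement is a purely formal bookkeeping consequence of the three properties of $\psi_\tau$ (it is a bijection $\min(\tau)\to\max(\tau)$, it fixes singletons, and it preserves depth). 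The only point deserving a line of care is that the squeeze conditions defining ${^{(k)}A_{n}}$ and ${A_{n}^{(k)}}$ are respected in both directions, which is precisely what the singleton-fixing property guarantees.
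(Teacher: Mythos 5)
Your proposal is correct and follows exactly the paper's route: the paper simply states that Proposition~\ref{prop:Psi} is an immediate consequence of Proposition~\ref{prop:psi_pi}, and you have supplied the routine verifications (well-definedness via the singleton-fixing property, bijectivity via $\psi_\tau^{-1}$, and the depth identity by reindexing) that the paper leaves implicit.
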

Finally, we define the mapping $\Phi_{n,k}: {^{(k)}\P_{n}} \to {\P_{n}^{(k)}}$ by
\be
\Phi_{n,k}:= F_{n,k}^{-1} \circ \Psi_{n,k} \circ G_{n,k}.
\ee
This mapping is illustrated in  Figure~\ref{fig:mapping F and G}.  Combining Propositions~\ref{prop:F-lem}
and \ref{prop:Psi}, we conclude  that the mapping $\Phi_{n,k}$ satisfies the requirements of
Proposition~\ref{prop:Phi}.


\subsection{Construction of the bijection $\Theta_n^{(n_1,n_2)}$}\label{sect:bijection2}

 The key idea for the definition of the mapping  $\Theta_n^{(n_1,n_2)}$ is
some appropriate decomposition of partitions in ${\P_{n}^{(n_1,n_2)}}$.
We first introduce some further definitions.
For any set  $K$, let  $\Pi(K)$ be the
set of partitions of $K$.
\begin{definition}
For  two positive integers $r,s$, we denote by $\P^*(r,s)$ the set of all partitions $\pi$
of $[r+s]$ such that there is no arc in $\pi$ connecting two integers in $[1,r]$ or in $[r+1,r+s]$ but $\pi$
can have singletons. Thus, we have   $\P(r,s)\subsetneq\P^*(r,s)$.
\end{definition}

\begin{definition}
Let ${A_{n}^{(n_1,n_2)}}$ be the set of 3-tuples
$( (\tau,A),  (\gamma,B),\sigma)$ where
\begin{itemize}
\item $\tau$ is a partition in $\P^*(n_1,n_2)$ and  $A$ is a set satisfying $\sing(\tau)\subseteq A\subseteq\max(\tau)$;
\item $\gamma$ is a partition in $\Pi([N_2+1,n])$ and $B$ is a set satisfying  $\sing(\gamma)\subseteq B\subseteq\min(\gamma)$;
\item the sets $A$ and $B$ have the same cardinality. If $k=|A|=|B|$, then $\sigma$ is in $\s_k$.
\end{itemize}
\end{definition}
For instance, in Figure~\ref{fig:mapping H,Gamma,Theta}, we have $(
(\tau_1,A_1), (\gamma_1,B_1),\sigma_1)\in {A_{14}^{\,(3,4)}}$ and $(
(\tau_2,A_2), (\gamma_2,B_2),\sigma_2)\in {A_{14}^{\,(4,3)}}$.

For $\pi\in\P^{\,(n_1,n_2)}_{n}$, we set $H_{n}^{\,(n_1,n_2)}(\pi)=((\tau,A), (\gamma,B),\sigma)$ where
\begin{itemize}
\item $\tau$ is the restriction of $\pi$ on $[1,N_2]$ and $A$ is the set of elements $\leq N_2$
 in $\pi$ which are connected to an element $>N_2$ by an arc;
\item $\gamma$ is the restriction of $\pi$ on $[N_2+1,n]$ and $B$ is the set of elements $> N_2$ in $\pi$ which are connected
    to an element $\leq N_2$ by an arc;
\item Suppose $A=\{a_1<a_2<\cdots<a_k\}$ and $B=\{b_1<b_2<\cdots<b_k\}$.
Then, $\sigma$ is the (unique) permutation in $\s_k$  such that
$(a_1,b_{\sigma(1)})$, $(a_2,b_{\sigma(2)})$, $\ldots$, $(a_k,b_{\sigma(k)})$
are arcs of~$\pi$.
\end{itemize}
Clearly,   $H_{n}^{\,(n_1,n_2)}$  is a mapping  from
${\P_{n}^{\,(n_1,n_2)}}$ to $ {A_{n}^{\,(n_1,n_2)}}$. Two
illustrations are given in Figure~\ref{fig:mapping H,Gamma,Theta}.
\begin{figure}[h!]
\begin{center}
{\setlength{\unitlength}{0.57mm}
\begin{picture}(220,154)(0,-80)
\put(-12,43){ $\pi_1=$}
\put(-5,30){\line(1,0){20}}\put(30,30){\line(1,0){30}}\put(75,30){\line(1,0){60}}
\put(-5,30){\circle*{1}} \put(-5,30){\makebox(-3,-4)[c]{\tiny 1}}
\put(5,30){\circle*{1}}  \put(5,30){\makebox(-3,-4)[c]{\tiny 2}}
\put(15,30){\circle*{1}} \put(15,30){\makebox(-3,-4)[c]{\tiny 3}}
\put(30,30){\circle*{1}} \put(30,30){\makebox(-3,-4)[c]{\tiny 4}}
\put(40,30){\circle*{1}} \put(40,30){\makebox(-3,-4)[c]{\tiny 5}}
\put(50,30){\circle*{1}} \put(50,30){\makebox(-3,-4)[c]{\tiny 6}}
\put(60,30){\circle*{1}} \put(60,30){\makebox(-3,-4)[c]{\tiny 7}}
\put(75,30){\circle*{1}} \put(75,30){\makebox(-3,-4)[c]{\tiny 8}}
\put(85,30){\circle*{1}} \put(85,30){\makebox(-3,-4)[c]{\tiny 9}}
\put(95,30){\circle*{1}} \put(95,30){\makebox(-3,-4)[c]{\tiny 10}}
\put(105,30){\circle*{1}}\put(105,30){\makebox(-3,-4)[c]{\tiny 11}}
\put(115,30){\circle*{1}}\put(115,30){\makebox(-3,-4)[c]{\tiny 12}}
\put(125,30){\circle*{1}}\put(125,30){\makebox(-3,-4)[c]{\tiny 13}}
\put(135,30){\circle*{1}}\put(135,30){\makebox(-3,-4)[c]{\tiny 14}}
\red{\qbezier(5,30)(17.5,50)(30,30)    \qbezier(15,30)(35,60)(50,30)}
\qbezier(75,30)(110,65)(135,30)  \qbezier(105,30)(110,38)(115,30)
\blue{\qbezier(-5,30)(45,80)(95,30)    \qbezier(40,30)(62.5,55)(85,30)
      \qbezier(60,30)(82.5,63)(105,30) \qbezier(30,30)(85,80)(125,30)}
\put(165,60){\line(1,0){20}}\put(200,60){\line(1,0){30}}
\put(165,60){\circle*{1}}\put(165,60){\makebox(-3,-4)[c]{\tiny 1}}
\put(175,60){\circle*{1}}\put(175,60){\makebox(-3,-4)[c]{\tiny 2}}
\put(185,60){\circle*{1}}\put(185,60){\makebox(-3,-4)[c]{\tiny 3}}
\put(200,60){\circle*{1}}\put(200,60){\makebox(-3,-4)[c]{\tiny 4}}
\put(210,60){\circle*{1}}\put(210,60){\makebox(-3,-4)[c]{\tiny 5}}
\put(220,60){\circle*{1}}\put(220,60){\makebox(-3,-4)[c]{\tiny 6}}
\put(230,60){\circle*{1}}\put(230,60){\makebox(-3,-4)[c]{\tiny 7}}
\red{\qbezier(175,60)(187.5,77)(200,60)\qbezier(185,60)(202.5,77)(220,60)}
\put(160,67){\small $\tau_1=$}
\put(160,49){\small $A_1=\{1,4,5,7\}$}
\put(165,30){\line(1,0){60}}
\put(165,30){\circle*{1}}\put(165,30){\makebox(-3,-4)[c]{\tiny 8}}
\put(175,30){\circle*{1}}\put(175,30){\makebox(-3,-4)[c]{\tiny 9}}
\put(185,30){\circle*{1}}\put(185,30){\makebox(-3,-4)[c]{\tiny 10}}
\put(195,30){\circle*{1}}\put(195,30){\makebox(-3,-4)[c]{\tiny 11}}
\put(205,30){\circle*{1}}\put(205,30){\makebox(-3,-4)[c]{\tiny 12}}
\put(215,30){\circle*{1}}\put(215,30){\makebox(-3,-4)[c]{\tiny 13}}
\put(225,30){\circle*{1}}\put(225,30){\makebox(-3,-4)[c]{\tiny 14}}
\qbezier(165,30)(195,53)(225,30)\qbezier(195,30)(200,38)(205,30)
\put(160,37){\small $\gamma_1=$}
\put(160,19){\small $B_1=\{9,10,11,13\}$}
\put(161,9){\small $\sigma_1=2\,4\,1\,3$}
\put(140,30){\vector(1,0){18}}\put(140,35){$H_{14}^{(3,4)}$}
\put(192,6){\vector(0,-1){18}}\put(195,-4){$\Gamma_{14}^{(3,4)}$}
\put(140,-60){\vector(1,0){18}}\put(140,-55){$H_{14}^{(4,3)}$}
\put(68,12){\vector(0,-1){30}}\put(50,-3){$\Theta_{14}^{(3,4)}$}
\put(165,-30){\line(1,0){30}}\put(210,-30){\line(1,0){20}}
\put(165,-30){\circle*{1}}\put(165,-30){\makebox(-3,-4)[c]{\tiny 1}}
\put(175,-30){\circle*{1}}\put(175,-30){\makebox(-3,-4)[c]{\tiny 2}}
\put(185,-30){\circle*{1}}\put(185,-30){\makebox(-3,-4)[c]{\tiny 3}}
\put(195,-30){\circle*{1}}\put(195,-30){\makebox(-3,-4)[c]{\tiny 4}}
\put(210,-30){\circle*{1}}\put(210,-30){\makebox(-3,-4)[c]{\tiny 5}}
\put(220,-30){\circle*{1}}\put(220,-30){\makebox(-3,-4)[c]{\tiny 6}}
\put(230,-30){\circle*{1}}\put(230,-30){\makebox(-3,-4)[c]{\tiny 7}}
\red{\qbezier(175,-30)(192.5,-8)(210,-30)\qbezier(195,-30)(207.5,-15)(220,-30)}
\put(160,-23){\small $\tau_2=$}
\put(160,-41){\small $A_2=\{1,3,5,7\}$}
\put(165,-60){\line(1,0){60}}
\put(165,-60){\circle*{1}}\put(165,-60){\makebox(-3,-4)[c]{\tiny 8}}
\put(175,-60){\circle*{1}}\put(175,-60){\makebox(-3,-4)[c]{\tiny 9}}
\put(185,-60){\circle*{1}}\put(185,-60){\makebox(-3,-4)[c]{\tiny 10}}
\put(195,-60){\circle*{1}}\put(195,-60){\makebox(-3,-4)[c]{\tiny 11}}
\put(205,-60){\circle*{1}}\put(205,-60){\makebox(-3,-4)[c]{\tiny 12}}
\put(215,-60){\circle*{1}}\put(215,-60){\makebox(-3,-4)[c]{\tiny 13}}
\put(225,-60){\circle*{1}}\put(225,-60){\makebox(-3,-4)[c]{\tiny 14}}
\qbezier(165,-60)(195,-37)(225,-60)\qbezier(195,-60)(200,-50)(205,-60)
\put(160,-53){\small $\gamma_2=$}
\put(160,-71){\small $B_2=\{9,10,11,13\}$}
\put(161,-81){\small $\sigma_2=2\,4\,1\,3$}
\put(-12,-47){ $\pi_2=$}
\put(-5,-60){\line(1,0){30}}
\put(40,-60){\line(1,0){20}}\put(75,-60){\line(1,0){60}}
\put(-5,-60){\circle*{1}}\put(-5,-60){\makebox(-3,-4)[c]{\tiny 1}}
\put(5,-60){\circle*{1}} \put(5,-60){\makebox(-3,-4)[c]{\tiny 2}}
\put(15,-60){\circle*{1}}\put(15,-60){\makebox(-3,-4)[c]{\tiny 3}}
\put(25,-60){\circle*{1}}\put(25,-60){\makebox(-3,-4)[c]{\tiny 4}}
\put(40,-60){\circle*{1}}\put(40,-60){\makebox(-3,-4)[c]{\tiny 5}}
\put(50,-60){\circle*{1}}\put(50,-60){\makebox(-3,-4)[c]{\tiny 6}}
\put(60,-60){\circle*{1}}\put(60,-60){\makebox(-3,-4)[c]{\tiny 7}}
\put(75,-60){\circle*{1}}\put(75,-60){\makebox(-3,-4)[c]{\tiny 8}}
\put(85,-60){\circle*{1}}\put(85,-60){\makebox(-3,-4)[c]{\tiny 9}}
\put(95,-60){\circle*{1}}\put(95,-60){\makebox(-3,-4)[c]{\tiny 10}}
\put(105,-60){\circle*{1}}\put(105,-60){\makebox(-3,-4)[c]{\tiny 11}}
\put(115,-60){\circle*{1}}\put(115,-60){\makebox(-3,-4)[c]{\tiny 12}}
\put(125,-60){\circle*{1}}\put(125,-60){\makebox(-3,-4)[c]{\tiny 13}}
\put(135,-60){\circle*{1}}\put(135,-60){\makebox(-3,-4)[c]{\tiny 14}}
\red{\qbezier(5,-60)(22.5,-35)(40,-60)\qbezier(25,-60)(37.5,-40)(50,-60)}
\qbezier(75,-60)(110,-25)(135,-60) \qbezier(105,-60)(110,-52)(115,-60)
\blue{\qbezier(-5,-60)(45,-5)(95,-60)\qbezier(40,-60)(62.5,-35)(85,-60)
\qbezier(60,-60)(82.5,-20)(105,-60) \qbezier(15,-60)(70,-10)(125,-60)}
\end{picture}}
\caption{The mappings $H_{n}^{\,(n_1,n_2)}$, $\Gamma_n^{(n_1,n_2)}$ and $\Theta_{n}^{\,(n_1,n_2)}$.}\label{fig:mapping H,Gamma,Theta}
\end{center}
\end{figure}
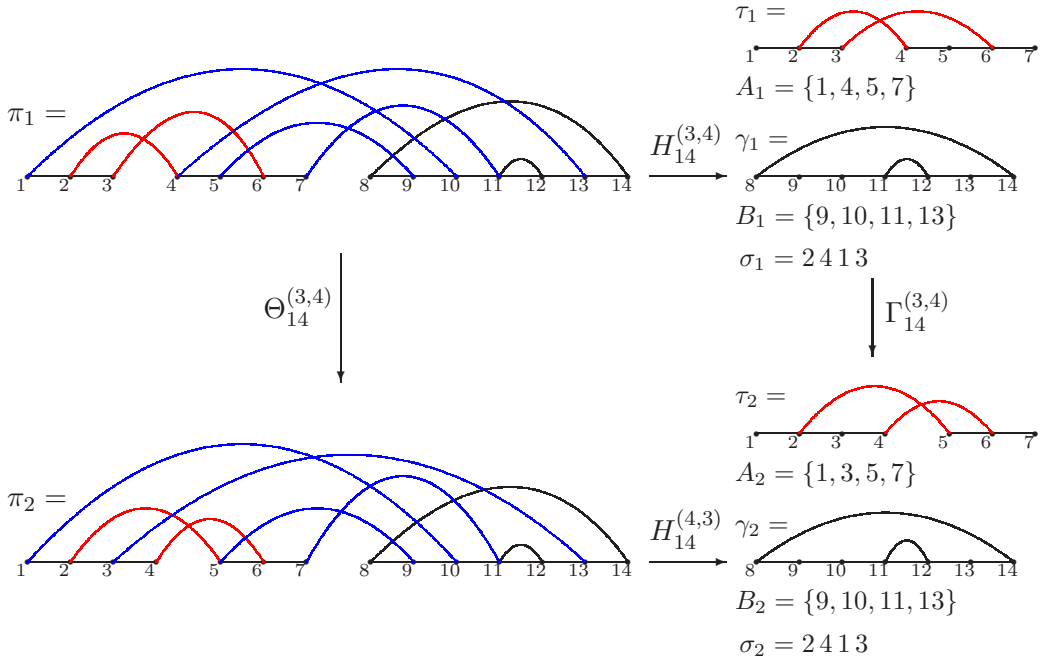

\begin{prop}\label{prop:H}
The mapping $H_{n}^{\,(n_1,n_2)}: {\P_{n}^{(n_1,n_2)}} \to {A_{n}^{(n_1,n_2)}}$ is a bijection.
Moreover, for any $\pi\in {\P_{n}^{(n_1,n_2)}}$, if
$H_{n}^{\,(n_1,n_2)}(\pi)=((\tau,A), (\gamma,B),  \sigma)$ and $k=|A|$, then
\begin{itemize}
\item[(i)] $\bl(\pi)=\bl(\tau)+\bl(\gamma)-k$,
\item[(ii)] $\rc(\pi)=\rc(\tau)+\rc(\gamma)+\ninv(\sigma)+\sum_{i\in A}\d_i(\tau)+\sum_{i\in B}\d_i(\gamma)$.
\end{itemize}
\end{prop}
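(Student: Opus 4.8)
The plan is to mimic the proof of Proposition~\ref{prop:F-lem}: first verify that $H_{n}^{\,(n_1,n_2)}$ really maps into $A_{n}^{\,(n_1,n_2)}$ and is invertible, and then extract the two statistics by bookkeeping of arcs. For the structural part I would begin from the observation that, since the intervals $[1,n_1]$ and $[n_1+1,N_2]$ are contiguous, two elements of the same block lying in one of them would force an arc inside it; hence the restriction $\tau$ of $\pi$ to $[1,N_2]$ has no arc inside $[1,n_1]$ or $[n_1+1,N_2]$, so $\tau\in\P^*(n_1,n_2)$, while the restriction $\gamma$ to $[N_2+1,n]$ is an arbitrary partition. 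The crucial local remark is that every arc of $\pi$ crossing the cut at $N_2$ has the form $(a,b)$ with $a\le N_2<b$; thus $a$ has no successor inside $[1,N_2]$ (so $a\in\max(\tau)$) and $b$ has no predecessor inside $[N_2+1,n]$ (so $b\in\min(\gamma)$), and an element whose only neighbour lies across the cut becomes a singleton of $\tau$ or $\gamma$, which yields $\sing(\tau)\subseteq A$ and $\sing(\gamma)\subseteq B$. This places the image in $A_{n}^{\,(n_1,n_2)}$.

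Invertibility is then immediate: given $((\tau,A),(\gamma,B),\sigma)$, I would place $\tau$ on $[1,N_2]$ and $\gamma$ on $[N_2+1,n]$ and adjoin the $k$ straddling arcs $(a_i,b_{\sigma(i)})$, where $A=\{a_1<\cdots<a_k\}$ and $B=\{b_1<\cdots<b_k\}$. The inclusions $A\subseteq\max(\tau)$ and $B\subseteq\min(\gamma)$ make these additions compatible with the successor/predecessor structure, and $\sing(\tau)\subseteq A$, $\sing(\gamma)\subseteq B$ guarantee that no singleton survives, so the output lies in $\P_{n}^{\,(n_1,n_2)}$; the two constructions are visibly inverse to each other.

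Statement (i) follows at once from Fact~\ref{fact:blocks-arcs}, once one notes that the arcs of $\pi$ partition into the arcs of $\tau$, the arcs of $\gamma$, and precisely the $k$ straddling arcs:
\begin{align*}
\bl(\pi)&=n-(\text{number of arcs of }\pi)\\
&=\bigl(N_2-\text{arcs of }\tau\bigr)+\bigl((n-N_2)-\text{arcs of }\gamma\bigr)-k=\bl(\tau)+\bl(\gamma)-k.
\end{align*}

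The real content is statement (ii), and this is where I expect the only genuine difficulty. I would classify each crossing pair of arcs of $\pi$ by the position of its two arcs relative to the cut $N_2$, in the spirit of the $R_i$/$L_i$ tables of Proposition~\ref{prop:F-lem}. Two low arcs cross exactly as in $\tau$ and two high arcs exactly as in $\gamma$, giving $\rc(\tau)+\rc(\gamma)$; a low arc and a high arc never cross, the former lying entirely left of the cut and the latter entirely right. Two straddling arcs $(a_i,b_{\sigma(i)})$ and $(a_{i'},b_{\sigma(i')})$ with $a_i<a_{i'}$ cross iff $b_{\sigma(i)}<b_{\sigma(i')}$, i.e.\ iff $(i,i')$ is a non-inversion of $\sigma$, contributing $\ninv(\sigma)$. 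The two mixed cases are the delicate ones: a straddling arc $(a,b)$ and a low arc $(p,q)$ satisfy $q\le N_2<b$, which kills the interleaving $a<p<b<q$ and leaves crossing equivalent to $p<a<q$; summed over straddling arcs this is $\sum_{a\in A}\d_a(\tau)$ by Definition~\ref{def: depth}. Symmetrically, a straddling arc $(a,b)$ and a high arc $(p,q)$ satisfy $a\le N_2<p$, which kills the interleaving $p<a<q<b$ and leaves crossing equivalent to $p<b<q$, giving $\sum_{b\in B}\d_b(\gamma)$. The main obstacle is exactly this verification that in each mixed case only one interleaving is geometrically possible and that it is measured by the depth of the endpoint lying across the cut; once the boundary inequalities are invoked to discard the impossible interleaving, summing the five nonzero contributions yields (ii).
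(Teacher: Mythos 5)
Your proposal is correct and follows essentially the same route as the paper: the paper also establishes bijectivity by exhibiting the inverse, derives (i) from Fact~\ref{fact:blocks-arcs}, and proves (ii) by splitting the crossings of $\pi$ into exactly the five classes you describe (two low arcs, two high arcs, two straddling arcs, and the two mixed cases), identified with $\rc(\tau)$, $\rc(\gamma)$, $\ninv(\sigma)$, $\sum_{i\in A}\d_i(\tau)$ and $\sum_{i\in B}\d_i(\gamma)$ respectively. Your write-up is in fact somewhat more explicit than the paper's, which leaves the boundary-inequality verifications to a table and a figure.
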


\pf It is easy to see that $H_{n}^{(n_1,n_2)}$ establishes a
bijection from ${\P_{n}^{(n_1,n_2)}}$ to ${A_{n}^{(n_1,n_2)}}$ by
constructing  its  inverse (use Figure~\ref{fig:mapping
H,Gamma,Theta}), and Property (i)  is a direct consequence of
Fact~\ref{fact:blocks-arcs}.
 Let $\pi\in {\P_{n}^{(n_1,n_2)}}$. The arc crossings of the partition $\pi$ can be divided
into five parts $C_i(\pi)$, $1\leq i\leq 5$, illustrated in Table~\ref{tab:forme-cr-2}.  They are defined formally as follows:
\begin{align*}
C_1(\pi)&=\{(i_1,j_1)(i_2,j_2)\in\pi\;|\;1 \leq i_1< i_2<j_1<j_2\leq N_2\},\\
C_2(\pi)&=\{(i_1,j_1)(i_2,j_2)\in\pi\;|\;N_2<i_1< i_2<j_1<j_2 \leq n\},\\
C_3(\pi)&=\{(i_1,j_1)(i_2,j_2)\in\pi\;|\;1\leq i_1< i_2 \leq N_2<j_1<j_2 \leq n\},\\
C_4(\pi)&=\{(i_1,j_1)(i_2,j_2)\in\pi\;|\;1\leq i_1 \leq N_2 < i_2 <j_1<j_2 \leq n\},\\
C_5(\pi)&=\{(i_1,j_1)(i_2,j_2)\in\pi\;|\;1\leq i_1< i_2 <j_1\leq N_2<j_2 \leq n\},
\end{align*}
and satisfy  $\rc(\pi)=\sum_{i=1}^5 |C_i(\pi)|$. Suppose
$H_{n}^{(n_1,n_2)}(\pi)=((\tau,A),(\gamma,B) ,\sigma)$.  It is
easily checked (use Figure~\ref{fig:mapping H,Gamma,Theta}) that
$|C_1(\pi)|=\rc(\tau)$, $|C_2(\pi)|=\rc(\gamma)$,
$|C_3(\pi)|=\ninv(\sigma)$, $|C_4(\pi)|=\sum_{i\in B}\d_i(\gamma)$
and $|C_5(\pi)|=\sum_{i\in A}\d_i(\tau)$. Altogether, this leads to
Property (ii). \qed

\begin{table}[h]
$$
\begin{array}{|c|c|}
\hline
 i&C_i(\pi)\\
 \hline
1& \hspace{1cm} {\setlength{\unitlength}{0.6mm}
\begin{picture}(70,15)(0,-5)
\put(0,0){\line(1,0){13}}\put(35,0){\line(-1,0){13}}\put(45,0){\line(1,0){25}}
\put(0,0){\circle*{1,3}}\put(0,0){\makebox(-2,-6)[c]{\small 1}}
\put(35,0){\circle*{1,3}}\put(37,0){\makebox(-2,-6)[c]{\tiny $N_2$}}
\put(70,0){\circle*{1,3}}\put(70,0){\makebox(-2,-6)[c]{\small $n$}}
\put(13,0){\circle*{1,3}}\put(13,0){\makebox(-2,-6)[c]{\tiny $n_1$}}
\qbezier(5,0)(15,10)(25,0) \qbezier(10,0)(20,10)(30,0)
\end{picture}}
\hspace{1cm}\\
\hline
2&\hspace{1cm}{
{\setlength{\unitlength}{0.6mm}\begin{picture}(70,15)(0,-5)
\put(0,0){\line(1,0){25}}\put(35,0){\line(1,0){35}}
\put(0,0){\circle*{1,3}}\put(0,0){\makebox(-2,-6)[c]{\small 1}}
\put(25,0){\circle*{1,3}}\put(25,0){\makebox(-2,-6)[c]{\tiny $N_2$}}
\put(70,0){\circle*{1,3}}\put(70,0){\makebox(-2,-6)[c]{\small $n$}}
\qbezier(40,0)(50,10)(60,0) \qbezier(45,0)(55,10)(65,0)
\end{picture}}}
\hspace{1cm}\\
\hline
3&\hspace{1cm} {{\setlength{\unitlength}{0.6mm}
\begin{picture}(70,15)(0,-5)
\put(0,0){\line(1,0){25}}\put(35,0){\line(1,0){35}}
\put(0,0){\circle*{1,3}}\put(0,0){\makebox(-2,-6)[c]{\small 1}}
\put(25,0){\circle*{1,3}}\put(25,0){\makebox(-2,-6)[c]{\tiny $N_2$}}
\put(70,0){\circle*{1,3}}\put(70,0){\makebox(-2,-6)[c]{\small $n$}}
\qbezier(5,0)(30,10)(45,0) \qbezier(20,0)(40,10)(60,0)
\end{picture}}}
\hspace{1cm}\\
\hline
4&\hspace{1cm}{{\setlength{\unitlength}{0.6mm}
\begin{picture}(70,15)(0,-5)
\put(0,0){\line(1,0){25}}\put(35,0){\line(1,0){35}}
\put(0,0){\circle*{1,3}}\put(0,0){\makebox(-2,-6)[c]{\small 1}}
\put(25,0){\circle*{1,3}}\put(25,0){\makebox(-2,-6)[c]{\tiny $N_2$}}
\put(70,0){\circle*{1,3}}\put(70,0){\makebox(-2,-6)[c]{\small $n$}}
\qbezier(12.5,0)(37.5,10)(52.5,0) \qbezier(45,0)(52.5,10)(60,0)
\end{picture}}}
\hspace{1cm}\\
\hline
5&\hspace{1cm}{\setlength{\unitlength}{0.6mm}
\begin{picture}(70,15)(0,-5)
\put(0,0){\line(1,0){13}} \put(35,0){\line(-1,0){13}}
\put(45,0){\line(1,0){25}}
\put(0,0){\circle*{1,3}}\put(0,0){\makebox(-2,-6)[c]{\small 1}}
\put(35,0){\circle*{1,3}}\put(35,0){\makebox(-2,-6)[c]{\tiny $N_2$}}
\put(70,0){\circle*{1,3}}\put(70,0){\makebox(-2,-6)[c]{\small $n$}}
\put(13,0){\circle*{1,3}}\put(13,0){\makebox(-2,-6)[c]{\tiny $n_1$}}
\qbezier(10,0)(17.5,10)(25,0) \qbezier(17.5,0)(37.5,10)(57.5,0)
\end{picture}}
\hspace{1cm}\\
\hline
\end{array}
$$
\caption{Sketchs of crossings  in $C_i(\pi)$.}\label{tab:forme-cr-2}
\end{table}

Let
 \be
 R(n_1,n_2):=\{(\pi,A)\,:\, \pi\in\P^*(n_1,n_2) \quad \textrm{and}\quad \sing(\pi)\subseteq A\subseteq\max(\pi)\}.
 \ee
For instance, the elements $(\pi,A)$ and $(\pi,A')$ drawn in Figure~\ref{fig:psi(a,b)} are, respectively,
in $R(4,6)$ and $R(6,4)$.

In view of Proposition~\ref{prop:H}, to prove
Proposition~\ref{prop:Theta}, it suffices to demonstrate the
following result.

\begin{prop}\label{prop:psi(a,b)}
There is a bijection $\psi_{(n_1,n_2)}: R(n_1,n_2) \to R(n_2,n_1)$
such that,
for  $(\pi,A)\in R(n_1,n_2)$,  if $\psi_{(n_1,n_2)}(\pi,A)=(\pi',A')$,  then
\begin{align}\label{eq:psi(a,b)}
 \rc(\pi')=\rc(\pi),\quad |A'|=|A|,\quad \sum_{i\in A'} \d_{i}(\pi')=\sum_{i\in A} \d_{i}(\pi).
\end{align}
\end{prop}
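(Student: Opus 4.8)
The plan is to realize $\psi_{(n_1,n_2)}$ as a reflection followed by the depth-preserving bijection $\psi_{\pi}$ of Proposition~\ref{prop:psi_pi}. Set $N:=n_1+n_2$ and let $\rho\colon[N]\to[N]$, $\rho(i)=N+1-i$, be the order-reversing reflection, which acts on partitions of $[N]$ by $\pi\mapsto\pi^{\rho}$ (reverse every block). First I would check that $\rho$ induces a bijection $\P^{*}(n_1,n_2)\to\P^{*}(n_2,n_1)$. Indeed $\rho$ sends $[1,n_1]$ onto $[n_2+1,N]$ and $[n_1+1,N]$ onto $[1,n_2]$, so an arc internal to a forbidden block of $\pi$ corresponds to an arc internal to a forbidden block of $\pi^{\rho}$; since $\pi$ has none, neither does $\pi^{\rho}$. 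Because $\rho$ reverses the linear order, it carries the arc $(a,b)$ to $(N+1-b,N+1-a)$, exchanges $\min(\pi)\leftrightarrow\max(\pi^{\rho})$ and $\max(\pi)\leftrightarrow\min(\pi^{\rho})$, and maps $\sing(\pi)$ onto $\sing(\pi^{\rho})$. From $a<i<b\iff N+1-b<N+1-i<N+1-a$ one gets the depth transfer $\d_i(\pi)=\d_{N+1-i}(\pi^{\rho})$, and the same bijection on arcs sends a crossing quadruple $i_1<i_2<j_1<j_2$ to a crossing quadruple, whence $\rc(\pi^{\rho})=\rc(\pi)$.

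The one obstruction is that reflection alone sends a distinguished set $A\subseteq\max(\pi)$ to $\rho(A)\subseteq\min(\pi^{\rho})$, whereas membership in $R(n_2,n_1)$ requires a subset of $\max$. This is exactly what Proposition~\ref{prop:psi_pi} repairs: $\psi_{\pi^{\rho}}$ is a bijection $\min(\pi^{\rho})\to\max(\pi^{\rho})$ fixing singletons with $\d_i(\pi^{\rho})=\d_{\psi_{\pi^{\rho}}(i)}(\pi^{\rho})$. Accordingly I would define
\[
\psi_{(n_1,n_2)}(\pi,A):=\bigl(\pi^{\rho},\ \psi_{\pi^{\rho}}(\rho(A))\bigr).
\]
Since $\rho$ maps $\sing(\pi)$ onto $\sing(\pi^{\rho})$ and $\max(\pi)$ onto $\min(\pi^{\rho})$, the constraint $\sing(\pi)\subseteq A\subseteq\max(\pi)$ becomes $\sing(\pi^{\rho})\subseteq\rho(A)\subseteq\min(\pi^{\rho})$; applying $\psi_{\pi^{\rho}}$ (which fixes singletons and bijects $\min$ onto $\max$) yields $\sing(\pi^{\rho})\subseteq\psi_{\pi^{\rho}}(\rho(A))\subseteq\max(\pi^{\rho})$, so the image indeed lies in $R(n_2,n_1)$.

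Finally I would verify the three identities of \eqref{eq:psi(a,b)} and bijectivity. Writing $(\pi',A')=\psi_{(n_1,n_2)}(\pi,A)$ we have $\pi'=\pi^{\rho}$, so $\rc(\pi')=\rc(\pi)$ by the reflection step, while $|A'|=|\rho(A)|=|A|$ because $\rho$ and $\psi_{\pi^{\rho}}$ are bijections. For the depths,
\[
\sum_{i\in A}\d_i(\pi)=\sum_{i\in A}\d_{N+1-i}(\pi^{\rho})=\sum_{j\in\rho(A)}\d_j(\pi^{\rho})=\sum_{j'\in A'}\d_{j'}(\pi'),
\]
where the middle equality is the reflection identity for depths and the last is the depth-invariance of $\psi_{\pi^{\rho}}$. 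Bijectivity follows because $\psi_{(n_1,n_2)}$ is a composite of two bijections: letting $R^{\min}(n_2,n_1)$ denote the set of pairs $(\pi',B)$ with $\pi'\in\P^{*}(n_2,n_1)$ and $\sing(\pi')\subseteq B\subseteq\min(\pi')$, the reflection gives a bijection $R(n_1,n_2)\to R^{\min}(n_2,n_1)$, and $(\pi',B)\mapsto(\pi',\psi_{\pi'}(B))$ gives a bijection $R^{\min}(n_2,n_1)\to R(n_2,n_1)$; the explicit inverse is $(\pi',A')\mapsto\bigl((\pi')^{\rho},\ \rho(\psi_{\pi'}^{-1}(A'))\bigr)$ (using that $\rho$ is an involution on partitions). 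The only point demanding care is the bookkeeping that reflection preserves crossings and transfers depths exactly; once Proposition~\ref{prop:psi_pi} is invoked, nothing else is delicate, and one can check on the example of Figure~\ref{fig:mapping H,Gamma,Theta} that this construction reproduces the displayed correspondence.
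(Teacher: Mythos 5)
Your proof is correct, but it takes a genuinely different route from the paper's. The paper exploits the rigid structure of $\P^{*}(n_1,n_2)$: every block has size at most two and every arc runs from $[1,n_1]$ to $[n_1+1,N_2]$, so $\pi$ is encoded by its sets of left and right endpoints together with a permutation $\rho\in\s_k$; the image $\pi'$ is then built by complementation $i\mapsto N_2+1-i$ with the \emph{same} $\rho$ transported to the new endpoint lists, and $A$ is carried across by matching maximal elements according to their rank, using the observation that in such partitions the depth of a maximal element depends only on its rank (it equals $k-\ell$ for the $\ell$-th smallest right endpoint). You instead take the literal order-reversing reflection (which lands the distinguished set in $\min(\pi^{\rho})$ rather than $\max$) and then invoke the Motzkin-path bijection $\psi_{\pi}$ of Proposition~\ref{prop:psi_pi} to push it back to the maximal elements while preserving depths. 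The two constructions produce different bijections in general (your reflection transports $\rho^{-1}$ where the paper transports $\rho$, and the set $A$ is moved by a different rule), but either one satisfies the three required identities, which is all Proposition~\ref{prop:Theta} needs. What your version buys is uniformity: it reuses the same tool ($\psi_{\pi}$) that drives $\Psi_{n,k}$ in Section~11.1 and avoids the explicit endpoint/permutation bookkeeping; what the paper's version buys is a completely self-contained, elementary description tailored to $\P^{*}(n_1,n_2)$ that does not depend on the Motzkin-path machinery. One cosmetic caveat: your final remark that the construction ``reproduces the displayed correspondence'' of Figure~\ref{fig:mapping H,Gamma,Theta} happens to hold there only because the example has $\rho=\mathrm{id}$; for a general $\pi$ your map and the paper's need not produce the same image, though both are valid.
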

\begin{proof}
To any $(\pi,A)\in R(n_1,n_2)$ we associate an element  $(\pi',A')$ in $R(n_2,n_1)$ as follows:
 \begin{itemize}
\item By definition of $\P^*(n_1,n_2)$, the arcs of $\pi$
are  $(i_1,j_{\rho (1)})$, $(i_2,j_{\rho (2)})$, $\ldots$, $(i_k,j_{\rho (k)})$
for some integers $k\geq 0$, $1\leq i_1<i_2<\cdots<i_k\leq n_1$, $n_1+1\leq j_1<j_2<\cdots<j_k\leq  N_2$
and some permutation $\rho\in\s_k$.
We use $\overline{i}$ for the complement of~$i$ in~$[1,N_2]$, i.e.,  $\overline{i}=N_2+1-i$.
Then, we define $\pi'$ as the partition  of~$[1,N_2]$ which consists  of the
arcs $(\overline{j_r}, \overline{i_{\rho (r)}})$ for $1\leq r \leq k$. It is clear that $\pi'\in \P^*(n_2,n_1)$.
Moreover, we have $\rc(\pi')=\ninv(\rho)$ and $\rc(\pi)=\ninv(\rho)$ whence  $\rc(\pi')=\rc(\pi)$.

\item  Since $\sing(\pi)\subseteq A\subseteq\max(\pi)$, we have $A=\sing(\pi)\cup B$
with $B=\{j_{\ell(1)}<j_{\ell(2)}<\cdots<j_{\ell(t)}\}$ for some
increasing sequence $(\ell(s))_{1\leq s\leq t}$.
Suppose $\overline{I}:=\{\overline{i_1}, \overline{i_2}, \ldots, \overline{i_k}\}=\{u_1<u_2<\cdots<u_k\}$.
 We then  set $A':=\sing(\pi')\cup B'$ with $B'=\{u_{\ell(1)}<u_{\ell(2)}<\cdots<u_{\ell(t)}\}$. Clearly, we have
$\sing(\pi')\subseteq A'\subseteq\max(\pi')$ and $|A'|=|A|$. It is also easily checked that
$d_{u_{\ell(t)}}(\pi')=d_{j_{\ell(t)}}(\pi)$ for $s=1,2,\ldots,t$
whence $\sum_{i\in B'} \d_{i}(\pi')=\sum_{i\in B} \d_{i}(\pi)$. Moreover,
since $\sing(\pi')=\overline{\sing(\pi)}$ and $d_{\overline{i}}(\pi')=d_{i}(\pi)$ for $i\in \sing(\pi)$, we see that
$\sum_{i\in \sing(\pi')} \d_{i}(\pi')=\sum_{i\in \sing(\pi)} \d_{i}(\pi)$.
Altogether, this implies that $\sum_{i\in A'} \d_{i}(\pi')=\sum_{i\in A} \d_{i}(\pi)$.
\end{itemize}
 Set $\psi_{(n_1,n_2)}(\pi,A)=(\pi', A')$.  Then  $\psi_{(n_1,n_2)}$
is a well-defined  map from $R(n_1,n_2)$ to $R(n_2,n_1)$ and satisfies~\eqref{eq:psi(a,b)}.
 An illustration is given in Figure~\ref{fig:psi(a,b)}.
Besides, it is easy to see that  the composition $\psi_{(n_2,n_1)} \circ \psi_{(n_1,n_2)}$ is the identity mapping. This proves that   $\psi_{(n_1,n_2)}$
is a bijection.
\end{proof}

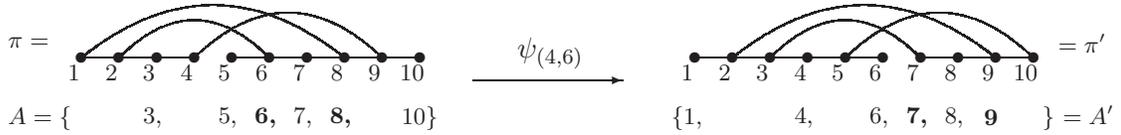
\begin{figure}[h!]
\begin{center}
\unitlength=1mm
\begin{picture}(52,18)(-6,-8)
\put(0,0){\line(1,0){15}}\put(20,0){\line(1,0){25}}
\qbezier(0,0)(17.5,14)(35,0)\qbezier(5,0)(15,10)(25,0)\qbezier(15,0)(27.5,12)(40,0)
\put(0,0){\circle*{1,3}}\put(0,0){\makebox(-2,-4)[c]{\footnotesize1}}
\put(5,0){\circle*{1,3}}\put(5,0){\makebox(-2,-4)[c]{\footnotesize2}}
\put(10,0){\circle*{1,3}}\put(10,0){\makebox(-2,-4)[c]{\footnotesize3}}
\put(15,0){\circle*{1,3}}\put(15,0){\makebox(-2,-4)[c]{\footnotesize4}}
\put(20,0){\circle*{1,3}}\put(20,0){\makebox(-2,-4)[c]{\footnotesize5}}
\put(25,0){\circle*{1,3}}\put(25,0){\makebox(-2,-4)[c]{\footnotesize6}}
\put(30,0){\circle*{1,3}}\put(30,0){\makebox(-2,-4)[c]{\footnotesize7}}
\put(35,0){\circle*{1,3}}\put(35,0){\makebox(-2,-4)[c]{\footnotesize8}}
\put(40,0){\circle*{1,3}}\put(40,0){\makebox(-2,-4)[c]{\footnotesize9}}
\put(45,0){\circle*{1,3}}\put(45,0){\makebox(-2,-4)[c]{\footnotesize10}}
\put(10,-6){\makebox(-1,-4)[c]{\footnotesize 3,}}
\put(20,-6){\makebox(-1,-4)[c]{\footnotesize 5,}}
\put(25,-6){\makebox(-1,-4)[c]{\footnotesize \bf{6},}}
\put(30,-6){\makebox(-1,-4)[c]{\footnotesize 7,}}
\put(35,-6){\makebox(-1,-4)[c]{\footnotesize \bf{8},}}
\put(45,-6){\makebox(0,-4)[c]{\footnotesize 10\}}}
\put(-6,4){\makebox(-2,-4)[c]{\footnotesize $\pi=$}}
\put(-5,-6){\makebox(-1,-4)[c]{\footnotesize $A=\{$}}
\put(52,-3){\vector(1,0){20}}\put(58,0){$\psi_{(4,6)}$}
\end{picture}
\hspace{2.8cm}
\begin{picture}(54,17)(-5,-8)
\put(0,0){\line(1,0){25}}\put(30,0){\line(1,0){15}}
\put(49,4){\makebox(5,-4)[c]{\footnotesize $=\pi'$}}
\qbezier(5,0)(22.5,14)(40,0)\qbezier(10,0)(20,10)(30,0)\qbezier(20,0)(32.5,12)(45,0)
\put(0,0){\circle*{1,3}}\put(0,0){\makebox(-2,-4)[c]{\footnotesize1}}
\put(5,0){\circle*{1,3}}\put(5,0){\makebox(-2,-4)[c]{\footnotesize2}}
\put(10,0){\circle*{1,3}}\put(10,0){\makebox(-2,-4)[c]{\footnotesize3}}
\put(15,0){\circle*{1,3}}\put(15,0){\makebox(-2,-4)[c]{\footnotesize4}}
\put(20,0){\circle*{1,3}}\put(20,0){\makebox(-2,-4)[c]{\footnotesize5}}
\put(25,0){\circle*{1,3}}\put(25,0){\makebox(-2,-4)[c]{\footnotesize6}}
\put(30,0){\circle*{1,3}}\put(30,0){\makebox(-2,-4)[c]{\footnotesize7}}
\put(35,0){\circle*{1,3}}\put(35,0){\makebox(-2,-4)[c]{\footnotesize8}}
\put(40,0){\circle*{1,3}}\put(40,0){\makebox(-2,-4)[c]{\footnotesize9}}
\put(45,0){\circle*{1,3}}\put(45,0){\makebox(-2,-4)[c]{\footnotesize10}}
\put(49,-6){\makebox(4,-4)[c]{\footnotesize \} $=A'$}}
\put(0,-6){\makebox(-2,-4)[c]{\footnotesize $\{1$,}}
\put(15,-6){\makebox(-1,-4)[c]{\footnotesize 4,}}
\put(25,-6){\makebox(-1,-4)[c]{\footnotesize 6,}}
\put(30,-6){\makebox(-1,-4)[c]{\footnotesize \bf{7},}}
\put(35,-6){\makebox(-1,-4)[c]{\footnotesize 8,}}
\put(40,-6){\makebox(0,-4)[c]{\footnotesize \bf{9} }}
\end{picture}
\end{center}
\caption{The mapping $\psi_{(n_1,n_2)}$}\label{fig:psi(a,b)}
\end{figure}

For $((\pi,A),(\gamma,B),\sigma)\in A_n^{(n_1,n_2)}$, we set
\begin{align*}
 \Gamma_n^{(n_1,n_2)}((\pi,A),(\gamma,B),\sigma):=(\psi_{(n_1,n_2)}(\tau,A),(\gamma,B),\sigma).
\end{align*}
Clearly $\Gamma_n^{(n_1,n_2)}$ is a mapping from $A_n^{(n_1,n_2)}$ to $A_n^{(n_2,n_1)}$.
An illustration is given in Figure~\ref{fig:mapping H,Gamma,Theta}.
From  Proposition~\ref{prop:psi(a,b)} we deduce the following result.
\begin{prop}\label{prop:Gamma}
 The mapping $\Gamma_n^{(n_1,n_2)}: A_n^{(n_1,n_2)} \to A_n^{(n_2,n_1)}$
is a bijection. Moreover,  if $((\tau,A),(\gamma,B),\sigma) \in A_n^{(n_1,n_2)}$
and $\Gamma_n^{(n_1,n_2)}((\pi,A),(\gamma,B),\sigma)=((\tau',A'),(\gamma,B),\sigma)$,
then we have
$$\rc(\tau')=\rc(\tau),\quad |A'|=|A|,\quad \sum_{i\in A'} \d_{i}(\tau')=\sum_{i\in A} \d_{i}(\tau).$$
\end{prop}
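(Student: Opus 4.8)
The plan is to observe that $\Gamma_n^{(n_1,n_2)}$ is nothing more than the map $\psi_{(n_1,n_2)}$ applied to the first coordinate of the triple, while the remaining data $(\gamma,B)$ and $\sigma$ are carried along untouched. Consequently every assertion of the proposition should reduce mechanically to the corresponding assertion in Proposition~\ref{prop:psi(a,b)}, and no new combinatorial construction is needed beyond that of $\psi_{(n_1,n_2)}$.

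First I would check that $\Gamma_n^{(n_1,n_2)}$ is well defined, i.e.\ that it actually lands in $A_n^{(n_2,n_1)}$. Given $((\tau,A),(\gamma,B),\sigma)\in A_n^{(n_1,n_2)}$, the defining conditions $\tau\in\P^*(n_1,n_2)$ and $\sing(\tau)\subseteq A\subseteq\max(\tau)$ say precisely that $(\tau,A)\in R(n_1,n_2)$. Proposition~\ref{prop:psi(a,b)} then yields $(\tau',A'):=\psi_{(n_1,n_2)}(\tau,A)\in R(n_2,n_1)$, so $\tau'\in\P^*(n_2,n_1)$ and $\sing(\tau')\subseteq A'\subseteq\max(\tau')$. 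Since $\psi_{(n_1,n_2)}$ preserves cardinalities, $|A'|=|A|=|B|=k$, and therefore the unchanged data $(\gamma,B)$ together with $\sigma\in\s_k$ remain admissible with the new first coordinate. Hence $((\tau',A'),(\gamma,B),\sigma)\in A_n^{(n_2,n_1)}$, as required.

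Next I would establish bijectivity. Because the $(\gamma,B)$ and $\sigma$ coordinates are transported identically, $\Gamma_n^{(n_1,n_2)}$ is a bijection as soon as $\psi_{(n_1,n_2)}$ is one. The candidate inverse is $\Gamma_n^{(n_2,n_1)}$, and the relation $\psi_{(n_2,n_1)}\circ\psi_{(n_1,n_2)}=\mathrm{id}$ recorded in the proof of Proposition~\ref{prop:psi(a,b)} immediately gives $\Gamma_n^{(n_2,n_1)}\circ\Gamma_n^{(n_1,n_2)}=\mathrm{id}$ on $A_n^{(n_1,n_2)}$, and symmetrically in the reverse order.

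Finally, the three numerical equalities are inherited verbatim: writing $\Gamma_n^{(n_1,n_2)}((\tau,A),(\gamma,B),\sigma)=((\tau',A'),(\gamma,B),\sigma)$ with $(\tau',A')=\psi_{(n_1,n_2)}(\tau,A)$, the identities $\rc(\tau')=\rc(\tau)$, $|A'|=|A|$ and $\sum_{i\in A'}\d_i(\tau')=\sum_{i\in A}\d_i(\tau)$ are exactly the three conclusions of Proposition~\ref{prop:psi(a,b)}. I expect essentially no obstacle here; the only point demanding a little care is the bookkeeping that the untransformed pieces $(\gamma,B)$ and $\sigma$ stay compatible after the first coordinate is altered, and this is guaranteed solely by the cardinality-preservation $|A'|=|A|$.
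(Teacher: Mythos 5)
Your proposal is correct and follows exactly the paper's route: the paper also deduces Proposition~\ref{prop:Gamma} immediately from Proposition~\ref{prop:psi(a,b)}, since $\Gamma_n^{(n_1,n_2)}$ is just $\psi_{(n_1,n_2)}$ applied to the first coordinate with $(\gamma,B)$ and $\sigma$ carried along unchanged. Your write-up merely makes explicit the well-definedness and compatibility bookkeeping that the paper leaves implicit.
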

Finally,  we define the mapping $\Theta_n^{(n_1,n_2)}: \P_n^{(n_1,n_2)} \to \P_n^{(n_2,n_1)}$ by
\be
\Theta_n^{(n_1,n_2)}:= \left(H_n^{(n_2,n_1)}\right)^{-1} \circ \Gamma_n^{(n_1,n_2)} \circ H_n^{(n_1,n_2)}.
\ee
An illustration is given in Figure~\ref{fig:mapping H,Gamma,Theta}.
Combining Propositions~\ref{prop:H} and \ref{prop:Gamma}, we conclude
that the mapping $\Theta_n^{(n_1,n_2)}$ satisfies the requirements of Proposition~\ref{prop:Theta}. \\

\medskip
{\bf Acknowledgements.} We thank the handling editor for his patience and careful reading of the manuscript.
The third author  gratefully acknowledges the hospitality of the City University of Hong Kong and Korea Advanced
Institute of Science and Technology where part of this work was done in the fall 2011.


\end{document}